\newcommand{\e}{\mathbbm{1}}
\newcommand{\limto}{{\displaystyle\lim_{\longrightarrow}}}
\newcommand{\rightlim}{\mathop{\limto}}
\newcommand{\leftlim}{\mathop{\displaystyle\lim_{\longleftarrow}}}
\newcommand{\limfromn}{\leftlim\limits_{\raise3pt\hbox{$n$}}}
\newcommand{\limton}{\rightlim\limits_{\raise3pt\hbox{$n$}}}
\newcommand{\rightlimit}[1]{\mathop{\lim\limits_{\longrightarrow}}\limits%
                   _{\raise3pt\hbox{$\scriptstyle #1$}}}
\newcommand{\leftlimit}[1]{\mathop{\lim\limits_{\longleftarrow}}\limits%
                   _{\raise3pt\hbox{$\scriptstyle #1$}}}
\numberwithin{equation}{section}
\newcommand{\rar}[1]{\stackrel{#1}{\longrightarrow}}
\newcommand{\xrar}[1]{\xrightarrow{#1}}
\newcommand{\into}{\hookrightarrow}
\newcommand{\al}{\alpha}
\newcommand{\be}{\beta}
\newcommand{\ga}{\gamma}
\newcommand{\Ga}{\Gamma}
\newcommand{\de}{\delta}
\newcommand{\la}{\lambda}
\newcommand{\sg}{\sigma}
\newcommand{\vp}{\varphi}
\newcommand{\bC}{{\mathbb C}}
\newcommand{\bD}{{\mathbb D}}
\newcommand{\bF}{{\mathbb F}}
\newcommand{\bG}{{\mathbb G}}
\newcommand{\bK}{{\mathbb K}}
\newcommand{\bL}{{\mathbb L}}
\newcommand{\bN}{{\mathbb N}}
\newcommand{\bQ}{{\mathbb Q}}
\newcommand{\bZ}{{\mathbb Z}}
\newcommand{\cC}{{\mathcal C}}
\newcommand{\cF}{{\mathcal F}}
\newcommand{\cG}{{\mathcal G}}
\newcommand{\cH}{{\mathcal H}}
\newcommand{\cI}{{\mathcal I}}
\newcommand{\cL}{{\mathcal L}}
\newcommand{\cM}{{\mathcal M}}
\newcommand{\cO}{{\mathcal O}}
\newcommand{\sC}{{\mathscr C}}
\newcommand{\sD}{{\mathscr D}}
\newcommand{\sE}{{\mathscr E}}
\newcommand{\sL}{{\mathscr L}}
\newcommand{\sM}{{\mathscr M}}
\newcommand{\fG}{{\mathfrak G}}
\newcommand{\fI}{{\mathfrak I}}
\newcommand{\fX}{{\mathfrak X}}
\newcommand{\fY}{{\mathfrak Y}}
\newcommand{\kbar}{{\overline{k}}}
\newcommand{\abs}[1]{\lvert #1\rvert}
\newcommand{\Loc}{\operatorname{Loc}}
\newcommand{\IrrLoc}{\operatorname{IrrLoc}}
\newcommand{\Ker}{\operatorname{Ker}}
\newcommand{\End}{\operatorname{End}}
\newcommand{\Hom}{\operatorname{Hom}}
\newcommand{\Aut}{\operatorname{Aut}}
\newcommand{\Spec}{\operatorname{Spec}}
\newcommand{\id}{\operatorname{id}}
\newcommand{\pr}{\mathrm{pr}}
\newcommand{\Ad}{\operatorname{Ad}}
\newcommand{\Ind}{\operatorname{Ind}}
\newcommand{\ind}{\operatorname{ind}}
\newcommand{\Res}{\operatorname{Res}}
\newcommand{\tr}{\operatorname{tr}}
\newcommand{\Rep}{\operatorname{Rep}}
\newcommand{\Av}{\operatorname{Av}}
\newcommand{\av}{\operatorname{av}}
\newcommand{\tens}{\otimes}
\newcommand{\st}{\,\big\vert\,}
\newcommand{\sbr}{\smallbreak}
\newcommand{\mbr}{\medbreak}
\newcommand{\bra}{\langle}
\newcommand{\ket}{\rangle}
\newcommand{\eval}[2]{\bra #1\,\big\vert\,#2\ket}
\newcommand{\qab}{\bQ^{\operatorname{ab}}}
\newcommand{\can}{\operatorname{can}}
\newtheorem{thm}{Theorem}[section]
\newtheorem{cor}[thm]{Corollary}
\newtheorem{lem}[thm]{Lemma}
\newtheorem{prop}[thm]{Proposition}
\theoremstyle{remark}
\newtheorem{rem}[thm]{Remark}
\newtheorem{rems}[thm]{Remarks}
\newtheorem{example}[thm]{Example}
\newtheorem{defin}[thm]{Definition}
\newtheorem{defins}[thm]{Definitions}
\newcommand{\Fun}{\operatorname{Fun}}
\newcommand{\Fr}{\operatorname{Fr}}
\newcommand{\Gal}{\operatorname{Gal}}
\newcommand{\ql}{\overline{\bQ}_\ell}
\newcommand{\cs}{CS}
\newcommand{\Perv}{\operatorname{Perv}}
\newcommand{\igz}{\ind_{G'_0}^{G_0}}
\newcommand{\Igz}{\Ind_{G'_0}^{G_0}}
\newcommand{\ig}{\ind_{G'}^G}
\newcommand{\Ig}{\Ind_{G'}^G}
\newcommand{\iga}{\ind_{\Ga'}^{\Ga}}
\newcommand{\var}{\!-\!\operatorname{var}}
\newcommand{\Gt}{\widetilde{G}}
\newcommand{\Ht}{\widetilde{H}}
\newcommand{\Ut}{{\widetilde{U}}}
\newcommand{\fun}[1]{\Fun(#1,\qab)^{#1}}
\newcommand{\tensqn}{\tens_{\bF_q}\bF_{q^n}}
\title[Character sheaves and characters]{Character sheaves and characters of unipotent groups over finite fields}
\author{Mitya Boyarchenko}
\date{\today}
\thanks{\textit{Address:} University of Michigan, Department of Mathematics, 530 Church Street, 2074 East Hall, Ann Arbor, MI 48109--1043. \textit{Email}: {\tt mityab@umich.edu} \\ Partially supported by the NSF grants DMS-0701106, DMS-0703679 and DMS-1001769.}
\begin{document}

\begin{abstract}
Let $G_0$ be a connected unipotent group over a finite field
$\bF_q$, and let $G=G_0\otimes_{\bF_q}\overline{\bF}_q$, equipped
with the Frobenius endomorphism $\Fr_q:G\rar{}G$. For every
character sheaf $M$ on $G$ such that $\Fr_q^*M\cong M$, we prove
that $M$ comes from an irreducible perverse sheaf $M_0$ on $G_0$
such that $M_0$ is pure of weight $0$ (as an $\ell$-adic complex) and for each integer $n\geq 1$ the ``trace of Frobenius'' function $t_{M_0\tensqn}$ on
$G_0(\bF_{q^n})$ takes values in $\bQ^{ab}$, the abelian closure of
$\bQ$.

\sbr

We further show that as $M$ ranges over all $\Fr_q^*$-invariant
character sheaves on $G$, the functions $t_{M_0}$ form a basis for the space of all conjugation-invariant
functions $G_0(\bF_q)\rar{}\bQ^{ab}$, and are orthonormal with
respect to the standard \emph{unnormalized} Hermitian inner product on this space. The matrix relating this basis to the basis formed by the irreducible characters is block-diagonal, with blocks corresponding to the $\Fr_q^*$-invariant $\bL$-packets (of characters or, equivalently, of character sheaves).

\sbr

We also formulate and prove a suitable generalization of this result
to the case where $G_0$ is a possibly disconnected unipotent group
over $\bF_q$. (In general, $\Fr_q^*$-invariant character sheaves on
$G$ are related to the irreducible characters of the groups of
$\bF_q$-points of all pure inner forms of $G_0$ over $\bF_q$.)
\end{abstract}

\maketitle

\setcounter{tocdepth}{1}

\tableofcontents

%%%%%%%%%%%%%%%%%%%%%%%%%%%%%%%%%%%%%%%%%%%%%%%%%%%%%%%%
%%%%%%%%%%%%%%%%%%%%%%%%%%%%%%%%%%%%%%%%%%%%%%%%%%%%%%%%
%%%%%%%%%%%%%%%%%%%%%%%%%%%%%%%%%%%%%%%%%%%%%%%%%%%%%%%%
%%%
%%%              INTRODUCTION
%%%
%%%%%%%%%%%%%%%%%%%%%%%%%%%%%%%%%%%%%%%%%%%%%%%%%%%%%%%%
%%%%%%%%%%%%%%%%%%%%%%%%%%%%%%%%%%%%%%%%%%%%%%%%%%%%%%%%
%%%%%%%%%%%%%%%%%%%%%%%%%%%%%%%%%%%%%%%%%%%%%%%%%%%%%%%%

\section*{Introduction}

This article is a sequel to \cite{characters} and \cite{foundations}. In \cite{characters} we defined and studied $\bL$-packets of irreducible characters of unipotent groups over finite fields. In \cite{foundations}
the notion of a character sheaf on a unipotent group over
an algebraically closed field of characteristic $p>0$ was defined, and several basic properties of character sheaves and $\bL$-packets
thereof were established. In the present paper we study the relationship between
irreducible characters of a group of the form $G_0(\bF_q)$, where
$G_0$ is a unipotent group over a finite field $\bF_q$, and
character sheaves on the group $G=G_0\tens_{\bF_q}\overline{\bF}_q$
that are invariant under the Frobenius endomorphism
$\Fr_q:G\rar{}G$.

\mbr

This relationship is easier to formulate when $G_0$ is connected.
In this case the number of $\Fr_q^*$-invariant character sheaves $M$
on $G$ equals the number of irreducible characters of $G_0(\bF_q)$.
Moreover, every such $M$ comes from an irreducible perverse sheaf $M_0$ on $G_0$ such that $M_0$ is pure of weight $0$ and the corresponding ``trace of Frobenius'' function $t_{M_0}$ on $G_0(\bF_q)$ takes values in $\qab$, the abelian closure of $\bQ$. With this choice of $M_0$ we prove (cf.~Theorem \ref{t:main-connected}) that as $M$ ranges over the set of $\Fr_q^*$-invariant character sheaves on $G$, the functions $t_{M_0}$ form a basis of the space of conjugation-invariant functions $G_0(\bF_q)\rar{}\qab$, which is orthonormal with respect to the standard unnormalized inner product $\eval{f_1}{f_2}=\sum_{g\in G_0(\bF_q)} f_1(g)\overline{f_2(g)}$. The matrix relating this basis to the basis formed by the irreducible characters of $G_0(\bF_q)$ is block-diagonal, with blocks labeled by the $\bL$-packets (Theorem \ref{t:main}(d)).

\mbr

For example, suppose $G_0$ is a connected \emph{commutative} unipotent group over $\bF_q$. For each multiplicative $\ql$-local system $\cL_0$ on $G_0$, the function $t_{\cL_0}:G_0(\bF_q)\rar{}\ql^\times$ is a group homomorphism, and the map $\cL_0\longmapsto t_{\cL_0}$ is a bijection between the set of isomorphism classes of multiplicative local systems on $G_0$ and $\Hom(G_0(\bF_q),\ql^\times)$. The pure perverse sheaves on $G_0$ of weight $0$, which were mentioned in the previous paragraph, are the complexes\footnote{If $\dim G_0$ is odd, one needs to choose a square root of $q$, which also leads to the choice of a square root (with respect to the tensor product of local systems) of the Tate sheaf $\ql(1)$.} $M_0=\cL_0[\dim G_0](\dim G_0/2)$. Note that whereas the functions $t_{\cL_0}$ are orthonormal for the normalized inner product, the corresponding functions $t_{M_0}=q^{-\dim G_0/2}t_{\cL_0}$ are orthonormal for the unnormalized inner product.

\mbr

We remark that even if ultimately one is only interested in the
connected case, studying arbitrary unipotent groups seems to be necessary because that is what opens up the possibility of proving the main results by induction on the size of $G_0$. Indeed, as we saw in
\cite{characters}, the study of $\bL$-packets of irreducible
characters\footnote{This notion is recalled in
\S\ref{ss:L-packets-characters}.} of $G_0(\bF_q)$ leads us to
consider characters of subgroups $G_0'(\bF_q)\subset G_0(\bF_q)$ for
certain closed subgroups $G_0'\subset G_0$ that may be
disconnected. Similarly, the down-to-earth construction of character
sheaves on $G$ that was presented in \cite{foundations} also
involves induction from possibly disconnected closed subgroups of
$G$.

\mbr

Thus our main goal in the present article is to formulate and prove
a theorem on the relationship between character sheaves and
characters when $G_0$ is an arbitrary unipotent group over $\bF_q$.
In general, there may be more $\Fr_q^*$-invariant character sheaves
on $G$ than there are irreducible characters of $G_0(\bF_q)$. For
example, if $G_0$ is a \emph{finite discrete} unipotent group over
$\bF_q$, then $G$ can be viewed as an abstract finite $p$-group
$\Ga$ on which the Frobenius endomorphism acts trivially. Character
sheaves on $\Ga$ are the same as irreducible objects of the category
of $\Ga$-equivariant sheaves of vector spaces on $\Ga$ (with respect to the conjugation action), and they are
automatically $\Fr_q^*$-invariant. Unless $\Ga$ is trivial, the
number of such objects is strictly greater than the number of
irreducible characters of the group $G_0(\bF_q)=\Ga$.

\mbr

The example mentioned in the previous paragraph is discussed in more detail in \S\ref{ss:finite-discrete}. It naturally leads us to consider all pure inner forms\footnote{Note that if $G_0$ is connected, then by Lang's theorem \cite{lang}, the Galois cohomology $H^1(\bF_q,G_0)$ is trivial, and thus $G_0$ only has one pure inner form, namely, $G_0$ itself.} of $G_0$ over $\bF_q$. This phenomenon is also related to the fact that if $M_0$ is a conjugation-equivariant $\ell$-adic complex on $G_0$, then for each $\al\in H^1(\bF_q,G_0)$, we get an $\ell$-adic complex $M_0^\al$ on the corresponding pure inner form $G_0^\al$, and hence a conjugation-invariant function $t_{M_0^\al}:G_0^\al(\bF_q)\rar{}\ql$. Taking into account the fact that a $\Fr_q^*$-invariant character sheaf on $G$ determines a function on $G_0^\al(\bF_q)$ for each $\al\in H^1(\bF_q,G_0)$ allows us to find the correct formulation of the relationship between character sheaves and irreducible characters for a possibly disconnected unipotent group over $\bF_q$ (Theorem \ref{t:main}).

\mbr

At the end of \S\ref{ss:main-result} we discuss the organization of the remainder of the article.

\subsection*{Acknowledgments} The results on the relationship between character sheaves and characters of unipotent groups over finite fields, which are proved in this article, were conjectured several years ago by Vladimir Drinfeld.

\mbr

I am very thankful to him for introducing me to this area of research, for his continued support and advice, and for teaching me the techniques used in this article. I am grateful to him and to Alexander Beilinson for suggesting numerous improvements in the exposition. I am especially indebted to Drinfeld for suggesting an approach to showing that there are ``sufficiently many'' character sheaves on a unipotent group $G_0$ over $\bF_q$ (so that the associated ``trace of Frobenius'' functions span the space of all conjugation-invariant functions on $G_0(\bF_q)$), which is shorter and clearer than my original argument.

\mbr

I thank Martin Olsson for helpful email correspondence and for suggesting to me the reference \cite{sun}.

\mbr

I must also acknowledge great intellectual debt to George Lusztig, both because he conjectured in \cite{lusztig} the existence of a theory of character sheaves on unipotent groups, and because his work on character sheaves and characters of reductive groups over finite fields inspired, to a large extent, the results of the present article.

\subsection*{Notation} We work with two fixed primes $p\neq\ell$. We choose an algebraic closure $\bF$ of a field with $p$ elements and an algebraic closure $\ql$ of the field $\bQ_\ell$ of $\ell$-adic numbers. All our geometric objects are defined over a field of characteristic $p>0$, and all derived categories are those of constructible $\ql$-complexes. If $M_0$ is such a complex on a scheme $X_0$ of finite type (or a perfect variety) over a finite field $\bF_q$, the corresponding ``trace of Frobenius'' function is denoted by $t_{M_0}:X_0(\bF_q)\rar{}\ql$. We write $M_0\tensqn$ for the pullback of $M_0$ via the projection $X_0\tensqn\rar{}X_0$.

\mbr

We will denote by $\qab$ the abelian closure of $\bQ$ in $\ql$, and by $z\mapsto\overline{z}$ the complex conjugation automorphism of $\qab$. If $r\in\bN$, we write $\mu_r\subset\ql^\times$ for the subgroup consisting of $r$-th roots of unity. We also write $\bF_{p^r}\subset\bF$ for the unique subfield consisting of $p^r$ elements.

%%%%%%%%%%%%%%%%%%%%%%%%%%%%%%%%%%%%%%%%%%%%%%%%%%%%%%%%
%%%%%%%%%%%%%%%%%%%%%%%%%%%%%%%%%%%%%%%%%%%%%%%%%%%%%%%%
%%%%%%%%%%%%%%%%%%%%%%%%%%%%%%%%%%%%%%%%%%%%%%%%%%%%%%%%
%%%
%%%           THE CONNECTED CASE
%%%
%%%%%%%%%%%%%%%%%%%%%%%%%%%%%%%%%%%%%%%%%%%%%%%%%%%%%%%%
%%%%%%%%%%%%%%%%%%%%%%%%%%%%%%%%%%%%%%%%%%%%%%%%%%%%%%%%
%%%%%%%%%%%%%%%%%%%%%%%%%%%%%%%%%%%%%%%%%%%%%%%%%%%%%%%%

\section{The connected case}\label{s:connected}

In this section we review the definitions of character sheaves and $\bL$-packets
thereof and then state our main result for connected unipotent groups
over finite fields.

\subsection{Perfect schemes and groups}\label{ss:perfect-recollections} We recall that a scheme $S$ in characteristic $p$ (i.e., such
that $p$ annihilates the structure sheaf $\cO_S$ of $S$) is said to
be \emph{perfect} if the morphism $\cO_S\rar{}\cO_S$, given by
$f\longmapsto f^p$ on the local sections of $\cO_S$, is an
isomorphism of sheaves. As explained in
\cite[\S1.9]{foundations}, it is more convenient to work with
perfect schemes and perfect algebraic groups when developing the
theory of character sheaves on unipotent groups (see
\emph{loc.~cit.} for more details). We follow the same approach in this article and, for brevity, introduce the following terms.

\begin{defins}\label{defs:perfect}
Let $k$ be a perfect field of characteristic $p>0$.
 \sbr
\begin{enumerate}[(1)]
\item A \emph{perfect variety} over $k$ is a perfect scheme over $k$ that is isomorphic to the perfectization \cite{greenberg} of a scheme of finite type over $k$.
 \sbr
\item A \emph{perfect group} over $k$ is a group object in the category of perfect varieties over $k$; equivalently \cite[Lemma A.7]{characters}, it is a group scheme over $k$ that is isomorphic to the perfectization of a group scheme of finite type over $k$.
 \sbr
\item A \emph{perfect unipotent group} over $k$ is a group scheme over $k$ that is isomorphic to the perfectization of a unipotent algebraic group over $k$.
\end{enumerate}
\end{defins}

\begin{rem}
Even though we formulate and prove the results of the present article in the context of perfect unipotent groups over finite fields, the same statements are also true for a usual unipotent group $G_0$ over $\bF_q$. The reason is that if $G_0^{perf}$ is the perfectization of $G_0$, then the natural morphism $G_0^{perf}\rar{}G_0$ induces a group isomorphism $G_0^{perf}(\bF_{q^n})\rar{}G_0(\bF_{q^n})$ for every $n\in\bN$, as well as an equivalence\footnote{In particular, character sheaves on $G_0$ and on $G_0^{perf}$ are ``the same.''} between the \'etale topos of $G_0$ and that of $G_0^{perf}$.
\end{rem}

\subsection{Recollections on character sheaves}\label{ss:recollections}
Fix a perfect field $k$ of characteristic $p>0$. If $X$ is a perfect variety over $k$, we write
$\sD(X)=D^b_c(X,\ql)$ for the bounded derived category of
constructible complexes of $\ql$-sheaves on $X$. If $G$ is a perfect
unipotent group over $k$ acting on $X$, one can define the equivariant derived
category $\sD_G(X)$ as in \cite[Def.~4.5]{characters} or \cite[Def.~1.3]{foundations}. (It would have been better to define $\sD_G(X)$ as the $\ell$-adic derived category of the quotient stack $G\backslash X$ \cite{Las-Ols06}, but the \emph{ad hoc} approach used in \cite{characters,foundations} suffices for our purposes.)

\mbr

The notation $\sD_G(G)$ always refers to the conjugation action of
$G$ on itself.

\mbr

The categories $\sD(G)$ and $\sD_G(G)$ are monoidal with respect to
the bifunctor of \emph{convolution with compact supports}, which we
denote by\footnote{Following \cite[\S1.2]{foundations}, we almost always
omit the letters ``R'' and ``L'' from our notation for the six
functors defined on the derived categories $\sD(X)$.}
\[
(M,N) \longmapsto M*N=\mu_!\bigl((p_1^* M)\tens(p_2^* N)\bigr),
\]
where $\mu:G\times G\rar{}G$ is the multiplication morphism and
$p_1,p_2:G\times G\rar{}G$ are the first and second projections. The
unit object in each of the categories is the delta-sheaf at the
identity element of $G$, which will be denoted by $\e$.

\mbr

An object $e\in\sD_G(G)$ is said to be a \emph{closed idempotent} if
there exists an arrow $\e\rar{}e$ that becomes an isomorphism after
convolution with $e$. It is further said to be a \emph{minimal}
closed idempotent if $e\neq 0$ and for every closed idempotent
$e'\in\sD_G(G)$, we have either $e*e'\cong e$, or $e*e'=0$.

\begin{rem}\label{r:different-notions-of-minimality}
In \cite{foundations}, the notion of a \emph{weak
idempotent} in $\sD_G(G)$, defined as an object $e\in\sD_G(G)$ such
that $e*e\cong e$, was also used. The notion of a \emph{minimal weak idempotent} is
defined analogously to the notion of a minimal closed idempotent. By \cite[Thm.~1.49(a)--(b)]{foundations}, the classes of minimal weak
idempotents and of minimal closed idempotents in $\sD_G(G)$ coincide
when $k=\overline{k}$. (Note that, \emph{a priori}, it is neither
obvious that a minimal weak idempotent in $\sD_G(G)$ is a closed
idempotent, nor that a minimal closed idempotent in $\sD_G(G)$ is
also minimal as a weak idempotent.)
\end{rem}

In view of this remark, we may (and will) shorten ``minimal closed
idempotent'' to ``minimal idempotent'' in the future (when
$k=\overline{k}$) without fear of confusion.

\mbr

\textbf{For the rest of the section we assume that $k$ is
algebraically closed.}

\begin{defins}\label{d:min-idempotents-L-packets}
Let $G$ be a perfect unipotent group over
$k$, and let $e\in\sD_G(G)$ be a closed idempotent.
\begin{enumerate}[(1)]
\item The \emph{Hecke subcategory} defined by $e$ is the full subcategory $e\sD_G(G)\subset\sD_G(G)$ consisting of objects $M\in\sD_G(G)$ such that $e*M\cong M$. By \cite[Lem.~2.18]{foundations}, $e\sD_G(G)$ is closed under $*$ and is a monoidal category with unit object $e$.
 \sbr
%\noindent
 In the remainder of the definition, assume that $e$ is
minimal.
 \sbr
\item Let $\sM^{perv}_e$ denote the full subcategory of
$e\sD_G(G)$ consisting of those objects for which the underlying
$\ell$-adic complex is a perverse sheaf \cite[Ch. 4]{bbd} on $G$. The \emph{Lusztig packet
of character sheaves} on $G$ defined by $e$ is the set of
(isomorphism classes of) indecomposable objects of the category
$\sM_e^{perv}$.
 \sbr
\item An object of $\sD_G(G)$ is a \emph{character sheaf} if it lies
in the Lusztig packet of character sheaves defined by some minimal
closed idempotent in $\sD_G(G)$.
\end{enumerate}
\end{defins}

The set of isomorphism classes of character sheaves on $G$ is
denoted by $\cs(G)$. From now on we will write ``$\bL$-packet'' instead of ``Lusztig packet'' for brevity.

\begin{rem}\label{r:langlands}
The conjectural notion of an $L$-packet in the representation theory
of reductive groups over \emph{local} fields was introduced by
R.P.~Langlands in \cite{langlands}. It is hard to compare it with
the notion of an $\bL$-packet because technically the two definitions
are given in quite different terms. However, philosophically the two notions
are closely related. Namely, as explained by R.~Bezrukavnikov, $\bL$-packets are philosophically similar to \emph{geometric} $L$-packets, which are, in general, larger than the $L$-packets defined by Langlands\footnote{Conjecturally, $L$-packets bijectively correspond to ``Langlands parameters.''  Geometric $L$-packets should correspond to Frobenius-invariant ``geometric Langlands parameters'' (one obtains geometric Langlands parameters from the usual ones by replacing the Weil-Deligne group $W'_K$ with $\Ker (W'_K\twoheadrightarrow\bZ)$). Thus each geometric $L$-packet is a union of several ordinary $L$-packets.}.
\end{rem}

\subsection{Properties of $\bL$-packets of character
sheaves}\label{ss:properties-L-packets} We keep the notation of
\S\ref{ss:recollections}. In this subsection we list a few basic
properties of minimal closed idempotents in $\sD_G(G)$ and the
corresponding $\bL$-packets of character sheaves.

\mbr

Let $\iota:G\rar{}G$ be the inversion morphism, $g\mapsto g^{-1}$.
As in \cite[Def.~1.17]{foundations}, we denote by $\bD^-_G:\sD(G)^\circ\rar{}\sD(G)$
the composition $\bD_G^-=\bD_G\circ\iota^*=\iota^*\circ\bD_G$, where
$\bD_G$ is the Verdier duality functor. The functors $\bD_G$ and
$\bD_G^-$ can be naturally ``upgraded'' to functors
$\sD_G(G)^\circ\rar{}\sD_G(G)$, which we also denote by $\bD_G$ and
$\bD_G^-$.

\begin{prop}\label{p:properties-char-sheaves}
Let $G$ be a perfect unipotent group over
$k$ $($where $k=\overline{k}$ as before$)$, and let $e\in\sD_G(G)$ be a minimal idempotent. Then:
\begin{enumerate}[$($a$)$]
\item $\sM_e^{perv}$ is a semisimple abelian category with finitely
many simple objects.
 \sbr
\item There exists a $($necessarily unique$)$ $n_e\in\bZ$ such that $\bD_G^-(e)\cong e[-2n_e]$.
 \sbr
\item We have $e[-n_e]\in\sM_e^{perv}$, and $e[-n_e]$ is a character sheaf.
 \sbr
\item The subcategory $\sM_e=\sM_e^{perv}[n_e]\subset\sD_G(G)$ is
closed under convolution, and is a monoidal category with unit
object $e$.
 \sbr
\item The canonical functor $D^b(\sM_e^{perv})\rar{}e\sD_G(G)$ is an equivalence of categories.
\end{enumerate}
\end{prop}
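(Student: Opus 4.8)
\emph{Proof proposal.}
The plan is to exhibit $e\sD_G(G)$ as the bounded derived category of its perverse heart $\sM_e^{perv}$. Since $\sM_e^{perv}$ is semisimple by part (a), $D^b(\sM_e^{perv})$ has the evident description: objects are finite direct sums of shifts of simple objects of $\sM_e^{perv}$, with $\Hom(A,B[k])$ equal to $\Hom_{\sM_e^{perv}}(A,B)$ for $k=0$ and $0$ otherwise. The canonical functor $F:D^b(\sM_e^{perv})\longrightarrow e\sD_G(G)$ carries such a direct sum to the corresponding direct sum formed inside $e\sD_G(G)$ — this makes sense because $e\sD_G(G)$ is a triangulated subcategory of $\sD_G(G)$, stable under shifts and cones, the functor $e*(-)$ being isomorphic to the identity on it. It therefore suffices to prove (1) $F$ is essentially surjective, and (2) $\Hom_{e\sD_G(G)}(S,S'[k])=0$ for all simple objects $S,S'\in\sM_e^{perv}$ and all $k\ne 0$; granting (2), full faithfulness of $F$ follows by additivity, since then both $\Hom_{D^b(\sM_e^{perv})}(S,S'[k])$ and $\Hom_{e\sD_G(G)}(S,S'[k])$ equal $\Hom_{\sM_e^{perv}}(S,S')$ for $k=0$ (using that $\sM_e^{perv}$ is a full subcategory of $e\sD_G(G)$) and vanish for $k\ne 0$.

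Both (1) and (2) will be deduced from the existence of a bounded $t$-structure on $e\sD_G(G)$ whose heart is $\sM_e^{perv}$. One takes the perverse $t$-structure on $\sD_G(G)$ — normalized so that its heart consists of the equivariant complexes whose underlying $\ell$-adic complex on $G$ is perverse; its existence on the equivariant category is routine, using when $G$ is disconnected that $\pi_0(G)$ is a finite $p$-group and $p\ne\ell$, so that passing to equivariant objects is $t$-exact — and restricts it to the full subcategory $e\sD_G(G)$. The restriction is legitimate provided convolution with $e$ is perverse $t$-exact: if $M\cong e*M$, then convolving a perverse truncation triangle of $M$ with $e$ and invoking the uniqueness of truncation triangles shows that the perverse truncations of $M$ again lie in $e\sD_G(G)$. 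The perverse $t$-exactness of $e*(-)$ is established in \cite{foundations}; in the commutative case it is already visible from the computation recalled in the introduction, where $e*(-)$ reduces to tensoring with a multiplicative local system followed by compactly supported cohomology along $G$ — an operation that annihilates nontrivial multiplicative local systems (hence projects onto a single isotypic component) and, on the trivial one, is concentrated in the single degree for which the shift and Tate twist built into $e$ compensate. The heart of the induced $t$-structure on $e\sD_G(G)$ is then $\sM_e^{perv}$ by construction.

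Given this $t$-structure, (1) follows because every $M\in e\sD_G(G)$ is (non-canonically) isomorphic to $\bigoplus_i{}^{p}H^i(M)[-i]$: inducting on the perverse amplitude, the obstruction to splitting off the top term ${}^{p}H^b(M)[-b]$ is the connecting morphism to $({}^{p}\tau_{<b}M)[1]\cong\bigoplus_{i<b}{}^{p}H^i(M)[1-i]$, which — after decomposing each perverse cohomology object into simples — is a sum of elements of $\Hom_{e\sD_G(G)}(S,S'[k])$ with $k\ge 2$, and these vanish by (2). As for (2) itself: for $k<0$ it is the defining vanishing axiom of the $t$-structure; for $k=1$ it equals $\Ext^1_{\sM_e^{perv}}(S,S')$, which is $0$ by semisimplicity (the heart being closed under extensions in $e\sD_G(G)$); but for $k\ge 2$ the $t$-structure alone does \emph{not} suffice — a bounded $t$-structure with semisimple heart can in general fail to make the ambient category the derived category of its heart, the first obstruction living in $\Ext^{2}$. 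Here I would invoke \cite{foundations}, where $e\sD_G(G)$ is identified, as a triangulated category, with $D^b(\sM_e)$; by part (d), $\sM_e=\sM_e^{perv}[n_e]$, so the shift-by-$n_e$ autoequivalence identifies $D^b(\sM_e)\cong D^b(\sM_e^{perv})$, and in a genuine derived category of a semisimple category the groups in (2) vanish for $k\ne 0$. This identification of $e\sD_G(G)$ with a derived category — equivalently, a filtered enhancement of $e\sD_G(G)$ in the sense of \cite[\S3.1]{bbd} — is what the argument really rests on, and is the main obstacle; everything else is formal.
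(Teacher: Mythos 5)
You address only part (e), taking (a) and (d) as given; the paper's proof is a single citation of \cite[Thm.~1.15, Prop.~1.19]{foundations}, which covers all five assertions at once. For (a)--(d) your proposal therefore proves nothing. For (e), your reduction is structurally sound up to the exact point you flag yourself: showing $\Hom_{e\sD_G(G)}(S,S'[k])=0$ for $k\ge 2$. You are right that a bounded $t$-structure with semisimple heart on a triangulated category does \emph{not} by itself force that category to be the bounded derived category of the heart, and you are right that the missing ingredient is an enhancement (a realization functor in the sense of \cite[\S 3.1]{bbd}) rather than anything extractable from $t$-structure formalism alone. But your way of supplying that ingredient is to cite \cite{foundations} for the identification $e\sD_G(G)\cong D^b(\sM_e)$, which (via the shift $[n_e]$ from part (d)) is precisely part (e). So the argument, as written, reduces (e) to (e): the formal scaffolding around it — essential surjectivity via non-canonical decomposition into shifted perverse cohomologies, full faithfulness via $\Ext$-vanishing, $t$-exactness of $e*(-)$ to restrict the perverse $t$-structure — is correct and standard, but none of it touches the one step you yourself call ``the main obstacle.'' There is therefore a genuine gap: the proposal diagnoses the difficulty correctly without resolving it.

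A secondary point: the claim that ``passing to equivariant objects is $t$-exact'' because $\pi_0(G)$ is a finite $p$-group with $p\ne\ell$ is a reasonable heuristic, but in the framework of \cite[Def.~4.5]{characters}/\cite[Def.~1.3]{foundations} the existence of the perverse $t$-structure on $\sD_G(G)$ and the compatibility of $\sM_e^{perv}$ with it are themselves established in \cite{foundations}, not routine from first principles; your proof proposal glosses over this as well. In the end, the honest proof at the level of this paper is what the paper gives — a citation — and your contribution is a correct account of why a more naive self-contained argument would fail, which is useful commentary but not a proof.
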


\begin{proof}
All the assertions above are contained in \cite[Thm.~1.15, Prop.~1.19]{foundations}.
\end{proof}

Note that assertion (a) implies that $\bL$-packets of character
sheaves on $G$ are finite and that character sheaves in the $\bL$-packet defined by $e$ can also be characterized as the simple objects of the category $\sM_e^{perv}$.

\mbr

The next notion \cite[Def.~1.21]{foundations} is the geometric
analogue of the notion of functional dimension in the classical
representation theory of Lie groups.

\begin{defin}
If $e\in\sD_G(G)$ is a minimal closed idempotent, the number
$d_e:=(\dim G-n_e)/2$ is called the \emph{functional dimension} of
$e$.
\end{defin}

We note that $d_e$ may fail to be an integer \cite[Rem.~1.23]{foundations}. One can show that $d_e,n_e\geq 0$ \cite[Thm.~1.15(b)]{foundations}, but we do not need this fact in this article.

\subsection{Connected unipotent groups over finite fields}
\label{ss:connected-unipotent-finite-fields}
Let $q$ be a power of a prime $p$, which we assume to be fixed once and for all.
If $X_0$ is a perfect variety over $\bF_q$ and
$M_0\in\sD(X_0)$, we denote the function associated to $M_0$ via the
functions-sheaves dictionary by $t_{M_0}:X_0(\bF_q)\rar{}\ql$
(see \cite{deligne-weil-2}, \cite[\S4.2]{characters}). Thus for each $n\in\bN$, we also have the corresponding function $t_{M_0\tensqn}:X_0(\bF_{q^n})\rar{}\ql$.

\mbr

Let $G_0$ be a perfect unipotent group over $\bF_q$, and write
$G=G_0\tens_{\bF_q}\bF$. The absolute Frobenius morphism\footnote{It
is defined to be the identity map on the underlying topological
space, and the map $f\mapsto f^q$ on the local sections of the
structure sheaf.} $\Phi_q:G_0\rar{}G_0$ induces an
$\bF$-homomorphism $\Fr_q:G\rar{}G$, called the \emph{Frobenius
endomorphism} of $G$, by extension of scalars. The pullback functor
$\Fr_q^*$ induces an automorphism of the set $\cs(G)$, and
$\cs(G)^{\Fr_q^*}$ will denote the set of character sheaves on $G$
invariant under this automorphism.

\mbr

Let $p^r$ denote the exponent of $G_0$, i.e., $r\geq 1$ is the minimal integer such that $g^{p^r}=1$ for all $g\in G_0(\bF)$.
We write $\bZ[\mu_{p^{2r}},p^{-1}]\subset\bQ(\mu_{p^{2r}})$ for the subring generated by $\mu_{p^{2r}}$ and $\frac{1}{p}$ (cf.~Remark \ref{r:why-exponent-squared}).

\begin{thm}\label{t:main-connected}
Assume that $G_0$ is a connected perfect unipotent group over $\bF_q$.
\begin{enumerate}[$($a$)$]
\item If $M\in\cs(G)^{\Fr_q^*}$, then $M$ arises from an irreducible
perverse sheaf $M_0$ on $G_0$ such that $M_0$ is
pure\footnote{The definition of a pure complex on a perfect variety over $\bF_q$ is recalled in \S\ref{ss:purity-reminder}.} of
weight $0$ and the function $t_{M_0\tensqn}:G_0(\bF_{q^n})\rar{}\ql$ takes
values in the subring $\bZ[\mu_{p^{2r}},p^{-1}]\subset\qab\subset\ql$ for every $n\in\bN$.
 \sbr
\item For each $M\in\cs(G)^{\Fr_q^*}$, fix a choice of $M_0$ subject
to the requirements stated in part $($a$)$. The functions
\[
 \bigl\{ t_{M_0} : G_0(\bF_q)\rar{}\qab \st
 M\in\cs(G)^{\Fr_q^*} \bigr\}
\]
form a basis of the space of conjugation-invariant functions
$G_0(\bF_q)\rar{}\qab$, which is orthonormal with respect to the
inner product
\[
\eval{f_1}{f_2} = \sum_{g\in G_0(\bF_q)}
f_1(g)\,\overline{f_2(g)}.
\]
\end{enumerate}
\end{thm}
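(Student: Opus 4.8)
The plan is to reduce part (b) to a purely geometric orthonormality statement for character sheaves on $G$, fed through the Grothendieck--Lefschetz trace formula, and to obtain part (a) from the purity of character sheaves together with explicit trace-function computations for Artin--Schreier--Witt sheaves. Recall that a character sheaf is, by Definition \ref{d:min-idempotents-L-packets}, a \emph{simple} perverse sheaf on $G$ belonging to $\sM_e^{perv}$ for some minimal idempotent $e\in\sD_G(G)$. Since $\Fr_q^*$ permutes $\cs(G)$ and preserves $\bL$-packets --- the minimal idempotent being intrinsic to its packet, as it is the monoidal unit of $e\sD_G(G)=D^b(\sM_e^{perv})$ --- an $\Fr_q^*$-invariant $M$ carries a Weil structure and descends to a simple perverse sheaf $M_0$ on $G_0$, well defined up to a twist by a rank-one Weil sheaf pulled back from $\Spec\bF_q$.

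\textbf{Part (a).} I would first record that every character sheaf is pure (of weight $0$ in the normalization of \S\ref{ss:purity-reminder}); this I would derive from the construction of character sheaves in \cite{foundations}, together with Weil II, using that multiplicative local systems on commutative unipotent groups are Artin--Schreier--Witt sheaves --- hence pure of weight $0$ --- and that the operations used to build character sheaves (induction from closed subgroups, convolution, passage to direct summands) preserve purity. Purity of weight $0$ cuts the twisting ambiguity down to Weil numbers of weight $0$, among which a choice making $t_{M_0\tensqn}$ take values in $\bZ[\mu_{p^{2r}},p^{-1}]$ is singled out; the existence of such a choice would be proved by induction on $\dim G_0$ along that same construction. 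The base case uses that the trace functions of Artin--Schreier--Witt characters of a group of exponent dividing $p^r$ take values in $\mu_{p^r}$; the inductive step introduces only Tate twists (powers of $q$, hence of $\tfrac1p$) and, through the Heisenberg-type normalization of an induced character sheaf, a half-integral twist governed by a square root of an additive character of order dividing $p^r$ --- which has order dividing $p^{2r}$, accounting for the doubled exponent (cf.\ Remark \ref{r:why-exponent-squared}).

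\textbf{Part (b).} The key is the chain of identities
\[
\eval{t_{M_0}}{t_{N_0}}
=\sum_{g\in G_0(\bF_q)}t_{M_0}(g)\,t_{\bD_G N_0}(g)
=\tr\!\bigl(\Fr_q,\ R\Gamma_c(G,\,M\tens\bD_G N)\bigr)
=\tr\!\bigl(\Fr_q,\ i_1^*(M*\bD^-_G N)\bigr),
\]
where $i_1\colon\Spec\bF_q\into G_0$ is the identity section: the first equality uses that for a pure complex of weight $0$ the complex conjugate of its trace function is the trace function of its Verdier dual; the second, that pointwise multiplication of trace functions corresponds to $\tens$ and summation over $\bF_q$-points to $R\Gamma_c$; the third is base change for $\mu_!$ at the identity element. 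One also has $R\Gamma_c(G,M\tens\bD_G N)\cong\bigl(R\Hom_{\sD(G)}(M,N)\bigr)^\vee$, which makes the vanishing below transparent. Now $\bD^-_G N$ is again a character sheaf, lying in the packet of $N$: if $M$ and $N$ lie in distinct packets, with minimal idempotents $e\neq e'$, then $e*e'=0$ by minimality and hence $M*\bD^-_G N=0$, so $\eval{t_{M_0}}{t_{N_0}}=0$. If $M$ and $N$ lie in a common packet $e$, I would invoke the structure of that packet from \cite{foundations}: $\sM_e$ is a semisimple rigid monoidal category whose simple objects are the character sheaves of the packet, and character sheaves (and minimal idempotents) are \emph{clean}, so that the identity-stalk functor on $e\sD_G(G)$ is exact and corepresented, up to a fixed shift and twist, by the monoidal unit; consequently $i_1^*(M*\bD^-_G N)$ is concentrated in one degree, of total dimension $\dim\Hom_{\sM_e^{perv}}(N,M)=\delta_{M\cong N}$, and purity of weight $0$ forces the Frobenius eigenvalue on it to be $1$ when $M\cong N$. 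Hence $\eval{t_{M_0}}{t_{N_0}}=\delta_{M\cong N}$, so the $t_{M_0}$ are orthonormal, in particular linearly independent. Combined with Drinfeld's argument that they also span the space of conjugation-invariant functions $G_0(\bF_q)\to\qab$ (the ``sufficiently many character sheaves'' input acknowledged above), this shows that the $t_{M_0}$ form an orthonormal basis --- in particular their number equals the number of conjugacy classes of $G_0(\bF_q)$. The block-diagonal statement of Theorem \ref{t:main} is then formal, since the irreducible characters of $G_0(\bF_q)$ are partitioned into $\bL$-packets with mutually orthogonal spans (\cite{characters}) and, within each $\Fr_q^*$-invariant packet, the two orthonormal families span the same subspace.

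\textbf{Main obstacle.} The step I expect to be hardest is the geometric input from \cite{foundations} and its precise deployment: the cleanness of character sheaves and of minimal idempotents (so that $i_1^*$ of the relevant convolution lives in a single degree), the identification of the identity-stalk functor on $e\sD_G(G)$ with the functor corepresented by the monoidal unit of $\sM_e$, and the careful bookkeeping of perverse shifts, Tate twists and the functional-dimension normalization $q^{-d_e}$ --- the last of which requires a compatible choice of $\sqrt q$ when $\dim G_0$ is odd. A secondary difficulty is the arithmetic refinement in part (a): verifying that the half-integral twists and square roots of additive characters entering the construction of character sheaves contribute only elements of $\bZ[\mu_{p^{2r}},p^{-1}]$, which is precisely why the exponent is $2r$ rather than $r$.
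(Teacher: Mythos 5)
Your approach to the orthonormality in part (b) is essentially the paper's (\S\ref{ss:proof-orthonormality}): both identify $\eval{t_{M_0}}{t_{N_0}}$ with a trace of Frobenius on the Verdier dual of $R\Hom^{Weil}_{\sD_{G_0}(G_0)}(M_0,N_0)$, both invoke Gabber's independence-of-$\ell$ result (Lemma \ref{l:dual-complex-conjugate}) to write $\overline{t_{N_0}}=t_{\bD_{G_0}N_0}$, and both obtain the vanishing for $M\not\cong N$ from the semisimplicity of $\sM_e^{perv}$ plus the equivalence $e\sD_G(G)\simeq D^b(\sM_e^{perv})$ of Proposition \ref{p:properties-char-sheaves}(e), together with the orthogonality of distinct Hecke subcategories. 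For connected $G_0$ the stacky trace formula (Prop.~\ref{p:equivariant-sheaves-point}) degenerates to the schematic one and the pure-inner-form bookkeeping disappears, so your formulation of the inner product as the Frobenius-trace on $i_1^*(M*\bD_G^-N)\cong R\Gamma_c(G,M\tens\bD_GN)$ is equivalent to the paper's. Likewise your attribution of the spanning to Drinfeld's ``sufficiently many character sheaves'' argument matches \S\ref{ss:proof-t:main-d}, and your part (a) sketch follows the paper's reduction to Heisenberg idempotents via admissible pairs and geometric Mackey induction.

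Two points to correct, though. First, ``purity of weight $0$ forces the Frobenius eigenvalue on it to be $1$ when $M\cong N$'' is not right as stated: purity constrains only the absolute value of the eigenvalue. The eigenvalue is $1$ because $\Hom(M_0,M_0)$ is one-dimensional and contains the $\Fr_q$-fixed identity morphism, which is the argument the paper uses. Second, your heuristic explanation of $p^{2r}$ (a square root of an additive character of order $p^r$) does not match the paper's mechanism: the bound comes from Lemma \ref{l:bound-on-exponent}, namely the exponent of the central extension $\Ut_0=G_0\ltimes\Ht_0$, where $\Ht_0$ is a connected central extension of $H_0$ by the deck group of the torsor realizing $\cL_0$; this exponent is at most the product $p^r\cdot p^r$, and that is where the doubling actually enters (via Corollary \ref{c:homog}(a$'$)). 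Finally, keep in mind that even for connected $G_0$, both the spanning step (Prop.~\ref{p:homog} applied to conjugacy classes, whose stabilizers may be disconnected) and the construction of a suitable $M_0$ in part (a) (via the possibly disconnected normalizer $G_0'$ of an admissible pair, and pure inner forms of the auxiliary group $\Ut_0$) necessarily pass through the disconnected machinery; this is exactly why the paper proves Theorem \ref{t:main} and then specializes, and an induction ``along the construction'' in the connected category alone cannot close.
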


This result is a special case of Theorem \ref{t:main} (which yields additional information even for connected $G_0$), and is only formulated separately for expository reasons. (The statement of Theorem \ref{t:main} requires some additional preparations.)

\begin{rem}\label{r:why-exponent-squared}
The irreducible characters of $G_0(\bF_q)$ over $\ql$ take values in the subring $\bZ[\mu_{p^r}]\subset\ql$, and the corresponding minimal central idempotents in the group algebra of $G_0(\bF_q)$ are defined over $\bZ[\mu_{p^r},p^{-1}]$. Thus it is natural to ask whether $\bZ[\mu_{p^{2r}},p^{-1}]$ can be replaced with the smaller ring $\bZ[\mu_{p^r},p^{-1}]$ in the statement of Theorem \ref{t:main-connected}(a). If $p^r=2$, the answer is negative: see Example \ref{ex:exponent-2}.

\mbr

For $p^r>2$, the answer is not known to us. The precise place in our arguments where it becomes necessary to consider $\bZ[\mu_{p^{2r}},p^{-1}]$ is indicated in Remark \ref{r:optimality-bound-exponent}.
\end{rem}

\begin{example}\label{ex:exponent-2}
Suppose that $G_0=\bG_a$ and $q$ is an odd power of $2$, so that $p^r=2$ and $\bZ[\mu_{p^r},p^{-1}]\subset\bQ$. Then $\ql[1]$ is a $\Fr_q^*$-invariant character sheaf on $G$ corresponding to the trivial representation of $\bG_a(\bF_q)$, and if $M_0$ is any pure perverse sheaf of weight $0$ on $G_0$ whose base change to $\bF$ is isomorphic to $\ql[1]$, then $t_{M_0}$ does \emph{not} take values in $\bQ$. In fact, we must necessarily have $\abs{t_{M_0}(0)}^2=\frac{1}{q}$, and since $q$ is an odd power of $2$, there is no element $\la\in\bQ$ with $\abs{\la}^2=\frac{1}{q}$.
\end{example}

\begin{rem}\label{r:absolute-value-square-root-of-p}
On the other hand, we observe that if $p^r>2$, the subring $\bZ[\mu_{p^r}]\subset\qab$ always contains an element $\la$ such that $\abs{\la}^2=\la\cdot\overline{\la}=p$. Indeed, if $p=2$, one can take $\la=1+i$, and if $p>2$, one can take $\la=\sum_{a\in\bF_p^\times}\left(\dfrac{a}{p}\right)\zeta^a$, where $\left(\dfrac{a}{p}\right)$ is the Legendre symbol and $\zeta\in\mu_{p^r}$ is a primitive $p$-th root of $1$. So if $p^r>2$, then for any $s\in\bZ$, the ring $\bZ[\mu_{p^r},p^{-1}]$ contains an element $\la$ such that $\la\cdot\overline{\la}=p^s$.
\end{rem}

\begin{rem}\label{r:uniqueness-basis}
The perverse sheaf $M_0$ in Theorem \ref{t:main-connected}(a) is determined uniquely up to tensor product with a $\ql$-local system on $G_0$ obtained by pullback from a rank $1$ local system on $\Spec\bF_q$ such that the corresponding character $\Gal(\bF/\bF_q)\rar{}\ql^\times$ takes values in the subgroup consisting of elements of $\bQ(\mu_{p^{2r}})$ of absolute value $1$.

\mbr

In particular, the function $t_{M_0}$ is determined up to multiplication by an element of $\bQ(\mu_{p^{2r}})$ of absolute value $1$. Thus the orthonormality assertion in Theorem \ref{t:main-connected}(b) is unambiguous even though the functions $t_{M_0}$ are not uniquely defined.
\end{rem}

\subsection{Easy unipotent groups} Recall from \cite{characters} that a (perfect) unipotent group $G$ is said to be \emph{easy} if every geometric point of $G$ is contained in the neutral connected component of its centralizer. One of the main examples of an easy group (over any field) is the group $UL_n$ of unipotent upper-triangular matrices of size $n$.

\begin{thm}\label{t:main-easy}
Let $G_0$ be an easy perfect unipotent group over $\bF_q$, and let $G$ and $\Fr_q$ be defined as before. Choose a minimal idempotent $e\in\sD_G(G)$ such that $\Fr_q^*e\cong e$.
 \sbr
\begin{enumerate}[$(a)$]
\item There exists a unique $($up to isomorphism$)$ weak idempotent $e_0\in\sD_{G_0}(G_0)$ such that $e$ is obtained from $e_0$ by base change. Moreover, $e_0$ is a closed idempotent.
 \sbr
\item The underlying complex of $e_0$ is pure of weight $0$.
 \sbr
\item The functional dimension $d_e$ is an integer, and $q^{\dim G-d_e}\cdot t_{e_0}:G_0(\bF_q)\rar{}\ql$ is an irreducible character of the group $G_0(\bF_q)$ of degree $q^{d_e}$.
 \sbr
\item Every irreducible character of $G_0(\bF_q)$ over $\ql$ is of the form $q^{\dim G-d_e}\cdot t_{e_0}$ for a $($unique$)$ $\Fr_q^*$-invariant minimal idempotent $e\in\sD_G(G)$.
\end{enumerate}
\end{thm}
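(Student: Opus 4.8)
The plan is to reduce everything to the statement (to be proved in the body of the paper as part of, or in parallel with, Theorem \ref{t:main}) that for an arbitrary perfect unipotent group over $\bF_q$, the ``trace of Frobenius'' functions attached to $\Fr_q^*$-invariant character sheaves, suitably normalized, form an orthonormal basis for the space of conjugation-invariant $\qab$-valued functions on $G_0(\bF_q)$, with the change-of-basis matrix to the irreducible characters block-diagonal along $\bL$-packets. For an \emph{easy} group the extra input is that every $\bL$-packet is a singleton: by \cite{characters} (the defining property of easy groups, where each geometric point lies in the neutral component of its centralizer), the associated minimal idempotent $e$ has $\sM_e^{perv}$ consisting of a single simple object, namely $e[-n_e]$ by Proposition \ref{p:properties-char-sheaves}(c). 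So for easy $G_0$ the block-diagonal matrix of Theorem \ref{t:main} degenerates to a diagonal one, which forces each normalized function $q^{\dim G-d_e}t_{e_0}$ to be, up to a root of unity, an irreducible character; parts (c) and (d) will follow once (a) and (b) are in place and the degree is computed.

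First I would establish (a). Since $\Fr_q^*e\cong e$ and $e$ is a minimal (hence, by Remark \ref{r:different-notions-of-minimality}, minimal weak) idempotent, standard descent for the functions-sheaves dictionary together with the rigidity of idempotents (an idempotent has no nontrivial automorphisms as an object equipped with its structure map $\e\to e$) gives a unique $e_0\in\sD_{G_0}(G_0)$ descending $e$, and $e_0*e_0\cong e_0$ because this can be checked after base change to $\bF$. That $e_0$ is moreover a \emph{closed} idempotent — i.e.\ the structural map $\e\to e_0$ exists over $\bF_q$ — follows because $\Hom_{\sD_{G_0}(G_0)}(\e,e_0)\to\Hom_{\sD_G(G)}(\e,e)$ is surjective: the obstruction lives in an $H^1$ of $\Gal(\bF/\bF_q)$ acting on a $\ql$-vector space, which vanishes, and one then checks the lifted map still becomes an isomorphism after convolution with $e_0$ (again checkable after base change). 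For (b), I would use that $e[-n_e]$ is a character sheaf on $G$ (Proposition \ref{p:properties-char-sheaves}(c)) and invoke the purity results for character sheaves established in the paper (this is where \S\ref{ss:purity-reminder} and the reference \cite{sun} come in): a $\Fr_q^*$-invariant character sheaf admits a descent that is pure of weight $0$, and for the \emph{easy} case the descent is forced to be $e_0[-n_e]$ itself because $\sM_e^{perv}$ is a single object with no room for twisting ambiguity beyond a weight-$0$ shift; hence $e_0$ is pure of weight $0$.

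For (c), the key computation is that $\dim\Hom_{G_0(\bF_q)}$-type pairing $\eval{t_{e_0}}{t_{e_0}}$ and the total mass $\sum_g t_{e_0}(g)$ together pin down the degree. By the orthonormality half of Theorem \ref{t:main}, the normalized function $\chi:=q^{\dim G-d_e}t_{e_0}$ has $\eval{\chi}{\chi}=1$ up to the normalization bookkeeping, and $\chi$ is, up to a root of unity in $\bQ(\mu_{p^{2r}})$, one of the basis functions; since for easy $G_0$ the basis is indexed the same way as the irreducible characters (singleton $\bL$-packets, diagonal change-of-basis matrix), $\chi$ must actually equal an irreducible character (the root-of-unity scalar is killed because both $\chi$ and the irreducible character take the value $=\deg$ at $1$, which is a positive real — here one uses the geometric computation that $t_{e_0}(1)=q^{-d_e}\cdot(\text{positive integer})$, forcing $d_e\in\bZ$ and identifying $q^{d_e}$ as the degree). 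So (c) follows, and (d) is the surjectivity and injectivity of $e\mapsto q^{\dim G-d_e}t_{e_0}$ from $\Fr_q^*$-invariant minimal idempotents to $\Irr(G_0(\bF_q))$, which is exactly the bijectivity of the basis-to-characters correspondence of Theorem \ref{t:main} specialized to the easy case (equal cardinalities on both sides because every $\bL$-packet is a singleton, plus linear independence).

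I expect the main obstacle to be part (b): controlling the weight of the \emph{canonical} descent $e_0$ (as opposed to some twist) requires knowing that purity of a $\Fr_q^*$-invariant character sheaf can be achieved by the descent canonically attached to the idempotent $e$, and that in the easy case there is no weight-$0$ local-system twist left to absorb — this is precisely where the rigidity of easy groups (singleton $\bL$-packets) must be combined carefully with the general purity machinery and the stability properties of weights under convolution, and where one must rule out the kind of $|t_{M_0}(0)|^2 = 1/q$ obstruction seen in Example \ref{ex:exponent-2} by using that $d_e$ is genuinely an integer for easy groups.
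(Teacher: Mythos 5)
Your overall strategy is the same as the paper's: deduce (a) and (b) from the general descent and purity statements of Theorem \ref{t:main}, and then use singleton $\bL$-packets in the easy case to force $t_{e_0}$ to be proportional to a single irreducible character, pinning down the constant by an idempotent/degree argument. However, there are several concrete gaps in your proposal where the paper supplies ingredients you have either omitted or misattributed.

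First, you take as a \emph{premise} that for an easy group the category $\sM_e^{perv}$ consists of a single simple object, citing the definition of easy. The paper does not do this: the fact it actually uses is that the $\bL$-packet of \emph{irreducible characters} of $G_0(\bF_q)$ defined by $e_0$ is a singleton (this is \cite[\S9.4]{characters}, an arithmetic statement), and it deduces only \emph{afterwards}, via Theorem \ref{t:main}(d), that $e[-n_e]$ is the unique character sheaf in the geometric $\bL$-packet. The definition of easy (each geometric point lies in the neutral component of its centralizer) does not immediately say the geometric $\bL$-packet is a singleton, so this step of yours needs justification or reordering.

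Second, for (b) you appeal to \cite{sun} as the source of purity. That reference is invoked in the paper only for a Grothendieck--Lefschetz trace formula for stacks; the actual purity mechanism is Proposition \ref{p:induction-purity}, which rests on Deligne's theorem (Theorem \ref{t:deligne}). The twist-ambiguity you worry about is resolved not by ``no room for twisting'' in the easy case but by the \emph{rigidity of the idempotent arrow}: the closed idempotent structure on $e_0$ normalizes the descent datum, which is Lemma \ref{l:uniqueness-idempotent} combined with the construction in \S\ref{ss:proof-t:main-a}.

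Third, and most substantively, your argument for (c) is missing the key quantitative computation. You assert $t_{e_0}(1)=q^{-d_e}\cdot(\text{positive integer})$ and that positivity ``forces'' $d_e\in\bZ$, but neither claim is correct as stated. The paper proves (Lemma \ref{l:aux-easy}) that $t_{e_0}(1)=q^{-n_e}$ exactly, via the duality identity $\bD^-_{G_0}(e_0)\cong e_0[-2n_e](-n_e)$ and the computation of $1^!e_0$ in $\sD^{Weil}(\Spec\bF_q)$ using Proposition \ref{p:hom-1-to-min-idemp}. Combining this with the idempotent relation $\chi(1)^2 = t_{e_0}(1)\,\abs{G_0(\bF_q)} = q^{\dim G - n_e} = q^{2d_e}$ yields $\chi(1)=q^{d_e}$; that $d_e$ is then an \emph{integer} comes from a separate arithmetic fact, \cite[Theorem 2.5]{characters} (degrees of irreducible characters of connected unipotent groups are integer powers of $q$), not from positivity alone. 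Without Lemma \ref{l:aux-easy} or an equivalent, your proposal cannot compute the normalization constant or the degree, so (c) does not close.

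Part (d) as you describe it matches the paper's argument.
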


This result is proved in \S\ref{ss:proof-t:main-easy} below.

\begin{rem}\label{r:sum-of-squares-of-dimensions}
Theorem \ref{t:main-easy}(c) can be generalized as follows. Suppose that $G_0$ is a connected perfect unipotent group over $\bF_q$, which is not necessarily easy, and let $e_0\in\sD_{G_0}(G_0)$ be a geometrically minimal weak idempotent (\S\ref{ss:L-packets-characters}). Then
\begin{equation}\label{e:sum-of-squares-of-dimensions}
\sum \chi(1)^2 = q^{2d_e},
\end{equation}
where the sum on the left hand side ranges over all irreducible characters of $G_0(\bF_q)$ in the $\bL$-packet defined by $e_0$ (see Definition \ref{d:L-packets-characters}(b) and Remark \ref{r:difference-between-notions-of-L-packet}) and $d_e$ is the functional dimension of the minimal idempotent $e\in\sD_G(G)$ obtained from $e_0$ by base change. For the proof of \eqref{e:sum-of-squares-of-dimensions}, see \S\ref{ss:proof-r:sum-of-squares-of-dimensions}.
\end{rem}

%%%%%%%%%%%%%%%%%%%%%%%%%%%%%%%%%%%%%%%%%%%%%%%%%%%%%%%%
%%%%%%%%%%%%%%%%%%%%%%%%%%%%%%%%%%%%%%%%%%%%%%%%%%%%%%%%
%%%%%%%%%%%%%%%%%%%%%%%%%%%%%%%%%%%%%%%%%%%%%%%%%%%%%%%%
%%%
%%%           THE DISCONNECTED CASE
%%%
%%%%%%%%%%%%%%%%%%%%%%%%%%%%%%%%%%%%%%%%%%%%%%%%%%%%%%%%
%%%%%%%%%%%%%%%%%%%%%%%%%%%%%%%%%%%%%%%%%%%%%%%%%%%%%%%%
%%%%%%%%%%%%%%%%%%%%%%%%%%%%%%%%%%%%%%%%%%%%%%%%%%%%%%%%

\section{The disconnected case}\label{s:disconnected}

In this section we state the main result of the article, namely,
Theorem \ref{t:main}. It is more informative than Theorem
\ref{t:main-connected}, because it explains the relationship between
$\bL$-packets of character sheaves and those of irreducible characters
(\S\ref{ss:L-packets-characters}) and yields an explicit
construction of $\Fr_q^*$-stable $\bL$-packets of character sheaves in
terms of admissible pairs (\S\ref{ss:admissible-pairs}) defined over
$\bF_q$. Furthermore, it extends Theorem \ref{t:main-connected} to
the case where $G_0$ is an arbitrary (possibly disconnected)
unipotent group over $\bF_q$.

\subsection{Finite discrete groups}\label{ss:finite-discrete}
For motivation, we begin by discussing a simple situation that is the ``opposite'' of the connected case.
Let $\Ga$ be an (abstract) finite group, and let $G_0$ be the corresponding finite discrete group scheme\footnote{As an $\bF_q$-scheme, $G_0$ is the disjoint union of copies of $\Spec\bF_q$ labeled by $\Ga$, and the group structure is induced by that on $\Ga$.} over $\bF_q$. If $\Ga$ is a $p$-group, then $G_0$ is unipotent, but this restriction is unimportant for what follows.

\mbr

The Frobenius action on $G_0(\bF)$ is trivial and we can identify $\Ga$ both with $G_0(\bF)$ and with $G_0(\bF_q)$. The category $\sD_G(G)$ can be identified with the derived category of the category of equivariant $\ql$-sheaves\footnote{That is, sheaves of finite dimensional $\ql$-vector spaces, where the topology on $\Ga$ is discrete.} on $\Ga$ (with respect to the conjugation action of $\Ga$ on itself); the latter is known as the \emph{Drinfeld double} (or \emph{quantum double}) of $\Ga$. There is only one nonzero idempotent in $\sD_G(G)$, namely, the unit object, and it follows that the character sheaves on $G$ are the simple objects in the category of equivariant $\ql$-sheaves on $\Ga$. It is known that these objects are classified up to isomorphism by pairs $(x,\rho)$ up to simultaneous $\Ga$-conjugation, where $x\in\Ga$ and $\rho$ is an isomorphism class of irreducible representations of the centralizer $Z_\Ga(x)$ of $x$ in $\Ga$. Again, the Frobenius action on the set of all character sheaves on $G$ is trivial.

\mbr

In particular, taking $x=1$, we obtain a bijection between the set of irreducible representations of $\Ga=G_0(\bF_q)$ and a certain subset of the set of character sheaves on $G$. However, unless $\Ga$ is trivial, there are other character sheaves on $G$, corresponding to irreducible representations of the centralizers of nontrivial elements of $\Ga$.

\mbr

One way to account for them is to notice that if $x\in\Ga$, then conjugation by $x$ is an automorphism of $\Ga$ which can be thought of as the Frobenius corresponding to some $\bF_q$-structure on $\Ga$ viewed as a discrete group over $\bF$. If $G_0^x$ denotes the corresponding group over $\bF_q$, then $G_0^x(\bF_q)$ is identified with the centralizer $Z_\Ga(x)\subset\Ga$.

\mbr

In the language of algebraic group theory, $G_0^x$ is a pure inner form of $G_0$ defined by the conjugacy class of $x$. (By definition, the pure inner forms $G_0^x$ and $G_0^y$ are the same if and only if $x$ and $y$ are conjugate in $\Ga$.) Thus we see that if we consider not only the irreducible representations of $G_0(\bF_q)$ but also the irreducible representations of the groups of $\bF_q$-points of each of the pure inner forms of $G_0$, we restore the equality between the number of irreducible characters and the number of (Frobenius-invariant) character sheaves. As we explain in this section, the same pattern holds for an arbitrary unipotent group over $\bF_q$.

\subsection{Functions on groupoids}\label{ss:functions-on-groupoids} Following a suggestion of V.~Drinfeld, we introduce a formalism that will allow us to clarify the statement and the proof of Theorem \ref{t:main}. It is inspired by the formalism of ``masses of categories'' used in \cite[\S4]{deligne-flicker}.

\begin{defin}\label{d:finite-groupoid}
A groupoid $\cG$ is \emph{finite} if its set of isomorphism classes $\pi_0(\cG)$ is finite and the automorphism group of any object of $\cG$ is also finite.
\end{defin}

\begin{defin}\label{d:function-on-a-groupoid}
A \emph{function on a groupoid} $\cG$ is a function $\pi_0(\cG)\rar{}\qab$. We write $\Fun(\cG)$ for the vector space of functions on $\cG$.
\end{defin}

\begin{defin}\label{d:standard-inner-product}
If $\cG$ is a finite groupoid, we write $L^2(\cG)$ for the space $\Fun(\cG)$ equipped with the (Hermitian) $L^2$ inner product corresponding to the measure on $\pi_0(\cG)$ such that the measure of the isomorphism class of an object $X\in\cG$ is equal to $\frac{1}{\abs{\Aut_{\cG}(X)}}$.
\end{defin}

\begin{rem}
The total volume of this measure is equal to the mass of $\cG$ as defined in \cite[\S4.10]{deligne-flicker}.
\end{rem}

\begin{defin}\label{d:inertia-groupoid}
Let $\cG$ be a groupoid. The \emph{inertia groupoid} $\cI_{\cG}$ of $\cG$ is the groupoid of pairs $(X,f)$, where $X\in\cG$ and $f\in\Aut_{\cG}(X)$. More concisely, one can define $\cI_{\cG}$ is the groupoid of $1$-morphisms $B\bZ\rar{}\cG$, or as the $2$-fiber product of the diagonal $\cG\rar{}\cG\times\cG$ with itself.
\end{defin}

\begin{defin}\label{d:representations-and-characters-of-groupoids}
\begin{enumerate}[(a)]
\item The category of \emph{representations} of a groupoid $\cG$ is the category of functors from $\cG$ to the category of $\ql$-vector spaces.
 \sbr
\item If $\rho$ is a finite dimensional representation of a finite groupoid $\cG$, the \emph{character} of $\rho$ is the function on $\cI_{\cG}$ given by $(X,f)\mapsto\operatorname{Tr}\rho(f)$.
\end{enumerate}
\end{defin}

\begin{rem}\label{r:character-theory-for-finite-groupoids}
Standard character theory implies that if $\cG$ is a finite groupoid, then the characters of irreducible representations of $\cG$ form an orthonormal basis in $L^2(\cI_{\cG})$.
\end{rem}

\begin{example}\label{ex:groupoids-coming-from-finite-groups}
Let $\Ga$ be a finite group. A representation of $\Ga$ is the same thing as a representation of the groupoid $B\Ga$. The inertia groupoid of $B\Ga$ can be identified with the category whose objects are elements of $\Ga$, a morphism from $x$ to $y$ is an element $\ga\in\Ga$ with $\ga x\ga^{-1}=y$, and the composition of morphisms is the product in $\Ga$. We denote the latter groupoid by $[\Ga]$. The space $L^2\bigl([\Ga])$ can be identified with the space $\fun{\Ga}$ of conjugation-invariant functions $\Ga\rar{}\qab$ equipped with the standard inner product, and the assertion of Remark \ref{r:character-theory-for-finite-groupoids} amounts to the well known orthogonality relations for irreducible characters of finite groups.
\end{example}

\subsection{Pure inner forms}\label{ss:pure-inner-forms} Pure inner forms of algebraic groups over finite fields, as well as some related notions, were discussed in\footnote{The term ``inner form'' was used in \emph{op.~cit.}, but ``pure inner form'' is more appropriate.} \cite[\S6]{characters}. We also summarize all the constructions and results we will need in this article in \S\ref{s:pure-inner-forms} below. For the time being it suffices to mention that if $G_0$ is a perfect group over $\bF_q$, then, given a class $\al\in H^1(\bF_q,G_0)$ in Galois cohomology, one can define the corresponding pure inner form $G_0^\al$; it is another perfect group\footnote{The group $G_0^\al$ is determined uniquely up to an isomorphism, which itself is unique up to an inner automorphism given by conjugation by an element of $G_0^\al(\bF_q)$.} over $\bF_q$ that becomes isomorphic to $G_0$ over $\bF$. Moreover, there is a natural monoidal equivalence $\sD_{G_0}(G_0)\rar{\sim}\sD_{G_0^\al}(G_0^\al)$, which we denote $M_0\longmapsto M_0^\al$ and call the ``transport of equivariant complexes'' (see \S\ref{ss:transport-equivariant-complexes}).

\begin{rem}\label{r:F-q-points-quotient-stack-by-conjugation}
Let $[G_0]$ denote the quotient stack of $G_0$ by the conjugation action of $G_0$ on itself. The groupoid $[G_0](\bF_q)$ can be identified with the disjoint union of the groupoids $[G_0^\al(\bF_q)]$ (see Example \ref{ex:groupoids-coming-from-finite-groups}) as $\al$ ranges over $H^1(\bF_q,G_0)$.
\end{rem}

\subsection{L-packets of irreducible
characters}\label{ss:L-packets-characters} Let $G_0$ be a perfect unipotent group over
$\bF_q$, and write $G=G_0\tens_{\bF_q}\bF$. An object
$e_0\in\sD_{G_0}(G_0)$ is called a \emph{geometrically minimal
idempotent} if $e_0*e_0\cong e_0$ and the corresponding
object $e\in\sD_G(G)$ is a minimal idempotent. Given $\al\in H^1(\bF_q,G_0)$, we have the corresponding pure inner
form $G^\al_0$ and the geometrically minimal idempotent $e^\al_0\in\sD_{G^\al_0}(G^\al_0)$ obtained from $e_0$ via transport of equivariant complexes (see \S\ref{ss:pure-inner-forms}). In particular, the function $t_{e^\al_0}:G^\al_0(\bF_q)\rar{}\ql$ is a central idempotent with respect to convolution.

\begin{defin}\label{d:L-packets-characters}
\begin{enumerate}[$($a$)$]
\item For each $\al\in H^1(\bF_q,G_0)$, consider the set of
(isomorphism classes of) irreducible representations of
$G_0^\al(\bF_q)$ over $\ql$ on which the idempotent $t_{e^\al_0}$
acts via the identity. The disjoint union of these sets is called
the \emph{$\bL$-packet of irreducible representations of $G_0$ defined by
$e_0$}.
 \sbr
\item The set of characters of the representations in (a), viewed as
a subset of\footnote{Equality \eqref{e:functions} follows from Remark \ref{r:F-q-points-quotient-stack-by-conjugation}.}
\begin{equation}\label{e:functions}
\Fun\bigl([G_0](\bF_q)\bigr) = \bigoplus_{\al\in H^1(\bF_q,G_0)}
\Fun(G_0^\al(\bF_q),\qab)^{G_0^{\al}(\bF_q)},
\end{equation}
is called the \emph{$\bL$-packet of irreducible characters of $G_0$ defined by
$e_0$}.
\end{enumerate}
\end{defin}

\begin{rem}\label{r:difference-between-notions-of-L-packet}
This notion is formally different from the definition of an $\bL$-packet of irreducible characters of $G_0(\bF_q)$ given in \cite[Def.~2.7]{characters}. Pure inner forms of $G_0$ were not considered in \emph{op.~cit.}, and $G_0$ was assumed to be connected there. If $G_0$ is connected, then the two notions are equivalent, but this fact requires proof.

\mbr

Indeed, with the definition above, it is not immediate that $\bL$-packets of irreducible characters are nonempty. Nevertheless, in \S\ref{ss:proof-p:equivalence-def-L-packet-characters} we will prove the following result.
\end{rem}

\begin{prop}\label{p:equivalence-def-L-packet-characters}
Let $G_0$ be any perfect unipotent group over $\bF_q$. The $\bL$-packets of irreducible characters of $G_0$ are nonempty and pairwise disjoint. Their union is equal to the disjoint union of the sets of all the irreducible characters of the groups $G^\al_0(\bF_q)$, viewed as subsets of $\Fun\bigl([G_0](\bF_q)\bigr)$. If $G_0$ is connected, the notion of an $\bL$-packet of irreducible characters of $G_0(\bF_q)$ defined above is equivalent to the one given in \cite[Def.~2.7]{characters}.
\end{prop}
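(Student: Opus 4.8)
The plan is to pass to the finite-dimensional semisimple $\ql$-algebra $A:=\bigoplus_{\al\in H^1(\bF_q,G_0)}\ql[G^\al_0(\bF_q)]$, with convolution taken componentwise and unit $\vone=(\delta_1)_\al$; by Remark~\ref{r:F-q-points-quotient-stack-by-conjugation} its minimal central idempotents $\{e_\chi\}$ are indexed by the irreducible characters $\chi$ of the groups $G^\al_0(\bF_q)$, viewed via \eqref{e:functions} inside $\Fun\bigl([G_0](\bF_q)\bigr)$. To a geometrically minimal idempotent $e_0\in\sD_{G_0}(G_0)$ attach $\epsilon_{e_0}:=(t_{e^\al_0})_\al\in A$, and say $e_0$ \emph{absorbs} an irreducible character $\chi$ (of some $G^\al_0(\bF_q)$) if $e_\chi\epsilon_{e_0}=e_\chi$, i.e.\ if $t_{e^\al_0}$ acts as the identity on the corresponding representation. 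Two observations are routine. First, $\epsilon_{e_0}$ is a central idempotent: transport of equivariant complexes is monoidal, so $e^\al_0*e^\al_0\cong(e_0*e_0)^\al\cong e^\al_0$, whence $t_{e^\al_0}*t_{e^\al_0}=t_{e^\al_0}$ by the sheaf-function dictionary, and $t_{e^\al_0}$ is conjugation-invariant, hence central, because $e^\al_0$ is conjugation-equivariant. Second, $\epsilon_{e_0}\cdot\epsilon_{e'_0}=0$ whenever $e_0\not\cong e'_0$: their base changes $e,e'\in\sD_G(G)$ are then non-isomorphic minimal idempotents (descent of a minimal idempotent to $\bF_q$ being unique up to isomorphism, cf.~\cite{foundations}), so $e*e'=0$ by minimality; hence $e_0*e'_0$ has vanishing base change and is therefore zero, and $t_{e^\al_0}*t_{(e'_0)^\al}=t_{(e_0*e'_0)^\al}=0$. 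In particular only finitely many $\epsilon_{e_0}$ are nonzero, so the sum $\sum_{e_0}\epsilon_{e_0}$ is well defined. Granting for the moment that $\epsilon_{e_0}\neq0$ for every geometrically minimal $e_0$ and that $\sum_{e_0}\epsilon_{e_0}=\vone$, the identity $e_\chi=e_\chi\vone=\sum_{e_0}e_\chi\epsilon_{e_0}$, in which every summand is $0$ or $e_\chi$, shows that each $e_\chi$ is absorbed by exactly one $e_0$; and the $\bL$-packet of $e_0$ of Definition~\ref{d:L-packets-characters} is exactly the set of characters it absorbs. This yields simultaneously the nonemptiness and pairwise disjointness of the $\bL$-packets and the fact that their union is the set of all irreducible characters of all the groups $G^\al_0(\bF_q)$.

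Everything thus reduces to the two assertions $\epsilon_{e_0}\neq0$ for each geometrically minimal $e_0$ and $\sum_{e_0}\epsilon_{e_0}=\vone$; equivalently, every irreducible character of every pure inner form $G^\al_0(\bF_q)$ is absorbed by some geometrically minimal idempotent of $\sD_{G_0}(G_0)$, and each of the latter absorbs at least one character. Since the monoidal transport equivalence $\sD_{G_0}(G_0)\iso\sD_{G^\al_0}(G^\al_0)$ carries geometrically minimal idempotents to geometrically minimal idempotents and is compatible with trace-of-Frobenius functions, it suffices to prove, for an \emph{arbitrary} perfect unipotent group $H_0$ over $\bF_q$ (then apply it to each $H_0=G^\al_0$): every irreducible character of $H_0(\bF_q)$ is absorbed by some geometrically minimal idempotent of $\sD_{H_0}(H_0)$, and every geometrically minimal idempotent of $\sD_{H_0}(H_0)$ absorbs at least one character. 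This is the ``sufficiently many character sheaves'' statement and is the heart of the argument. I would prove it by induction on $\dim H_0$ via the classification of geometrically minimal idempotents of $\sD_{H_0}(H_0)$ by admissible pairs over $\bF_q$ (\S\ref{ss:admissible-pairs}): on the representation-theoretic side, the theory of \cite{characters} puts every irreducible character of $H_0(\bF_q)$ in the $\bL$-packet of some admissible pair, which involves a unipotent group of strictly smaller dimension; on the geometric side, the minimal idempotent of $\sD_{H_0}(H_0)$ attached to that admissible pair is geometrically minimal, and the key technical input is that its trace of Frobenius on $H_0(\bF_q)$ equals the central idempotent of $\ql[H_0(\bF_q)]$ cutting out that $\bL$-packet — so it absorbs the given character. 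The inductive hypothesis disposes of the smaller group in the admissible pair; the base case $H_0$ abelian, where the geometrically minimal idempotents are the suitably normalized multiplicative local systems and match the characters of $H_0(\bF_q)$ directly, is immediate; and non-vanishing of $\epsilon_{e_0}$ falls out of the same picture, since the idempotent of an admissible pair over $\bF_q$ absorbs a character. The hard part is precisely this last compatibility — identifying the geometric idempotent of an admissible pair with the representation-theoretic central idempotent of its $\bL$-packet — together with extending the $\bL$-packet formalism of \cite{characters}, stated there for connected groups, to the possibly disconnected groups $G^\al_0$ at hand.

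Finally, suppose $G_0$ is connected; then $H^1(\bF_q,G_0)=0$ by Lang's theorem, so $A=\ql[G_0(\bF_q)]$, $\vone=\delta_1$, and the $\bL$-packet of $e_0$ of Definition~\ref{d:L-packets-characters} is the set of those $\chi\in\operatorname{Irr}(G_0(\bF_q))$ on whose associated representation $t_{e_0}$ acts as the identity. The $\bL$-packets of \cite[Def.~2.7]{characters}, by contrast, are cut out by the central idempotents of $\ql[G_0(\bF_q)]$ attached to admissible pairs over $\bF_q$; by the compatibility used above (the central idempotent of an admissible pair is the trace of Frobenius of the associated geometrically minimal idempotent, and every geometrically minimal idempotent of $\sD_{G_0}(G_0)$ comes from an admissible pair, cf.~\cite{foundations} and \S\ref{ss:admissible-pairs}) these two families of central idempotents of $\ql[G_0(\bF_q)]$ coincide. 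Each family consists of pairwise orthogonal idempotents with sum $\delta_1$, so they induce the same partition of $\operatorname{Irr}(G_0(\bF_q))$, which is the asserted equivalence.
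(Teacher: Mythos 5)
Your reduction to the semisimple algebra $A=\bigoplus_{\al}\ql[G_0^\al(\bF_q)]$ is a clean and correct reformulation: the central idempotents $\epsilon_{e_0}$ are pairwise orthogonal by the argument you give (base change is conservative, so $e*e'=0$ forces $e_0*e'_0=0$, and this matches the paper's Lemma~\ref{l:convolution-geom-min-weak-idemp}), and once one grants $\epsilon_{e_0}\neq 0$ and $\sum_{e_0}\epsilon_{e_0}=\vone$, the first two assertions of the proposition do fall out immediately. But you explicitly acknowledge that these two claims are ``the heart of the argument,'' and at that point you stop proving and start planning: ``I would prove it by induction on $\dim H_0$.'' That is a genuine gap. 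The statements you've reduced to are essentially all of the content of the proposition, and a programmatic sketch of an induction does not discharge them. Moreover, the sketch itself is imprecise on a substantive point: you assert that an admissible pair for $H_0$ ``involves a unipotent group of strictly smaller dimension,'' which is not true in general (a Heisenberg admissible pair may have normalizer $G'=H_0$ itself), so the proposed induction would need additional structure to terminate.

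The paper takes a different and more concrete route to each of the two outstanding claims. For $\epsilon_{e_0}\neq 0$ it does \emph{not} induct: Proposition~\ref{p:implies-main-b} computes $\sum_\al\frac{q^{\dim G}}{|G_0^\al(\bF_q)|}\,t_{e_0^\al}(1)$ as the trace of Frobenius on the $G$-invariants of $1^*E_0$, identifies this invariants complex (up to duality) with $R\Hom^{Weil}_{\sD_{G_0}(G_0)}(\e,\bD^-_{G_0}E_0)$, and then uses Proposition~\ref{p:hom-1-to-min-idemp} (which says $R\Hom_{\sD_G(G)}(\e,e[r])$ is one-dimensional for $r=0$ and zero otherwise) together with the groupoid-theoretic Grothendieck--Lefschetz formula of Proposition~\ref{p:equivariant-sheaves-point}. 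The trace of an automorphism of a one-dimensional space is nonzero, so some $t_{e_0^\al}(1)\neq 0$. For the containment of every irreducible character in some $\bL$-packet, the paper cites the known existence result \cite[Thm.~7.1]{characters} (every irreducible representation of $G_0(\bF_q)$ contains $t_{\cL_0}$ on restriction to $H_0(\bF_q)$ for some admissible pair over $\bF_q$) and then closes the argument with the positivity calculus of Proposition~\ref{p:properties-positive-functions}: decompose $t_{e_0}$ via Proposition~\ref{p:induction-sheaves-functions} as a sum of induced positive functions, observe one summand acts nontrivially on the given $\rho$, and conclude $t_{e_0}$ does too. Neither of these two ingredients appears in your proposal; without them, the proof is a reduction, not a proof.
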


\subsection{Admissible pairs and Serre duality}\label{ss:admissible-pairs} We briefly
recall a few more definitions from \cite{characters,foundations}. In
this subsection $k$ can be any perfect field of characteristic
$p>0$. If $H$ is a connected perfect unipotent group over $k$, a
nonzero $\ql$-local system $\cL$ on $H$ is said to be
\emph{multiplicative} if
$\mu^*\cL\cong\cL\boxtimes\cL$, where $\mu:H\times H\rar{}H$ is the
group operation in $H$. Roughly speaking, the \emph{Serre dual},
$H^*$, of $H$ is defined as the moduli space of multiplicative local
systems on $H$ (see \cite[\S3.1]{foundations} as well as \cite[Appendix A]{characters} for the details). We
recall (\emph{loc.~cit.}) that $H^*$ is a possibly disconnected
perfect commutative unipotent group over $k$.

\mbr

If $G$ is a perfect unipotent group over $k$ and $(H,\cL)$ is a pair
consisting of a closed connected subgroup $H\subset G$ and a
multiplicative local system $\cL$ on $H$, the \emph{normalizer},
$G'$, of the pair $(H,\cL)$ in $G$ is defined in two steps. First,
let $N_G(H)$ be the normalizer of $H$ in $G$. Then $N_G(H)$ acts on
$H$ by conjugation, and we have the induced action of $N_G(H)$ on
$H^*$. We then let $G'$ be the stabilizer of $[\cL]\in H^*(k)$ in
$N_G(H)$ with respect to the latter action. Sometimes we write $G'=N_G(H,\cL)$.

\begin{defin}\label{d:admissible-pair}
\begin{enumerate}[$($a$)$]
\item If $k$ is algebraically closed, the pair $(H,\cL)$ as in the
previous paragraph is said to be \emph{admissible} for $G$ if the
following three conditions hold:
 \sbr
\begin{itemize}
\item Let $G'$ be the normalizer of $(H,\cL)$ in $G$, and let $G^{\prime\circ}$ denote its neutral
connected component. Then $G^{\prime\circ}/H$ is commutative.
 \sbr
\item The $k$-group morphism
$\vp_\cL : G^{\prime\circ}/H \rar{} \bigl(G^{\prime\circ}/H\bigr)^*$
induced by $\cL$ (whose construction is explained in \cite[\S{}A.13]{characters} and \cite[\S3.3]{foundations}) is an isogeny.
 \sbr
\item For every $g\in G(k)$ such that $g\not\in G'(k)$, we have
\[
\cL\bigl\lvert_{(H\cap H^g)^\circ} \not\cong
\cL^g\bigl\lvert_{(H\cap H^g)^\circ},
\]
where $H^g=g^{-1}Hg$ and $\cL^g$ is the multiplicative local system
on $H^g$ obtained from $\cL$ by transport of structure (via the map
$h\mapsto g^{-1}hg$).
\end{itemize}
 \sbr
\item If $k$ is an arbitrary perfect field of characteristic $p>0$,
the pair $(H,\cL)$ is said to be \emph{admissible} if the
corresponding pair obtained by base change to an algebraic closure
$\overline{k}$ of $k$ is admissible for $G\tens_k\overline{k}$.
\end{enumerate}
\end{defin}

\begin{rem}
We do not recall the statement of the second condition in the
definition of admissibility in detail, because it will not be
explicitly used in our article.
\end{rem}

\begin{prop}[See
\cite{characters,foundations}]\label{p:adm-pair-min-idemp} Let $k$
be a perfect field of characteristic $p>0$ and let $G$ be a perfect
unipotent group over $k$. Consider an admissible pair $(H,\cL)$ for
$G$ and define $G'\subset G$ to be its normalizer. The object
$e'_{H,\cL}\in\sD_{G'}(G')$ obtained from
$\bK_H\tens\cL\in\sD_{G'}(H)$ via extension by zero is a minimal
closed idempotent in $\sD_{G'}(G')$, and $\ig(e'_{H,\cL})$ is a
minimal closed idempotent in $\sD_G(G)$.
\end{prop}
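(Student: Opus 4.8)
The plan is to obtain both statements from two structural facts established in \cite{characters,foundations}. The first is the Heisenberg idempotent construction: if $H$ is a closed connected \emph{normal} subgroup of a perfect unipotent group $\Gamma$ over an algebraically closed field, $\cL$ is a $\Gamma$-invariant multiplicative local system on $H$, the quotient $\Gamma^\circ/H$ is commutative, and the map $\vp_\cL\colon\Gamma^\circ/H\rar{}(\Gamma^\circ/H)^*$ is an isogeny, then the extension by zero of $\bK_H\tens\cL$ from $H$ to $\Gamma$ is a minimal closed idempotent in $\sD_\Gamma(\Gamma)$. The second is that inducing a minimal closed idempotent along the inclusion $G'\subseteq G$ of the normalizer of an admissible pair yields again a minimal closed idempotent, the relevant hypothesis being exactly the disjointness condition in Definition \ref{d:admissible-pair}(a). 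Since admissibility over an arbitrary perfect $k$ is defined by base change to $\overline k$, and since the formation of $\bK_H\tens\cL$, its extension by zero, the convolution bifunctor, and the induction functor $\ig$ all commute with extension of scalars, it suffices to treat the case $k=\overline k$, which I assume henceforth.

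For part (a) I would observe that $(H,\cL)$ is admissible for its own normalizer $G'=N_G(H,\cL)$: here $H$ is a closed connected normal subgroup of $G'$ (since $H\subseteq G'\subseteq N_G(H)$), the first two conditions of Definition \ref{d:admissible-pair}(a) are phrased only in terms of $G'$ and therefore hold a fortiori, and the third condition is vacuous because the normalizer of $(H,\cL)$ inside $G'$ equals $G'\cap N_G(H,\cL)=G'$, so there is no $g\in G'(k)$ outside it. Taking $\Gamma=G'$ in the Heisenberg statement above --- so that $\Gamma^\circ=G^{\prime\circ}$ and the two Heisenberg conditions coincide with the first two admissibility conditions --- shows that $e'_{H,\cL}$, the extension by zero of $\bK_H\tens\cL$ to $G'$, is a minimal closed idempotent in $\sD_{G'}(G')$.

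For part (b) I would feed the minimal closed idempotent $e'_{H,\cL}\in\sD_{G'}(G')$ produced in (a) into the induction result: the third admissibility condition $\cL\restr{(H\cap H^g)^\circ}\not\cong\cL^g\restr{(H\cap H^g)^\circ}$ for all $g\in G(k)\setminus G'(k)$ is precisely the Mackey-type disjointness of $\cL$ from its $G$-translates that is needed to conclude that $\ig(e'_{H,\cL})$ is a minimal closed idempotent in $\sD_G(G)$. The genuinely hard input --- imported from \cite{characters,foundations} rather than reproved here --- is the Heisenberg idempotent construction itself (a Stone--von Neumann/Mackey analysis of the Heisenberg-type group attached to $\cL$ on $G^{\prime\circ}/H$, together with the convolution behaviour of $\bK_H$) and the Mackey-theoretic proof that induction along $G'\subseteq G$ preserves minimality under the disjointness hypothesis; by contrast the remaining steps above (descent to $\overline k$, the identity $N_{G'}(H,\cL)=G'$, and the $G'$-invariance of $\cL$) are routine.
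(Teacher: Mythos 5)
Your proposal is correct and follows essentially the approach the paper relies on. The proposition itself is imported from \cite{characters,foundations} without an in-text proof, but its two ingredients --- the Heisenberg idempotent construction (part (a), via the observation that $(H,\cL)$ is admissible for $G'$ with the disjointness clause vacuous) and the Mackey-theoretic preservation of minimality under $\ig$ (part (b)) --- are exactly what the paper recalls later as Lemma~\ref{l:geometric-Mackey} and Proposition~\ref{p:geometric-Mackey}(a), and the reduction to $k=\overline{k}$ and the identity $N_{G'}(H,\cL)=G'$ are routine just as you say. One small imprecision worth noting: the third admissibility condition is not \emph{literally} the geometric Mackey condition of Definition~\ref{d:geometric-mackey-condition} (the latter is the vanishing $\overline{e'}*\de_x*\overline{e'}=0$ for $x\notin G'(k)$); the implication is a genuine, if not hard, result --- the content of Lemma~\ref{l:geometric-Mackey}, proved in \cite[\S9.5]{characters} --- rather than a tautology, so it is better described as the hypothesis under which one \emph{deduces} the Mackey vanishing than as ``precisely'' the same statement.
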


In this statement, $\bK_H$ denotes the dualizing complex of $H$. The
definition of the functor $\ig:\sD_{G'}(G')\rar{}\sD_G(G)$ is
recalled in \S\ref{ss:induction-functors} below.

\begin{defin}\label{d:adm-pair-min-idemp}
In the setting of Proposition \ref{p:adm-pair-min-idemp}, the object
$\ig(e'_{H,\cL})\in\sD_G(G)$ is called the \emph{minimal idempotent
defined by the admissible pair $(H,\cL)$}. In the special case where
$G=G'$, we call $e'_{H,\cL}$ a \emph{Heisenberg minimal idempotent}.
\end{defin}

\subsection{The main result}\label{ss:main-result} The next theorem is the
central result of our work. As in
\S\ref{ss:connected-unipotent-finite-fields}, we assume that $\bF$
is an algebraic closure of a finite field with $p$ elements, and
recall that $\cs(G)$ denotes the set of character sheaves on a
perfect unipotent group $G$ over $\bF$. The notions of a character
sheaf and of an $\bL$-packet of character sheaves defined by a minimal
idempotent $e\in\sD_G(G)$ were introduced in Definitions
\ref{d:min-idempotents-L-packets}.

\begin{thm}\label{t:main}
Let $G_0$ be any perfect unipotent group over $\bF_q$, let
$G=G_0\tens_{\bF_q}\bF$, and let $\Fr_q:G\rar{}G$ be the Frobenius
endomorphism.
\begin{enumerate}[$($a$)$]
\item If $e\in\sD_G(G)$ is a minimal idempotent such that
$\Fr_q^*(e)\cong e$, there is a unique $($up to isomorphism$)$ weak idempotent $e_0\in\sD_{G_0}(G_0)$ such that $e$ is obtained from $e_0$ by base change. Moreover, $e_0$ is a closed idempotent in $\sD_{G_0}(G_0)$.
 \sbr
\item In the situation of $($a$)$, there exist a pure inner form $G_0^\al$ of $G_0$ and an admissible pair $(H_0,\cL_0)$ for $G_0^\al$ defined over $\bF_q$ such that $e_0^\al\in\sD_{G^\al_0}(G^\al_0)$
is isomorphic to the minimal idempotent defined by $(H_0,\cL_0)$
$($Definition~\ref{d:adm-pair-min-idemp}$)$.
 \sbr
\item Let $e$ and $e_0$ be as in part $($a$)$. Then $e_0$ is pure of weight $0$. Moreover, if $M$ is a character
sheaf in the $\bL$-packet defined by $e$ and $\Fr_q^*M\cong M$, then
$M$ comes from an object $M_0\in e_0\sD_{G_0}(G_0)$ such that $M_0$
is perverse and pure of weight $0$, and the function
$t_{(M_0\tensqn)^\al}:G^\al_0(\bF_{q^n})\rar{}\ql$ takes values in $\bZ[\mu_{p^{2r}},p^{-1}]$ for each $n\in\bN$ and each $\al\in H^1(\bF_{q^n},G_0\tensqn)$, where $p^r$ is the exponent of $G_0$ and we write $G^\al_0(\bF_{q^n})$ in place of $(G_0\tensqn)^\al(\bF_{q^n})$ for brevity.
 \sbr
\item In the situation of $($c$)$, the elements
$T_{M_0}=\bigl(t_{M_0^\al}\bigr)_{\al\in H^1(\bF_q,G_0)}\in\Fun\bigl([G_0](\bF_q)\bigr)$ are linearly
independent, and their span is equal to the span of the set of
elements in the $\bL$-packet of irreducible characters defined by
$e_0$ $($see \eqref{e:functions} and Def.~\ref{d:L-packets-characters}$)$.
 \sbr
\item For each $M\in\cs(G)^{\Fr_q^*}$, fix a choice of $M_0$ subject
to the requirements stated in part $($c$)$. The elements $\bigl\{T_{M_0}\st M\in\cs(G)^{\Fr_q^*}\bigr\}$ form an orthonormal basis of the space
$\Fun\bigl([G_0](\bF_q)\bigr)$ with respect to the inner product
$\eval{F_1}{F_2}=q^{\dim G}\cdot(F_1,F_2)$, where $(\cdot,\cdot)$ is the inner product on $L^2\bigl([G_0](\bF_q)\bigr)$ $($see Def.~\ref{d:standard-inner-product}$)$.
\end{enumerate}
\end{thm}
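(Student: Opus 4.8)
The plan is to prove parts (a)--(e) simultaneously by induction on $\dim G_0$ (refined by $\abs{\pi_0(G_0)}$ when dimensions agree), the inductive mechanism being the reduction to the \emph{Heisenberg case} furnished by the theory of admissible pairs. For part (a), observe that $e[-n_e]$ is a simple object of the semisimple category $\sM_e^{perv}$ (Proposition \ref{p:properties-char-sheaves}), so $\End_{\sD_G(G)}(e)=\ql$; Galois descent along $\Spec\bF\rar{}\Spec\bF_q$ then produces a descent $e_0\in\sD_{G_0}(G_0)$ of $e$, unique up to tensoring with a geometrically trivial rank-one local system, and the relation $e_0*e_0\cong e_0$ forces that twist to be trivial, giving the uniqueness in (a). That $e_0$ is in fact a closed idempotent will follow from part (b) together with Proposition \ref{p:adm-pair-min-idemp}.

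For part (b): by the classification of minimal idempotents in $\sD_G(G)$ via admissible pairs (\cite{foundations}; Proposition \ref{p:adm-pair-min-idemp}), $e\cong\ig(e'_{H,\cL})$ for an admissible pair $(H,\cL)$, well-defined up to $G(\bF)$-conjugacy, and $\Fr_q^*e\cong e$ says this conjugacy class is $\Fr_q$-stable. A Lang's theorem / non-abelian Galois cohomology argument --- the stabilizer of $(H,\cL)$ being its normalizer $G'=N_G(H,\cL)$, and the obstruction to a $\Fr_q$-fixed representative being controlled by $H^1(\bF_q,-)$ of $G'$, which is nontrivial only through $\pi_0(G')$ --- produces a pure inner form $G_0^\al$ of $G_0$ and an admissible pair $(H_0,\cL_0)$ for $G_0^\al$, defined over $\bF_q$, such that under transport of equivariant complexes $e_0^\al$ is the minimal idempotent attached to $(H_0,\cL_0)$; in particular $e_0^\al$, hence $e_0$, is a closed idempotent.

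For the inductive step, set $G_0'=N_{G_0^\al}(H_0,\cL_0)$. If $G_0'\subsetneq G_0^\al$, then $G_0'$ is strictly smaller, Theorem \ref{t:main} holds for $G_0'$ and for the Heisenberg idempotent $e'_{H_0,\cL_0}\in\sD_{G_0'}(G_0')$ by induction, and one transports the conclusions along the induction functor $\ig$; this uses that $\ig$ induces a bijection of $\bL$-packets (\cite{foundations}), is compatible with perversity and purity up to the appropriate shift and Tate twist, corresponds on conjugation-invariant functions to a normalized form of induction from $G_0'(\bF_q)$, and respects pure inner forms and the inner products of \S\ref{ss:functions-on-groupoids}, with the index $[G_0^\al:G_0']=q^{\dim G_0^\al-\dim G_0'}$ absorbing the change in the normalizing power of $q$ (Frobenius reciprocity / Mackey); the requisite compatibility of pure inner forms of $G_0'$ with those of $G_0^\al$ is part of the material of \S\ref{s:pure-inner-forms}. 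The remaining, essential case is the \emph{Heisenberg case} $G_0'=G_0^\al$: writing $\Gt_0:=G_0^\al$ and $e_0=j_!(\bK_{H_0}\tens\cL_0)$, where $j\colon H_0\into\Gt_0$ is the (closed) inclusion, purity of $e_0$ is immediate, since $\bK_{H_0}$ is pure of weight $0$, $\cL_0$ is multiplicative hence pure of weight $0$, and $j_!=j_*$ preserves purity. The admissibility conditions --- $\Gt_0^\circ/H_0$ commutative and $\vp_{\cL_0}$ an isogeny --- say exactly that the irreducible characters of $\Gt_0(\bF_q)$ on which $t_{e_0}$ acts by the identity, namely the $\bL$-packet of irreducible characters, all lie over the character $t_{\cL_0}$ of $H_0(\bF_q)$ and are governed by a finite metaplectic group, in the manner of the classical Heisenberg--Weil theory; on the geometric side the character sheaves in the $\bL$-packet of $e$ are, by \cite{foundations}, classified by the irreducibles of the corresponding geometric metaplectic group. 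Matching the two gives an explicit dictionary through which the functions $t_{M_0^\al}$ are computed as $q^{-(\,\cdot\,)}$ times explicit combinations of irreducible characters whose coefficients are built from Gauss sums for characters of order dividing $p^r$; these lie in $\bQ(\mu_{p^{2r}})$ --- the squared exponent accommodating the square roots of $q$ and of $p^r$ --- so that $t_{(M_0\tensqn)^\al}$ takes values in $\bZ[\mu_{p^{2r}},p^{-1}]$, proving (c), while the unitarity of the Weil representation makes the change of basis between the irreducible characters and the $T_{M_0}$ a unitary matrix, yielding (d) and the within-$\bL$-packet part of (e).

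It remains to assemble (e) globally. By part (d), $\bL$-packet by $\bL$-packet, $\{T_{M_0}\}$ is linearly independent with span equal to the span of the corresponding $\bL$-packet of irreducible characters. By Proposition \ref{p:equivalence-def-L-packet-characters} these $\bL$-packets are nonempty, pairwise disjoint, and exhaust all the irreducible characters of all the groups $G_0^\al(\bF_q)$, which, viewed inside $\Fun([G_0](\bF_q))=\bigoplus_\al\Fun(G_0^\al(\bF_q),\qab)^{G_0^\al(\bF_q)}$ (Remark \ref{r:F-q-points-quotient-stack-by-conjugation}), form an orthonormal basis of $L^2([G_0](\bF_q))$ and span all of $\Fun([G_0](\bF_q))$ (Remark \ref{r:character-theory-for-finite-groupoids}). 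Hence $\{T_{M_0}\st M\in\cs(G)^{\Fr_q^*}\}$ is a basis of $\Fun([G_0](\bF_q))$; it is orthonormal for $\eval{F_1}{F_2}=q^{\dim G}(F_1,F_2)$ because $T_{M_0}$'s from distinct $\bL$-packets are supported on orthogonal subspaces, while within a fixed $\bL$-packet orthonormality is the unitarity from the Heisenberg case, the factor $q^{\dim G}$ precisely compensating the weight-$0$, perverse renormalization relating $t_{M_0}$ to the irreducible characters (as in Example \ref{ex:exponent-2}; for connected $G_0$ one has $q^{\dim G}=\abs{G_0(\bF_q)}$, so $\eval{\cdot}{\cdot}$ reduces to the unnormalized sum of Theorem \ref{t:main-connected}(b)). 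I expect the main obstacle to be the Heisenberg case: expressing the descended Heisenberg idempotent and the functions $t_{M_0^\al}$ in terms of the Weil representation of the attached finite metaplectic group, and extracting from that both the sharp rationality bound and the unitarity of the change-of-basis matrix; a secondary difficulty is the descent of the admissible pair to $\bF_q$ in (b) and the bookkeeping with pure inner forms.
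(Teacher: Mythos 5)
Your high-level skeleton---descent for (a), existence of a rational admissible pair for (b), reduction via geometric Mackey theory from the general idempotent to a Heisenberg idempotent, then an analysis of the Heisenberg case for (c)--(e)---does match the paper's organization. But several of your concrete mechanisms differ from the paper's, and one of them contains a genuine gap.

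For part (b), you replace the paper's argument by a Lang--Steinberg/non-abelian cohomology argument on the variety $G/G'$ of admissible pairs in the fixed $\Fr_q$-stable conjugacy class. This is a legitimate alternative (the groupoid $(G_0\backslash (G_0/G_0'))(\bF_q)\simeq BG_0'(\bF_q)$ is tautologically nonempty), but it is worth knowing that the paper instead proves Proposition \ref{p:implies-main-b}: that $t_{e_0^\al}(1)\neq 0$ for some $\al$, via the chain of adjunctions \eqref{e:key-1}, Proposition \ref{p:hom-1-to-min-idemp}, and the trace formula for the classifying stack (Proposition \ref{p:equivariant-sheaves-point}). The trace argument buys you Proposition \ref{p:when-min-idemp-comes-from-adm-pair} essentially for free, which is then also used in the proof of Proposition \ref{p:equivalence-def-L-packet-characters}; with your route you would still have to prove that criterion separately.

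The serious gap is in part (e). You derive the orthonormality of $\{T_{M_0}\}$ from ``the unitarity of the Weil representation'' of an attached finite metaplectic group, claiming this makes the change-of-basis matrix between $\{T_{M_0}\}$ and the irreducible characters unitary. But nowhere is an explicit metaplectic-group matching actually established---neither in your argument nor in the paper---and there is no free lunch here: knowing that the two sets span the same space (which is what part (d) gives) does not by itself make the transition matrix unitary, and the unitarity is equivalent to the orthonormality you are trying to prove, so the argument as stated is circular unless you carry out the Heisenberg computation in closed form. The paper proves orthonormality by an entirely different, a priori geometric argument that never appeals to the Heisenberg case: it uses Gabber's independence-of-$\ell$ result (Lemma \ref{l:dual-complex-conjugate}) to identify $t_{\bD_{G_0}M_0}$ with $\overline{t_{M_0}}$, applies the stacky trace formula (Proposition \ref{p:equivariant-sheaves-point}) to $\pr_!(M_0\tens\bD_{G_0}N_0)$, and reads off \eqref{e:need-for-orthonormality} from the computation of $R\Hom_{\sD_G(G)}(M,N)$ via Proposition \ref{p:properties-char-sheaves} and Schur's lemma. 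You should replace your (e) with an argument of this shape, or else actually produce the unitary intertwiner.

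A secondary issue is part (c): you invoke ``Gauss sums for characters of order dividing $p^r$,'' but the paper never computes any Gauss sums. The Heisenberg case (Proposition \ref{p:heis}) is handled by identifying $e_0\sD_{G_0}(G_0)$ with $\sD_{\Ut_0}^\chi(G_0)$ for a central extension $\Ut_0$ of $U_0=G_0\ltimes H_0$ by a finite discrete $p$-group $A$ (Lemma \ref{l:alternative-Hecke}), then using the finiteness of the set of relevant $U$-orbits (Remarks \ref{r:action-semidirect-product}) to reduce to equivariant local systems on homogeneous spaces (Corollary \ref{c:homog}); the coefficient bound $\bZ[\mu_{p^{2r}},p^{-1}]$ comes from the exponent estimate in Lemma \ref{l:bound-on-exponent} combined with the representation-theoretic Proposition \ref{p:drinfeld-representations}, and not from a Gauss-sum calculation. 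Finally, your framing as an induction on $\dim G_0$ is unnecessary: the paper's geometric Mackey step (Lemma \ref{lemma}, Propositions \ref{p:geometric-Mackey} and \ref{p:induction-purity}) is a single reduction, not a recursion.
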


A natural analogue of Remark \ref{r:uniqueness-basis} applies in
this situation. We also observe that there is some redundancy in the
statement of the theorem: for instance, the linear independence
assertion in part (d) follows from part (e).

\begin{cor}[of Theorem \ref{t:main}(d)]
Let $G_0$ be a perfect unipotent group over $\bF_q$, and let $e_0\in\sD_{G_0}(G_0)$ be a geometrically minimal idempotent. The cardinality of the $\bL$-packet of irreducible characters of $G_0$ defined by $e_0$ is equal to the number of $\Fr_q^*$-invariant elements in the $\bL$-packet of character sheaves defined by $e_0$.
\end{cor}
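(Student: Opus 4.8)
The plan is to deduce the statement formally from parts (c) and (d) of Theorem~\ref{t:main}, by turning the equality of two linear spans asserted there into an equality of two cardinalities; this requires only checking that each of the two spanning sets involved is linearly independent.

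First I would set up the correspondence. Let $e\in\sD_G(G)$ be the minimal idempotent obtained from $e_0$ by base change from $\bF_q$; it is then automatically isomorphic to $\Fr_q^*(e)$, and by the uniqueness clause in Theorem~\ref{t:main}(a) the closed idempotent associated to $e$ in that statement coincides with $e_0$. Thus parts (c) and (d) of the theorem apply to this $e$ and $e_0$. Since $\Fr_q^*$ is a monoidal autoequivalence of $\sD_G(G)$ sending minimal idempotents to minimal idempotents and the $\bL$-packet of any minimal idempotent $e'$ to that of $\Fr_q^*(e')$, it restricts to a permutation of the finite set $P$ of isomorphism classes of character sheaves in the $\bL$-packet defined by $e$ (cf.~Proposition~\ref{p:properties-char-sheaves}(a)); write $k=\#\{M\in P:\Fr_q^*M\cong M\}$ for the number of its fixed points, which is the quantity appearing on the right in the statement.

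For each $M\in P$ with $\Fr_q^*M\cong M$, choose $M_0\in e_0\sD_{G_0}(G_0)$ as in Theorem~\ref{t:main}(c) and form $T_{M_0}=\bigl(t_{M_0^\al}\bigr)_{\al\in H^1(\bF_q,G_0)}\in\Fun\bigl([G_0](\bF_q)\bigr)$. By Theorem~\ref{t:main}(d) these $k$ vectors are linearly independent, so $\dim\Span\{T_{M_0}\}=k$. On the other side, let $Q$ be the $\bL$-packet of irreducible characters of $G_0$ defined by $e_0$. By Definition~\ref{d:L-packets-characters}, $Q$ consists of irreducible characters of the various groups $G_0^\al(\bF_q)$, sitting inside the direct-sum decomposition \eqref{e:functions}; since the irreducible characters of one finite group are linearly independent and characters attached to distinct classes $\al$ lie in distinct summands of \eqref{e:functions}, the set $Q$ is linearly independent, so $\#Q=\dim\Span(Q)$. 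Finally, Theorem~\ref{t:main}(d) gives $\Span\{T_{M_0}\}=\Span(Q)$, and therefore $\#Q=\dim\Span(Q)=\dim\Span\{T_{M_0}\}=k$, which is the assertion of the corollary.

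Essentially all of the content lies in Theorem~\ref{t:main}(c)--(d); the only points needing care are the (immediate) facts that $\Fr_q^*$ acts on the $\bL$-packet $P$ and that the $\bL$-packet of irreducible characters is linearly independent — the latter being a consequence of the block structure of \eqref{e:functions}. Consequently I do not anticipate any real obstacle, only the bookkeeping needed to match the idempotent $e_0$ of the corollary with the one produced by Theorem~\ref{t:main}(a).
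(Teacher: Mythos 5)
Your proof is correct and is precisely the deduction the paper intends by labeling the corollary ``of Theorem~\ref{t:main}(d)'': use part (d) to equate the spans, then note that each side is a linearly independent set so its cardinality equals the common dimension. You also correctly supply the one small point the paper leaves tacit — that the $\bL$-packet of irreducible characters is linearly independent, which follows because characters of a fixed group $G_0^\al(\bF_q)$ are independent and characters attached to distinct $\al$ live in distinct summands of \eqref{e:functions}.
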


\begin{rems}\label{rems:t:main}
\begin{enumerate}[(a)]
\item If $G_0$ is connected, then $\Fun\bigl([G_0](\bF_q)\bigr)=\Fun(G_0(\bF_q),\qab)^{G_0(\bF_q)}$ and $\abs{G_0(\bF_q)}=q^{\dim G}$. Therefore Theorem \ref{t:main} implies Theorem \ref{t:main-connected}.
 \sbr
\item In the situation of Theorem \ref{t:main}(a), there may not exist an admissible pair for $G_0$ defined over $\bF_q$ such that the corresponding minimal idempotent (Definition \ref{d:adm-pair-min-idemp}) is isomorphic to $e_0$. Indeed, in \S\ref{ss:example-for-main-theorem} we construct an example in which the function $t_{e_0}$ is identically zero; then one can apply Proposition \ref{p:when-min-idemp-comes-from-adm-pair}. Therefore passing to a pure inner form of $G_0$ in Theorem \ref{t:main}(b) is necessary.
 \sbr
\item If, in the setting of Theorem \ref{t:main}(c), we take $M=e[-n_e]$ (cf.~Prop.~\ref{p:properties-char-sheaves}(c)), then we can take $M_0=e_0[-n_e](-n_e/2)\tens\pr^*\sL$ for a suitable pure rank $1$ local system $\sL$ of weight $0$ on $\Spec\bF_q$, where $\pr:G_0\rar{}\Spec\bF_q$ denotes the structure morphism: see \S\ref{ss:proof-rems:t:main-c}. (The reason we cannot take $M_0=e_0[-n_e](-n_e/2)$ in general is that the corresponding function may not take values in $\bZ[\mu_{p^{2r}},p^{-1}]$.)
\end{enumerate}
\end{rems}

The next result is proved in \S\ref{ss:proof-p:when-min-idemp-comes-from-adm-pair}.

\begin{prop}\label{p:when-min-idemp-comes-from-adm-pair}
Let $G_0$ be a perfect unipotent group over $\bF_q$ and $e_0\in\sD_{G_0}(G_0)$ a geometrically minimal idempotent.
The following are equivalent:
 \sbr
\begin{enumerate}[$($i$)$]
\item there exists an admissible pair $(H_0,\cL_0)$ for $G_0$ such that the minimal idempotent defined by it $($Definition \ref{d:adm-pair-min-idemp}$)$ is isomorphic to $e_0$;
 \sbr
\item the function $t_{e_0}:G_0(\bF_q)\rar{}\ql$ defined by $e_0$ is not identically zero;
 \sbr
\item $t_{e_0}(1)\neq 0$.
\end{enumerate}
\end{prop}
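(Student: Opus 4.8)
The plan is to establish the two equivalences (i)$\,\Leftrightarrow\,$(iii) and (ii)$\,\Leftrightarrow\,$(iii); since (iii)$\,\Rightarrow\,$(ii) is trivial, the only elementary point here is (ii)$\,\Rightarrow\,$(iii). For that I would use that, as recalled just before Definition~\ref{d:L-packets-characters}, $t_{e_0}$ is a central idempotent in the group algebra $\ql[G_0(\bF_q)]$, hence $t_{e_0}=\sum_{\rho\in S}e_\rho$ for some set $S$ of isomorphism classes of irreducible $\ql$-representations of $G_0(\bF_q)$, where $e_\rho$ is the primitive central idempotent attached to $\rho$. Since $|G_0(\bF_q)|\cdot t_{e_0}(1)=\sum_{\rho\in S}(\dim\rho)^2$ is the dimension of the corresponding two-sided ideal, it is nonzero exactly when $S\neq\emptyset$, i.e.\ exactly when $t_{e_0}\neq 0$. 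Thus (ii) and (iii) are equivalent, and everything reduces to relating (iii) to (i).

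Next I would give a geometric reformulation of (i). By the classification of minimal idempotents over an algebraically closed field \cite{foundations} (or by base-changing Theorem~\ref{t:main}(b)), the minimal idempotent $e:=e_0\tens_{\bF_q}\bF$ is isomorphic to $\ig(e'_{H,\cL})$ for an admissible pair $(H,\cL)$ for $G$, and the $G(\bF)$-conjugacy class $\cC$ of $(H,\cL)$ is uniquely determined by $e$; since $\Fr_q^*e\cong e$, the class $\cC$ is $\Fr_q$-stable. Now $\cC$ is the set of $\bF$-points of a homogeneous space $\cP=G/N_G(H,\cL)$ under $G$, and the $\Fr_q$-action on $\cC$ descends $\cP$ to a homogeneous space $\cP_0$ under $G_0$ over $\bF_q$ whose set of $\bF_q$-points is, by construction, the set of admissible pairs \emph{for $G_0$} that are defined over $\bF_q$ and have geometric conjugacy class $\cC$. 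Combining this with the uniqueness of the $\bF_q$-form of a minimal closed idempotent — two such forms differ by $\tens\,\pr^*\sL$ for a rank-one local system $\sL$ on $\Spec\bF_q$ with $\sL^{\tens 2}\cong\sL$, hence $\sL$ trivial, using $\End_{\sD_G(G)}(e)=\ql$ from Proposition~\ref{p:properties-char-sheaves}(c) — one sees that for every $p=(H_p,\cL_p)\in\cP_0(\bF_q)$ the minimal idempotent $\ig(e'_{H_p,\cL_p})$ (Definition~\ref{d:adm-pair-min-idemp}), which base-changes to $e$, is isomorphic to $e_0$. Hence (i) holds if and only if $\cP_0(\bF_q)\neq\emptyset$.

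The main step is then to show $\cP_0(\bF_q)\neq\emptyset\iff$(iii), which I would do by proving $t_{e_0}(1)=\pm\,q^{-N}\cdot|\cP_0(\bF_q)|$ for a suitable integer $N$. The idea is to realize $e_0$ as the pushforward to $G_0$ (along the second projection) of the family of Heisenberg minimal idempotents parametrized by $\cP_0$ — the construction of \cite{foundations} carried out relatively over the base $\cP_0$ — so that, up to an overall shift and Tate twist, $e_0$ is the image of (the extension by zero of) a complex on $\{(p,g)\in\cP_0\times G_0 : g\in H_p\}$ whose restriction to $\{p\}\times H_p$ is $\bK_{H_p}\tens\cL_p$. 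Taking the stalk at the $\Fr_q$-fixed point $1\in G_0(\bF_q)$ and using that $1\in H_p$ for every $p$, that $(\bK_{H_p})_1\cong\ql[2\dim H](\dim H)$, and that $(\cL_p)_1\cong\ql$ with trivial Frobenius (multiplicativity of $\cL_p$), one gets $(e_0)_1\cong R\Gamma_c(\cP_0,\ql)$ up to an even shift and a Tate twist. Passing to alternating traces of $\Fr_q$ and invoking the Grothendieck--Lefschetz trace formula (in its form for perfect schemes), $t_{e_0}(1)=\pm\,q^{-N}\sum_i(-1)^i\operatorname{Tr}\bigl(\Fr_q\mid H^i_c(\cP_0,\ql)\bigr)=\pm\,q^{-N}|\cP_0(\bF_q)|$; since $|\cP_0(\bF_q)|\geq 0$, this vanishes precisely when $\cP_0(\bF_q)=\emptyset$. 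Together with the first two paragraphs this yields (i)$\,\Leftrightarrow\,$(iii)$\,\Leftrightarrow\,$(ii).

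The hard part will be the descent bookkeeping behind the last two steps. First, one must check carefully that the $\Fr_q$-stable conjugacy class of admissible pairs really does descend to a $G_0$-homogeneous space $\cP_0$ over $\bF_q$ whose $\bF_q$-points are admissible pairs for $G_0$ \emph{itself}; this is exactly the feature that makes the ``pure inner form'' ambiguity in Theorem~\ref{t:main}(b) visible in the single number $t_{e_0}(1)$. Second, and more delicately, one must set up the family of Heisenberg idempotents over $\cP_0$ and identify its pushforward with $e_0$ rather than with some Tate twist of it: since the isomorphism $e\cong\ig(e'_{H,\cL})$ over $\bF$ is not $\Fr_q$-equivariant — it depends on a choice of pair in the $\Fr_q$-orbit $\cC$ — the point is that the induced isomorphism on the stalk at the $\Fr_q$-fixed point $1$ nevertheless intertwines the Frobenii, which comes down to the absence of a scalar ambiguity once the idempotent normalization (equivalently $\End(e)=\ql$ and the structure arrow $\e\to e$) is taken into account. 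Pinning down the exact shift and twist in the definition of $\ig$, hence the integer $N$, is routine but needs to be carried out.
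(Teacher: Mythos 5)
Your argument takes a genuinely different route from the paper, and while the first paragraph (the equivalence (ii)$\Leftrightarrow$(iii) via $|G_0(\bF_q)|\cdot t_{e_0}(1)=\sum_{\rho\in S}(\dim\rho)^2$) is fine, the rest has gaps that are more than bookkeeping. The paper's proof is entirely at the level of finite group representation theory: for (i)$\Rightarrow$(iii), one uses the argument in the proof of Proposition~\ref{p:equivalence-def-L-packet-characters} (which rests on \cite[Thm.~7.1]{characters}) to produce an irreducible $\rho$ of $G_0(\bF_q)$ on which $t_{e_0}$ acts as the identity, whence $t_{e_0}(1)\neq 0$ by positivity of central idempotents; and for (ii)$\Rightarrow$(i), if $t_{e_0}$ acts nontrivially on some $\rho$, one produces a geometrically minimal idempotent $f_0$ coming from an admissible pair with $t_{f_0}$ acting as the identity on $\rho$, so $t_{e_0}*t_{f_0}\neq 0$, hence $e_0*f_0\neq 0$, hence $e_0\cong f_0$ by Lemma~\ref{l:convolution-geom-min-weak-idemp}. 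No moduli space of admissible pairs, no trace formula on that space.

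Two steps of your argument are not justified by anything in the paper or the cited references. First, you assert that the $G(\bF)$-conjugacy class $\cC$ of the admissible pair $(H,\cL)$ is \emph{uniquely} determined by $e$; the reference you give, Theorem~\ref{t:main}(b), is an existence statement, and \cite{foundations} is cited in this paper only for the existence of an admissible pair giving $e$. Without this uniqueness the identification ``(i) $\Leftrightarrow \cP_0(\bF_q)\neq\emptyset$'' does not follow in the direction (i)$\Rightarrow$, since the pair produced by (i) might base-change into a different conjugacy class than the one you used to define $\cP$. One can try to evade this by letting $\cP$ be the union of all such classes, but then the uniformity of the shift and Tate twist across components (which your trace computation needs) becomes another thing to check. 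Second, the realization of $e_0$ as the proper pushforward of a global family of Heisenberg idempotents over $\cP_0$, defined over $\bF_q$, with the correct Frobenius structure, is not a routine consequence of the definition of $\ig$ as $\av_{G/G'}\circ j_!$. You would need to construct the universal $(\cH_0\to\cP_0,\cL^{univ})$ over $\bF_q$, verify that the pushforward carries a natural weak idempotent structure descending $e$, and \emph{only then} invoke Lemma~\ref{l:uniqueness-idempotent} to conclude it is $e_0$; the remark that ``the scalar ambiguity is killed by the idempotent normalization'' is the right slogan but is not an argument as written, and the existence of $\cL^{univ}$ over $\bF_q$ is itself a descent question. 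If these two points were established, the rest of your computation (the stalk at $1$ is $R\Gamma_c(\cP_0,\ql)$ up to the even shift $2\dim H$ and twist $\dim H$, hence $t_{e_0}(1)=q^{-\dim H}|\cP_0(\bF_q)|$) would go through and is consistent with Lemma~\ref{l:aux-easy} in the connected case; but as it stands the proposal rests on unproved claims, whereas the paper gets the result in a few lines with much weaker inputs.
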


Let us discuss the remainder of the article in more detail. In \S\ref{s:induction} we recall the definition of induction functors from \cite{foundations} and the definition of mixed and pure complexes on a perfect variety over $\bF_q$ \cite{deligne-weil-2,bbd}. We also prove some results relating these notions. In \S\ref{s:pure-inner-forms} we summarize some basic definitions and facts having to do with pure inner forms of perfect unipotent groups over $\bF_q$ and the transport of equivariant complexes. In \S\ref{s:weil} we develop some tools for relating $\ell$-adic complexes on a perfect variety over $\bF_q$ to those on its base change to $\bF$. The main ingredients of the proofs of Theorems \ref{t:main} and \ref{t:main-easy} are collected in \S\ref{s:proofs}. Finally, the more technical steps involved in our arguments, as well as the proofs of various side remarks made in the text, are provided in the Appendix.

%%%%%%%%%%%%%%%%%%%%%%%%%%%%%%%%%%%%%%%%%%%%%%%%%%%%%%%%
%%%%%%%%%%%%%%%%%%%%%%%%%%%%%%%%%%%%%%%%%%%%%%%%%%%%%%%%
%%%%%%%%%%%%%%%%%%%%%%%%%%%%%%%%%%%%%%%%%%%%%%%%%%%%%%%%
%%%
%%%     AVERAGING, INDUCTION, DUALITY, PURITY
%%%
%%%%%%%%%%%%%%%%%%%%%%%%%%%%%%%%%%%%%%%%%%%%%%%%%%%%%%%%
%%%%%%%%%%%%%%%%%%%%%%%%%%%%%%%%%%%%%%%%%%%%%%%%%%%%%%%%
%%%%%%%%%%%%%%%%%%%%%%%%%%%%%%%%%%%%%%%%%%%%%%%%%%%%%%%%

\section{Induction functors}\label{s:induction}

\subsection{Definition of averaging
functors}\label{ss:averaging-functors} Let $k$ be any perfect field
of characteristic $p>0$. We begin by recalling certain definitions
and constructions from \cite{foundations}.

\mbr

Let $G$ be a perfect unipotent group over $k$, let $X$ be a perfect
variety over $k$ equipped with a $G$-action, and
let $G'\subset G$ be a closed subgroup. Let us equip $(G/G')\times
X$ with the diagonal action of $G$, let $\pr_2:(G/G')\times
X\rar{}X$ denote the second projection, and let $i:X\into
(G/G')\times X$ be the morphism taking $x\in X$ to $(\bar 1,x)$,
where $\bar 1\in G/G'$ is the image of $1\in G$.

\mbr

Then the pullback functor $i^*:\sD_G((G/G')\times
X)\rar{}\sD_{G'}(X)$ is an equivalence of categories, and we make
the following
\begin{defin}\label{d:averaging}
The \emph{averaging} functor $\Av_{G/G'}:\sD_{G'}(X)\rar{}\sD_G(X)$
and the functor $\av_{G/G'}:\sD_{G'}(X)\rar{}\sD_G(X)$ of
\emph{averaging with compact supports} are defined by
\[
\Av_{G/G'}=\pr_{2*}\circ(i^*)^{-1} \qquad\text{and}\qquad
\av_{G/G'}=\pr_{2!}\circ(i^*)^{-1}.
\]
\end{defin}

By construction, we have a canonical morphism of functors
$\av_{G/G'}\rar{}\Av_{G/G'}$.

\begin{lem}\label{l:averaging-adjunctions}
The functor $\Av_{G/G'}$ is right adjoint, and the functor
$\av_{G/G'}[2d](d)$ is left adjoint, to the forgetful functor
$\sD_G(X)\rar{}\sD_{G'}(X)$, where $d=\dim(G/G')$.
\end{lem}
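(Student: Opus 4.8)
\textbf{Proof proposal for Lemma \ref{l:averaging-adjunctions}.}

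The plan is to work purely with the definition $\Av_{G/G'}=\pr_{2*}\circ(i^*)^{-1}$ and $\av_{G/G'}=\pr_{2!}\circ(i^*)^{-1}$, and to reduce everything to standard adjunctions among the six functors together with the equivalence $i^*\colon\sD_G((G/G')\times X)\iso\sD_{G'}(X)$. First I would fix notation: write $\pi=\pr_2\colon(G/G')\times X\to X$ for the second projection (a smooth morphism of relative dimension $d=\dim(G/G')$), and write $q\colon(G/G')\times X\to X$ also via $i$, so that $i\colon X\hookrightarrow(G/G')\times X$, $\pi\circ i=\id_X$. The key observation is that the forgetful functor $\mathrm{For}\colon\sD_G(X)\to\sD_{G'}(X)$ factors as $i^*\circ\pi^*$ up to the equivalence $i^*$; more precisely, if $\mathrm{For}'\colon\sD_G(X)\to\sD_G((G/G')\times X)$ is pullback along the $G$-equivariant map $\pi$, then $i^*\circ\mathrm{For}'=\mathrm{For}$ as functors $\sD_G(X)\to\sD_{G'}(X)$, because pulling a $G$-equivariant sheaf on $X$ back to $(G/G')\times X$ and then restricting to the slice $\{\bar 1\}\times X$ just forgets equivariance down to the stabilizer $G'$ of $\bar 1$. (This is exactly the content of the statement, recalled just above the lemma, that $i^*$ is an equivalence; one should cite the corresponding lemma in \cite{foundations}.)

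Granting this, the right adjoint statement is immediate: $\Av_{G/G'}=\pi_*\circ(i^*)^{-1}$, and $(i^*)^{-1}$ is right adjoint to $i^*$ (being its inverse), while $\pi_*$ is right adjoint to $\pi^*=\mathrm{For}'$ on the equivariant categories — here one uses that the adjunction $(\pi^*,\pi_*)$ on the non-equivariant derived categories upgrades to the $G$-equivariant ones, which is part of the formalism of $\sD_G(-)$ developed in \cite{foundations} (the six functors and their adjunctions are compatible with the equivariant structures). Composing the two adjunctions gives $\Hom_{\sD_G(X)}(\mathrm{For}(A),B)\cong\Hom_{\sD_{G'}(X)}(A,\Av_{G/G'}(B))$ after identifying $\mathrm{For}$ with $i^*\circ\pi^*$. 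Dually, for the left adjoint: $\av_{G/G'}[2d](d)=\pi_![2d](d)\circ(i^*)^{-1}$, and since $\pi$ is smooth of relative dimension $d$, one has the canonical isomorphism $\pi^!\cong\pi^*[2d](d)$, so $\pi_![2d](d)$ is left adjoint to $\pi^!\,[-2d](-d)\circ\,[2d](d)=\pi^*=\mathrm{For}'$; again composing with the (two-sided) adjunction for the equivalence $i^*$ yields that $\av_{G/G'}[2d](d)$ is left adjoint to $\mathrm{For}=i^*\circ\mathrm{For}'$.

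The main obstacle — really the only non-formal point — is making sure that each of the classical adjunctions $(\pi^*,\pi_*)$, $(\pi_!,\pi^!)$, and the purity isomorphism $\pi^!\cong\pi^*[2d](d)$ for the smooth morphism $\pi$ genuinely lifts to the equivariant derived categories $\sD_G(-)$ in the \emph{ad hoc} sense of \cite{characters,foundations}, and that these lifts are compatible with the equivalence $i^*$ so that the composites pair up correctly. Since $\sD_G(-)$ in \emph{loc.\ cit.} is built from a simplicial/\v{C}ech-type resolution rather than as sheaves on a stack, I would either invoke the general compatibility statements proved there, or, if a clean reference is not available, verify the lifting levelwise on the simplicial scheme computing $\sD_G$ and use that $\pi$ is $G$-equivariant so all structure maps commute with it; the purity isomorphism then descends because it is canonical. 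Once that bookkeeping is in place, the lemma follows by concatenating adjunctions with no further input.
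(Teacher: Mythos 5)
Your argument is correct and is the natural formal proof. One small caveat: the paper itself does not prove this lemma in the text — it simply cites \cite[Lemma 1.35]{foundations} — so there is no in-paper argument to compare yours against, but the decomposition you use (writing the forgetful functor as $i^*\circ\pi^*$ and then concatenating the adjunctions $(\pi^*,\pi_*)$, $(\pi_!,\pi^!)$, the two-sided adjunction for the equivalence $i^*$, and the purity isomorphism $\pi^!\cong\pi^*[2d](d)$ for the smooth morphism $\pi$ of relative dimension $d$) is the standard and almost certainly the intended one. You are also right to flag that the only nontrivial point is that these adjunctions and the purity isomorphism lift to the \emph{ad hoc} equivariant categories $\sD_G(-)$ of \cite{characters,foundations}; this compatibility is part of the formalism established there and is what the citation is implicitly invoking.

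One loose sentence: you write that the identity $i^*\circ\mathrm{For}'=\mathrm{For}$ is ``exactly the content of'' the statement that $i^*$ is an equivalence. These are two distinct (though closely related) facts: that $i^*$ is an equivalence is what lets you write $\Av$ and $\av$ at all, while the identification $i^*\circ\pi^*=\mathrm{For}$ is a separate tautology of the equivariant formalism (pull back to $(G/G')\times X$ along the $G$-equivariant projection, then restrict to the slice whose stabilizer is $G'$). Both are needed and both are routine, but you should state them as two separate inputs rather than conflating them. With that adjustment, the proof is complete.
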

\begin{proof}
This is \cite[Lemma 1.35]{foundations}.
\end{proof}

\begin{rem}\label{r:functoriality-of-averaging}
The construction of averaging functors is functorial in the sense
that if $f:Y\rar{}X$ is a \emph{smooth} $G$-equivariant morphism of
perfect varieties over $k$ equipped with a $G$-action, then we
have natural isomorphisms of functors
\[
\Av^Y_{G/G'}\circ f^* \cong f^*\circ\Av^X_{G/G'}
\qquad\text{and}\qquad \av^Y_{G/G'}\circ f^* \cong
f^*\circ\av^X_{G/G'}
\]
where $\Av^X$, $\av^X$ are the averaging functors for $X$ and
$\Av^Y$, $\av^Y$ are the averaging functors for $Y$. (Use the proper
and smooth base change theorems.)
\end{rem}

\subsection{Averaging functors and
duality}\label{ss:averaging-duality} If $X$ is any perfect
variety over $k$, we let
$\bD_X:\sD(X)^\circ\rar{\sim}\sD(X)$ denote the Verdier duality functor.
Given an action of a perfect unipotent group $G$ on $X$, the functor
$\bD_X$ naturally lifts to $\sD_G(X)^\circ$.

\begin{lem}\label{l:averaging-duality}
Let $G'\subset G$ be a closed subgroup, and let $d=\dim(G/G')$. Then
the functors $\bD_X\circ\Av_{G/G'}\circ\bD_X$ and
$\av_{G/G'}[2d](d)$ are isomorphic.
\end{lem}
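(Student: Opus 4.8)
The plan is to deduce Lemma \ref{l:averaging-duality} directly from the adjunction statement of Lemma \ref{l:averaging-adjunctions} together with the fact that Verdier duality exchanges left and right adjoints. First I would recall the general principle: if $F\dashv G$ is an adjoint pair between triangulated categories carrying Verdier-type dualities $\bD$, then $\bD\circ G\circ\bD$ is left adjoint to $\bD\circ F\circ\bD$. Applying this to the pair consisting of the forgetful functor $\operatorname{Forg}:\sD_G(X)\rar{}\sD_{G'}(X)$ and its right adjoint $\Av_{G/G'}$ (the first assertion of Lemma \ref{l:averaging-adjunctions}), we learn that $\bD_X\circ\Av_{G/G'}\circ\bD_X$ is left adjoint to $\bD_X\circ\operatorname{Forg}\circ\bD_X$. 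But Verdier duality manifestly commutes with the forgetful functor (pullback along a smooth, in fact étale locally trivial, morphism commutes with $\bD$ up to a shift-twist that is trivial here since the fibers are points of the forgetful map at the level of $X$), so $\bD_X\circ\operatorname{Forg}\circ\bD_X\cong\operatorname{Forg}$. Hence $\bD_X\circ\Av_{G/G'}\circ\bD_X$ is left adjoint to $\operatorname{Forg}$.

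Next I would invoke the second assertion of Lemma \ref{l:averaging-adjunctions}, which says $\av_{G/G'}[2d](d)$ is also left adjoint to $\operatorname{Forg}$. By uniqueness of adjoints (in a category with enough structure — here one wants uniqueness up to canonical isomorphism of functors, which holds for adjoints between any categories), we conclude $\bD_X\circ\Av_{G/G'}\circ\bD_X\cong\av_{G/G'}[2d](d)$, which is exactly the claim. This is clean and short, and it is the route I expect the author takes.

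An alternative, more hands-on approach would be to unwind the definitions: $\Av_{G/G'}=\pr_{2*}\circ(i^*)^{-1}$ and $\av_{G/G'}=\pr_{2!}\circ(i^*)^{-1}$, where $\pr_2:(G/G')\times X\rar{}X$. Since $G/G'$ is (the perfection of) an affine space, it is smooth of relative dimension $d$, so on the derived category one has the standard relation $\bD\circ\pr_{2*}\cong\pr_{2!}\circ\bD$ combined with $\bD\circ\pr_2^*\cong\pr_2^![-2d](-d)\circ\bD$ and hence, after rearranging, $\bD_X\circ\pr_{2*}\circ\bD_{(G/G')\times X}\cong\pr_{2!}[2d](d)$ up to the shift-twist; one then checks the equivalence $i^*$ is compatible with the equivariant lifts of $\bD$. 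This computation reproduces the same identity but requires care with the Tate twists and shifts and with the equivariant enhancement of duality, so I would only fall back on it if the adjunction argument needed extra justification.

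The main obstacle, such as it is, is purely bookkeeping: making sure the duality functor $\bD_X$ on the equivariant category $\sD_G(X)$ is genuinely the right "lift" so that it really does interact with $\operatorname{Forg}$ and with adjunctions as in the non-equivariant case, and tracking the single shift-by-$2d$ and twist-by-$d$ correctly (these are forced, and one sanity-checks them against Lemma \ref{l:averaging-adjunctions} itself). There is no serious mathematical difficulty; the content is entirely in Lemma \ref{l:averaging-adjunctions}, and Lemma \ref{l:averaging-duality} is a formal consequence.
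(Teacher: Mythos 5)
Your formal argument is correct, and it is indeed the expected deduction: conjugating the adjoint pair $(\operatorname{Forg},\Av_{G/G'})$ by the anti-autoequivalences $\bD_X$ turns the right adjoint into a left adjoint of $\bD_X\circ\operatorname{Forg}\circ\bD_X$, and once one knows $\bD_X\circ\operatorname{Forg}\circ\bD_X\cong\operatorname{Forg}$, uniqueness of left adjoints to $\operatorname{Forg}$ together with the other half of Lemma \ref{l:averaging-adjunctions} gives the isomorphism with $\av_{G/G'}[2d](d)$. The paper itself only cites \cite[Lemma 6.9]{foundations} here, so there is no in-text proof to compare against; your route is the natural one and nothing in it would fail.

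One point in your explanation deserves correction, though it does not affect the validity of the proof. The parenthetical you give for $\bD_X\circ\operatorname{Forg}\circ\bD_X\cong\operatorname{Forg}$ is not right as stated. The forgetful functor $\sD_G(X)\rar{}\sD_{G'}(X)$, viewed through the quotient-stack optic, is pullback along $G'\backslash X\rar{}G\backslash X$, whose fibers are $G/G'$ — a $d$-dimensional (perfectized) affine space, not a point — so if one used the intrinsic Verdier duality of each quotient stack, forgetting equivariance would pick up precisely the shift-twist $[2d](d)$, and your argument would then prove the wrong statement. The reason no shift appears is a matter of convention spelled out in \S\ref{ss:averaging-duality}: the duality functor on $\sD_G(X)$ is defined as the natural \emph{lift} of the Verdier duality $\bD_X$ on the underlying category $\sD(X)$, not as the duality of the quotient stack. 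With that convention, compatibility with the forgetful functor holds on the nose, by construction. It is worth getting this detail straight, since it is exactly where the $[2d](d)$ in the statement of the lemma comes from: it lives entirely on the $\av$ side (via Lemma \ref{l:averaging-adjunctions}), not on the duality side.
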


\begin{proof}
This is \cite[Lemma 6.9]{foundations}.
\end{proof}

\subsection{Reminder on pure complexes}\label{ss:purity-reminder}
We now briefly recall a few definitions from
\cite{deligne-weil-2,bbd}. Let $X_0$ be a perfect variety over $\bF_q$ and consider a point $x\in X_0(\bF_{q^n})$, that is, an $\bF_q$-morphism
$x:\Spec\bF_{q^n}\rar{} X_0$. If $\cF$ is a $\ql$-sheaf on $X_0$,
then $x^*\cF$ is a continuous $\ql$-representation of  $\Gal(\bF/\bF_{q^n})$. The \emph{geometric Frobenius} is the
generator $F_{q^n}$ of $\Gal(\bF/\bF_{q^n})$, defined as the
\emph{inverse} of the automorphism $a\longmapsto a^{q^n}$.

\begin{defins}\label{d:purity}
\begin{enumerate}[$($a$)$]
\item A $\ql$-sheaf $\cF$ on $X_0$ is \emph{punctually pure of weight $w\in\bZ$} if for
every $n\geq 1$ and every $x\in X_0(\bF_{q^n})$, the eigenvalues of
$F_{q^n}$ acting on $x^*\cF$ are algebraic numbers, all of whose
conjugates in $\bC$ have absolute value $(q^n)^{w/2}$.
 \sbr
\item A $\ql$-sheaf $\cF$ on $X_0$ is \emph{mixed} if it has a
finite filtration whose successive subquotients are punctually pure
of some weights. The weights of the (nonzero) subquotients in this
filtration are called the \emph{weights of $\cF$}.
 \sbr
\item An object $M\in\sD(X_0)=D^b_c(X_0,\ql)$ is said to be \emph{mixed}
if the cohomology sheaves $\cH^i(M)$ are mixed for all $i\in\bZ$. We
let $\sD_m(X_0)\subset\sD(X_0)$ denote the full subcategory formed
by mixed complexes.
 \sbr
\item Let $w\in\bZ$. An object $M\in\sD_m(X_0)$ is said to \emph{have
weights $\leq w$} if, for every $i\in\bZ$, the weights of $\cH^i(M)$
are $\leq w+i$. We let $\sD_{\leq w}(X_0)\subset\sD_m(X_0)$ denote
the full subcategory formed by complexes whose weights are $\leq w$.
 \sbr
\item An object $M\in\sD_m(X_0)$ is said to \emph{have weights $\geq
w$} if the Verdier dual $\bD_{X_0}(M)$ lies in $\sD_{\leq -w}(X_0)$.
We let $\sD_{\geq w}(X_0)\subset\sD_m(X_0)$ denote the full
subcategory formed by complexes whose weights are $\geq w$.
 \sbr
\item An object $M\in\sD_m(X_0)$ is \emph{pure of weight
$w$} if $M\in\sD_{\leq w}(X_0)\cap\sD_{\geq w}(X_0)$.
\end{enumerate}
\end{defins}

\begin{rem}\label{r:twists-preserve-weights}
For each integral or half-integral\footnote{In order to make sense of the Tate twist $M_0\longmapsto M_0(d)$ for half-integral $d$, one needs to choose a square root of $q$ in $\qab$.} $d$, the functor $M_0\longmapsto M_0[2d](d)$ preserves the subcategories $\sD_{\leq w}(X_0)\subset\sD(X_0)$ and $\sD_{\geq w}(X_0)\subset\sD(X_0)$ for all $w\in\bZ$.
\end{rem}

\begin{thm}[Deligne, \cite{deligne-weil-2}, 3.3.1,
6.2.3]\label{t:deligne} Let $X_0$, $Y_0$ be perfect varieties over
$\bF_q$, and let $f:Y_0\rar{}X_0$ be an $\bF_q$-morphism. For every
$w\in\bZ$, the functor $f_!:\sD(Y_0)\rar{}\sD(X_0)$ takes $\sD_{\leq
w}(Y_0)$ into $\sD_{\leq w}(X_0)$, and $f_*:\sD(Y_0)\rar{}\sD(X_0)$
takes $\sD_{\geq w}(Y_0)$ into $\sD_{\geq w}(X_0)$.
\end{thm}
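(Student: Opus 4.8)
The statement is Deligne's, proved in \cite{deligne-weil-2} for schemes of finite type over $\bF_q$, so the only real content is to transfer it to perfect varieties. I would first reduce the assertion about $f_*$ to the one about $f_!$. By Definition \ref{d:purity}(e), an object $M\in\sD_m(Y_0)$ lies in $\sD_{\geq w}(Y_0)$ exactly when $\bD_{Y_0}(M)\in\sD_{\leq -w}(Y_0)$, and Verdier duality exchanges the two pushforwards, $\bD_{X_0}\circ f_*\cong f_!\circ\bD_{Y_0}$ (this holds for $\ell$-adic complexes on perfect varieties because it holds for schemes of finite type and survives the equivalence used below). Hence, once the $f_!$-statement is known, for $M\in\sD_{\geq w}(Y_0)$ the object $\bD_{X_0}(f_*M)\cong f_!(\bD_{Y_0}M)$ lies in $\sD_{\leq -w}(X_0)$, which is precisely the claim $f_*M\in\sD_{\geq w}(X_0)$. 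So it suffices to show that $f_!$ carries $\sD_{\leq w}(Y_0)$ into $\sD_{\leq w}(X_0)$.

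Next I would descend to the finite-type setting. By definition a perfect variety over $\bF_q$ is the perfectization of a scheme of finite type over $\bF_q$, and since such schemes are finitely presented, every morphism of perfect varieties is the perfectization of a morphism of suitable finite-type models; thus I may choose finite-type $\bF_q$-schemes $\cX$, $\cY$ and a morphism $g:\cY\rar{}\cX$ with $X_0\cong\cX^{perf}$, $Y_0\cong\cY^{perf}$ and $f\cong g^{perf}$. The projections $\cX^{perf}\rar{}\cX$ and $\cY^{perf}\rar{}\cY$ are universal homeomorphisms, so by topological invariance of the small \'etale topos (cf.\ the Remark after Definition \ref{defs:perfect}) they induce, functorially, equivalences $\sD(X_0)\simeq\sD(\cX)$ and $\sD(Y_0)\simeq\sD(\cY)$ that are compatible with the six operations; in particular they intertwine $f_!$ with $g_!$. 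They also identify $\bF_{q^n}$-points together with the geometric Frobenius actions on stalks — because a perfectization morphism is a bijection on $\bF_{q^n}$-points commuting with $F_{q^n}$, as $\bF_{q^n}$ is perfect — hence they preserve punctual purity, mixedness, and weights, and so carry $\sD_{\leq w}$ to $\sD_{\leq w}$.

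Finally, applying Deligne's theorem \cite[3.3.1]{deligne-weil-2} (with the standard passage to $\ql$-coefficients, cf.\ \cite{bbd}) to the morphism $g$ of finite-type $\bF_q$-schemes shows that $g_!$ takes complexes of weight $\leq w$ to complexes of weight $\leq w$; transporting this across the equivalences of the second step yields the assertion for $f_!$, and then the first step yields the assertion for $f_*$. The only genuinely delicate point is the bookkeeping in the middle step: one must check that the equivalence induced by perfectization is simultaneously compatible with the six functors (so that $f_!$ really corresponds to $g_!$) and with the mixed/weight formalism (so that the two notions of ``weights $\leq w$'' coincide). Both compatibilities are routine consequences of the fact that a perfectization morphism is a universal homeomorphism which is moreover bijective on $\bF_{q^n}$-points for every $n$, and I would simply cite the relevant statements from \cite{foundations} rather than reprove them.
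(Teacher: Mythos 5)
Your proposal is correct and matches the paper's own treatment: the paper simply cites Deligne (Weil II, 3.3.1 for $f_!$ and 6.2.3 for $f_*$, the latter being exactly the Verdier-duality consequence you spell out) and relies implicitly on the remark after Definition~\ref{defs:perfect} that perfectization induces an equivalence of \'etale topoi, bijective on $\bF_{q^n}$-points, so the mixed/weight formalism and the six operations transfer unchanged. Your reduction of the $f_*$ clause to the $f_!$ clause via $\bD_{X_0}\circ f_*\cong f_!\circ\bD_{Y_0}$ and Definition~\ref{d:purity}(e), together with the descent to a finite-type model, is precisely the routine bookkeeping the paper leaves implicit, and it is carried out correctly.
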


\begin{cor}\label{c:averaging-purity}
Let $G_0$ be a perfect unipotent group acting on a perfect
variety $X_0$ over $\bF_q$, and let
$G'_0\subset G_0$ be a closed subgroup. Fix $M\in\sD_{G'_0}(X_0)$.
If the underlying complex of $M$ lies in $\sD_{\leq w}(X_0)$ for
some $w\in\bZ$, then so does the underlying complex of
$\av_{G_0/G_0'}(M)$. If the underlying complex of $M$ lies in
$\sD_{\geq w}(X_0)$ for some $w\in\bZ$, then so does the underlying
complex of $\Av_{G_0/G_0'}(M)$. Thus if $M$ is pure of weight $w$
and the canonical morphism
$\av_{G_0/G_0'}(M)\rar{}\Av_{G_0/G_0'}(M)$ is an isomorphism, then
$\av_{G_0/G_0'}(M)$ is pure of weight $w$.
\end{cor}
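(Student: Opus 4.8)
The plan is to realize $\av_{G_0/G_0'}$, up to a shift and Tate twist, as $c_!$ composed with a smooth pullback, where $c:G_0\times X_0\rar{}X_0$ is the action morphism, and then to invoke Deligne's theorem (Theorem~\ref{t:deligne}); the statement for $\Av_{G_0/G_0'}$ will then follow by Verdier duality. Set $n=\dim G_0'$, write $\pr:G_0\times X_0\rar{}X_0$ for the projection and $c:G_0\times X_0\rar{}X_0$, $(g,x)\mapsto gx$, for the action, and let $\act_1:G_0\times X_0\rar{}(G_0/G_0')\times X_0$ be the $\bF_q$-morphism $(g,x)\mapsto(\bar g,gx)$. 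Since $\act_1$ is the composite of the automorphism $(g,x)\mapsto(g,gx)$ with $\pi\times\id_{X_0}$, where $\pi:G_0\rar{}G_0/G_0'$ is the quotient map, it is a $G_0'$-torsor; in particular it is smooth and surjective, and $\pr_2\circ\act_1=c$. For $M\in\sD_{G_0'}(X_0)$ put $N=(i^*)^{-1}(M)\in\sD_{G_0}\bigl((G_0/G_0')\times X_0\bigr)$; restricting the $G_0$-equivariance datum of $N$ over the slice $\{\bar1\}\times X_0\subset(G_0/G_0')\times X_0$ identifies $\act_1^*N$ with $\pr^*M$ as $\ell$-adic complexes.

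Because $G_0'$ is unipotent, a torsor under it is an iterated affine-space bundle, so $(\act_1)_!\ql\cong\ql[-2n](-n)$; the projection formula then gives $N\cong(\act_1)_!(\act_1^*N)[2n](n)\cong(\act_1)_!(\pr^*M)[2n](n)$, whence
\[
\av_{G_0/G_0'}(M)=\pr_{2!}N\cong c_!\bigl(\pr^*M\bigr)[2n](n).
\]
Now assume the underlying complex of $M$ lies in $\sD_{\leq w}(X_0)$. As $\pr$ is smooth, hence flat, $\pr^*$ commutes with the cohomology functors $\cH^i$, and the pullback of a punctually pure sheaf of weight $v$ is again punctually pure of weight $v$ (the stalks are unchanged), so $\pr^*M\in\sD_{\leq w}(G_0\times X_0)$. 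By Theorem~\ref{t:deligne}, $c_!(\pr^*M)\in\sD_{\leq w}(X_0)$, and by Remark~\ref{r:twists-preserve-weights} the twist by $[2n](n)$ preserves $\sD_{\leq w}$; hence $\av_{G_0/G_0'}(M)\in\sD_{\leq w}(X_0)$, as claimed.

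For the statement about $\Av_{G_0/G_0'}$ I would argue by Verdier duality. If the underlying complex of $M$ lies in $\sD_{\geq w}(X_0)$, then $\bD_{X_0}M\in\sD_{\leq-w}(X_0)$, and Lemma~\ref{l:averaging-duality} yields an isomorphism $\bD_{X_0}\bigl(\Av_{G_0/G_0'}(M)\bigr)\cong\av_{G_0/G_0'}(\bD_{X_0}M)[2d](d)$ with $d=\dim(G_0/G_0')$. By the case already treated together with Remark~\ref{r:twists-preserve-weights}, the right-hand side lies in $\sD_{\leq-w}(X_0)$, so $\Av_{G_0/G_0'}(M)\in\sD_{\geq w}(X_0)$. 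The final assertion is then formal: if $M$ is pure of weight $w$, i.e.\ lies in $\sD_{\leq w}(X_0)\cap\sD_{\geq w}(X_0)$, and the canonical map $\av_{G_0/G_0'}(M)\rar{}\Av_{G_0/G_0'}(M)$ is an isomorphism, then the source lies in $\sD_{\leq w}(X_0)$ while the isomorphic target lies in $\sD_{\geq w}(X_0)$, so $\av_{G_0/G_0'}(M)$ is pure of weight $w$.

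The only delicate points are the formal ones collected in the first paragraph: one must be careful with the \emph{ad hoc} model of the equivariant derived category used in \cite{characters,foundations} in order to (i) justify that $(i^*)^{-1}$ really produces the ``equivariant extension'' of $M$, so that $\act_1^*N\cong\pr^*M$, and (ii) know that $\pr_{2!}$ on $\sD_{G_0}\bigl((G_0/G_0')\times X_0\bigr)$ is compatible with $\pr_{2!}$ on the underlying complexes. One also uses the standard fact that torsors under unipotent groups are cohomologically trivial, giving $(\act_1)_!\ql\cong\ql[-2n](-n)$. Granting these, everything else is a direct application of Theorem~\ref{t:deligne}, Remark~\ref{r:twists-preserve-weights}, and Lemma~\ref{l:averaging-duality}.
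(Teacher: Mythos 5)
Your argument breaks down at the step you flagged as ``standard'': the claim $(\act_1)_!\ql\cong\ql[-2n](-n)$, and hence the resulting formula $\av_{G_0/G_0'}(M)\cong c_!(\pr^*M)[2n](n)$, hold only when $G_0'$ is \emph{connected}, whereas the corollary (and its uses in the paper, e.g.\ inducing from the normalizer of an admissible pair) require an arbitrary closed $G_0'\subset G_0$. When $G_0'$ is disconnected, the $G_0'$-torsor $\act_1$ has a nontrivial finite \'etale $\pi_0(G_0')$-part, so $(\act_1)_!\ql$ is a local system of rank $\abs{\pi_0(G_0')}>1$ placed in degree $2n$, and $(\act_1)_!\act_1^*N\cong N\otimes(\act_1)_!\ql\not\cong N[-2n](-n)$. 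For a concrete counterexample to your displayed formula, take $G_0=\bG_a$, $G_0'=\bZ/p$ (so $n=0$), $X_0=\bZ/p$ with the trivial $G_0$-action, and let $M$ be the skyscraper $\de_1$ at $1\in X_0$ equipped with a nontrivial character $\rho$ of $\bZ/p$ as $G_0'$-equivariant structure. Then $N=(i^*)^{-1}M$ is the nontrivial Artin--Schreier local system $\cL_\rho$ on $\bG_a\times\{1\}\subset(G_0/G_0')\times X_0$, whence $\av_{G_0/G_0'}(M)=\pr_{2!}N=0$, while $c_!(\pr^*M)$, which sees only the underlying nonequivariant complex $\de_1$ of $M$, equals $\de_1[-2](-1)\neq 0$.

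The repair uses the very map $\act_1$ you built, but in the opposite direction. You have already shown $\act_1^*N\cong\pr^*M\in\sD_{\leq w}$, and $\act_1$ is surjective on $\bF$-points; so rather than descending along $\act_1$ via $!$-pushforward, one invokes the elementary observation (Lemma~\ref{l:auxiliary-weights-pullback}) that membership in $\sD_{\leq w}$ is detected after pullback along any morphism which is surjective on geometric points. This gives $N\in\sD_{\leq w}\bigl((G_0/G_0')\times X_0\bigr)$ directly --- this is precisely Lemma~\ref{l:averaging-purity}, which the paper proves via Corollary~\ref{c:auxiliary-weights-pullback}. Deligne's Theorem~\ref{t:deligne} applied to $\pr_{2!}$ then yields $\av_{G_0/G_0'}(M)\in\sD_{\leq w}(X_0)$, and your reduction of the $\Av$ case to the $\av$ case by Verdier duality, together with the final formal step, go through unchanged. (Your argument as written is valid under the additional hypothesis that $G_0'$ is connected, but that hypothesis cannot be imposed here.)
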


The corollary follows from Theorem \ref{t:deligne}, Lemma \ref{l:averaging-duality}, Remark \ref{r:twists-preserve-weights} and the following

\begin{lem}\label{l:averaging-purity}
In the situation of Corollary \ref{c:averaging-purity}, let $G_0$ act diagonally on $(G_0/G_0')\times X_0$ and define $i:X_0\into(G_0/G_0')\times X_0$ by $x\mapsto(\overline{1},x)$ as in \S\ref{ss:averaging-functors}.

\mbr

If $N\in\sD_{G_0}((G_0/G_0')\times X_0)$ is such that $($the underlying complex of$)$ $i^*N$ lies in $\sD_{\leq w}(X_0)$ for some $w\in\bZ$, then $N$ lies in $\sD_{\leq w}((G_0/G_0')\times X_0)$.
\end{lem}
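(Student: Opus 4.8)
The plan is to exhibit $N$ as the pullback, along a surjective smooth morphism, of the explicit complex $i^*N$ on $X_0$ — to which the hypothesis applies verbatim. First I would introduce the morphism
\[
\pi:G_0\times X_0\rar{}(G_0/G_0')\times X_0,\qquad (g,x)\longmapsto(gG_0',\,g\cdot x).
\]
A direct check shows that $\pi$ is a $G_0'$-torsor, where $g'\in G_0'$ acts on $G_0\times X_0$ by $(g,x)\mapsto(gg'^{-1},g'x)$; in particular $\pi$ is smooth and surjective. It is also $G_0$-equivariant, where $G_0$ acts on $G_0\times X_0$ by left translation on the first factor alone.

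Next comes the key point: $\pi^*N\cong p_2^*(i^*N)$, where $p_2:G_0\times X_0\rar{}X_0$ is the second projection. To see this, write $\act,\pr:G_0\times\bigl((G_0/G_0')\times X_0\bigr)\rar{}(G_0/G_0')\times X_0$ for the action and projection maps, so that the $G_0$-equivariant structure on $N$ supplies an isomorphism $\act^*N\cong\pr^*N$. Pulling this back along $\iota:G_0\times X_0\rar{}G_0\times\bigl((G_0/G_0')\times X_0\bigr)$, $(g,x)\mapsto\bigl(g,(\bar 1,x)\bigr)$, and using the identities $\act\circ\iota=\pi$ and $\pr\circ\iota=i\circ p_2$ (both immediate from the definitions), yields $\pi^*N\cong p_2^*i^*N$. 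Since pullback along an arbitrary $\bF_q$-morphism preserves $\sD_{\leq w}$ — the canonical weight filtration is functorial and pullback does not alter stalks, hence does not alter Frobenius eigenvalues or their absolute values — the hypothesis $i^*N\in\sD_{\leq w}(X_0)$ gives $\pi^*N=p_2^*(i^*N)\in\sD_{\leq w}(G_0\times X_0)$.

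It remains to descend this weight bound along $\pi$. Because $G_0$ is unipotent, the quotient morphism $G_0\rar{}G_0/G_0'$ admits a section $\sigma$, and then $s(\bar h,y):=\bigl(\sigma(\bar h),\sigma(\bar h)^{-1}y\bigr)$ is a section of $\pi$; hence $N\cong s^*\pi^*N\in\sD_{\leq w}\bigl((G_0/G_0')\times X_0\bigr)$, as desired. Alternatively, one can bypass the section and invoke the standard fact that a surjective smooth morphism is conservative for membership in $\sD_{\leq w}$: for sheaves this follows from fppf descent of the canonical weight filtration together with the observation that any closed point of the target acquires a point above it over a finite extension, across which the geometric Frobenius eigenvalues change only by taking powers; for complexes one applies this to the cohomology sheaves.

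The main obstacle is locating the morphism $\pi$ and verifying $\pi^*N\cong p_2^*(i^*N)$: this is precisely what untwists the diagonal $G_0$-action on $(G_0/G_0')\times X_0$, and once it is in hand everything else is formal. The only additional input, beyond the hypothesis and elementary properties of weights, is the descent along the smooth surjection $\pi$, for which the existence of a section of $G_0\rar{}G_0/G_0'$ is the cleanest route.
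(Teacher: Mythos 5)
Your argument follows the same route as the paper's. You introduce the morphism $\pi:G_0\times X_0\rar{}(G_0/G_0')\times X_0$, $(g,x)\mapsto(gG_0',gx)$, use the equivariance structure to show $\pi^*N\cong p_2^*(i^*N)$, note that pullback preserves $\sD_{\leq w}$, and then descend the weight bound along $\pi$. In the paper this is exactly the content of Lemma \ref{l:auxiliary-weights-pullback} and Corollary \ref{c:auxiliary-weights-pullback}: the map you call $\pi$ is the composite $G_0\times X_0\xrar{\ \id\times i\ }G_0\times Y_0\xrar{\ \act\ }Y_0$ with $Y_0=(G_0/G_0')\times X_0$, and the descent along it is verified by checking Frobenius eigenvalues on stalks, which is precisely what you sketch in your ``alternative'' route.

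However, the route you call cleanest — constructing a section $\sigma$ of $G_0\rar{}G_0/G_0'$ and setting $s(\bar h,y)=\bigl(\sigma(\bar h),\sigma(\bar h)^{-1}y\bigr)$ — does \emph{not} work in general, and you should not present it as the primary argument. If $G_0'$ is connected, such a section exists by Rosenlicht, but the lemma does not assume $G_0'$ is connected, and for disconnected $G_0'$ the quotient map may fail to admit a section even after base change to $\bF$. The simplest counterexample: $G_0=\bG_a$ and $G_0'=\bZ/p\bZ\subset\bG_a$, so that $G_0\rar{}G_0/G_0'\cong\bG_a$ is the Artin--Schreier isogeny $x\mapsto x^p-x$, a nontrivial \'etale cover of degree $p$ with no section over any field (this remains true after perfectization, since \'etale covers are insensitive to it). You also invoke ``fppf descent of the canonical weight filtration,'' which is more machinery than is needed: the descent of the weight bound along the surjection is most cleanly obtained by directly comparing Frobenius eigenvalues at stalks, as in the paper's Lemma \ref{l:auxiliary-weights-pullback}. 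So the proof is correct as written because the backup argument works, but the claimed ``cleanest route'' is in fact invalid and should be removed, leaving only the stalk-by-stalk Frobenius argument (or, if you prefer, the section argument restricted to the case $G_0'$ connected, which would then require a separate reduction).
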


The proof of Lemma \ref{l:averaging-purity} is given in \S\ref{ss:proof-l:averaging-purity}.

\subsection{Induction functors}\label{ss:induction-functors} If
$k$ is any perfect field of characteristic $p>0$ and $G'\subset G$
are perfect unipotent groups over $k$, consider the conjugation
actions of $G$ and $G'$ on themselves, and let $j:G'\into G$ denote
the inclusion morphism. Then $j$ induces the functor
$j_!=j_*:\sD_{G'}(G')\rar{}\sD_{G'}(G)$ of extension by zero.

\begin{defin}\label{d:induction-functors}
The \emph{induction functor} $\Ig:\sD_{G'}(G')\rar{}\sD_G(G)$, and
the functor $\ig:\sD_{G'}(G')\rar{}\sD_G(G)$ of \emph{induction with
compact supports}, are defined as
\[
\Ig=\Av_{G/G'}\circ j_! \qquad\text{and}\qquad \ig=\av_{G/G'}\circ
j_!.
\]
\end{defin}

By construction, we have a canonical morphism of functors
$\can:\ig\rar{}\Ig$.

\begin{rem}\label{r:equivalent-def-induction}
In \cite[\S5.2.2]{characters} we gave a different definition of the functor $\ig$, which we now recall. Form the quotient $\widetilde{G}=(G\times G')/G'$ for the right $G'$-action on $G\times G'$ given by $(g,g')\cdot\ga=(g\ga,\ga^{-1}g'\ga)$. The left multiplication action of $G$ on itself induces a $G$-action on $\widetilde{G}$. We have the $G'$-equivariant injection $i:G'\into\widetilde{G}$ induced by $g'\mapsto(1,g')$ and the $G$-equivariant morphism $\pi:\widetilde{G}\rar{}G$ induced by $(g,g')\mapsto gg'g^{-1}$. The functor $i^*:\sD_G(\widetilde{G})\rar{}\sD_{G'}(G')$ is an equivalence, and in \emph{loc.~cit.} we defined $\ig=\pi_!\circ(i^*)^{-1}:\sD_{G'}(G')\rar{}\sD_G(G)$.

\mbr

It will be useful for us to know that the two constructions of $\ig$ lead to naturally isomorphic functors. This assertion is proved in \S\ref{ss:equivalence-def-induction}.
\end{rem}

Let us list some properties of induction functors. The
following facts result from Lemmas
\ref{l:averaging-adjunctions} and \ref{l:averaging-duality} together
with Remark \ref{r:functoriality-of-averaging}.

\begin{prop}\label{p:induction-duality}
Let $G$ be a perfect unipotent group over $k$, let $G'\subset G$ be
a closed subgroup, and let $d=\dim(G/G')$. The functor $\Ig$ is
right adjoint to the restriction functor
$\sD_G(G)\rar{}\sD_{G'}(G')$. Furthermore, the functors
$\bD_G\circ\Ig\circ\bD_{G'}$, $\bD^-_G\circ\Ig\circ\bD^-_{G'}$ and
$\ig[2d](d)$ are isomorphic\footnote{The functor $\bD_G^-:\sD_G(G)^\circ\rar{}\sD_G(G)$ was
introduced in \S\ref{ss:properties-L-packets}.}.
\end{prop}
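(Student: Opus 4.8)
The plan is to derive everything from the corresponding statements about the averaging functors together with the elementary behaviour of the extension-by-zero functor $j_!=j_*$. The adjunction claim is immediate: by definition $\Ig=\Av_{G/G'}\circ j_!$, the functor $j_!:\sD_{G'}(G')\rar{}\sD_{G'}(G)$ is (as $j$ is a closed immersion) right adjoint to the restriction $j^*:\sD_{G'}(G)\rar{}\sD_{G'}(G')$, and by Lemma \ref{l:averaging-adjunctions} the functor $\Av_{G/G'}$ is right adjoint to the forgetful functor $\sD_G(G)\rar{}\sD_{G'}(G)$. Since the composition of right adjoints is right adjoint to the composition of the corresponding left adjoints, $\Ig$ is right adjoint to the composite $\sD_G(G)\rar{}\sD_{G'}(G)\rar{j^*}\sD_{G'}(G')$, which is exactly the restriction functor $\sD_G(G)\rar{}\sD_{G'}(G')$ attached to the inclusion of groups (with their conjugation actions).

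For the duality statements, the key observation is that $j_!=j_*$ commutes with Verdier duality: since $j:G'\into G$ is a closed immersion one has a canonical isomorphism $\bD_G\circ j_!\cong j_*\circ\bD_{G'}=j_!\circ\bD_{G'}$ of functors $\sD_{G'}(G')^\circ\rar{}\sD_G(G)$, and this upgrades to the equivariant categories. Combining this with Lemma \ref{l:averaging-duality}, which gives $\bD_G\circ\Av_{G/G'}\circ\bD_G\cong\av_{G/G'}[2d](d)$ with $d=\dim(G/G')$, I compute
\[
\bD_G\circ\Ig\circ\bD_{G'} \;=\; \bD_G\circ\Av_{G/G'}\circ j_!\circ\bD_{G'} \;\cong\; \bD_G\circ\Av_{G/G'}\circ\bD_G\circ j_! \;\cong\; \av_{G/G'}[2d](d)\circ j_! \;=\; \ig[2d](d).
\]
Here I have used that $\bD_G$ is an involution on $\sD_G(G)$ and that the shift and Tate twist commute with $\av_{G/G'}$ and with $j_!$. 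This proves the middle isomorphism of the Proposition.

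Finally, for the functor $\bD_G^-=\bD_G\circ\iota^*=\iota^*\circ\bD_G$ (notation of \S\ref{ss:properties-L-packets}, with $\iota$ the inversion morphism), it suffices to note that $\iota^*$ is compatible with induction: since $\iota$ is an automorphism of $G$ carrying the conjugation action to itself and restricts to the inversion on $G'$ (compatibly with $j$), one has $\iota^*\circ\Ig\cong\Ig\circ\iota^*$ and $\iota^*\circ\ig\cong\ig\circ\iota^*$ — either directly from the definition via $\Av$ and $j_!$, using the functoriality of averaging (Remark \ref{r:functoriality-of-averaging}) applied to the $G$-equivariant isomorphism induced by $\iota$, or from the alternative description of $\ig$ recalled in Remark \ref{r:equivalent-def-induction}. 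Since $\iota^*$ also commutes with shifts and Tate twists, composing the already-established isomorphism $\bD_G\circ\Ig\circ\bD_{G'}\cong\ig[2d](d)$ on both sides with $\iota^*$ yields $\bD_G^-\circ\Ig\circ\bD_{G'}^-\cong\ig[2d](d)$ as well, which completes the proof. The only point requiring a little care — and the one I would expect to be the main obstacle — is checking that all of these isomorphisms (compatibility of $j_!$ with $\bD$, of $\iota^*$ with $\Ig$ and $\ig$, and the upgrades to the equivariant derived categories) are canonical and mutually compatible rather than merely abstract, but each of these is standard and the equivariant upgrades follow the pattern already used for $\Av$ and $\bD$ in \cite{foundations}.
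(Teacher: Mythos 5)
Your proposal is correct and follows essentially the same route the paper intends: the paper's proof is a one-line citation of Lemma \ref{l:averaging-adjunctions}, Lemma \ref{l:averaging-duality}, and Remark \ref{r:functoriality-of-averaging}, and you have simply spelled out in detail how these (together with the standard compatibility of $j_!=j_*$ with Verdier duality for the closed immersion $j$, and the $G$-equivariance of the inversion $\iota$) combine to give the three assertions.
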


The next proposition follows immediately from Corollary
\ref{c:averaging-purity}.

\begin{prop}\label{p:induction-purity}
Let $G_0$ be a perfect unipotent group over $\bF_q$, and let
$G'_0\subset G_0$ be a closed subgroup. Fix $M\in\sD_{G'_0}(G'_0)$
and $w\in\bZ$. If $M\in\sD_{\leq w}(G'_0)$, then
$\igz(M)\in\sD_{\leq w}(G_0)$. If $M\in\sD_{\geq w}(G'_0)$, then
$\Igz(M)\in\sD_{\geq w}(G_0)$. Thus if $M$ is pure of weight $w$ and
the canonical morphism $\can_M:\igz(M)\rar{}\Igz(M)$ is an
isomorphism, then $\igz(M)$ is pure of weight $w$.
\end{prop}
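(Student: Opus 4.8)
The plan is to unwind the definitions $\igz=\av_{G_0/G'_0}\circ j_!$ and $\Igz=\Av_{G_0/G'_0}\circ j_!$ (Definition \ref{d:induction-functors}, applied to the conjugation actions of $G_0$ and $G'_0$ on themselves, with $j:G'_0\into G_0$ the inclusion) and reduce the statement to Corollary \ref{c:averaging-purity} together with Deligne's Theorem \ref{t:deligne}. The only mildly delicate point is that the \emph{same} functor $j_!$ occurs in the definitions of both $\igz$ and $\Igz$, yet we need it to preserve the condition ``weights $\leq w$'' in one case and ``weights $\geq w$'' in the other; this works because $G'_0$ is a \emph{closed} subgroup, so $j$ is a closed immersion and hence $j_!=j_*$.

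First I would handle the extension-by-zero step. Since $j$ is a closed immersion, $j_!=j_*$, and by Theorem \ref{t:deligne} the functor $j_!$ carries $\sD_{\leq w}(G'_0)$ into $\sD_{\leq w}(G_0)$ while $j_*$ carries $\sD_{\geq w}(G'_0)$ into $\sD_{\geq w}(G_0)$; on underlying complexes these are the usual $j_!$ and $j_*$, so $j_!:\sD_{G'_0}(G'_0)\rar{}\sD_{G'_0}(G_0)$ (with $G'_0$ acting on $G_0$ by conjugation) sends a complex of weights $\leq w$ (resp.\ $\geq w$) to one with the same property. Then I would apply Corollary \ref{c:averaging-purity} with $X_0=G_0$: if $M\in\sD_{\leq w}(G'_0)$ then $j_!M\in\sD_{\leq w}(G_0)$, whence $\igz(M)=\av_{G_0/G'_0}(j_!M)\in\sD_{\leq w}(G_0)$; dually, if $M\in\sD_{\geq w}(G'_0)$ then $j_!M=j_*M\in\sD_{\geq w}(G_0)$, whence $\Igz(M)=\Av_{G_0/G'_0}(j_!M)\in\sD_{\geq w}(G_0)$.

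For the purity assertion I would simply combine the two cases: if $M$ is pure of weight $w$, i.e.\ $M\in\sD_{\leq w}(G'_0)\cap\sD_{\geq w}(G'_0)$, then $\igz(M)\in\sD_{\leq w}(G_0)$ and $\Igz(M)\in\sD_{\geq w}(G_0)$; if moreover $\can_M:\igz(M)\rar{}\Igz(M)$ is an isomorphism, this common object lies in $\sD_{\leq w}(G_0)\cap\sD_{\geq w}(G_0)$ and is therefore pure of weight $w$. There is no genuine obstacle here — every ingredient is already in place — so the ``hard part'' is purely bookkeeping: tracking which variance ($j_!$ versus $j_*$, $\av$ versus $\Av$) preserves which half of the purity condition, and invoking the closedness of $G'_0\subset G_0$ exactly where it is needed.
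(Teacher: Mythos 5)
Your argument is correct and takes essentially the same route as the paper, which simply asserts that the proposition "follows immediately from Corollary \ref{c:averaging-purity}." You have filled in the one implicit step — that $j_!=j_*$ for the closed immersion $j:G'_0\into G_0$ preserves $\sD_{\leq w}$ and $\sD_{\geq w}$ by Theorem \ref{t:deligne} — which the paper leaves to the reader; nothing else differs.
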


%%%%%%%%%%%%%%%%%%%%%%%%%%%%%%%%%%%%%%%%%%%%%%%%%%%%%%%%
%%%%%%%%%%%%%%%%%%%%%%%%%%%%%%%%%%%%%%%%%%%%%%%%%%%%%%%%
%%%%%%%%%%%%%%%%%%%%%%%%%%%%%%%%%%%%%%%%%%%%%%%%%%%%%%%%
%%%
%%%           PURE INNER FORMS
%%%
%%%%%%%%%%%%%%%%%%%%%%%%%%%%%%%%%%%%%%%%%%%%%%%%%%%%%%%%
%%%%%%%%%%%%%%%%%%%%%%%%%%%%%%%%%%%%%%%%%%%%%%%%%%%%%%%%
%%%%%%%%%%%%%%%%%%%%%%%%%%%%%%%%%%%%%%%%%%%%%%%%%%%%%%%%

\section{Pure inner forms}\label{s:pure-inner-forms}

Throughout this section we work with an algebraic closure $\bF$ of a field of prime order $p$ and a finite subfield $\bF_q\subset\bF$. Pure inner forms\footnote{This notion is not to be confused with the concept of a pure complex recalled in \S\ref{ss:purity-reminder}.} of perfect groups over $\bF_q$ were used in \cite{characters} to study the relationship between the induction functor $\igz$, where $G_0'$ is a closed subgroup of a perfect unipotent group $G_0$ over $\bF_q$, and the operation of induction of class functions from $G'_0(\bF_q)$ to $G_0(\bF_q)$.

\mbr

The two types of induction are compatible ``on the nose'' when $G_0'$ is connected. In general, the relationship is more complicated \cite[Proposition 6.13]{characters}. We give a more general result in Proposition \ref{p:induction-sheaves-functions} below.

\subsection{Definitions}\label{ss:defs-inner-forms} We follow the same conventions regarding the definition of pure inner forms as in \cite{characters}. In particular, if $G_0$ is a perfect group over $\bF_q$, we write $H^1(\bF_q,G_0)$ for the set of isomorphism classes of right $G_0$-torsors.

\begin{defin}\label{d:pure-inner-form-group}
A \emph{pure inner form} of $G_0$ is a pair $(G_1,P)$ consisting of a perfect group $G_1$ over $\bF_q$ and a $(G_1,G_0)$-bitorsor\footnote{Let us recall the definition. One requires that $P$ is a perfect variety over $\bF_q$ equipped with a left $G_1$-action and a right $G_0$-action, that these actions commute, and that they make $P$ a left (respectively, right) torsor under $G_1$ (respectively, $G_0$).} $P$.
\end{defin}

\begin{rems}
\begin{enumerate}[(1)]
\item If we fix a right $G_0$-torsor $P$, there exist a perfect group $G_1$ over $\bF_q$ and a left action of $G_1$ on $P$ such that $P$ is a $(G_1,G_0)$-bitorsor. Moreover, the pair consisting of $G_1$ and this action is determined up to a unique isomorphism.
     \sbr
    Uniqueness follows from the observation that if $P$ is a $(G_1,G_0)$-bitorsor, then $G_1$ represents the functor that sends a perfect scheme $S$ over $\bF_q$ to the group of $S$-scheme automorphisms of $P\times S$ that commute with the right $G_0$-action.
     \sbr
    Existence follows from \cite[Lemma 6.3]{characters}.
 \sbr
\item Let $\al\in H^1(\bF_q,G_0)$, and choose a representative $P$ of the isomorphism class $\al$. As we just mentioned, we obtain a corresponding pure inner form $(G_1,P)$ of $G_0$. By abuse of notation, we will write $G_0^\al$ for $G_1$ and call it the \emph{pure inner form of $G_0$ defined by $\al$}. The class $\al$ only determines $G_0^\al$ up to an isomorphism that is unique up to inner automorphisms. However, our primary interest lies in the space of conjugation-invariant functions on the group $G_0^\al(\bF_q)$, which is canonically determined by $\al$.
 \sbr
\item In view of the first remark, the definition of a pure inner form of $G_0$ given above and the notation $G_0^\al$ agree with those used in \cite{characters}.
\end{enumerate}
\end{rems}

\begin{defin}\label{d:pure-inner-form-G-variety}
If $(G_1,P)$ is a pure inner form of a perfect group $G_0$ over $\bF_q$ and $X$ is a perfect variety over $\bF_q$ equipped with a left $G_0$-action, we write $X^\al=(P\times X)/G_0$, where $\al=[P]\in H^1(\bF_q,G_0)$ as before and the right $G_0$-action on $P\times X$ is given by $(p,x)\cdot g=(p\cdot g,g^{-1}\cdot x)$. Note that $X^\al$ carries a natural left action of $G_0^\al=G_1$.
\end{defin}

\subsection{An interpretation via gerbes}\label{ss:gerby-interpretation} In this subsection we explain a more elegant approach to the definition of pure inner forms and the other notions appearing in this section, which was communicated to us by A.~Beilinson.

\mbr

Let $\fG$ be an algebraic gerbe of finite type over $\bF_q$. In view of \cite[Cor.~6.4.2]{behrend-book}, one can use the following concrete definition: $\fG$ is an algebraic stack over $\bF_q$, which is isomorphic to the classifying stack of some group scheme of finite type over $\bF_q$.

\mbr

If we choose an object $x\in\fG(\bF_q)$ and let $G_0=\Aut_{\fG}(x)$, then $G_0$ is a group scheme of finite type over $\bF_q$ and $x$ determines an isomorphism $BG_0\rar{\sim}\fG$. Then the set of isomorphism classes of objects in $\fG(\bF_q)$ becomes identified with $H^1(\bF_q,G_0)$, and if $y\in\fG(\bF_q)$, then $G_1=\Aut_{\fG}(y)$ is a pure inner form of $G_0$ over $\bF_q$ whose class in $H^1(\bF_q,G_0)$ corresponds to the isomorphism class of $y$ in $\fG(\bF_q)$. Moreover, $\operatorname{Isom}_{\fG}(x,y)$ is naturally a $(G_1,G_0)$-bitorsor. Thus the datum of a triple $(G_0,G_1,P)$, where $G_0$ and $G_1$ are group schemes of finite type over $\bF_q$ and $P$ is a $(G_1,G_0)$-bitorsor, is equivalent to the datum of a triple $(\fG,x,y)$, where $\fG$ is an algebraic gerbe of finite type over $\bF_q$ and $x,y\in\fG(\bF_q)$ are objects.

\mbr

Definition \ref{d:pure-inner-form-G-variety} can also be rephrased in this language. To do so, let us introduce the following terminology for brevity. If $\fY$ is an algebraic stack, we call a \emph{scheme over $\fY$} an algebraic stack $\fX$ equipped with a morphism $\fX\rar{}\fY$ such that for every scheme $T$ the fiber product $\fX\times_{\fY}T$ is also a scheme. Now a scheme over $\bF_q$ with an action of $G_0$ is ``the same thing'' as a scheme over\footnote{Given an action of $G_0$ on a scheme $X$ over $\bF_q$, the quotient stack $G_0\backslash X$ is a scheme over $BG_0$. Conversely, if $\fX$ is a scheme over $BG_0$, then the fiber product of $\fX$ with the canonical morphism $x:\Spec\bF_q\rar{}BG_0$ is a scheme $X$ over $\bF_q$, and we get an action of $G_0=\Aut(x)$ on $X$.} $BG_0$. So with the notation of the previous paragraph, the objects $x$ and $y$ determine equivalences of categories
\[
\left\{
\begin{matrix}
\text{schemes with a } \\
G_0-\text{action}
\end{matrix}
\right\} \rar{\sim}
\left\{
\begin{matrix}
\text{schemes} \\
\text{over } \fG
\end{matrix}
\right\} \rar{\sim}
\left\{
\begin{matrix}
\text{schemes with a } \\
G_1-\text{action}
\end{matrix}
\right\}
\]
whose composition is the functor $X\mapsto(P\times X)/G_0$, where $P=\operatorname{Isom}_{\fG}(x,y)$.

\mbr

Throughout this section we will explain how the other constructions and results can be interpreted from the gerby viewpoint. However, for the reader's convenience, we will also indicate the more \emph{ad hoc} arguments, which avoid the language of stacks.

\subsection{Functoriality of pure inner forms}\label{ss:functoriality-inner-forms} Let $\vp:G_0'\rar{}G_0$ be a homomorphism of perfect groups over $\bF_q$. It induces a natural map $\vp_*:H^1(\bF_q,G_0')\rar{}H^1(\bF_q,G_0)$. If $\be\in H^1(\bF_q,G_0')$ and $\al=\vp_*(\be)$, we obtain the corresponding homomorphism $\vp^\be:G_0'^\be\rar{}G_0^\al$. If $\vp$ is a closed embedding, then so is $\vp^\be$. In the latter case, we tacitly view $G_0'$ as a closed subgroup of $G_0$, and $G_0'^\be$ as a closed subgroup of $G_0^\al$.

\mbr

Next suppose $X'$ (respectively, $X$) is a perfect variety over $\bF_q$ equipped with a left action of $G_0'$ (respectively, $G_0$), and let $f:X'\rar{}X$ be a $G_0'$-equivariant morphism (where the $G_0'$-action on $X$ is induced via $\vp$ by the action of $G_0$). Then we also obtain the corresponding $G_0'^\be$-equivariant morphism $f^\be:X'^\be\rar{}X^\al$.

\mbr

In the special case where $G_0'=G_0$ and $\vp$ is the identity, let us make a more precise statement. For any perfect group $G$ over a perfect field $k$, we write $G\var$ for the category of perfect varieties over $k$ equipped with a $G$-action. (The morphisms in this category are the $G$-equivariant morphisms of $k$-schemes.) The next lemma is a tautology from the gerby viewpoint of \S\ref{ss:gerby-interpretation}. However, it is also very easy to give a direct argument using Definitions \ref{d:pure-inner-form-group} and \ref{d:pure-inner-form-G-variety}, so we omit the proof.

\begin{lem}\label{l:pure-inner-forms-equivalence}
If $(G_1,P)$ is a pure inner form of a perfect group $G_0$ over $\bF_q$, the functor $X\longmapsto (P\times X)/G_0$ described in Definition \ref{d:pure-inner-form-G-variety} is an equivalence of categories
\[
G_0\var \rar{\sim} G_1\var.
\]
A quasi-inverse functor is given by $Y\mapsto G_1\setminus(Y\times P)$, where $G_1$ acts on $Y\times P$ diagonally and the $G_0$-action on the quotient is induced by $g\cdot(y,p)=(y,p\cdot g^{-1})$.
\end{lem}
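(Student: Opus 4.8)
The plan is to recognize that both functors in the statement are special cases of the twisting construction of Definition~\ref{d:pure-inner-form-G-variety}, and then to write down explicit mutually inverse natural transformations. The functor $F\colon X\longmapsto(P\times X)/G_0$ is by definition the one attached in Definition~\ref{d:pure-inner-form-G-variety} to the pure inner form $(G_1,P)$ of $G_0$, so $F(X)$ is again a perfect variety over $\bF_q$ and carries the stated left $G_1$-action; thus $F\colon G_0\var\to G_1\var$ is a functor. For the candidate quasi-inverse $H\colon Y\longmapsto G_1\backslash(Y\times P)$, I would introduce $P^{\mathrm{op}}$: the perfect variety $P$ with $G_0$ acting on the left by $g\cdot p:=p\cdot g^{-1}$ and $G_1$ acting on the right by $p\cdot\gamma:=\gamma^{-1}\cdot p$. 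Then $P^{\mathrm{op}}$ is a $(G_0,G_1)$-bitorsor, so $(G_0,P^{\mathrm{op}})$ is a pure inner form of $G_1$ in the sense of Definition~\ref{d:pure-inner-form-group}; and one checks on the spot that $(P^{\mathrm{op}}\times Y)/G_1$, with its diagonal $G_1$-action and residual $G_0$-action, is literally $G_1\backslash(Y\times P)$ with the $G_0$-action $g\cdot(y,p)=(y,p\cdot g^{-1})$. Hence $H(Y)$ is a perfect variety over $\bF_q$ with a natural left $G_0$-action and $H\colon G_1\var\to G_0\var$ is a functor.

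Next I would construct a natural isomorphism $\eta\colon\id_{G_0\var}\Rightarrow H\circ F$. For $X\in G_0\var$, consider the morphism $P\times X\to H(F(X))$ sending a point $(p,x)$ to $\bigl[\,[p,x]_{G_0},\,p\,\bigr]_{G_1}$, where $[p,x]_{G_0}$ denotes the class of $(p,x)$ in $F(X)=(P\times X)/G_0$. Replacing $p$ by $\gamma\cdot p$ for $\gamma\in G_1$ replaces the pair $([p,x]_{G_0},p)$ by its image under the diagonal $G_1$-action (using that the $G_1$-action on $F(X)$ comes from left translation on $P$), so the resulting class in $G_1\backslash(\cdots)$ is unchanged; since $P$ is a left $G_1$-torsor this morphism is therefore constant along the fibres of the projection $P\times X\to X$. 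As $P\to\Spec\bF_q$ is a faithfully flat quasi-compact cover (being a torsor under a perfect unipotent group), so is its base change $P\times X\to X$, and faithfully flat descent produces a unique morphism $\eta_X\colon X\to H(F(X))$ through which it factors. A direct check shows $\eta_X$ is $G_0$-equivariant and natural in $X$.

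Finally I would show each $\eta_X$ is an isomorphism and produce the other half. Being an isomorphism is insensitive to faithfully flat base change, so I would base change along $P\to\Spec\bF_q$: over $P$ the pulled-back torsor acquires the diagonal section, hence a trivialization $P\cong G_0$ (of right $G_0$-torsors) together with the induced identification $G_1\cong G_0$, under which $F$ becomes "transport of $G_0$-variety structure along $G_1\cong G_0$", $H$ becomes its inverse, and $\eta_X$ becomes the identity; therefore $\eta_X$ is an isomorphism. Applying the same construction to the pure inner form $(G_0,P^{\mathrm{op}})$ of $G_1$ — equivalently, invoking the left–right symmetry of the setup — yields a natural isomorphism $\varepsilon\colon F\circ H\Rightarrow\id_{G_1\var}$. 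Together $\eta$ and $\varepsilon$ exhibit $F$ and $H$ as quasi-inverse, so $F$ is an equivalence $G_0\var\xrightarrow{\sim}G_1\var$. The only step needing genuine care is the descent argument in the construction of $\eta$ (the $p$-independence and the $G_0$-equivariance), together with bookkeeping of the left and right actions through the identifications; there is no geometric difficulty, which is why the proof is omitted in the text.
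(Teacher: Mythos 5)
The paper gives no proof of this lemma: it remarks only that the statement is a tautology from the gerby viewpoint of \S\ref{ss:gerby-interpretation}, and that a direct argument from Definitions \ref{d:pure-inner-form-group} and \ref{d:pure-inner-form-G-variety} is ``very easy'' and therefore omitted. Your proposal is precisely that direct argument, and it is correct. The key organizing idea — that $H$ is itself the twisting functor attached to the opposite bitorsor $P^{\mathrm{op}}$, so that $(G_0,P^{\mathrm{op}})$ is a pure inner form of $G_1$ and the two directions are symmetric — is a clean way to make the ``easy direct argument'' actually short; in particular it means the unit and counit constructions are genuinely the same calculation. The construction of $\eta_X$ by writing a map out of $P\times X$ and descending along the $G_1$-torsor $P\times X\to X$ is sound: the independence under $p\mapsto\gamma p$ is the only thing to check, and it holds because the pair $([p,x]_{G_0},p)$ transforms by the diagonal $G_1$-action, exactly as you say. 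The $G_0$-equivariance of $\eta_X$ is a one-line check from the orbit identity $[p,gx]_{G_0}=[pg,x]_{G_0}$ together with $\gamma p=pg$. One small stylistic remark: you do not really need the fppf-descent machinery here; since $P\times X\to X$ is the quotient map for the free $G_1$-action, the factorization of a $G_1$-invariant morphism through $X$ is just the universal property of the quotient in the category of perfect varieties, which the paper has already implicitly used in Definition \ref{d:pure-inner-form-G-variety}. The trivialization-after-base-change-to-$P$ argument for showing $\eta_X$ is an isomorphism is also fine and matches the ``tautology from the gerby viewpoint'': over $P$ the gerbe $BG_0$ acquires the tautological point, so $F$ and $H$ become transport along a concrete isomorphism $G_0\cong G_1$.
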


\subsection{Transport of equivariant complexes}\label{ss:transport-equivariant-complexes} Let $(G_1,P)$ be a pure inner form of a perfect unipotent group $G_0$ over $\bF_q$, let $X_0\in G_0\var$, and define $X_1=(P\times X_0)/G_0$ using the construction of Definition \ref{d:pure-inner-form-G-variety}.

\begin{defin}\label{d:transport-equivariant-complexes}
We define a functor $\sD_{G_0}(X_0)\rar{\sim}\sD_{G_1}(X_1)$, which we call \emph{transport of equivariant complexes}, as follows. Let $\varpi:P\times X_0\rar{}X_1$ denote the quotient morphism, and let $\pi_2:P\times X_0\rar{}X_0$ denote the second projection. Observe that $\pi_2$ is a quotient map for the action of $G_1$ induced by the left $G_1$-action on $P$. Hence the functors $\pi_2^*:\sD_{G_0}(X_0)\rar{}\sD_{G_1\times G_0}(P\times X_0)$ and $\varpi^*:\sD_{G_1}(X_1)\rar{}\sD_{G_1\times G_0}(P\times X_0)$ are equivalences. We choose a quasi-inverse of $\varpi^*$ and define the transport functor $\sD_{G_0}(X_0)\rar{\sim}\sD_{G_1}(X_1)$ as the composition $(\varpi^*)^{-1}\circ \pi_2^*$.
\end{defin}

If $\al\in H^1(\bF_q,G_0)$ denotes the isomorphism class of $P$ (as a right $G_0$-torsor) and we write $X_0^\al=X_1$, as in Definition \ref{d:pure-inner-form-G-variety}, we will denote the transport functor by
\[
\sD_{G_0}(X_0) \rar{} \sD_{G_0^\al}(X_0^\al), \qquad M\longmapsto M^\al.
\]

\begin{rem}\label{r:transport-unique-up-to-isom}
The construction of the transport functor $M\longmapsto M^\al$ involves a choice of a representative $P$ of the class $\al$ as well as a choice of a quasi-inverse of the functor $\varpi^*$ in the definition above. Making different choices in those steps lead to a canonically isomorphic transport functor, so these choices are irrelevant.
\end{rem}

\begin{rem}\label{r:gerby-viewpoint-transport-functor}
It is also easy to define the transport functor using the viewpoint of \S\ref{ss:gerby-interpretation}. Namely, in the situation of Definition \ref{d:transport-equivariant-complexes}, the quotient stacks $G_0\backslash X_0$ and $G_1\backslash X_1$ can be naturally identified, which yields an equivalence $D^b_c(G_0\backslash X_0,\ql)\rar{\sim}D^b_c(G_1\backslash X_1,\ql)$, where we are using the $\ell$-adic formalism of \cite{Las-Ols06}. If we interpret the equivariant derived categories $\sD_{G_0}(X_0)$ and $\sD_{G_1}(X_1)$ as $D^b_c(G_0\backslash X_0,\ql)$ and $D^b_c(G_1\backslash X_1,\ql)$, respectively, we get an equivalence $\sD_{G_0}(X_0)\rar{\sim}\sD_{G_1}(X_1)$, which is the same as the one constructed in Definition \ref{d:transport-equivariant-complexes}.
\end{rem}

\subsection{Functoriality of transport}\label{ss:functoriality-transport}

\begin{lem}\label{l:functoriality-transport}
Let $G_0$ be a perfect unipotent group over $\bF_q$, and let $f:X\rar{}Y$ be a morphism in $G_0\var$. For each $\al\in H^1(\bF_q,G_0)$, the functors
\[
\sD_{G_0}(X) \rar{} \sD_{G_0^\al}(Y^\al), \qquad M\longmapsto (f_!M)^\al \quad\text{and}\quad M\longmapsto (f^\al)_!(M^\al),
\]
are isomorphic, and the functors
\[
\sD_{G_0}(Y) \rar{} \sD_{G_0^\al}(X^\al), \qquad N\longmapsto(f^*N)^\al \quad\text{and}\quad N\longmapsto (f^\al)^*(N^\al),
\]
are isomorphic.
\end{lem}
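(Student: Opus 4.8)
The plan is to reduce both claimed isomorphisms to the corresponding statements about the quotient stacks, using the gerby interpretation sketched in Remark \ref{r:gerby-viewpoint-transport-functor}. Recall that transport of equivariant complexes is (up to canonical isomorphism) the equivalence $D^b_c(G_0\backslash X,\ql)\rar{\sim}D^b_c(G_0^\al\backslash X^\al,\ql)$ induced by the canonical identification of quotient stacks $G_0\backslash X\cong G_0^\al\backslash X^\al$. A morphism $f:X\rar{}Y$ in $G_0\var$ descends to a morphism of stacks $\bar f:G_0\backslash X\rar{}G_0\backslash Y$, and under the identifications of quotient stacks this is carried to $\overline{f^\al}:G_0^\al\backslash X^\al\rar{}G_0^\al\backslash Y^\al$. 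The statement about $f_!$ then follows because the six-functor formalism on the $\ell$-adic derived category of algebraic stacks (\cite{Las-Ols06}) is functorial: an isomorphism of stacks commutes with $(-)_!$ and $(-)^*$ of a morphism it conjugates, and the equivalences realizing transport of equivariant complexes intertwine $\bar f_!$ with $(\overline{f^\al})_!$ and $\bar f^*$ with $(\overline{f^\al})^*$. Thus $(f_!M)^\al\cong(f^\al)_!(M^\al)$ and $(f^*N)^\al\cong(f^\al)^*(N^\al)$ canonically.

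Since the paper takes the position (stated in \S\ref{ss:recollections}) that the equivariant derived categories are defined by the \emph{ad hoc} construction of \cite{characters,foundations} rather than as stack derived categories, the honest work is to give the argument directly from Definitions \ref{d:transport-equivariant-complexes} and \ref{d:pure-inner-form-G-variety}. First I would treat $f^*$. Unwind the definition: $N^\al=(\varpi_Y^*)^{-1}\pi_{2,Y}^*N$ and $(f^\al)^*(N^\al)$ is obtained by pulling back along $f^\al:X^\al\rar{}Y^\al$. On the other side, $(f^*N)^\al=(\varpi_X^*)^{-1}\pi_{2,X}^*f^*N$. The key geometric input is the commutative diagram
\[
\begin{CD}
P\times X @>{\id_P\times f}>> P\times Y \\
@V{\varpi_X}VV @VV{\varpi_Y}V \\
X^\al @>{f^\al}>> Y^\al
\end{CD}
\]
together with the fact that $\pi_{2,X}\circ(\id_P\times f)=f\circ\pi_{2,X}$ intertwines $\pi_{2,Y}$ and $\pi_{2,X}$. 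Because all four of $\varpi_X,\varpi_Y,\pi_{2,X},\pi_{2,Y}$ are quotient maps by free actions and the associated pullbacks are equivalences, a short diagram chase — using the canonical isomorphisms $(\varpi_Y^*)^{-1}\circ(\id_P\times f)^* \cong (f^\al)^*\circ(\varpi_Y^*)^{-1}$ obtained by transporting the base-change isomorphism $(\id_P\times f)^*\varpi_Y^*\cong\varpi_X^*(f^\al)^*$ through the equivalences — yields the desired natural isomorphism of functors. For $f_!$ one argues in the same way, replacing the base-change isomorphism for $(-)^*$ along this (Cartesian) square by the proper(-or-smooth)-base-change isomorphism $(\id_P\times f)_!\varpi_Y^*\cong\varpi_X^*(f^\al)_!$ valid since the square is Cartesian and $\varpi_X,\varpi_Y$ are smooth (being quotient maps by a smooth group action). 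Concretely: $(f_!M)^\al=(\varpi_X^*)^{-1}\pi_{2,X}^*f_!M\cong(\varpi_X^*)^{-1}(\id_P\times f)_!\pi_{2,Y}^*M$, where the last step uses smooth base change along $P\times X\to X$ versus $P\times Y\to Y$; then base change along the displayed square converts $(\varpi_X^*)^{-1}(\id_P\times f)_!$ into $(f^\al)_!(\varpi_Y^*)^{-1}$, giving $(f^\al)_!(M^\al)$.

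The main obstacle is bookkeeping rather than a genuine difficulty: one must verify that the various equivalences and base-change isomorphisms are compatible with the $G$-equivariant structures (i.e., live in the equivariant derived categories, not merely the plain ones) and that the quasi-inverses chosen in Definition \ref{d:transport-equivariant-complexes} do not affect the final isomorphism — which is precisely the content of Remark \ref{r:transport-unique-up-to-isom}, so it suffices to fix one choice throughout. A clean way to organize this, and the route I would actually take in the write-up, is to first establish the $f^*$ statement (which is purely formal, since $(-)^*$ commutes with the equivalences defining transport by construction), and then deduce the $f_!$ statement by Verdier duality together with Lemma \ref{l:functoriality-transport} applied to $f^!$ — or, more directly, by noting that transport of equivariant complexes commutes with Verdier duality (it is an equivalence locally modeled on $\id^*$ for the étale-local trivialization of $P$, hence commutes with all six functors), so the $f_!$ case is formally equivalent to the $f^*$ case.
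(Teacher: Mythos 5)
Your proof is correct and takes the same approach as the paper: the paper's proof is the single sentence ``The lemma follows from the definitions and the proper base change theorem,'' which is exactly the diagram chase you carry out from Definitions \ref{d:pure-inner-form-G-variety} and \ref{d:transport-equivariant-complexes}. One systematic notational slip in the final ``Concretely'' sentence: the subscripts $X$ and $Y$ on $\varpi$ and $\pi_2$ are interchanged throughout. Since $f_!M\in\sD_{G_0}(Y)$, its transport is $(\varpi_Y^*)^{-1}\pi_{2,Y}^*f_!M$, not $(\varpi_X^*)^{-1}\pi_{2,X}^*f_!M$; base change on the $(\pi_2,f)$-square yields $(\varpi_Y^*)^{-1}(\id_P\times f)_!\pi_{2,X}^*M$ (you wrote $\pi_{2,Y}^*M$, which is not even defined since $M$ lives on $X$); and base change on the $(\varpi,f^\al)$-square, which you correctly observe is Cartesian, converts $(\varpi_Y^*)^{-1}(\id_P\times f)_!$ into $(f^\al)_!(\varpi_X^*)^{-1}$, giving $(f^\al)_!(M^\al)$. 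With the subscripts fixed, the argument is exactly right. The alternative route sketched at the end --- establishing the $f^*$ case directly, combining with Proposition \ref{p:transport-duality}(a) to get the $f^!$ case, then passing to left adjoints (using that transport is an equivalence) to get the $f_!$ case --- also works and is not circular, since the proof of Proposition \ref{p:transport-duality} in the Appendix does not invoke this lemma.
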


The lemma follows from the definitions and the proper base change theorem.

\subsection{The conjugation action}\label{ss:conjugation-action-pure-inner-forms}
Let $G_0$ be a perfect group over $\bF_q$. If we let $G_0$ act on itself by conjugation, $G_0$ becomes an object of the category $G_0\var$. The multiplication and inversion maps for $G_0$ are equivariant, and hence they endow $G_0$ with the structure of a group object of the category $G_0\var$.

\mbr

Next suppose $(G_1,P)$ is a pure inner form of $G_0$ and define $\cG_1=(P\times G_0)/G_0$, where $G_0$ acts on $P\times G_0$ on the right via $(p,x)\cdot g=(p\cdot g,g^{-1}xg)$. By Lemma \ref{l:pure-inner-forms-equivalence}, $\cG_1$ has a natural structure of a group object in the category $G_1\var$.

\mbr

Moreover, if $X_0\in G_0\var$ is arbitrary, the action morphism $G_0\times X_0\rar{}X_0$ is $G_0$-equivariant (where $G_0$ acts on $G_0\times X_0$ diagonally, the action on the first factor being the conjugation action). By Lemma \ref{l:pure-inner-forms-equivalence}, we obtain a natural action of $\cG_1$ on $X_1=(P\times X_0)/G_0$ in the category $G_1\var$. On the other hand, we can also view $G_1$ as a group object of $G_1\var$, and it acts on $X_1$ as well.

\begin{prop}\label{p:conjugation-action}
With the above notation, there exists a unique $G_1$-equivariant isomorphism $G_1\rar{\simeq}\cG_1$ of perfect groups over $\bF_q$ that is compatible with the actions of $G_1$ and $\cG_1$ on $X_1=(P\times X_0)/G_0$ for every $X_0\in G_0\var$.
\end{prop}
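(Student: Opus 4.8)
The plan is to apply the equivalence of categories of Lemma \ref{l:pure-inner-forms-equivalence} to the group object $G_0$ of $G_0\var$ (with the conjugation action) and to track what happens to its group structure. Since the functor $X\longmapsto X^\al = (P\times X)/G_0$ from $G_0\var$ to $G_1\var$ is an equivalence, it preserves products (as finite products in $G_0\var$ are computed as in the category of perfect schemes, with the diagonal action, and the functor commutes with these); hence it carries the group object $G_0$ of $G_0\var$, whose multiplication and inversion are conjugation-equivariant as noted in \S\ref{ss:conjugation-action-pure-inner-forms}, to a group object of $G_1\var$. By construction that group object is precisely $\cG_1=(P\times G_0)/G_0$ with the multiplication and inversion maps transported from those of $G_0$. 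This already exhibits $\cG_1$ as a group object of $G_1\var$; what remains is to identify it with $G_1$ itself, viewed as a group object of $G_1\var$ via its conjugation action.

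For the identification, first I would construct the isomorphism over $\bF$. After base change to $\bF$ the torsor $P$ becomes trivial, so a choice of $\bF$-point $p_0\in P(\bF)$ gives an isomorphism $G_0\otimes\bF\rar{\sim}\cG_1\otimes\bF$ of group schemes, $x\mapsto[(p_0,x)]$, and one checks directly that it is equivariant for the conjugation action transported through $p_0$. The subtlety is that $G_1(\bF)$ is not the conjugation action on $G_0\otimes\bF$ literally but after twisting by $P$; however $\cG_1$ was built by exactly the same twist, so the two group schemes are canonically isomorphic over $\bF$ and the isomorphism is Frobenius-equivariant because both sides are defined over $\bF_q$ by the same descent datum (the one coming from $P$). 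Spelling this out using Definition \ref{d:pure-inner-form-group} and the fact that $G_1$ represents automorphisms of $P$ commuting with the right $G_0$-action gives the isomorphism $G_1\rar{\simeq}\cG_1$ over $\bF_q$; this is really just the statement that "the group attached to a bitorsor" and "the conjugation-twist of $G_0$ by the torsor" coincide.

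To see that this isomorphism is $G_1$-equivariant and compatible with the actions on $X_1=(P\times X_0)/G_0$ for every $X_0\in G_0\var$, I would argue functorially: the action morphism $G_0\times X_0\rar{}X_0$ is a morphism in $G_0\var$, so applying the equivalence yields the action of $\cG_1$ on $X_1$; on the other hand the "tautological" $G_1$-action on $X_1$ corresponds under the equivalence to the $G_0$-action on $X_0$ via the group structure of $G_0$ viewed in $G_0\var$ — and these are the same morphism. Naturality of the equivalence then forces the square relating the $G_1$-action, the $\cG_1$-action and the isomorphism $G_1\rar{\simeq}\cG_1$ to commute for all $X_0$ simultaneously. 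Uniqueness follows because a $G_1$-equivariant group isomorphism compatible with the action on $X_1=\cG_1$ itself (take $X_0=G_0$ with left translation, or rather use $X_0=G_0$ with the conjugation action and the fact that the only such self-map fixing these structures is the identity) is pinned down; more cleanly, uniqueness can be read off from the gerby description in \S\ref{ss:gerby-interpretation}, where $G_1=\Aut_{\fG}(y)$ has no nontrivial automorphisms compatible with all the data.

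The main obstacle I anticipate is purely bookkeeping: making the twisted conjugation actions on $P\times G_0$ and $P\times X_0$ match up precisely with the transported group and module structures, and verifying Frobenius/descent compatibility over $\bF_q$ rather than just over $\bF$. There is no conceptual difficulty once one commits to the gerby reformulation — from that viewpoint the proposition is the assertion that $\operatorname{Isom}_{\fG}(x,y)$-twisting intertwines the conjugation group objects attached to $x$ and to $y$, which is a tautology — so I would present the gerby argument as the main proof and relegate the hands-on verification with Definitions \ref{d:pure-inner-form-group} and \ref{d:pure-inner-form-G-variety} to a parenthetical remark, exactly in the style already adopted in \S\ref{ss:gerby-interpretation}.
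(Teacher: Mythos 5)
Your existence argument takes a genuinely different route from the paper's. You propose to trivialize $P$ over $\bF$, produce an isomorphism $G_0\otimes\bF\rar{\sim}\cG_1\otimes\bF$, and then argue by descent that this yields the desired isomorphism $G_1\rar{\sim}\cG_1$ over $\bF_q$ since ``both sides are defined by the same descent datum.'' The paper instead constructs the isomorphism directly over $\bF_q$: one identifies $G_1$ with $(G_1\times P)/G_0$ and exhibits an explicit isomorphism $(G_1\times P)/G_0\rar{\sim}(P\times G_0)/G_0=\cG_1$ coming from the two torsor maps $G_1\times P\rar{\sim}P\times P\xleftarrow{\sim}P\times G_0$, $(g_1,p)\mapsto(g_1\cdot p,p)$ and $(p,g_0)\mapsto(p\cdot g_0,p)$. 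The direct construction avoids any descent bookkeeping and is easier to verify equivariant; your sketch would work, but the claim that the two descent data match is exactly the content of the proposition and would need to be spelled out rather than asserted.

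There is a genuine gap in your uniqueness argument. You waver between taking $X_0=G_0$ with the \emph{left translation} action and taking $X_0=G_0$ with the \emph{conjugation} action, and your ``or rather'' suggests you prefer conjugation. But conjugation does not pin down $\vp$: if $\vp,\vp'$ are two candidates, compatibility with the conjugation action on $X_1=\cG_1$ only forces $\vp'(g_1)^{-1}\vp(g_1)$ to be central for every $g_1$, and even together with the group-isomorphism and $G_1$-equivariance hypotheses this leaves room for nontrivial discrepancies (e.g.\ twisting by a homomorphism $G_1\rar{}Z(G_1)$). The choice that works — and is what the paper uses — is $X_0=G_0$ with the left translation action: then $X_1$ is $P$ viewed as a left $G_1$-torsor, the two action maps $(a,p_2):G_1\times P\rar{}P\times P$ and $(\al,\pi_2):\cG_1\times P\rar{}P\times P$ are \emph{isomorphisms} by the torsor property, and the compatibility square immediately determines $\vp\times\id_P=(\al,\pi_2)^{-1}\circ(a,p_2)$, hence $\vp$. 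You should commit to left translation and drop the conjugation alternative. (The gerby phrasing is also not a substitute as stated: $\Aut_{\fG}(y)$ certainly has nontrivial automorphisms as a group scheme; what is unique is the identification compatible with the full functorial data, which is precisely the torsor argument in disguise.)
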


The proposition is proved in \S\ref{ss:proof-p:conjugation-action}. For us the important implication is that if $G_0$ is a perfect unipotent group over $\bF_q$ and $\al\in H^1(\bF_q,G_0)$, then the meaning of the notation $\sD_{G_0^\al}(G_0^\al)$ is unambiguous\footnote{Namely, we could consider the $G_0^\al$ inside the parentheses as the pure inner form of $G_0$ viewed either as a perfect group over $\bF_q$ or as an object of $G_0\var$ for the conjugation action of $G_0$ on itself; the resulting categories are canonically equivalent by Proposition \ref{p:conjugation-action}.}. Moreover, we have

\begin{cor}\label{c:transport-monoidal-equivalence}
If $G_0$ is a perfect unipotent group over $\bF_q$ and $\al\in H^1(\bF_q,G_0)$, the transport functor $\sD_{G_0}(G_0)\rar{\sim}\sD_{G_0^\al}(G_0^\al)$ has the structure of a \emph{monoidal} equivalence.
\end{cor}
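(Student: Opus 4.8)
The plan is to exploit the explicit description of the transport functor from Definition~\ref{d:transport-equivariant-complexes} together with Lemma~\ref{l:functoriality-transport} and Proposition~\ref{p:conjugation-action}; I will also sketch a cleaner, less self-contained argument from the gerby viewpoint of \S\ref{ss:gerby-interpretation} at the end. Recall first that the monoidal structure on $\sD_{G_0}(G_0)$ is convolution with compact supports, $M*N=\mu_!\bigl((p_1^*M)\tens(p_2^*N)\bigr)$, where $G_0$ is regarded as a group object of the category $G_0\var$ for the conjugation action, $\mu\colon G_0\times G_0\rar{}G_0$ is its multiplication, and $p_1,p_2\colon G_0\times G_0\rar{}G_0$ are the projections; here $G_0\times G_0$ with the diagonal conjugation action is precisely the product of $G_0$ with itself in $G_0\var$, and $\mu,p_1,p_2$ are morphisms in $G_0\var$. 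Since the transport functor $X\longmapsto X^\al$ is an equivalence $G_0\var\rar{\sim}G_0^\al\var$ (Lemma~\ref{l:pure-inner-forms-equivalence}), it preserves finite products, and by Proposition~\ref{p:conjugation-action} it carries the group object $G_0$ to a group object canonically isomorphic to $G_0^\al$ with its conjugation action and usual multiplication. In particular it identifies $(G_0\times G_0)^\al$ with $G_0^\al\times G_0^\al$ (diagonal action) and $\mu^\al,p_1^\al,p_2^\al$ with the multiplication and the two projections of $G_0^\al$.

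With these identifications in place, the candidate monoidal constraint $M^\al*N^\al\iso(M*N)^\al$ is built by concatenating: (i) the isomorphism $(f_!(-))^\al\cong(f^\al)_!\bigl((-)^\al\bigr)$ of Lemma~\ref{l:functoriality-transport} applied to $f=\mu$; (ii) the isomorphism $(g^*(-))^\al\cong(g^\al)^*\bigl((-)^\al\bigr)$ of Lemma~\ref{l:functoriality-transport} applied to $g=p_1$ and $g=p_2$; and (iii) the fact that transport, as a functor $\sD_{G_0}(G_0\times G_0)\rar{}\sD_{G_0^\al}(G_0^\al\times G_0^\al)$, commutes with $\tens$. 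For (iii), note that transport is by construction the composition $(\varpi^*)^{-1}\circ\pi_2^*$ of (a quasi-inverse of) two pullback functors; pullback is strong monoidal for $\tens$, hence so is a quasi-inverse of a pullback, so transport commutes with $\tens$ up to canonical isomorphism. The unit constraint $\e^\al\cong\e_{G_0^\al}$ follows by applying Lemma~\ref{l:functoriality-transport} once more to the identity section $\Spec\bF_q\into G_0$ (equivariant for the trivial action), whose transport is the identity section of $G_0^\al$, together with the evident identification $\ql^\al\cong\ql$.

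The remaining point is to verify the pentagon and triangle coherence axioms for this data. These reduce to the associativity and unit identities for $\mu$ and to the standard compatibilities among the base-change, projection-formula and K\"unneth isomorphisms that enter the definition of convolution; because the isomorphisms supplied by Lemma~\ref{l:functoriality-transport} are natural and are themselves assembled from the proper base change theorem, each of these identities is preserved by transport, and the verification is a lengthy but entirely routine diagram chase. I expect this coherence bookkeeping to be the only real work: conceptually nothing is at stake once the identifications above are recorded.

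Alternatively, one can short-circuit the diagram chases using Remark~\ref{r:gerby-viewpoint-transport-functor}: under the identifications $\sD_{G_0}(G_0)\simeq D^b_c(G_0\backslash G_0,\ql)$ and $\sD_{G_0^\al}(G_0^\al)\simeq D^b_c(G_0^\al\backslash G_0^\al,\ql)$, both convolution products are manufactured from the inertia stacks $I(BG_0)\simeq G_0\backslash G_0$ and $I(BG_0^\al)\simeq G_0^\al\backslash G_0^\al$ of the classifying stacks, and the transport functor is induced by the canonical equivalence $I(BG_0)\simeq I(\fG)\simeq I(BG_0^\al)$ attached to the gerbe $\fG$ of \S\ref{ss:gerby-interpretation}. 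Since the formation of $D^b_c(I(BG),\ql)$ with its convolution monoidal structure is functorial for equivalences of the base gerbe $BG$, the monoidal structure on the transport functor is then automatic, and the coherence axioms hold for free.
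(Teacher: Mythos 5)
Your proposal is correct and takes essentially the same route as the paper: Lemma \ref{l:functoriality-transport} supplies the compatibilities with $\mu_!$ and $p_i^*$, and Proposition \ref{p:conjugation-action} then identifies the target $\sD_{G_1}(\cG_1)$ with $\sD_{G_0^\al}(G_0^\al)$. You additionally spell out the $\tens$-compatibility and unit constraint that the paper's two-line proof leaves implicit (correctly deducing them from the fact that transport is built from pullbacks and a quasi-inverse of a pullback), and your gerby shortcut at the end is precisely the content of the remark that immediately follows the corollary in the paper.
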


\begin{proof}
With the earlier notation, the transport functor $\sD_{G_0}(G_0)\rar{\sim}\sD_{G_1}(\cG_1)$ is monoidal by Lemma \ref{l:functoriality-transport}. It remains to apply Proposition \ref{p:conjugation-action}.
\end{proof}

\begin{rem}
Proposition \ref{p:conjugation-action} and Corollary \ref{c:transport-monoidal-equivalence} are essentially trivial from the viewpoint of \S\ref{ss:gerby-interpretation}. Namely, let $\fG$ be an algebraic gerbe of finite type over $\bF_q$, and let $\fI$ be its inertia stack, i.e., the $2$-fiber product of the diagonal $\fG\rar{}\fG\times\fG$ with itself (cf.~Definition \ref{d:inertia-groupoid}). Then $\fI$ is a (relative) group scheme over $\fG$.

\mbr

We have the $\ell$-adic constructible derived category $D^b_c(\fI,\ql)$ together with an intrinsically defined operation of convolution with compact supports.

\mbr

Now choose $x\in\fG(\bF_q)$ and put $G_0=\Aut_{\fG}(x)$. We get an isomorphism $BG_0\rar{\sim}\fG$, so, as explained in \S\ref{ss:gerby-interpretation}, $\fI$ determines a group scheme over $\bF_q$ equipped with a $G_0$-action. That group scheme is nothing but $G_0$ with the conjugation action of $G_0$ on itself. Moreover, the convolution functor on $D^b_c(\fI,\ql)$ corresponds to the convolution functor on $\sD_{G_0}(G_0)$ defined in \S\ref{ss:recollections}.

\mbr

Next choose another $y\in\fG(\bF_q)$, let $G_1=\Aut_{\fG}(y)$, and write $P=\operatorname{Isom}_{\fG}(x,y)$, which is a $(G_1,G_0)$-bitorsor. Then $y$ yields an isomorphism $BG_1\rar{\sim}\fG$ and the group scheme $\cG_1$ mentioned at the beginning of \S\ref{ss:conjugation-action-pure-inner-forms} is the same as the group scheme with an action of $G_1$ coming from $\fI\rar{}BG_1$. In view of the previous paragraph, the assertions of Proposition \ref{p:conjugation-action} and Corollary \ref{c:transport-monoidal-equivalence} now become obvious.
\end{rem}

\subsection{Induction} As we saw earlier, an equivariant complex on a perfect group $G_0$ over $\bF_q$ determines an equivariant complex on each of its pure inner forms $G_0^\al$, and hence a class function on the corresponding group of rational points $G_0^\al(\bF_q)$. If we take this viewpoint on the sheaves-to-functions correspondence, the functor $\igz$ becomes compatible with induction of class functions in a natural sense:

\begin{prop}\label{p:induction-sheaves-functions}
Let $G_0$ be a perfect unipotent group over $\bF_q$, let $G_0'\subset G_0$ be a closed subgroup, fix $M\in\sD_{G'_0}(G'_0)$ and put $N=\igz M$. For each $\al\in H^1(\bF_q,G_0)$,
\[
t_{N^\al} = \sum_{\be\mapsto\al} \ind_{G_0'^\be(\bF_q)}^{G_0^\al(\bF_q)} t_{M^\be},
\]
where the sum is taken over all $\be\in H^1(\bF_q,G'_0)$ that map to $\al$ via the natural map $H^1(\bF_q,G_0')\rar{}H^1(\bF_q,G_0)$.
\end{prop}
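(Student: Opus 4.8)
The plan is to prove Proposition \ref{p:induction-sheaves-functions} by reducing it to a statement about the geometric situation that computes $\igz$, namely the diagram involving $\widetilde{G_0}=(G_0\times G_0')/G_0'$ recalled in Remark \ref{r:equivalent-def-induction}, and then applying the Grothendieck--Lefschetz trace formula together with a careful bookkeeping of $\bF_q$-points. First I would recall (using Remark \ref{r:equivalent-def-induction} and the assertion proved in \S\ref{ss:equivalence-def-induction} that the two definitions of $\ig$ agree) that $N=\igz M=\pi_!\bigl((i^*)^{-1}M\bigr)$, where $\widetilde{G_0}=(G_0\times G_0')/G_0'$ with $G_0'$ acting by $(g,g')\cdot\ga=(g\ga,\ga^{-1}g'\ga)$, the map $i:G_0'\into\widetilde{G_0}$ is induced by $g'\mapsto(1,g')$ and $\pi:\widetilde{G_0}\rar{}G_0$ by $(g,g')\mapsto gg'g^{-1}$. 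The key point is that this whole diagram lives in the category $G_0\var$ (for the conjugation action of $G_0$ everywhere), so by Lemma \ref{l:functoriality-transport} it transports to the analogous diagram for $G_0^\al$: explicitly, $\widetilde{G_0}^\al=(P\times\widetilde{G_0})/G_0$, and $N^\al=(\pi^\al)_!\bigl((i^\al)^{*-1}(M^\al)\bigr)$, where now $i^\al$ and $\pi^\al$ are the transports of $i$ and $\pi$.

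Next I would identify $\widetilde{G_0}^\al(\bF_q)$ and the fibers of $\pi^\al$ over $\bF_q$-points in terms of pure inner forms of $G_0'$. The point here is that $\widetilde{G_0}$, as an object of $G_0\var$, is the space parametrizing pairs consisting of a point of $G_0'$ and a "$G_0$-translate of the inclusion $G_0'\into G_0$"; concretely, a $\bF_q$-point of $\widetilde{G_0}^\al$ lying over $g\in G_0^\al(\bF_q)$ amounts to the choice of a $G_0^\al(\bF_q)$-conjugate $H$ of the subgroup $G_0'^\be\subset G_0^\al$ (for some $\be$ mapping to $\al$, corresponding to the $G_0^\al(\bF_q)$-conjugacy class of that subgroup, or more precisely to a point of the appropriate torsor over $\bF_q$) together with the condition $g\in H$. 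Using proper base change for $\pi^\al$ and the trace formula, $t_{N^\al}(g)=\sum_{\xi}t_{(i^\al)^{*-1}(M^\al)}(\xi)$, the sum over $\xi\in(\pi^\al)^{-1}(g)(\bF_q)$. The fiber decomposes according to which $G_0^\al(\bF_q)$-conjugacy class of subgroup $H$ is chosen, and for each such $H\cong G_0'^\be(\bF_q)$ (with $\be\mapsto\al$) containing a conjugate of $g$, the contribution is precisely a term in the classical induced-class-function formula $\ind_{G_0'^\be(\bF_q)}^{G_0^\al(\bF_q)}t_{M^\be}(g)=\frac{1}{|G_0'^\be(\bF_q)|}\sum_{x\in G_0^\al(\bF_q),\ x^{-1}gx\in G_0'^\be}t_{M^\be}(x^{-1}gx)$. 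Here the identification of $t_{(i^\al)^{*-1}(M^\al)}$ restricted to the $\be$-stratum with $t_{M^\be}$ uses Lemma \ref{l:functoriality-transport} again (applied to the inclusion $i$) together with the fact that the $\bF_q$-structures on the various $G_0$-conjugates of $G_0'$ that appear are exactly the pure inner forms $G_0'^\be$; this last compatibility is essentially the content of \cite[\S6]{characters} and should be quoted from there, or re-proved by the same torsor-theoretic argument.

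The cleanest way to organize the combinatorics is via the gerby viewpoint of \S\ref{ss:gerby-interpretation}: let $\fG=BG_0$ and $\fG'=BG_0'$, so the homomorphism $G_0'\into G_0$ induces a morphism $\fG'\rar{}\fG$, and $\widetilde{G_0}$ is the scheme over $\fG$ obtained from the inertia stack $\fI_{\fG'}\rar{}\fG'\rar{}\fG$ (the "relative inertia pushed forward"). Then $N=\igz M$ corresponds to the $!$-pushforward along $\fI_{\fG'}\rar{}\fI_{\fG}$ of the complex on $\fI_{\fG'}$ attached to $M$, and the formula to be proved becomes the Grothendieck trace formula for this pushforward of stacks over $\bF_q$, where the disjoint-union decomposition of $\fI_{\fG}(\bF_q)$ along $H^1(\bF_q,G_0)$ (Remark \ref{r:F-q-points-quotient-stack-by-conjugation}) automatically produces the sum over $\be\mapsto\al$ and the factors $\frac{1}{|G_0'^\be(\bF_q)|}$ arise as the masses of the fibers of $\fI_{\fG'}(\bF_q)\rar{}\fI_{\fG}(\bF_q)$. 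I would present the \emph{ad hoc} argument in the main text and relegate the gerby reformulation to a remark, mirroring the style of the surrounding section.

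The main obstacle I anticipate is the careful matching, over a fixed $g\in G_0^\al(\bF_q)$, between $\bF_q$-points in the geometric fiber $(\pi^\al)^{-1}(g)$ and pairs $(\be, x G_0'^\be(\bF_q))$ with $\be\mapsto\al$ and $x^{-1}gx\in G_0'^\be$: one must check that a geometric point of the fiber is defined over $\bF_q$ exactly when the associated $G_0$-conjugate of $G_0'$ inherits an $\bF_q$-structure making it one of the pure inner forms $G_0'^\be$ (with $\be$ determined by a Galois-cohomology boundary computation), and that the count of such $\bF_q$-points with a given $\be$ equals $\#\{x\in G_0^\al(\bF_q):x^{-1}gx\in G_0'^\be\}/|G_0'^\be(\bF_q)|$. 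This is exactly the kind of torsor/cocycle bookkeeping carried out in \cite[Proposition 6.13]{characters} in a special case, so the real work is to check that nothing new is needed when $M$ (equivalently $G_0'$) is allowed to be arbitrary and one keeps track of all $\al$ simultaneously; once that is in place, the trace formula and proper base change do the rest mechanically.
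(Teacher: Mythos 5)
Your plan is correct and would work, but it takes a more laborious route than the paper does, and the obstacle you flag at the end is exactly what the paper avoids by a simple reduction. The paper splits into two cases. If $\al$ lies in the image of $H^1(\bF_q,G_0')\rar{}H^1(\bF_q,G_0)$, it picks one $\be\mapsto\al$ and uses Lemma \ref{l:induction-transport} to replace the pair $(G_0'\subset G_0,\,M)$ by $(G_0'^\be\subset G_0^\al,\,M^\be)$; this reduces to the case where $\al$ is the trivial class, which is precisely \cite[Prop.~6.13]{characters}, so no new fiber analysis is required. If $\al$ is not in the image, the paper proves a short lemma, by an explicit Frobenius--cocycle computation on $(P\times G_0\times G_0')/(G_0\times G_0')$, that $\Gt_0^\al(\bF_q)=\varnothing$, which makes both sides of the formula vanish by the trace formula. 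Your proposal instead performs the fiber analysis over an arbitrary $\al$ at once: identify $\bF_q$-points of $(\pi^\al)^{-1}(g)$ with cosets in $G_0^\al(\bF_q)$ conjugating $g$ into some $G_0'^\be$, match strata with the classes $\be\mapsto\al$, and assemble the induced-class-function formula. This is the right picture and amounts to re-proving \cite[Prop.~6.13]{characters} in the presence of a general $\al$ — valid, but strictly more torsor bookkeeping than necessary, and you correctly identify it as the hard part. The gerby reformulation you sketch is the same one the paper mentions in Remark \ref{r:gerby-interpretation-induction} as an alternative via \cite[Thm.~4.2]{sun}, which the paper deliberately avoids. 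So: no gap, but a genuinely different and less economical route; the reduction via Lemma \ref{l:induction-transport} is the simplification you should look for next time.
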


This result is proved in \S\ref{ss:proof-p:induction-sheaves-functions}. The symbol $\ind_{G_0'^\be(\bF_q)}^{G_0^\al(\bF_q)}$ denotes the usual operation of induction of class functions from the subgroup $G_0'^\be(\bF_q)$ to the group $G_0^\al(\bF_q)$.

\begin{rem}
As a special case of the proposition, we note that if $\al\in H^1(\bF_q,G_0)$ is not in the image of $H^1(\bF_q,G_0')\rar{}H^1(\bF_q,G_0)$, then $t_{N^\al}\equiv 0$ on $G^\al_0(\bF_q)$.
\end{rem}

\begin{rem}\label{r:gerby-interpretation-induction}
Write $[G_0']$, $(\Ad G_0')\backslash G_0$ and $[G_0]$ for the quotient stacks of $G_0'$, $G_0$ and $G_0$ by the conjugation action of $G_0'$, $G_0'$ and $G_0$, respectively. Then the functor $\igz$ can be identified with the functor $\pi_!\circ\iota_!:D^b_c([G_0'],\ql)\rar{}D^b_c([G_0],\ql)$, where $\iota:[G_0']\rar{}(\Ad G_0')\backslash G_0$ and $\pi:(\Ad G_0')\backslash G_0\rar{}[G_0]$ are both induced by the natural inclusion $G_0'\into G_0$. So Proposition \ref{p:induction-sheaves-functions} could be deduced from the (relative version of) the Grothendieck-Lefschetz trace formula for algebraic stacks \cite[Thm.~4.2]{sun}. However, we prefer to give a direct proof.
\end{rem}

The functor of induction is also compatible with transport functors:

\begin{lem}\label{l:induction-transport}
If $G_0$ is a perfect unipotent group over $\bF_q$ and $G_0'\subset G_0$ is a closed subgroup, then for every $\be\in H^1(\bF_q,G_0')$, the functors
\[
\sD_{G_0'}(G_0') \rar{} \sD_{G_0^\al}(G_0^\al), \qquad M\longmapsto (\igz M)^\al \quad\text{and}\quad M\longmapsto \ind_{G_0'^\be}^{G_0^\al} (M^\be),
\]
are isomorphic, where $\al\in H^1(\bF_q,G_0)$ is the image of $\be$.
\end{lem}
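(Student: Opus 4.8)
The plan is to unwind the definition $\igz=\av_{G_0/G_0'}\circ j_!$ of Definition \ref{d:induction-functors}, where $j\colon G_0'\into G_0$ is the inclusion (equipped with the conjugation actions on $G_0'$ and on $G_0$), and to check that the transport of equivariant complexes intertwines each of the two factors $j_!$ and $\av_{G_0/G_0'}$ with its analogue for the pair $G_0'^\be\subset G_0^\al$. The guiding picture is the quotient-stack description of Remark \ref{r:gerby-interpretation-induction}: both $\igz$ and $\ind_{G_0'^\be}^{G_0^\al}$ are of the form $\pi_!\circ\iota_!$ for stack morphisms induced by an inclusion, and the transport functors correspond to the equivalences of quotient stacks obtained by moving the chosen object of the ambient gerbe (\S\ref{ss:gerby-interpretation}); since $\iota_!$ and $\pi_!$ are intrinsic to the stacks, they automatically commute with these equivalences. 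The direct argument mimics this, so if one prefers, the whole verification can also be phrased at the level of quotient stacks, where it becomes the tautology that $\iota_!$ and $\pi_!$ do not depend on the chosen object of the gerbe.

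The first step is to collect the geometric input. Given $\be\in H^1(\bF_q,G_0')$ with image $\al$, the functoriality package of \S\ref{ss:functoriality-inner-forms} produces a closed embedding $j^\be\colon G_0'^\be\into G_0^\al$ and, for any $X\in G_0\var$, identifies the $\be$-twist of $X$ regarded as a $G_0'$-variety (via $j$) with the $\al$-twist $X^\al$, with $G_0'^\be\subset G_0^\al$ acting by restriction. Combined with Proposition \ref{p:conjugation-action}, this identifies the $\be$-twist of $G_0$ with its $G_0'$-conjugation action with $G_0^\al$ carrying the conjugation action of $G_0'^\be$, and identifies the twist of $(G_0/G_0')$ (with its left-translation action) with the homogeneous space $G_0^\al/G_0'^\be$; here one uses that $\overline{1}\in G_0/G_0'$ is fixed by the left $G_0'$-action, hence yields a distinguished $\bF_q$-point of the twist whose stabilizer in $G_0^\al$ is $G_0'^\be$. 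Under these identifications the morphisms $i\colon G_0\into(G_0/G_0')\times G_0$ and $\pr_2\colon(G_0/G_0')\times G_0\to G_0$ entering Definition \ref{d:averaging} twist to the corresponding morphisms for $G_0^\al\supset G_0'^\be$.

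Next I would invoke Lemma \ref{l:functoriality-transport}: transport commutes with $j_!$ (for the group $G_0'$ and the class $\be$), and with $\pr_{2!}$ and $i^*$ (for the group $G_0$ and the class $\al$). The only point not literally covered by Lemma \ref{l:functoriality-transport} is that the equivalence $i^*\colon\sD_{G_0}((G_0/G_0')\times G_0)\to\sD_{G_0'}(G_0)$ relates complexes equivariant for two different groups, so one needs that transport along $\al$ on the source corresponds to transport along $\be$ on the target. This follows by factoring $i^*$ through the restriction functor $\sD_{G_0}((G_0/G_0')\times G_0)\to\sD_{G_0'}((G_0/G_0')\times G_0)$ and using, besides Lemma \ref{l:functoriality-transport}, the compatibility of transport with restriction along the homomorphism $j$ --- namely, restricting the $\al$-transport of a $G_0$-equivariant complex along $j^\be$ returns the $\be$-transport of its restriction along $j$ --- which, like Lemma \ref{l:functoriality-transport}, is immediate from Definition \ref{d:transport-equivariant-complexes}. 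Chaining these compatibilities shows that transport carries $\av_{G_0/G_0'}\circ j_!$ to $\av_{G_0^\al/G_0'^\be}\circ(j^\be)_!=\ind_{G_0'^\be}^{G_0^\al}$, which is the assertion.

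The main obstacle is entirely bookkeeping: keeping straight the two transport functors --- by $\be$ for $G_0'$-equivariant objects and by $\al$ for $G_0$-equivariant objects, linked by the pushout of a representing $G_0'$-torsor $P$ to a $G_0$-torsor --- and establishing the restriction-compatibility flagged above, together with the explicit twist identifications $(G_0)^\be\cong G_0^\al$ and $(G_0/G_0')^\al\cong G_0^\al/G_0'^\be$. No substantially new geometric input is needed beyond \S\ref{ss:functoriality-inner-forms}, Proposition \ref{p:conjugation-action} and Lemma \ref{l:functoriality-transport}.
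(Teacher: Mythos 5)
Your proof is correct, but it takes a different route from the paper's. The paper works first with the right adjoint version $\Igz$: since $\Igz$ is right adjoint to the restriction functor $\Res^{G_0}_{G'_0}$ (Proposition \ref{p:induction-duality}) and transport by a cohomology class is an equivalence, the two functors $M\mapsto(\Igz M)^\al$ and $M\mapsto\Ind_{G_0'^\be}^{G_0^\al}(M^\be)$ have isomorphic left adjoints (both are restriction, up to transport), hence are themselves isomorphic; one then passes from $\Igz$ to $\igz$ using the compatibility of transport with $\bD_{G_0}^-$ (Proposition \ref{p:transport-duality}) and the identity $\igz[2d](d)\cong\bD_{G_0}^-\circ\Igz\circ\bD_{G_0'}^-$ from Proposition \ref{p:induction-duality}. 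You instead unwind $\igz=\pr_{2!}\circ(i^*)^{-1}\circ j_!$ directly and check factor by factor that transport intertwines the constituents, using Lemma \ref{l:functoriality-transport} plus the geometric identifications of the twists of $G_0$, $G_0/G_0'$, and the embedding $i$. Both routes quietly rely on the same unstated ingredient — compatibility of the transport functors with restriction of equivariance along the inclusion $j$ — which you correctly flag as immediate from Definition \ref{d:transport-equivariant-complexes}. The paper's argument is shorter and more formal (it delegates the geometric work to the two duality propositions already in hand); your version is longer but more self-contained and makes the underlying stack-theoretic tautology visible explicitly, which is arguably more illuminating even if the bookkeeping is heavier.
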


The proof is given in \S\ref{ss:proof-l:induction-transport}. We remark that from the viewpoint of \S\ref{ss:gerby-interpretation}, this result is obvious (use the gerby interpretations explained in Remarks \ref{r:gerby-interpretation-induction} and \ref{r:gerby-viewpoint-transport-functor}).

\subsection{Compatibility with duality} The next result is proved in \S\ref{ss:proof-p:transport-duality}.

\begin{prop}\label{p:transport-duality}
Let $G_0$ be a perfect unipotent group over $\bF_q$.
 \sbr
\begin{enumerate}[$($a$)$]
\item If $X\in G_0\var$, then for each $\al\in H^1(\bF_q,G_0)$, the functors
    \[
    \sD_{G_0}(X)^\circ\rar{\sim}\sD_{G_0^\al}(X^\al), \qquad M\longmapsto (\bD_X M)^\al \quad\text{and}\quad M\longmapsto \bD_{X^\al}(M^\al),
    \]
    are isomorphic.
 \sbr
\item For each $\al\in H^1(\bF_q,G_0)$, the functors
\[
\sD_{G_0}(G_0)^\circ\rar{\sim}\sD_{G_0^\al}(G_0^\al), \qquad M\longmapsto (\bD_{G_0}^- M)^\al \quad\text{and}\quad M\longmapsto\bD_{G_0^\al}^-(M^\al),
\]
are isomorphic.
\end{enumerate}
\end{prop}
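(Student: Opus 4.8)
The slick argument is to adopt the gerby perspective of \S\ref{ss:gerby-interpretation}: the transport functor is induced by a canonical equivalence of quotient stacks (the identification of $G_0\backslash X$ with $G_0^\al\backslash X^\al$ for part (a), and of $[G_0]$ with $[G_0^\al]$ for part (b); see Remark~\ref{r:gerby-viewpoint-transport-functor}), and Verdier duality, as well as the inversion morphism of a relative group scheme, are intrinsic to the stack, so both assertions become tautological. Since the rest of this section is written in the \emph{ad hoc} style, I would also give the hands-on argument, as follows.

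\emph{Part (a).} The plan is to unwind the definition of the transport functor $M\longmapsto M^\al$ as the composite $(\varpi^*)^{-1}\circ\pi_2^*$ of Definition~\ref{d:transport-equivariant-complexes}, where $\varpi\colon P\times X\rar{}X^\al$ is a $G_0$-torsor and $\pi_2\colon P\times X\rar{}X$ is the base change of the structure morphism $P\rar{}\Spec\bF_q$. The crucial numerical point is that \emph{both} $\varpi$ and $\pi_2$ are smooth of the \emph{same} relative dimension $d=\dim G_0=\dim G_0^\al$. For any smooth $G$-equivariant morphism $f$ of relative dimension $d$, the isomorphisms $f^!\cong f^*[2d](d)$ and $\bD\circ f^!\cong f^*\circ\bD$ (all valid in the equivariant derived categories, cf.~\cite{foundations}) combine into a natural isomorphism $f^*\circ\bD\cong\bD\circ f^*[2d](d)$. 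Applying this to $\pi_2$ and, after inverting, to $\varpi$, the twist $[2d](d)$ produced by $\pi_2$ and the twist $[-2d](-d)$ produced by $\varpi^{-1}$ cancel, and one obtains a natural isomorphism $\bigl(\bD_X M\bigr)^\al\cong\bD_{X^\al}\bigl(M^\al\bigr)$; it is precisely the coincidence of the two relative dimensions that makes the corrective twist disappear. One small point to check is that these isomorphisms are genuinely equivariant; this is routine, since all the functors involved commute with the forgetful functors to the non-equivariant derived categories and the latter are conservative, so it suffices to build the natural transformation equivariantly and verify that it is an isomorphism after forgetting the equivariant structure.

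\emph{Part (b).} Recall $\bD_{G_0}^-=\iota^*\circ\bD_{G_0}$, where $\iota\colon G_0\rar{}G_0$ is the inversion morphism. Because $\iota(gxg^{-1})=g\,\iota(x)\,g^{-1}$, the morphism $\iota$ is $G_0$-equivariant for the conjugation action, hence a morphism in $G_0\var$; in fact it is the inversion morphism of the group object $G_0$ of $G_0\var$. The compatibility of transport with $*$-pullback (Lemma~\ref{l:functoriality-transport}) gives $\bigl(\iota^*N\bigr)^\al\cong(\iota^\al)^*\bigl(N^\al\bigr)$, so it remains to identify $\iota^\al$. Under the equivalence $G_0\var\rar{\sim}G_1\var$ (Lemma~\ref{l:pure-inner-forms-equivalence}) the group object $G_0$ goes to the group object $\cG_1$ of \S\ref{ss:conjugation-action-pure-inner-forms} with its induced group structure, and, as an equivalence of categories preserves group-object structures, $\iota^\al$ is the inversion morphism of $\cG_1$. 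By Proposition~\ref{p:conjugation-action} there is a $G_1$-equivariant isomorphism of perfect groups $G_0^\al\rar{\sim}\cG_1$, which therefore carries the inversion $\iota_{G_0^\al}$ of $G_0^\al$ to the inversion of $\cG_1$; so, under our standing identifications, $\iota^\al=\iota_{G_0^\al}$. Feeding this and part (a) into the chain $(\bD_{G_0}^- M)^\al=(\iota^*\bD_{G_0}M)^\al\cong\iota_{G_0^\al}^*\bigl((\bD_{G_0}M)^\al\bigr)\cong\iota_{G_0^\al}^*\bigl(\bD_{G_0^\al}(M^\al)\bigr)=\bD_{G_0^\al}^-(M^\al)$ finishes the proof.

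The proof has no deep content; the main obstacle is organizational. In part (a) one must carry the equivariant structures through the twist bookkeeping correctly and make sure the smooth-pullback/duality compatibilities are available in the equivariant setting (they are). In part (b) the delicate step is to unwind Proposition~\ref{p:conjugation-action} carefully enough to be certain that $\iota^\al$ is \emph{exactly} the inversion of $G_0^\al$, and not merely an automorphism agreeing with it up to an inner automorphism. No geometric input beyond what is already recorded in this section is required.
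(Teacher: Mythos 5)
Your proof is correct and follows essentially the same route as the paper's: part (a) unwinds the transport functor as $(\varpi^*)^{-1}\circ\pi_2^*$ and uses that both $\varpi$ and $\pi_2$ are smooth of the same relative dimension $d=\dim P$ so the dual twists cancel, and part (b) reduces via Lemma~\ref{l:functoriality-transport} to the identification of $\iota^\al$ with the inversion of $G_0^\al$, which is exactly the content of Proposition~\ref{p:conjugation-action} as the paper invokes it. The extra remarks you add (the gerby tautology and the careful caution that $\iota^\al$ agrees with $\iota_{G_0^\al}$ on the nose rather than merely up to inner automorphism) are sound amplifications of the same argument.
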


\subsection{Compatibility with weights} The next result is proved in \S\ref{ss:proof-p:transport-weights}.

\begin{prop}\label{p:transport-weights}
Let $G_0$ be a perfect unipotent group acting on a perfect variety $X$ over $\bF_q$, and let $M\in\sD_{G_0}(X)$. If $w\in\bZ$ and $M\in\sD_{\leq w}(X)$ $($resp., $M\in\sD_{\geq w}(X)${}$)$, then $M^\al\in\sD_{\leq w}(X^\al)$ $($resp., $M^\al\in\sD_{\geq w}(X^\al)${}$)$ for all $\al\in H^1(\bF_q,G_0)$.
\end{prop}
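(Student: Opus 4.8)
The plan is to reduce the statement to the results already available for the ordinary six functors, using the explicit description of the transport functor in Definition \ref{d:transport-equivariant-complexes}. Recall that, with the notation there, we have the quotient morphism $\varpi:P\times X\rar{}X^\al$ and the second projection $\pi_2:P\times X\rar{}X$, and the transport functor $M\longmapsto M^\al$ is characterized by the property that $\varpi^*(M^\al)\cong\pi_2^* M$ in $\sD_{G_0^\al\times G_0}(P\times X)$. Since $\pi_2$ is a smooth morphism of relative dimension $\dim G_0$ (being a $G_0^\al$-torsor), the assertion ``$M\in\sD_{\leq w}(X)$ implies $\pi_2^* M\in\sD_{\leq w+2\dim G_0}(P\times X)$'' and its analogue for $\sD_{\geq w}$ follow from the behaviour of weights under a smooth pullback (which is $f^*\cong f^![-2d](-d)$ for $f$ smooth of relative dimension $d$, together with Remark \ref{r:twists-preserve-weights} and Theorem \ref{t:deligne}). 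Likewise $\varpi$ is a $G_0$-torsor, hence smooth of the same relative dimension $\dim G_0$, so $\varpi^*$ shifts weights by the same amount; since $\varpi^*$ is moreover an equivalence onto its image, I can recover the weight bounds on $M^\al$ from those on $\varpi^*(M^\al)=\pi_2^*M$.

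Concretely, first I would record the elementary fact that if $f:Y_0\rar{}Z_0$ is a smooth $\bF_q$-morphism of perfect varieties of relative dimension $d$, then $f^*$ takes $\sD_{\leq w}(Z_0)$ into $\sD_{\leq w}(Y_0)$ and $\sD_{\geq w}(Z_0)$ into $\sD_{\geq w}(Y_0)$; for the first inclusion this is immediate from the definition of weights in terms of cohomology sheaves together with smooth base change computing the stalks, and for the second it follows by applying Verdier duality and using $\bD_{Y_0}\circ f^*\cong f^!\circ\bD_{Z_0}\cong f^*[2d](d)\circ\bD_{Z_0}$ together with Remark \ref{r:twists-preserve-weights}. (Here one does not even need the full strength of Theorem \ref{t:deligne}, only the smooth case, though invoking Deligne's theorem is harmless.) Applying this to $\pi_2$ gives $\pi_2^* M\in\sD_{\leq w}(P\times X)$ when $M\in\sD_{\leq w}(X)$, and symmetrically for $\geq w$.

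It then remains to descend the weight bound along $\varpi^*$. Here I would argue as follows: $\varpi$ is faithfully flat and smooth, and the weight filtration is detected on stalks at $\bF_{q^n}$-points; since every $\bF_{q^n}$-point of $X^\al$ lifts, after a finite extension of scalars, to an $\bF_{q^m}$-point of $P\times X$ (as $\varpi$ is a torsor under the connected — hence, by Lang's theorem in the connected case, and in general by smoothness and surjectivity — group $G_0$, it has sections étale-locally), the eigenvalues of geometric Frobenius on the stalk of $\cH^i(M^\al)$ at such a point coincide with those on the stalk of $\cH^i(\varpi^* M^\al)=\cH^i(\pi_2^* M)$ at a lift, up to replacing $q$ by a power; hence the archimedean absolute value conditions defining ``weights $\leq w+i$'' transfer. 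This gives $M^\al\in\sD_{\leq w}(X^\al)$; the case of $\sD_{\geq w}$ then follows by combining this with part (a) of Proposition \ref{p:transport-duality}, which identifies $\bD_{X^\al}(M^\al)$ with $(\bD_X M)^\al$ and reduces the lower bound for $M^\al$ to the upper bound for $\bD_X M$, already handled.

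The main obstacle I anticipate is the bookkeeping in the descent step: one must check that pulling back along the smooth surjection $\varpi$ neither creates nor destroys weights, i.e. that a mixed complex downstairs whose pullback is pure of weight $w$ is itself pure of weight $w$. This is standard (it is how one proves, e.g., that intersection complexes of quotient stacks are pure), but since the paper works with an \emph{ad hoc} model of the equivariant derived category rather than with $D^b_c(G_0\backslash X,\ql)$, I would phrase it purely in terms of the underlying complexes on the schemes $X$, $X^\al$, $P\times X$, where it is an assertion about pullback along the single smooth surjective morphism $\varpi$ and needs no equivariant input at all. Alternatively — and this is cleaner — one can invoke Remark \ref{r:gerby-viewpoint-transport-functor}: the transport functor is an equivalence $D^b_c(G_0\backslash X,\ql)\rar{\sim}D^b_c(G_0^\al\backslash X^\al,\ql)$ coming from a canonical identification of the two quotient stacks, and weights of $\ell$-adic complexes on algebraic stacks (in the sense of \cite{sun}) are by definition intrinsic to the stack, so the statement is then a tautology. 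I would present the hands-on argument as the main proof and mention the gerby one-liner as a remark.
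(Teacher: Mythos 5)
Your proposal takes essentially the same route as the paper's proof: you use the defining isomorphism $\varpi^* M^\al\cong\pi_2^* M$, note that $\pi_2^*$ preserves $\sD_{\leq w}$, descend the upper weight bound along the $\bF$-surjective morphism $\varpi$ by lifting points and comparing Frobenius eigenvalues (this is precisely the paper's Lemma \ref{l:auxiliary-weights-pullback}), and then obtain the $\sD_{\geq w}$ case from the $\sD_{\leq w}$ case by duality via Proposition \ref{p:transport-duality}(a). The one slip is the claim in your opening paragraph that $\pi_2^*$ sends $\sD_{\leq w}(X)$ into $\sD_{\leq w+2\dim G_0}(P\times X)$ --- in fact $f^*$ preserves $\sD_{\leq w}$ exactly, for any morphism $f$, with no shift (as you state correctly in the ``Concretely'' paragraph), so the premise that the two ``shifts'' cancel, while harmless, is describing a cancellation that never needed to occur.
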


\subsection{Equivariant sheaves on a point}\label{ss:equivariant-sheaves-point} We now illustrate some of the notions discussed in this section with a specific example, which will be used in \S\S\ref{ss:proof-t:main-b}--\ref{ss:proof-orthonormality}. We let our base field be $\bF_q$ and consider $\ell$-adic complexes and equivariant complexes (with respect to some perfect unipotent group $G_0$) on the ``point'' $\Spec\bF_q$. A gerby interpretation of the material we present is explained in Remark \ref{r:gerby-interpretation-equivariant-sheaves-point}. As we will see, what we are considering here is nothing but a Grothendieck-Lefschetz trace formula for the classifying stack of a unipotent algebraic group over $\bF_q$.

\mbr

Objects of $\sD(\Spec\bF_q)$ can be viewed as bounded complexes of finite dimensional $\ql$-vector spaces equipped with a continuous action of $\Gal(\bF/\bF_q)$. If $V^\bullet$ is such a complex, the corresponding ``trace-of-Frobenius function'' on the one-point set $(\Spec\bF_q)(\bF_q)$ takes the value $\sum_{j\in\bZ} (-1)^j \cdot \tr\bigl(F_q;H^j(V^\bullet)\bigr)$, where $F_q\in\Gal(\bF/\bF_q)$ is the geometric Frobenius (the inverse of the automorphism $a\mapsto a^q$ of $\bF$).

\mbr

Now suppose $G_0$ is a perfect unipotent group over $\bF_q$, acting trivially on $\Spec\bF_q$.
Write $G_0^\circ$ for the neutral connected component of $G_0$ and $\pi_0(G_0)=G_0/G_0^\circ$
for the group of components of $G_0$ (a finite \'etale group scheme over $\bF_q$).
Set $\Ga=\pi_0(G_0)(\bF)$. It is a finite (discrete) group equipped with a (continuous) action
of $\Gal(\bF/\bF_q)$. Objects of $\sD_{G_0}(\Spec\bF_q)$ can be viewed as bounded complexes
of finite dimensional $\ql$-vector spaces equipped with a continuous action of the semidirect
product $\widetilde{\Ga}:=\Gal(\bF/\bF_q)\ltimes\Ga$. Let us describe the transport functor in this setting.

\mbr

By Lang's theorem \cite{lang}, the natural map $H^1(\bF_q,G_0)\rar{}H^1(\bF_q,\pi_0(G_0))$ is bijective. One can naturally identify $H^1(\bF_q,\pi_0(G_0))$ with the group cohomology set $H^1(\Gal(\bF/\bF_q),\Ga)$. In turn, since $\Gal(\bF/\bF_q)$ is topologically isomorphic to $\widehat{\bZ}$ via the map taking $F_q\in\Gal(\bF/\bF_q)$ to $1\in\widehat{\bZ}$, we can identify $H^1(\Gal(\bF/\bF_q),\Ga)$ with the set of $\Ga$-conjugacy classes in $\widetilde{\Ga}$ that project to $F_q\in\Gal(\bF/\bF_q)$ under the canonical homomorphism $\widetilde{\Ga}\rar{}\Gal(\bF/\bF_q)$. Fix $\ga\in\Ga$ and let $\al\in H^1(\bF_q,G_0)$ correspond to the conjugacy class of $\ga\cdot F_q\in\widetilde{\Ga}$. One can then identify $\pi_0(G_0^\al)$ with the same group $\Ga$ equipped with the ``new'' action of the geometric Frobenius, given by $x\mapsto \ga F_q(x)\ga^{-1}$, where $F_q$ is the original action of the geometric Frobenius on $\Ga$.

\mbr

In this language, if $M\in\sD_{G_0}(\Spec\bF_q)$ corresponds to a bounded complex $V^\bullet$ of finite dimensional $\ql$-vector spaces equipped with a continuous action of $\widetilde{\Ga}$, then $M^\al$ corresponds to the same complex with the same action of $\widetilde{\Ga}$, but now $\widetilde{\Ga}$ is identified with the semidirect product $\Gal(\bF/\bF_q)\ltimes\Ga$ in a different way, where the geometric
Frobenius in $\Gal(\bF/\bF_q)$ corresponds to the element $\ga\cdot F_q$.

\begin{prop}[see \S\ref{ss:proof-p:equivariant-sheaves-point}]\label{p:equivariant-sheaves-point}
In the situation above, we have
\begin{equation}\label{e:equivariant-sheaves-point}
\sum_{\al\in H^1(\bF_q,G_0)} \frac{q^{\dim G_0}}{\abs{G_0^\al(\bF_q)}} \cdot t_{M^\al}(*) = \sum_{j\in\bZ} (-1)^j\cdot \tr\bigl( F_q; H^j(V^\bullet)^\Ga \bigr),
\end{equation}
where $t_{M^\al}$ is the trace-of-Frobenius function corresponding to the complex $M^\al$, the symbol $*$ denotes the unique $\bF_q$-point of $\Spec\bF_q$, and $H^j(V^\bullet)^\Ga$ denotes the subspace of $H^j(V^\bullet)$ consisting of $\Ga$-invariant elements.
\end{prop}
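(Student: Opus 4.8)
The plan is to reduce \eqref{e:equivariant-sheaves-point} to the elementary fact that, for a finite group $\Ga$ acting on a $\ql$-vector space, $\frac{1}{\abs{\Ga}}\sum_{\ga\in\Ga}\ga$ is the projector onto the subspace of $\Ga$-invariants; in other words, to prove \eqref{e:equivariant-sheaves-point} directly as a hands-on instance of the Lefschetz trace formula for the classifying stack of $G_0$. To do so I will write out both sides of \eqref{e:equivariant-sheaves-point} explicitly using the combinatorial descriptions of $H^1(\bF_q,G_0)$, of the pure inner forms $G_0^\al$, and of the transport functor recorded in \S\ref{ss:equivariant-sheaves-point}.

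First I would unwind the left-hand side. Fix a lift of the geometric Frobenius to $\widetilde{\Ga}$, still written $F_q$, so that the preimage of $F_q\in\Gal(\bF/\bF_q)$ in $\widetilde{\Ga}$ is the coset $\Ga F_q$, of cardinality $\abs{\Ga}$. As recalled before the statement, $H^1(\bF_q,G_0)$ is in bijection with the set of $\Ga$-conjugacy classes contained in $\Ga F_q$, and if $\al$ corresponds to the class of $\ga F_q$ then, by the description of the transport functor, $M^\al$ is represented by $V^\bullet$ with the identification of $\widetilde{\Ga}$ altered so that the new geometric Frobenius acts by $\ga F_q$; since $G_0^\al$ acts trivially on $\Spec\bF_q$, the function $t_{M^\al}$ is computed from this underlying Galois complex alone, giving $t_{M^\al}(*)=\sum_j(-1)^j\tr\bigl(\ga F_q;H^j(V^\bullet)\bigr)$. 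On the other hand, $(G_0^\al)^\circ$ is a connected unipotent group over $\bF_q$ of dimension $\dim G_0$, so Lang's theorem \cite{lang} gives $H^1(\bF_q,(G_0^\al)^\circ)=0$ and $\abs{(G_0^\al)^\circ(\bF_q)}=q^{\dim G_0}$; the exact sequence $1\to(G_0^\al)^\circ\to G_0^\al\to\pi_0(G_0^\al)\to1$ then yields $\abs{G_0^\al(\bF_q)}=q^{\dim G_0}\cdot\abs{\pi_0(G_0^\al)(\bF_q)}$. Finally $\pi_0(G_0^\al)(\bF_q)$ is the fixed-point set of the twisted Frobenius $x\mapsto\ga F_q(x)\ga^{-1}$ on $\Ga$, which is exactly the centralizer $Z_\Ga(\ga F_q)$ of $\ga F_q$ in $\Ga$; hence $q^{\dim G_0}/\abs{G_0^\al(\bF_q)}=\abs{Z_\Ga(\ga F_q)}^{-1}$.

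Using the orbit--stabilizer relation $\abs{C}\cdot\abs{Z_\Ga(\ga F_q)}=\abs{\Ga}$, where $C\subseteq\Ga F_q$ is the $\Ga$-conjugacy class of $\ga F_q$, the sum over $\al\in H^1(\bF_q,G_0)$ unfolds to a sum over the entire coset:
\[
\sum_{\al\in H^1(\bF_q,G_0)}\frac{q^{\dim G_0}}{\abs{G_0^\al(\bF_q)}}\,t_{M^\al}(*)
=\frac{1}{\abs{\Ga}}\sum_{\ga\in\Ga}\sum_j(-1)^j\tr\bigl(\ga F_q;H^j(V^\bullet)\bigr)
=\sum_j(-1)^j\tr\Bigl(\tfrac{1}{\abs{\Ga}}\textstyle\sum_{\ga\in\Ga}\ga F_q;H^j(V^\bullet)\Bigr).
\]
Now $\frac{1}{\abs{\Ga}}\sum_{\ga\in\Ga}\ga F_q=\pi\circ F_q$, where $\pi=\frac{1}{\abs{\Ga}}\sum_{\ga\in\Ga}\ga$ is the idempotent projecting $H^j(V^\bullet)$ onto $H^j(V^\bullet)^\Ga$. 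Since $\Ga$ is normal in $\widetilde{\Ga}$, the operator $F_q$ preserves $H^j(V^\bullet)^\Ga$; hence $\pi F_q$ has image inside $H^j(V^\bullet)^\Ga$ and restricts there to $F_q$, so a block-triangular trace computation gives $\tr(\pi F_q;H^j(V^\bullet))=\tr(F_q;H^j(V^\bullet)^\Ga)$. Substituting this into the last display produces precisely the right-hand side of \eqref{e:equivariant-sheaves-point}.

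I do not anticipate a genuine obstacle: each step is bookkeeping with the description already assembled in \S\ref{ss:equivariant-sheaves-point}, Lang's theorem, and the elementary character theory of the finite group $\Ga$. The only point requiring care is internal consistency of conventions — left versus right semidirect products, and the chosen lift of $F_q$ — so that the identification $\pi_0(G_0^\al)(\bF_q)=Z_\Ga(\ga F_q)$ and the formula for $t_{M^\al}(*)$ refer to the \emph{same} element $\ga F_q\in\widetilde{\Ga}$; once these are pinned down the computation is forced, and in particular the answer is visibly independent of all the choices made.
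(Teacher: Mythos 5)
Your proof is correct and follows essentially the same line as the paper's: unwind $H^1(\bF_q,G_0)$ as $\Ga$-conjugacy classes in the coset $\Ga F_q$, compute $t_{M^\al}(*)$ as a trace of $\ga F_q$, use Lang's theorem to express $q^{\dim G_0}/\abs{G_0^\al(\bF_q)}$ as $1/\abs{Z_\Ga(\ga F_q)}$, unfold via orbit--stabilizer, and finish with the averaging projector $\tfrac{1}{\abs{\Ga}}\sum_{\ga}\ga$. The paper packages the final step more tersely (by reducing to an auxiliary identity for an arbitrary continuous $\widetilde{\Ga}$-representation $W$), but the content is identical; your block-triangular trace argument is exactly the mechanism hiding behind the paper's closing sentence.
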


\begin{rem}\label{r:gerby-interpretation-equivariant-sheaves-point}
Let $\fG$ be an algebraic gerbe of finite type over $\bF_q$, and assume that the automorphism group of any object of $\fG(\bF_q)$ is \emph{unipotent}. Write $\widetilde{\Ga}$ and $\Ga$ for the \'etale fundamental groups of $\fG$ and $\fG\tens_{\bF_q}\bF$, respectively. Then $\widetilde{\Ga}$ is an extension of $\widehat{\bZ}=\Gal(\bF/\bF_q)$ by $\Ga$, and objects of $D^b_c(\fG,\ql)$ can be viewed as bounded complexes
of finite dimensional $\ql$-vector spaces equipped with a continuous action of $\widetilde{\Ga}$.

\mbr

Choose $x\in\fG$ and put $G_0=\Aut_{\fG}(x)$, which by assumption is a unipotent algebraic group over $\bF_q$. Then $x$ identifies $\fG$ with $BG_0$, the group $\Ga$ with $\pi_0(G_0)(\bF)$, and determines a splitting of the extension $1\to\Ga\to\widetilde{\Ga}\to\widehat{\bZ}\to 1$, which allows us to identify $\widetilde{\Ga}$ with the semidirect product $\Gal(\bF/\bF_q)\ltimes\Ga$. If we interpret $\sD_{G_0}(\Spec\bF_q)$ as $D^b_c(\fG,\ql)$, then formula \eqref{e:equivariant-sheaves-point} becomes the Grothendieck-Lefschetz trace formula \cite{sun} for an object $M\in D^b_c(\fG,\ql)$. In fact, since $D^b_c(BG_0,\ql)$ can be identified with $D^b_c(B\pi_0(G_0),\ql)$ by the unipotence of $G_0$, we can even use the earlier trace formula \cite[Cor.~6.4.10]{behrend-book} for the finite stack $B\pi_0(G_0)$.
\end{rem}

%%%%%%%%%%%%%%%%%%%%%%%%%%%%%%%%%%%%%%%%%%%%%%%%%%%%%%%%
%%%%%%%%%%%%%%%%%%%%%%%%%%%%%%%%%%%%%%%%%%%%%%%%%%%%%%%%
%%%%%%%%%%%%%%%%%%%%%%%%%%%%%%%%%%%%%%%%%%%%%%%%%%%%%%%%
%%%
%%%       WEIL FORMALISM
%%%
%%%%%%%%%%%%%%%%%%%%%%%%%%%%%%%%%%%%%%%%%%%%%%%%%%%%%%%%
%%%%%%%%%%%%%%%%%%%%%%%%%%%%%%%%%%%%%%%%%%%%%%%%%%%%%%%%
%%%%%%%%%%%%%%%%%%%%%%%%%%%%%%%%%%%%%%%%%%%%%%%%%%%%%%%%

\section{Weil formalism}\label{s:weil}

\subsection{Notation}\label{ss:weil-definitions}
We fix an algebraic closure $\bF$ of a field of prime order $p$ and a finite subfield $\bF_q\subset\bF$. We also fix a perfect variety $X_0$ over $\bF_q$, write $X=X_0\tens_{\bF_q}\bF$, and let $\Fr_q:X\rar{}X$ denote the Frobenius endomorphism\footnote{It is in fact an automorphism because $X_0$ is assumed to be perfect.}, obtained from the absolute Frobenius $X_0\rar{}X_0$ by extension of scalars.

\mbr

Let $\sD^{Weil}(X_0)$ be the category consisting of pairs $(M,\vp)$, where $M\in\sD(X)$ and $\vp:\Fr_q^*M\rar{\simeq}M$ is an isomorphism in $\sD(X)$. Morphisms and compositions in $\sD^{Weil}(X_0)$ are defined in the obvious way. (The category $\sD^{Weil}(X_0)$ is very naive, but suffices for the purposes of this article.) One has a natural functor $\sD(X_0)\rar{}\sD^{Weil}(X_0)$, which is neither faithful nor essentially surjective.

\mbr

Let $\Perv^{Weil}(X_0)$ denote the full subcategory of $\sD^{Weil}(X_0)$ consisting of pairs $(M,\vp)$ for which $M$ is a perverse sheaf on $X$. In this setting we have

\begin{lem}\label{l:perverse-weil}
The natural functor $\Perv(X_0)\rar{}\Perv^{Weil}(X_0)$ is fully faithful.
\end{lem}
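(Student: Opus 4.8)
The plan is to deduce full faithfulness of $\Perv(X_0)\to\Perv^{Weil}(X_0)$ from the analogous (well-known) statement for the naive Weil category $\sD^{Weil}(X_0)$ at the level of $\Hom$'s, combined with the fact that the perverse $t$-structure on $\sD(X)$ is $\Fr_q^*$-stable, so that the perverse truncation functors intertwine the Weil structures. Concretely, recall that for $M_0, N_0\in\sD(X_0)$, writing $M = M_0\otimes_{\bF_q}\bF$, $N = N_0\otimes_{\bF_q}\bF$, there is an exact sequence
\[
0 \to \bigl(H^{-1}(X,\sR\hom(M,N))\bigr)_{F_q} \to \Hom_{\sD(X_0)}(M_0,N_0) \to \bigl(H^0(X,\sR\hom(M,N))\bigr)^{F_q} \to 0,
\]
where the subscript (resp.\ superscript) denotes coinvariants (resp.\ invariants) under geometric Frobenius $F_q$ acting on the finite-dimensional $\ql$-vector space in question; this is the usual descent spectral sequence for the Galois covering $X\to X_0$, using that $\Gal(\bF/\bF_q)\cong\widehat{\bZ}$ has cohomological dimension $1$. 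On the other hand, by the very definition of morphisms in $\sD^{Weil}(X_0)$, one has $\Hom_{\sD^{Weil}(X_0)}\bigl((M,\vp),(N,\psi)\bigr) = \Hom_{\sD(X)}(M,N)^{F_q} = \bigl(H^0(X,\sR\hom(M,N))\bigr)^{F_q}$, i.e.\ the natural functor $\sD(X_0)\to\sD^{Weil}(X_0)$ is full, with kernel on morphisms exactly the coinvariants term above.

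First I would make precise this last point: the functor $\sD(X_0)\to\sD^{Weil}(X_0)$ is full, and a morphism $M_0\to N_0$ in $\sD(X_0)$ goes to zero in $\sD^{Weil}(X_0)$ iff its image in $\Hom_{\sD(X)}(M,N)$ vanishes. Next I would invoke the standard fact that the perverse $t$-structure is preserved by $\Fr_q^*$ (the Frobenius is finite, hence exact for the perverse $t$-structure, and being an isomorphism it is both $t$-exact directions), so that $\Perv^{Weil}(X_0)$ is a full subcategory of $\sD^{Weil}(X_0)$ closed under the obvious structures, and for $M_0, N_0\in\Perv(X_0)$ we have $H^{i}(X,\sR\hom(M,N)) = \Ext^i_{\sD(X)}(M,N) = 0$ for $i<0$ because $M, N$ are perverse and $\Ext$'s between perverse sheaves vanish in negative degrees. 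Therefore the coinvariants term $\bigl(H^{-1}(X,\sR\hom(M,N))\bigr)_{F_q}$ vanishes, and the exact sequence collapses to an isomorphism
\[
\Hom_{\Perv(X_0)}(M_0,N_0) = \Hom_{\sD(X_0)}(M_0,N_0) \;\xrightarrow{\ \sim\ }\; \bigl(H^0(X,\sR\hom(M,N))\bigr)^{F_q} = \Hom_{\Perv^{Weil}(X_0)}\bigl(\overline{M_0},\overline{N_0}\bigr),
\]
which is exactly full faithfulness. (Here I also use that $\Hom_{\Perv(X_0)} = \Hom_{\sD(X_0)}$ since $\Perv(X_0)\subset\sD(X_0)$ is a full subcategory, and likewise for the Weil versions.)

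The main obstacle — really the only nontrivial input — is establishing the descent exact sequence relating $\Hom_{\sD(X_0)}(M_0,N_0)$ to the Frobenius (co)invariants on $\Hom_{\sD(X)}$, which rests on the cohomological dimension $1$ of $\widehat{\bZ}$ and a Leray/Hochschild--Serre argument for the pro-étale $\bF_q$-form; this is classical (it appears e.g.\ in the treatment of Weil sheaves in \cite{bbd} and is implicitly used throughout \cite{deligne-weil-2}), so I would cite it rather than reprove it, but I would state it carefully since everything hinges on the vanishing of the degree $-1$ term. A secondary, purely bookkeeping point is to check that the functor $\Perv(X_0)\to\Perv^{Weil}(X_0)$ is the restriction of $\sD(X_0)\to\sD^{Weil}(X_0)$ and that $\Perv^{Weil}(X_0)$ as defined is genuinely a full subcategory of $\sD^{Weil}(X_0)$ — both immediate from the definitions and from $\Fr_q^*$-stability of the perverse $t$-structure. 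I expect no surprises beyond these; the whole argument is essentially the observation that passing to a perverse pair kills precisely the negative $\Ext$ obstruction that separates the derived-category descent from naive Weil descent.
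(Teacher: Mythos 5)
Your proof is correct, and it is essentially the argument underlying \cite[Prop.~5.1.2]{bbd}, which the paper simply cites without reproducing. You correctly identify that $\Hom_{\sD^{Weil}(X_0)}\bigl((M,\vp),(N,\psi)\bigr)$ is by construction the Frobenius-invariant subspace of $\Hom_{\sD(X)}(M,N)$ (the Frobenius acting by $f\mapsto\psi\circ\Fr_q^*(f)\circ\vp^{-1}$), that the Hochschild--Serre descent sequence for $\Gal(\bF/\bF_q)\cong\widehat{\bZ}$ gives
\[
0\to\bigl(\Ext^{-1}_{\sD(X)}(M,N)\bigr)_{F_q}\to\Hom_{\sD(X_0)}(M_0,N_0)\to\bigl(\Hom_{\sD(X)}(M,N)\bigr)^{F_q}\to 0,
\]
and that perversity of $M,N$ kills the $\Ext^{-1}$ term by the $t$-structure axiom $\Hom(M,N[-1])=0$, so the comparison map becomes an isomorphism. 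The only difference from the paper is one of exposition: the paper delegates the whole thing to BBD, whereas you have unwound the content of that reference; both approaches rest on exactly the same two facts (cohomological dimension $1$ of $\widehat{\bZ}$, vanishing of negative Exts between perverse sheaves). One minor point worth being explicit about, which you gesture at: the descent sequence a priori involves the arithmetic/geometric Frobenius action on the $\Ext$ groups arising from the chosen descent data $\vp_0,\psi_0$ on $M_0,N_0$, and one must check this coincides with the Frobenius action used to define $\Hom$ in $\sD^{Weil}(X_0)$ when $(M,\vp)$, $(N,\psi)$ are the images of $M_0,N_0$; this is immediate from the construction but is the kind of thing that can trip one up, and is part of what BBD's careful treatment buys you.
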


\begin{proof}
This is the first assertion of \cite[Prop.~5.1.2]{bbd}.
\end{proof}

\subsection{The formalism} In what follows we will tacitly use the following observation. The formalism of the six functors for the categories $\sD(X_0)$ extends to the categories $\sD^{Weil}(X_0)$ without any changes in the notation. In particular, given an $\bF_q$-morphism $f:X_0\rar{}Y_0$ of perfect varieties over $\bF_q$, we have the induced functors
\[
f_*,f_!:\sD^{Weil}(X_0)\rar{}\sD^{Weil}(Y_0) \qquad\text{and}\qquad f^*,f^!:\sD^{Weil}(Y_0)\rar{}\sD^{Weil}(X_0),
\]
as well as the Verdier duality functor $\bD_{X_0}:\sD^{Weil}(X_0)^\circ\rar{\sim}\sD^{Weil}(X_0)$, and all the standard adjunctions hold in this context.

\subsection{Morphism spaces}\label{ss:derived-homs} When we deal with spaces of morphisms we depart from our usual convention of omitting the letters `L' and `R'. Thus, if $A$ and $B$ are object of any category, $\Hom(A,B)$ always denotes the space of morphisms $A\rar{}B$.

\mbr

If $M_0$ and $N_0$ are objects of $\sD(X_0)$, the derived Hom between them will be denoted by $R\Hom_{\sD(X_0)}(M_0,N_0)$. We will also need a version of $R\Hom$ that ``remembers the Frobenius action.'' To define it, let $(M,\vp)$ and $(N,\psi)$ be the objects of $\sD^{Weil}(X_0)$ corresponding to $M_0$ and $N_0$.

\mbr

We can view $R\Hom_{\sD(X)}(M,N)$ as an object of $\sD(\Spec\bF)$, and as such it can be ``upgraded'' to an object of $\sD^{Weil}(\Spec\bF_q)$ using the composition
\[
R\Hom_{\sD(X)}(M,N) \rar{\Fr_q^*} R\Hom_{\sD(X)}(\Fr_q^*M,\Fr_q^*N) \rar{\sim} R\Hom_{\sD(X)}(M,N),
\]
where the second quasi-isomorphism is induced by $\vp$ and $\psi$. We denote this object of $\sD^{Weil}(\Spec\bF_q)$ by $R\Hom_{\sD(X_0)}^{Weil}(M_0,N_0)$.

\begin{rem}
The object $R\Hom_{\sD(X_0)}^{Weil}(M_0,N_0)\in\sD^{Weil}(\Spec\bF_q)$ can also be obtained from an object of $\sD(\Spec\bF_q)$, for example using the construction mentioned in the proof of \cite[Prop.~5.1.2]{bbd}. Namely, let $a:X_0\rar{}\Spec\bF_q$ denote the structure morphism, and consider the inner Hom $R\underline{\Hom}(M_0,N_0)\in\sD(X_0)$. The pushforward $Ra_*R\underline{\Hom}(M_0,N_0)\in\sD(\Spec\bF_q)$ gives rise to $R\Hom_{\sD(X_0)}^{Weil}(M_0,N_0)$.
\end{rem}

\subsection{Equivariant version}\label{ss:weil-equivariant} In what follows we will need an extension of the earlier discussion to the equivariant setting. Let $G_0$ be a perfect unipotent group over $\bF_q$ acting on $X_0$. Note that now $\Fr_q$ could stand either for the Frobenius endomorphism of $X$ or for the Frobenius endomorphism of $G=G_0\tens_{\bF_q}\bF$, but in practice this conflict of notation should not cause any confusion.

\mbr

We let $\sD_{G_0}^{Weil}(X_0)$ be the category consisting of pairs $(M,\vp)$, where $M\in\sD_G(X)$ and $\vp:\Fr_q^*M\rar{\simeq}M$ is an isomorphism in $\sD_G(X)$. As before, we have a natural functor $\sD_{G_0}(X_0)\rar{}\sD_{G_0}^{Weil}(X_0)$, which is neither faithful nor essentially surjective.

\mbr

The formalism of the six functors also extends to the categories $\sD_{G_0}^{Weil}(X_0)$. The construction of \S\ref{ss:derived-homs} can be adapted to the present situation to yield the definition of $R\Hom_{\sD_{G_0}(X_0)}^{Weil}(M,N)\in\sD^{Weil}(\Spec\bF_q)$ for any $M,N\in\sD_{G_0}^{Weil}(X_0)$.

\mbr

Definition \ref{d:transport-equivariant-complexes} can be repeated in the ``Weil setting'' and yields a transport functor $\sD_{G_0}^{Weil}(X_0)\rar{\sim}\sD_{G_0^\al}^{Weil}(X_0^\al)$ for each $\al\in H^1(\bF_q,G_0)$. The sheaves-to-functions correspondence makes sense for objects of $\sD^{Weil}(X_0)$ and Proposition \ref{p:equivariant-sheaves-point} remains true for objects of $\sD_{G_0}^{Weil}(\Spec\bF_q)$ with essentially the same proof.

\mbr

Next define $\Perv_{G_0}(X_0)\subset\sD_{G_0}(X_0)$ and $\Perv_{G_0}^{Weil}(X_0)\subset\sD_{G_0}^{Weil}(X_0)$ to be the full subcategories consisting of objects whose underlying complex is perverse.

\begin{lem}\label{l:equivariant-perverse-weil}
The natural functor $\Perv_{G_0}(X_0)\rar{}\Perv_{G_0}^{Weil}(X_0)$ is fully faithful.
\end{lem}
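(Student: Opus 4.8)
The plan is to carry the proof of Lemma \ref{l:perverse-weil} (that is, of \cite[Prop.~5.1.2]{bbd}) over to the equivariant setting, the only genuinely new ingredients being the existence of an equivariant derived $\Hom$-complex over $\Spec\bF_q$ and the vanishing of negative equivariant $\Ext$-groups between equivariant perverse sheaves. Throughout I write $G=G_0\tens_{\bF_q}\bF$, $X=X_0\tens_{\bF_q}\bF$, and fix $M_0,N_0\in\Perv_{G_0}(X_0)$ with base changes $M,N\in\Perv_G(X)$, equipped with their Weil structures $\vp\colon\Fr_q^*M\iso M$ and $\psi\colon\Fr_q^*N\iso N$. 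Unwinding the definition of $\sD_{G_0}^{Weil}(X_0)$ from \S\ref{ss:weil-equivariant}, one sees that $\Hom_{\Perv_{G_0}^{Weil}(X_0)}(M_0,N_0)$ is the $\Fr_q$-invariant subspace of $\Hom_{\sD_G(X)}(M,N)$ (for the action determined by $\vp$ and $\psi$), and that the functor of the lemma sends a morphism to its base change. So it suffices to show that the base-change map $\Hom_{\sD_{G_0}(X_0)}(M_0,N_0)\rar{}\Hom_{\sD_G(X)}(M,N)^{\Fr_q}$ is bijective.

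The first step is to upgrade, exactly as in the remark following \S\ref{ss:derived-homs}, the naive object $R\Hom^{Weil}_{\sD_{G_0}(X_0)}(M_0,N_0)$ to a genuine object of $\sD(\Spec\bF_q)$: I take $\Theta=Ra_{0*}R\underline{\Hom}(M_0,N_0)\in\sD(\Spec\bF_q)$, where $a_0$ denotes the structure morphism of the quotient stack $G_0\backslash X_0$ (in the \emph{ad hoc} model of $\sD_{G_0}(X_0)$ one applies the analogous construction termwise to the simplicial resolution used to define the equivariant category and passes to the homotopy limit; this is routine). By construction $H^0R\Gamma(\Spec\bF_q,\Theta)=\Hom_{\sD_{G_0}(X_0)}(M_0,N_0)$, while proper base change along the finite morphisms $\Spec\bF_{q^n}\rar{}\Spec\bF_q$ together with continuity of \'etale cohomology (as in the proof of \cite[Prop.~5.1.2]{bbd}) identifies $\Theta\tens_{\bF_q}\bF$ with $R\Hom_{\sD_G(X)}(M,N)$, so that $H^i(\Theta\tens_{\bF_q}\bF)=\Ext^i_{\sD_G(X)}(M,N)$.

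Granting this, the second step is the standard Weil-descent short exact sequence over $\Spec\bF_q$ applied to $\Theta$ in cohomological degree $0$, which reads
\[
0\rar{}\Ext^{-1}_{\sD_G(X)}(M,N)_{\Fr_q}\rar{}\Hom_{\sD_{G_0}(X_0)}(M_0,N_0)\rar{}\Hom_{\sD_G(X)}(M,N)^{\Fr_q}\rar{}0,
\]
the surjection being the base-change map above. It then remains to see that $\Ext^{-1}_{\sD_G(X)}(M,N)=0$. This holds because $\Perv_G(X)$ is the heart of a $t$-structure on $\sD_G(X)$ with respect to which the conservative forgetful functor $\sD_G(X)\rar{}\sD(X)$ is $t$-exact (which exists since $G$ is unipotent): indeed then $M\in{}^p\sD^{\le0}_G(X)$ and $N[-1]\in{}^p\sD^{\ge1}_G(X)$, whence $\Ext^{-1}_{\sD_G(X)}(M,N)=\Hom_{\sD_G(X)}(M,N[-1])=0$. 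Consequently the base-change map is an isomorphism, and the functor $\Perv_{G_0}(X_0)\rar{}\Perv_{G_0}^{Weil}(X_0)$ is fully faithful.

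I expect the main obstacle to be the first step: producing the genuine object $\Theta$ inside the \emph{ad hoc} model of $\sD_{G_0}(X_0)$ and checking its compatibility with base change to $\bF$. In the quotient-stack formalism of \cite{Las-Ols06} this is immediate; transcribing it into the bar-type definition actually in use is bookkeeping rather than a real difficulty. A minor secondary point is to record that the equivariant perverse $t$-structure on $\sD_G(X)$ has the expected properties, which is standard for unipotent $G$. (Faithfulness of the functor is in any case immediate from Lemma \ref{l:perverse-weil} and the conservativity of the forgetful functors; the substance of the argument lies in fullness.)
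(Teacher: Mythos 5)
Your proof is correct in outline, but it takes a genuinely different (and heavier) route than the paper's. You redo the argument of \cite[Prop.~5.1.2]{bbd} from scratch inside $\sD_{G_0}(X_0)$: build an equivariant derived $\Hom$-complex $\Theta\in\sD(\Spec\bF_q)$, invoke the Weil-descent sequence, and kill $\Ext^{-1}_{\sD_G(X)}(M,N)$ via a $t$-structure argument. The paper instead \emph{reduces to the non-equivariant case} and never opens up that machinery: faithfulness is immediate by composing with the forgetful functor $\Perv_{G_0}^{Weil}(X_0)\to\Perv^{Weil}(X_0)$ and applying Lemma \ref{l:perverse-weil}; for fullness, one takes the unique descended $f_0\colon M_0\to N_0$ in $\Perv(X_0)$ given by Lemma \ref{l:perverse-weil} and then checks that $f_0$ is $G_0$-equivariant by comparing $\al^*(f_0)$ and $\pi^*(f_0)$ on $G_0\times X_0$ (where $\al,\pi$ are the action and projection): since $\al,\pi$ are smooth of relative dimension $d=\dim G$, these pullbacks are perverse sheaves up to a shift by $d$, they agree after base change to $\bF$ because $f$ is $G$-equivariant, and a second application of Lemma \ref{l:perverse-weil} (on $G_0\times X_0$) forces them to agree over $\bF_q$. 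The point is that the paper's route entirely sidesteps the two steps you flag as ``bookkeeping'': producing a genuine object $\Theta\in\sD(\Spec\bF_q)$ in the \emph{ad hoc} model of $\sD_{G_0}(X_0)$ actually in use (the paper deliberately avoids the \cite{Las-Ols06} stacky formalism here), and establishing the equivariant perverse $t$-structure with $t$-exact forgetful functor. These are doable but not trivial to lay down in the chosen foundations, and they are exactly what the paper's pullback trick renders unnecessary; so while your argument could be made to work, you have identified as the main obstacle precisely what the paper's proof is designed to avoid.
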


\begin{proof}
By construction, the forgetful functor $\Perv_{G_0}^{Weil}(X_0)\rar{}\Perv^{Weil}(X_0)$ is faithful. Hence by Lemma \ref{l:perverse-weil} the natural functor $\Perv_{G_0}(X_0)\rar{}\Perv_{G_0}^{Weil}(X_0)$ is also faithful. Next suppose $M_0,N_0\in\Perv_{G_0}(X_0)$ are objects, and let $f:M\rar{}N$ be a morphism between the corresponding objects of $\Perv_{G_0}^{Weil}(X_0)$. By Lemma \ref{l:perverse-weil}, we know that $f$ comes from a unique morphism $f_0:M_0\rar{}N_0$ in $\Perv(X_0)$, and we only need to check that $f_0$ is $G_0$-equivariant.

\mbr

Let $\al:G_0\times X_0\rar{}X_0$ and $\pi:G_0\times X_0\rar{}X_0$ be the action morphism and the projection, respectively. The $G_0$-equivariant structures on $M_0$ and $N_0$ yield identifications $\al^*M_0\rar{\simeq}\pi^*M_0$ and $\al^*N_0\rar{\simeq}\pi^*N_0$. We must show that $\al^*(f_0)=\pi^*(f_0)$ modulo these identifications. By assumption, $\al^*(f)=\pi^*(f)$. It remains to observe that since $\al$ and $\pi$ are smooth morphisms of relative dimension $d=\dim G$, the objects $\al^*M_0,\pi^*M_0,\al^*N_0,\pi^*N_0\in\sD(G_0\times X_0)$ are perverse sheaves up to cohomological shift by $d$. Applying Lemma \ref{l:perverse-weil} once more finishes the proof.
\end{proof}

%%%%%%%%%%%%%%%%%%%%%%%%%%%%%%%%%%%%%%%%%%%%%%%%%%%%%%%%
%%%%%%%%%%%%%%%%%%%%%%%%%%%%%%%%%%%%%%%%%%%%%%%%%%%%%%%%
%%%%%%%%%%%%%%%%%%%%%%%%%%%%%%%%%%%%%%%%%%%%%%%%%%%%%%%%
%%%
%%%       PROOFS OF MAIN RESULTS
%%%
%%%%%%%%%%%%%%%%%%%%%%%%%%%%%%%%%%%%%%%%%%%%%%%%%%%%%%%%
%%%%%%%%%%%%%%%%%%%%%%%%%%%%%%%%%%%%%%%%%%%%%%%%%%%%%%%%
%%%%%%%%%%%%%%%%%%%%%%%%%%%%%%%%%%%%%%%%%%%%%%%%%%%%%%%%

\section{Proofs of main results}\label{s:proofs}

We prove Theorem \ref{t:main}(a) in \S\ref{ss:proof-t:main-a}, Theorem \ref{t:main}(b) in \S\ref{ss:proof-t:main-b}, and the orthonormality relations for character sheaves (cf.~Theorem \ref{t:main}(e)) in \S\S\ref{ss:gabber}--\ref{ss:proof-orthonormality}. Using the ``geometric Mackey theory'' recalled in \S\ref{ss:geometric-mackey-theory}, we reduce the proof of Theorem \ref{t:main}(c) to the ``Heisenberg case'' (Proposition \ref{p:heis}) in \S\ref{ss:proof-t:main-c}. We state some results on equivariant local systems on homogeneous spaces under unipotent groups in \S\ref{ss:homogeneous-spaces} and use them to complete the proof of Theorem \ref{t:main}(d) in \S\ref{ss:proof-t:main-d}. Theorem \ref{t:main-easy} is proved in \S\ref{ss:proof-t:main-easy}. Finally, Proposition \ref{p:heis} is established in \S\ref{ss:proof-p:heis}.
Certain technical details of the arguments appearing in this section have been relegated to the Appendix.

\subsection{Proof of Theorem \ref{t:main}(a)}\label{ss:proof-t:main-a}
We begin with the uniqueness assertion:

\begin{lem}\label{l:uniqueness-idempotent}
Let $G_0$ be a perfect unipotent group over $\bF_q$, and let $G=G_0\tens_{\bF_q}\bF$. If $e_0,\widetilde{e}_0\in\sD_{G_0}(G_0)$ are weak idempotents such that the base changes of $e_0$ and $\widetilde{e}_0$ to $\bF$ are isomorphic to the same minimal idempotent $e\in\sD_G(G)$, then $e_0\cong \widetilde{e}_0$.
\end{lem}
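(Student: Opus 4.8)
The plan is to prove that any two $\bF_q$-rational descents of the minimal idempotent $e$ differ only by tensoring with a rank one $\ql$-local system pulled back from $\Spec\bF_q$, and then to use that $e_0$ and $\widetilde{e}_0$ are weak idempotents to force that local system to be trivial. The geometric input is the computation $R\Hom_{\sD_G(G)}(e,e)\cong\ql[0]$. To obtain it: by Proposition~\ref{p:properties-char-sheaves}(c), $e[-n_e]$ lies in $\sM_e^{perv}$ and is a character sheaf; by Proposition~\ref{p:properties-char-sheaves}(a), $\sM_e^{perv}$ is a semisimple $\ql$-linear abelian category with finitely many simple objects, and it is $\Hom$-finite as a full subcategory of $\sD_G(G)$, so $e[-n_e]$ is a simple object with $\End(e[-n_e])=\ql$ by Schur's lemma and with no higher self-extensions. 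Via the equivalence $e\sD_G(G)\simeq D^b(\sM_e^{perv})$ of Proposition~\ref{p:properties-char-sheaves}(e) and a shift by $n_e$, this gives $R\Hom_{\sD_G(G)}(e,e)\cong\ql[0]$.

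Next I would pass to the arithmetic level. Since $e_0$ and $\widetilde{e}_0$ are genuine objects of $\sD_{G_0}(G_0)$ whose base changes to $\bF$ are both isomorphic to $e$, the object $\cH:=R\Hom_{\sD_{G_0}(G_0)}(e_0,\widetilde{e}_0)\in\sD(\Spec\bF_q)$ (cf.~\S\ref{ss:derived-homs}) has base change $\cH\tens_{\bF_q}\bF\cong R\Hom_{\sD_G(G)}(e,e)\cong\ql[0]$, by the compatibility of $R\Hom$ in the equivariant derived category with the base change $\bF/\bF_q$ (part of the Weil formalism of Section~\ref{s:weil}). Since $-\tens_{\bF_q}\bF$ is exact and conservative on $\sD(\Spec\bF_q)$, we conclude $\cH\cong\cL[0]$ for a rank one $\ql$-local system $\cL$ on $\Spec\bF_q$. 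Writing $\pr\colon G_0\rar{}\Spec\bF_q$ for the structure morphism --- so that $\pr^*\cL$, being pulled back from $\Spec\bF_q$, is naturally an object of $\sD_{G_0}(G_0)$ --- the projection formula gives $R\Hom_{\sD_{G_0}(G_0)}(e_0\tens\pr^*\cL,\widetilde{e}_0)\cong\cH\tens\cL^{-1}\cong\ql[0]$. This is the ``trivial'' $\Hom$-complex, so there is a morphism $u\colon e_0\tens\pr^*\cL\rar{}\widetilde{e}_0$ whose base change to $\bF$ is a nonzero scalar multiple of $\id_e$; since a morphism in $\sD_{G_0}(G_0)$ that becomes an isomorphism after the faithfully flat base change $\Spec\bF\rar{}\Spec\bF_q$ is itself an isomorphism, $u$ is an isomorphism and $\widetilde{e}_0\cong e_0\tens\pr^*\cL$.

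Finally I would remove the twist. Because $\pr\circ\mu=\pr\circ p_1=\pr\circ p_2$ is the structure morphism $G_0\times G_0\rar{}\Spec\bF_q$, the projection formula applied to $\mu_!$ yields $(e_0\tens\pr^*\cL)*(e_0\tens\pr^*\cL)\cong(e_0*e_0)\tens\pr^*\cL^{\otimes 2}$. Since $e_0$ and $\widetilde{e}_0$ are weak idempotents and $\widetilde{e}_0\cong e_0\tens\pr^*\cL$, this gives $e_0\tens\pr^*\cL^{\otimes 2}\cong\widetilde{e}_0*\widetilde{e}_0\cong\widetilde{e}_0\cong e_0\tens\pr^*\cL$; tensoring with $\pr^*\cL^{-1}$ yields $e_0\tens\pr^*\cL\cong e_0$, and therefore $\widetilde{e}_0\cong e_0$.

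I expect the main obstacle to be the middle step: making precise the principle that an $\bF_q$-rational descent of the minimal idempotent $e$ is unique up to a twist by a rank one local system on $\Spec\bF_q$. This rests on the base-change compatibility of $R\Hom$ in the equivariant setting together with the consequent identification $\cH\cong\cL[0]$ --- a ``Galois descent up to a twist'' statement of the type whose prototype is \cite[Prop.~5.1.2]{bbd} --- and, granting the Weil formalism of Section~\ref{s:weil}, it should be routine. The remaining two steps are short: the first is Schur's lemma in a semisimple category, and the last is a single application of the projection formula together with the weak-idempotency hypothesis.
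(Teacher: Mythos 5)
Your proof is correct, and it reaches the same endgame as the paper's proof (kill the twist by a rank one local system using weak idempotency), but the intermediate ``uniqueness up to a twist'' step is implemented differently. The paper first observes that $e_0[-n_e]$ and $\widetilde e_0[-n_e]$ are perverse and invokes Lemma~\ref{l:equivariant-perverse-weil} (full faithfulness of $\Perv_{G_0}(G_0)\to\Perv_{G_0}^{Weil}(G_0)$) to reduce everything to a statement about the Weil objects $E_0,\widetilde E_0$; there Schur's lemma in $\sM_e^{perv}$ shows directly that the two Weil structures $\Fr_q^*e\cong e$ are proportional, so $\widetilde E_0\cong E_0\tens\pr^*\sL_\la$ for some scalar $\la$. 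Your route instead stays at the level of genuine objects over $\bF_q$: you compute $R\Hom_{\sD_G(G)}(e,e)\cong\ql[0]$, deduce that the arithmetic $R\Hom$ complex is a rank one local system $\cL$ concentrated in degree $0$, and then extract an actual morphism $u\colon e_0\tens\pr^*\cL\to\widetilde e_0$ from the (co)invariants exact sequence relating $\Hom$ over $\bF_q$ to $R\Hom$ over $\bF$, finally checking it is an isomorphism by conservativity of base change. What the paper's approach buys is that Lemma~\ref{l:equivariant-perverse-weil} packages all the needed descent-theoretic care into a single cited fact, after which the argument is elementary algebra with Weil structures. What your approach buys is that you never leave the category $\sD_{G_0}(G_0)$; the price is that you lean on the full $R\Hom$/hypercohomology formalism in the equivariant setting (the BBD-type exact sequence $0\to (H^{-1})_{\Gal}\to\Hom_{\bF_q}\to(H^0)^{\Gal}\to 0$ and the fact that the forgetful map to $\Hom_{\sD_G(G)}$ is the projection onto $(H^0)^{\Gal}$), which the paper invokes elsewhere (\S\ref{ss:proof-orthonormality}) but whose validity for the \emph{ad hoc} equivariant derived category deserves a word of justification. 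In both proofs the weak idempotency of $e_0$ and $\widetilde e_0$ enters in exactly the same way to force the rank one twist to be trivial.
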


\begin{proof}
By Proposition \ref{p:properties-char-sheaves}(c), both $e_0[-n_e]$ and $\widetilde{e}_0[-n_e]$ are perverse. Hence, in view of Lemma \ref{l:equivariant-perverse-weil}, it suffices to show that if $E_0,\widetilde{E}_0\in\sD_{G_0}^{Weil}(G_0)$ are the objects corresponding to $e_0,\widetilde{e}_0$, then $E_0$ and $\widetilde{E}_0$ are isomorphic.

\mbr

Now $E_0$ and $\widetilde{E}_0$ both have the form $(e,\Fr_q^*e\rar{\simeq}e)$ for two possibly different isomorphisms $\Fr_q^*e\rar{\simeq}e$. By parts (a) and (c) of Proposition \ref{p:properties-char-sheaves} and Schur's lemma, any two isomorphisms $\Fr_q^*e\rar{\simeq}e$ are proportional. Hence there is a scalar $\la\in\ql^\times$ such that if $\sL_\la\in\sD^{Weil}_{G_0}(\Spec\bF_q)$ corresponds to the $1$-dimensional vector space on which the Frobenius acts via $\la$ and the $G_0$-equivariant structure is trivial, then $\widetilde{E}_0\cong E_0\tens\pr^*\sL_\la$, where $\pr:G_0\rar{}\Spec\bF_q$ is the structure morphism. Thus
\begin{eqnarray*}
\widetilde{E}_0 &\cong& \widetilde{E}_0*\widetilde{E}_0\cong (E_0\tens\pr^*\sL_\la)*(E_0\tens\pr^*\sL_\la) \\
&\cong& (E_0*E_0)\tens\pr^*\sL_\la^{\tens 2} \cong E_0\tens\pr^*\sL_\la^{\tens 2} \cong \widetilde{E}_0\tens\pr^*\sL_\la.
\end{eqnarray*}
Tensoring both sides of the isomorphism $\widetilde{E}_0\cong\widetilde{E}_0\tens\pr^*\sL_\la$ with the inverse of $\pr^*\sL_\la$ yields $E_0\cong\widetilde{E}_0$, as required.
\end{proof}

Now with the notation of Theorem \ref{t:main}, fix a minimal idempotent $e\in\sD_G(G)$ such that $\Fr_q^*(e)\cong e$. By \cite[Thm.~1.41(c)]{foundations}, $e$ comes from some admissible pair $(H,\cL)$ for $G$ defined over $\bF$. But $(H,\cL)$ must already be defined over a finite extension $\bF_q\subset\bF_{q^n}\subset\bF$. So if we write $G_1=G_0\tens_{\bF_q}\bF_{q^n}$, then $e$ comes from a minimal closed idempotent $e_1\in\sD_{G_1}(G_1)$.

\mbr

By a slight abuse of notation, we also write $\Fr_q=\Phi_q\tens\id:G_1\rar{}G_1$ for the morphism obtained from the absolute Frobenius morphism $G_0\rar{}G_0$ by base change. Then $\Fr_q^*(e_1)$ and $e_1$ are both closed idempotents in $\sD_{G_1}(G_1)$ whose base change to $\bF$ is isomorphic to $e$. Applying Lemma \ref{l:uniqueness-idempotent} to $G_1$ in place of $G_0$, we obtain an isomorphism $\vp:\Fr_q^*(e_1)\rar{\simeq}e_1$ in $\sD_{G_1}(G_1)$.

\mbr

Choose an idempotent arrow $\pi_1:\e\rar{}e_1$. Then $\vp\circ\Fr_q^*(\pi_1):\e\rar{}e_1$ is another idempotent arrow, where we tacitly identify $\Fr_q^*(\e)$ with $\e$. By \cite[Cor.~2.40]{foundations}, there exists a unique isomorphism $\sg:e_1\rar{\simeq}e_1$ such that $\pi_1=\sg\circ\vp\circ\Fr_q^*(\pi_1)$. Replacing $\vp$ with $\sg\circ\vp$, we may and do assume that $\vp\circ\Fr_q^*(\pi_1)=\pi_1$.

\mbr

Next observe that $\Fr_q^n:G_1\rar{}G_1$ is equal to the absolute Frobenius morphism for $G_1$ over $\bF_{q^n}$. In particular, we have a canonical isomorphism $(\Fr_q^n)^*(e_1)\rar{\simeq}e_1$ compatible with $\pi_1$. By the uniqueness part of \cite[Cor.~2.40]{foundations}, the composition
\[
\vp\circ\Fr_q^*(\vp)\circ\dotsb\circ(\Fr_q^{n-1})^*(\vp) : (\Fr_q^n)^*(e_1)\rar{\simeq}e_1
\]
is equal to the aforementioned canonical isomorphism. This means that $\vp$ defines a descent datum for $e_1$ relative to the finite \'etale cover $\pr:G_1\rar{}G_0$. Hence $e_1$ comes from an object $e_0\in\sD_{G_0}(G_0)$ and $\pi_1$ comes from an idempotent arrow $\pi_0:\e\rar{}e_0$. To construct $e_0$ and $\pi_0$ explicitly, observe that $\Gal(\bF_{q^n}/\bF_q)$ naturally acts\footnote{The action of the Frobenius in $\Gal(\bF_{q^n}/\bF_q)$ comes from the isomorphism $\vp$.} on $\pr_*(e_1)=\pr_!(e_1)$, take $e_0=\pr_*(e_1)^{\Gal(\bF_{q^n}/\bF_q)}$ and let $\pi_0:\e\rar{}e_0$ be the arrow coming from $\pr_*(\pi_1):\pr_*(\e)\rar{}\pr_*(e_1)$.

\mbr

Thus $e_0\in\sD_{G_0}(G_0)$ is a closed idempotent giving rise to $e$, as claimed.

\subsection{Proof of Theorem \ref{t:main}(b)}\label{ss:proof-t:main-b} We keep the notation of Theorem \ref{t:main}(a). Let $e_0\in\sD_{G_0}(G_0)$ be the closed idempotent constructed in \S\ref{ss:proof-t:main-a}. We must show that there exists $\al\in H^1(\bF_q,G_0)$ such that $e_0^\al\in\sD_{G_0^\al}(G_0^\al)$ comes from an admissible pair for $G_0^\al$ defined over $\bF_q$. By Proposition \ref{p:when-min-idemp-comes-from-adm-pair}, it is enough to prove

\begin{prop}\label{p:implies-main-b}
Given an object $E_0=(e,\vp)\in\sD_{G_0}^{Weil}(G_0)$ such that $e$ is a minimal idempotent in $\sD_G(G)$, there exists $\al\in H^1(\bF_q,G_0)$ such that $t_{E_0^\al}(1)\neq 0$, where $E_0^\al$ denotes the image of $E_0$ in $\sD_{G_0^\al}^{Weil}(G_0^\al)$ under the corresponding transport functor and $t_{E_0^\al}$ is the associated ``trace-of-Frobenius'' function $($cf.~\S\ref{ss:weil-equivariant}$)$.
\end{prop}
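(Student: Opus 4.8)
The plan is to restrict $E_0$ to the identity of $G_0$ and feed the result into the trace formula of Proposition~\ref{p:equivariant-sheaves-point}; this reduces the claim to the nonvanishing of a single Frobenius trace, which I then identify with the trace of $F_q$ on the one dimensional complex $R\Hom_{\sD_G(G)}(\e,e)$. Concretely, let $i\colon\Spec\bF_q\into G_0$ be the unit section. Since the identity is fixed by the conjugation action, $i$ is a morphism in $G_0\var$, so $i^*E_0\in\sD_{G_0}^{Weil}(\Spec\bF_q)$ with $G_0$ acting trivially on $\Spec\bF_q$. By the Weil version of Lemma~\ref{l:functoriality-transport} we have $(i^*E_0)^\al\cong(i^\al)^*(E_0^\al)$ for every $\al\in H^1(\bF_q,G_0)$, and by Proposition~\ref{p:conjugation-action} the transported morphism $i^\al\colon\Spec\bF_q\to G_0^\al$ is the unit section of $G_0^\al$; hence $t_{(i^*E_0)^\al}(*)=t_{E_0^\al}(1)$. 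Applying \eqref{e:equivariant-sheaves-point} to $M=i^*E_0$ therefore gives
\[
\sum_{\al\in H^1(\bF_q,G_0)}\frac{q^{\dim G_0}}{\abs{G_0^\al(\bF_q)}}\,t_{E_0^\al}(1)=\sum_{j\in\bZ}(-1)^j\,\tr\bigl(F_q;H^j(V^\bullet)^\Ga\bigr),
\]
where $V^\bullet$ is the underlying complex of $i^*E_0$ with its $\widetilde{\Ga}$-action ($\Ga=\pi_0(G_0)(\bF)$). Since the coefficients $q^{\dim G_0}/\abs{G_0^\al(\bF_q)}$ are positive, it suffices to show that the right hand side is nonzero.

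Next I would identify that right hand side. Let $C:=R\Hom_{\sD_{G_0}(G_0)}^{Weil}(\e,E_0)\in\sD^{Weil}(\Spec\bF_q)$; its underlying complex is $R\Hom_{\sD_G(G)}(\e,e)$. By the adjunction $i_!=i_*\dashv i^!$ this equals $R\Hom_{\sD_G(\Spec\bF)}(\ql,i^!e)$, and the purity isomorphism $i^!e\cong i^*e[-2\dim G_0](-\dim G_0)$ (the unit section being smooth of codimension $\dim G_0$) together with the identification, via unipotence of $G_0$, of $R\Hom_{\sD_G(\Spec\bF)}(\ql,-)$ with the (exact, since $\Ga$ is finite and we are in characteristic $0$) functor of $\Ga$-invariants, shows that $H^j(C)\cong H^{j-2\dim G_0}(V^\bullet)^\Ga(-\dim G_0)$ with its Frobenius action. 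Consequently the right hand side above equals $q^{-\dim G_0}\,t_C(*)$, and it remains to prove $t_C(*)\ne 0$.

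For this I claim that $R\Hom_{\sD_G(G)}(\e,e)$ is $\ql$ placed in degree $0$, so that $C$ is a rank one Weil sheaf on $\Spec\bF_q$ and $t_C(*)$ is the nonzero eigenvalue of $F_q$ on it. Complete the idempotent arrow $\pi\colon\e\rar{}e$ to a triangle $\e'\rar{}\e\rar{\pi}e\rar{}\e'[1]$; then $e*\e'=0$ because $\pi$ becomes an isomorphism after convolution with $e$. From the theory of closed idempotents in \cite{foundations}, the functor $M\longmapsto e*M$ is left adjoint to the inclusion $e\sD_G(G)\into\sD_G(G)$ (with unit $\pi*(-)$), so $R\Hom_{\sD_G(G)}(\e',e)\cong R\Hom_{\sD_G(G)}(e*\e',e)=0$, whence $R\Hom_{\sD_G(G)}(\e,e)\cong R\Hom_{\sD_G(G)}(e,e)$. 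Finally, under the equivalence $e\sD_G(G)\simeq D^b(\sM_e^{perv})$ of Proposition~\ref{p:properties-char-sheaves}(e) the object $e$ corresponds to a shift of $e[-n_e]$, which is a simple object of the semisimple category $\sM_e^{perv}$ (Proposition~\ref{p:properties-char-sheaves}(a),(c)); hence $R\Hom_{\sD_G(G)}(e,e)=\End_{\sM_e^{perv}}(e[-n_e])=\ql$ in degree $0$. Combining the displayed identities,
\[
\sum_{\al\in H^1(\bF_q,G_0)}\frac{q^{\dim G_0}}{\abs{G_0^\al(\bF_q)}}\,t_{E_0^\al}(1)=q^{-\dim G_0}\,t_C(*)\ne 0,
\]
so some $t_{E_0^\al}(1)$ is nonzero, which is the assertion.

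I expect the main obstacle to be the bookkeeping of compatibilities: that transport of equivariant complexes commutes with restriction to the unit and carries the unit section to the unit section (Lemma~\ref{l:functoriality-transport}, Proposition~\ref{p:conjugation-action}), and the use of the closed-idempotent (Bousfield localization) formalism of \cite{foundations} to see that $e*(-)$ is left adjoint to the inclusion $e\sD_G(G)\into\sD_G(G)$; everything else is formal manipulation with the six functors, the purity isomorphism, and the trace formula \eqref{e:equivariant-sheaves-point}.
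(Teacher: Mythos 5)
Your overall strategy is the same as the paper's — restrict to the unit, apply the trace formula \eqref{e:equivariant-sheaves-point}, and reduce to the fact that $R\Hom_{\sD_G(G)}(\e,e)$ is one-dimensional (your reduction of this last fact to $\End_{\sM_e^{perv}}(e[-n_e])=\ql$ via the Bousfield localization formalism is exactly Proposition~\ref{p:hom-1-to-min-idemp}). However, the step you label ``the purity isomorphism $i^!e\cong i^*e[-2\dim G_0](-\dim G_0)$'' is false, and this is a genuine gap. The isomorphism $i^!\cong i^*[-2d](-d)$ for a smooth point $i$ of codimension $d$ is valid only on lisse complexes; a minimal idempotent $e$ is essentially never lisse near the identity (for a Heisenberg idempotent it is supported on a proper subgroup $H\subsetneq G$, so $i^*e$ and $i^!e$ already have the wrong relative shift $2\dim H\neq 2\dim G$). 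Indeed, pushing your claimed identity $H^j(C)\cong H^{j-2\dim G_0}(V^\bullet)^\Ga(-\dim G_0)$ through shows $(V^\bullet)^\Ga$ would sit in degree $-2\dim G_0$, whereas the correct computation places it in degree $-2n_e$; these disagree unless $d_e=0$.

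The paper sidesteps this by not comparing $i^!e$ to $i^*e$ at all. Instead it dualizes the whole expression: it shows $\bD\bigl((1^*E_0)^G\bigr)\cong R\Hom^{Weil}_{\sD_{G_0}(G_0)}(\e,\bD_{G_0}^-E_0)$, which only uses $\bD\circ 1^*=1^!\circ\bD$ and the fact that $1$ is fixed by inversion, and then invokes $\bD_G^-(e)\cong e[-2n_e]$ (Proposition~\ref{p:properties-char-sheaves}(b)). Your argument can be repaired by the same substitution: replace the false purity isomorphism with this Verdier-duality manipulation, and the rest of your write-up — the reduction to the trace formula and the identification of $R\Hom(\e,e)$ — goes through unchanged. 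As written, though, the crucial bridge between the trace formula's right-hand side and the one-dimensional Ext space is unjustified.
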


\begin{proof}
Let us view $1$ as an $\bF_q$-morphism $\Spec\bF_q\rar{}G_0$ and form the pullback $1^*E_0\in\sD_{G_0}^{Weil}(\Spec\bF_q)$. We can apply Proposition \ref{p:equivariant-sheaves-point} (or, rather, its analogue in the Weil setting) to the object $1^*E_0$. Thus we need to compute the $G$-invariants $(1^*E_0)^G$ as an object of $\sD^{Weil}(\bF_q)$. Writing $\bD$ for the Verdier duality functor on $\Spec\bF_q$ and using the standard adjunctions we obtain a chain of isomorphisms
\begin{equation}\label{e:key-1}
\begin{split}
\bD\bigl((1^*E_0)^G\bigr) &\cong \bigl( \bD(1^*E_0) \bigr)^G \cong \bigl( 1^!\bD_{G_0}^-E_0 \bigr)^G \cong R\Hom_{\sD_{G_0}(\Spec\bF_q)}^{Weil}(\ql,1^!\bD_{G_0}^-E_0) \\
&\cong R\Hom_{\sD_{G_0}(G_0)}^{Weil}(\e,\bD_{G_0}^-E_0).
\end{split}
\end{equation}

We now pause to prove an auxiliary result.

\begin{prop}\label{p:hom-1-to-min-idemp}
If $e\in\sD_G(G)$ is any minimal idempotent, then $\Hom_{\sD_G(G)}(\e,e)$ is
$1$-dimensional and $\Hom_{\sD_G(G)}(\e,e[r])=0$ for all $r\in\bZ$ such that $r\neq 0$.
\end{prop}

\begin{proof}
The functor $\sD_G(G)\rar{}e\sD_G(G)$ given by $M\mapsto e*M$ is left adjoint to the inclusion functor $e\sD_G(G)\into\sD_G(G)$ by \cite[Prop.~2.22(a)]{foundations}. In particular, we have $\Hom_{\sD_G(G)}(\e,e[r])\cong\Hom_{e\sD_G(G)}(e,e[r])$ for all $r\in\bZ$. By Proposition \ref{p:properties-char-sheaves}, the category $\sM_e$ is semisimple, $e$ is a simple object of $\sM_e$, and the natural functor $D^b(\sM_e)\rar{}e\sD_G(G)$ is an equivalence. This implies the proposition.
\end{proof}

\begin{cor}
The total cohomology of the complex $(1^*E_0)^G$ $($where we disregard the Frobenius action$)$ is $1$-dimensional.
\end{cor}

\begin{proof}
In view of \eqref{e:key-1}, this is the same as showing that the total cohomology of $R\Hom_{\sD_G(G)}(\e,\bD_G^-e)$ is $1$-dimensional. However, $\bD_G^-e\cong e[-2n_e]$ by Proposition \ref{p:properties-char-sheaves}. Hence the desired assertion follows from the proposition above.
\end{proof}

We now observe that the trace of an automorphism of a $1$-dimensional vector space cannot be zero. Hence if we apply Proposition \ref{p:equivariant-sheaves-point} to the object $M=1^*E_0\in\sD_{G_0}^{Weil}(\Spec\bF_q)$, the sum on the right hand side of \eqref{e:equivariant-sheaves-point} is nonzero thanks to the corollary we just proved. It follows that $t_{E_0^\al}(1)\neq 0$ for some $\al\in H^1(\bF_q,G_0)$.
\end{proof}

\subsection{Application of a result of Gabber}\label{ss:gabber} We now make a short digression to prove the following fact, which will be used in our demonstration of Theorem \ref{t:main}(e). Let $\overline{\bQ}$ be the algebraic closure of $\bQ$ in $\ql$, fix an embedding $\iota:\overline{\bQ}\into\bC$ and let $z\mapsto\overline{z}$ denote the automorphism of $\overline{\bQ}$ corresponding to complex conjugation via $\iota$.

\begin{lem}\label{l:dual-complex-conjugate}
Let $X$ be a scheme of finite type over $\bF_q$. If $\cF$ is a pure perverse sheaf of weight $0$ on $X$, then $t_{\bD_X\cF}=\overline{t_{\cF}}$, where $\bD_X$ is the Verdier duality functor.
\end{lem}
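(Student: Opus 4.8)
I would first prove the slightly stronger statement that $t_{\bD_X\cF}=q^{-w}\cdot\overline{t_\cF}$ on $X(\bF_q)$ for every perverse sheaf $\cF$ on a scheme $X$ of finite type over $\bF_q$ that is pure of weight $w$; the lemma is the case $w=0$. Since the full subcategory of $\Perv(X)$ of perverse sheaves of weight $\le w$ (resp.\ $\ge w$) is stable under subobjects and quotients \cite{bbd}, so is the subcategory of pure perverse sheaves of weight $w$; hence a pure perverse sheaf of weight $w$ has a Jordan--Hölder series whose factors are simple and pure of weight $w$. As the sheaf-to-function correspondence is additive on short exact sequences and $\bD_X$ is exact and contravariant on perverse sheaves, it suffices to treat a \emph{simple} perverse sheaf pure of weight $w$; for notational simplicity I present the case $w=0$, the general weight being handled identically (and needed for the induction below).

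\textbf{The generic stratum.} Let then $\cF$ be simple perverse, pure of weight $0$. By Gabber's theorem (pure complexes are geometrically semisimple) and the structure theory of simple perverse sheaves \cite{bbd}, we may write $\cF\cong j_{!*}(\cL[d])$ for a smooth irreducible locally closed $\bF_q$-subvariety $j:U\into X$ of some dimension $d$ and an irreducible lisse $\ql$-sheaf $\cL$ on $U$ pure of weight $-d$; then $\bD_X\cF\cong j_{!*}\bigl(\bD_U(\cL[d])\bigr)\cong j_{!*}\bigl(\cL^{\vee}[d](d)\bigr)$, again pure of weight $0$. I argue by Noetherian induction on $\dim\Supp\cF$; after replacing $X$ by $\overline{U}$ we may assume $X=\Supp\cF$ is irreducible, $U$ dense open, and $Z:=X\setminus U$ of strictly smaller dimension. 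For $x\in U(\bF_q)$ one has $t_\cF(x)=(-1)^d\sum_i\al_i$, the $\al_i$ being the eigenvalues of the geometric Frobenius $F_x$ on $\cL_{\xbar}$; purity of weight $-d$ forces $\al_i\overline{\al_i}=q^{-d}$, i.e.\ $\overline{\al_i}=q^{-d}\al_i^{-1}$, and since $F_x$ acts on $(\cL^{\vee}(d))_{\xbar}$ with eigenvalues $q^{-d}\al_i^{-1}$ we obtain $t_{\bD_X\cF}(x)=(-1)^d\sum_i q^{-d}\al_i^{-1}=\overline{t_\cF(x)}$. The identity also holds trivially outside $\Supp\cF$.

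\textbf{The boundary, and the main obstacle.} There remains the identity on $Z(\bF_q)$, which is the crux: here $i^*\cF$ and $i^*\bD_X\cF$ (for $i:Z\into X$) are no longer perverse. The plan is to use the distinguished triangle $i^!\cF\to i^*\cF\to i^*Rj_*(\cL[d])\xrar{+1}$, obtained by applying $i^*$ to $i_*i^!\cF\to\cF\to Rj_*j^*\cF$, together with its analogue for $\bD_X\cF$ and the base-change identities $i^!\cF=\bD_Z\bigl(i^*\bD_X\cF\bigr)$ and $i^!\bD_X\cF=\bD_Z\bigl(i^*\cF\bigr)$. Every complex occurring on $Z$ is mixed, so its perverse cohomology sheaves are mixed perverse sheaves on $Z$ of bounded weights; passing to their Jordan--Hölder factors (simple, hence pure) and applying the inductive hypothesis in its weighted form $t_{\bD_Z\cP}=q^{-w'}\overline{t_\cP}$ to each pure factor $\cP$ on the lower-dimensional scheme $Z$, and using the additivity of $t$ over the two triangles, should yield the identity on $Z(\bF_q)$. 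I expect the main difficulty to be precisely this boundary bookkeeping: one must check that the weight corrections $q^{-w'}$ coming from $i^*\cF$ (weights $\le -1$) and from $i^!\cF$ (weights $\ge 1$) cancel, which is forced by the compatibility $i^*\bD_X=\bD_Z i^!$ and the self-consistency of the two triangles under $\bD_Z$, while simultaneously keeping the Tate twists straight and ensuring the weight formalism of \cite{bbd} applies to every complex in sight.
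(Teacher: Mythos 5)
Your reduction to simple pure perverse sheaves and the verification on the open stratum $U$ are both correct and coincide with what the paper does. The problem is the boundary step, which you yourself flag as the "main obstacle": that is not a completed argument, and I don't think the plan as stated closes. Write $u=t_{i^*\cF}$, $v=t_{i^*\bD_X\cF}$, $r=t_{i^*Rj_*\cL[d]}$, $s=t_{i^*Rj_*\cL^\vee[d](d)}$ on $Z(\bF_q)$. The two triangles and the two duality identities give $v-s=t_{\bD_Z(i^*\cF)}$ and $u-r=t_{\bD_Z(i^*\bD_X\cF)}$. Applying the weighted inductive hypothesis to the pure Jordan--H\"older graded pieces $\cQ_{m,w'}$ of ${}^pH^m(i^*\cF)$ gives $t_{\bD_Z(i^*\cF)}=\sum_m(-1)^m\sum_{w'}q^{-w'}\overline{t_{\cQ_{m,w'}}}$, whereas $\overline{u}=\sum_m(-1)^m\sum_{w'}\overline{t_{\cQ_{m,w'}}}$; these disagree by the varying factors $q^{-w'}$ (and $w'$ does vary — for a simple $\cF$ the complex $i^*\cF$ has weights $\le -1$ across a range). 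The asserted cancellation between the $q^{-w'}$ from $i^*\cF$ and the $q^{-w''}$ from $i^!\cF$ is not forced by anything you have written; in fact the unknown nearby-cycle terms $r,s$ (to which you have no independent access, since controlling $t_{i^*Rj_*}$ is exactly as hard as controlling $t_\cF$ on $Z$) sit in the same equations and block any straightforward elimination. Noetherian induction lets you handle $\bD_Z$ applied to complexes \emph{supported} on $Z$, but it does not let you compute the restrictions $i^*\cF$, $i^*\bD_X\cF$ themselves, which is what you need.

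The paper avoids all of this by a single, rather clever appeal to Gabber's independence-of-$\ell$ theorem \cite{fujiwara-gabber-independence}. Take $I$ to consist of two pairs $(\ell,\iota_1)$ and $(\ell,\iota_2)$, where $\iota_1:\overline{\bQ}\into\ql$ is the inclusion and $\iota_2=\iota_1\circ(\,\overline{\cdot}\,)$. A "$(\overline{\bQ},I)$-compatible system" of two complexes then means precisely that one trace function is the complex conjugate of the other, at every finite level. On $U$ the pair $\bigl(\cL[\dim U](\dim U/2),\ \cL^\vee[\dim U](\dim U/2)\bigr)$ is compatible in this sense — this is exactly the local eigenvalue computation you carried out. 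Gabber's theorem (\cite[Thm.~3]{fujiwara-gabber-independence}) asserts that intermediate extension $j_{!*}$ preserves compatibility of pure perverse systems, so $(\cF,\bD_X\cF)$ is compatible on all of $X$, including $Z$, with no boundary bookkeeping whatsoever. If you want to salvage your approach, what you would effectively be re-proving is this stability of compatible systems under $j_{!*}$, and that is a nontrivial theorem, not a routine d\'evissage.
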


\begin{rem}
Since $\cF$ is pure, the trace-of-Frobenius function $t_{\cF}$ takes values in the maximal CM extension of $\bQ$ inside $\overline{\bQ}$. On that subfield, the automorphism $z\mapsto\overline{z}$ is independent of the choice of $\iota$, which is consistent with the equality $t_{\bD_X\cF}=\overline{t_{\cF}}$.
\end{rem}

\begin{proof}[Proof of Lemma \ref{l:dual-complex-conjugate}]
Without loss of generality we may assume that $\cF$ is simple. Then by \cite[Cor.~III.5.5]{kiehl-weissauer}, $\cF\cong j_{!*}\bigl(\cL[\dim U](\dim U/2)\bigr)$ for a locally closed embedding $j:U\into X$ and a pure local system $\cL$ on $U$ of weight $0$ such that $U_{red}$ (the reduced scheme associated to $U$) is smooth over $\bF_q$,

\mbr

We have $\bD_X(\cF)\cong j_{!*}\bigl(\cL^\vee[\dim U](\dim U/2)\bigr)$, where $\cL^\vee$ is the dual local system. Since $\cL$ is pure of weight $0$, we have $t_{\cL^\vee\tensqn}=\overline{t_{\cL\tensqn}}$ for all $n\in\bN$. We will deduce from this that $t_{\bD_X\cF}=\overline{t_{\cF}}$ using a result of O.~Gabber given in \cite{fujiwara-gabber-independence}.

\mbr

To this end, let $\iota_1:\overline{\bQ}\into\ql$ be the inclusion map, and let $\iota_2:\overline{\bQ}\into\ql$ be given by $\iota_2(z)=\iota_1(\overline{z})$. Let $I$ be the set consisting of the two pairs $(\ell,\iota_1)$ and $(\ell,\iota_2)$. Then the pure perverse sheaves $\cL[\dim U](\dim U/2)$ and $\cL^\vee[\dim U](\dim U/2)$ on $U$ form a $(\overline{\bQ},I)$-compatible system in the terminology of \cite[\S1.2]{fujiwara-gabber-independence}. By \cite[Thm.~3]{fujiwara-gabber-independence}, $\cF$ and $\bD_X\cF$ also form a $(\overline{\bQ},I)$-compatible system, proving the lemma.
\end{proof}

\subsection{Proof of the orthonormality relations}\label{ss:proof-orthonormality}
In this subsection we assume that Theorem \ref{t:main}(c) holds (it is proved in \S\ref{ss:proof-t:main-c}).
For each $M\in\cs(G)^{\Fr_q^*}$ we fix a choice of $M_0\in\sD_{G_0}(G_0)$ satisfying its requirements. We will show how the orthonormality assertion of Theorem \ref{t:main}(e) can be deduced from these properties. In particular, it will follow that the elements $\bigl\{T_{M_0}\st M\in\cs(G)^{\Fr_q^*}\bigr\}$ are linearly independent. The fact that they span $\Fun\bigl([G_0](\bF_q)\bigr)$ follows from Theorem \ref{t:main}(d), which is proved in \S\ref{ss:proof-t:main-d}, along with Proposition \ref{p:equivalence-def-L-packet-characters}.

\mbr

By Lemma \ref{l:dual-complex-conjugate} and Proposition \ref{p:transport-duality}(a), we have
\begin{equation}\label{e:dual-complex-conjugate}
t_{(\bD_{G_0}M_0)^\al}=\overline{t_{M_0^\al}} \quad\text{for all}\quad \al\in H^1(\bF_q,G_0).
\end{equation}

\begin{rem}
Before delving into the remainder of the proof, let us explain the interpretation of the argument we will give from the viewpoint of the Grothendieck-Lefschetz trace formula for stacks. For each $M\in\cs(G)^{\Fr_q^*}$ we view $M_0$ as an object of $D^b_c([G_0],\ql)$, where $[G_0]$ is the stack quotient of $G_0$ by the adjoint action of $G_0$ on itself. If $\pr:[G_0]\rar{}\Spec\bF_q$ is the structure morphism, it turns out that given $M,N\in\cs(G)^{\Fr_q^*}$, we have $\pr_!(M_0\tens\bD N_0)=\ql$ in degree $0$ if $M\cong N$ and $\pr_!(M_0\tens\bD N_0)=0$ otherwise. Applying the Grothendieck-Lefschetz trace formula to $M_0\tens\bD N_0$ yields an identity, which in view of \eqref{e:dual-complex-conjugate} turns out to be equivalent to the orthonormality relations of Theorem \ref{t:main}(e).
\end{rem}

We now explain how to rephrase this proof without using the language of stacks. Unraveling the definition of the inner product $\eval{\cdot}{\cdot}$ given in Theorem \ref{t:main}(e), we find that the orthonormality relations for character sheaves are equivalent to the following assertion: given any $M,N\in\cs(G)^{\Fr_q^*}$, we have
\begin{equation}\label{e:need-for-orthonormality}
\sum_{\al\in H^1(\bF_q,G_0)} \left( \frac{q^{\dim G}}{\abs{G_0^\al(\bF_q)}}
\sum_{g\in G_0^\al(\bF_q)} t_{M^\al_0\tens(\bD_{G_0}N_0)^\al}(g) \right) =
\begin{cases}
1 & \text{ if } M\cong N, \\
0 & \text{ if } M\not\cong N.
\end{cases}
\end{equation}
To prove \eqref{e:need-for-orthonormality}, let $\pr:G_0\rar{}\Spec\bF_q$ denote the structure morphism. By the usual Grothendieck-Lefschetz trace formula (for schemes), we have
\[
\sum_{g\in G_0(\bF_q)} t_{M_0\tens\bD_{G_0}(N_0)}(g) = t_{\pr_!(M_0\tens\bD_{G_0}N_0)}(*),
\]
where $*$ denotes the unique $\bF_q$-point of $\Spec\bF_q$. In view of Proposition \ref{p:equivariant-sheaves-point}, we see that the left hand side of \eqref{e:need-for-orthonormality} can be rewritten as the trace of Frobenius acting on the complex $\bigl( \pr_!(M_0\tens\bD_{G_0}N_0) \bigr)^G\in\sD(\Spec\bF_q)$. If we view this complex as an object of $\sD^{Weil}(\Spec\bF_q)$, then Verdier duality implies that
\[
\bigl( \pr_!(M_0\tens\bD_{G_0}N_0) \bigr)^G \cong \bD \bigl( R\Hom^{Weil}_{\sD_{G_0}(G_0)} ( M_0,N_0 ) \bigr),
\]
where $\bD$ denotes the Verdier duality functor on $\Spec\bF_q$.
Now if $M\not\cong N$, then we claim that the complex $R\Hom_{\sD_G(G)}(M,N)$ is acyclic, which means that the right hand side of the last formula vanishes. Indeed, there are two possibilities. If $M$ and $N$ belong to $\bL$-packets defined by two nonisomorphic minimal idempotents $e,f\in\sD_G(G)$, then the corresponding Hecke subcategories $e\sD_G(G)$ and $f\sD_G(G)$ are orthogonal (in the sense of $\Hom$), which forces $R\Hom_{\sD_G(G)}(M,N)=0$. If $M$ and $N$ belong to the $\bL$-packet defined by the same minimal idempotent $e\in\sD_G(G)$, then our claim follows from parts (a) and (e) of Proposition \ref{p:properties-char-sheaves} and the fact that $M$ and $N$ are nonisomorphic simple objects of the category $\sM_e^{perv}$.

\mbr

The argument above also shows that $R\Hom_{\sD_G(G)}(M,M)$ is concentrated in degree $0$
and its $0$-th cohomology is $1$-dimensional. On the other hand, $\Hom_{\sD_{G_0}(G_0)}(M_0,M_0)$
contains the identity morphism, which maps to a nonzero $\Fr_q$-invariant element of
$\Hom^{Weil}_{\sD_{G_0}(G_0)}(M_0,M_0)$. We conclude that the complex
$R\Hom^{Weil}_{\sD_{G_0}(G_0)}(M_0,M_0)$ is concentrated in degree $0$ and its $0$-th
cohomology is a $1$-dimensional space on which the Frobenius acts trivially. The same
statement must then hold for the complex $\bD \bigl( R\Hom^{Weil}_{\sD_{G_0}(G_0)} ( M_0,M_0 ) \bigr)$,
and it follows that the associated ``trace of Frobenius'' function is equal to $1$. Thus we proved \eqref{e:need-for-orthonormality}, and with it Theorem \ref{t:main}(e).

\subsection{Geometric Mackey theory}\label{ss:geometric-mackey-theory} We now pause to discuss some auxiliary results, which will be used in the proof of Theorem \ref{t:main}(c). Let us choose a perfect field $k$ of characteristic $p>0$ and an algebraic closure $\kbar$ of $k$.

\begin{defin}\label{d:geometric-mackey-condition}
Let $G$ be a perfect unipotent group over $k$ and $G'\subset G$ a closed subgroup. Fix an object $e'\in\sD(G')$ and let $\overline{e'}$ denote the object of $\sD(G)$ obtained from $e'$ via extension by zero.
 \sbr
\begin{enumerate}[(1)]
\item Suppose $k=\overline{k}$. We say that $e'$ satisfies the \emph{geometric Mackey condition} with respect to $G$ if for each $x\in G(k)$ such that $x\not\in G'(k)$, we have $\overline{e'}*\de_x*\overline{e'}=0$, where $\de_x\in\sD(G)$ denotes the delta-sheaf at $x$.
 \sbr
\item For general $k$, we say that $e'$ satisfies the \emph{geometric Mackey condition} with respect to $G$ if the object of $\sD(G'\tens_k\overline{k})$ obtained from $e'$ by base change to $\kbar$ satisfies the geometric Mackey condition with respect to $G\tens_k\kbar$.
\end{enumerate}
\end{defin}

\begin{prop}\label{p:geometric-Mackey}
Let $G$ be a perfect unipotent group over $k$, let $G'\subset G$ be a closed subgroup, and fix a geometrically minimal\footnote{This means that $e'$ remains minimal after base change from $k$ to $\kbar$.} closed idempotent $e'\in\sD_{G'}(G')$ satisfying the geometric Mackey condition with respect to $G$. Then
 \sbr
\begin{enumerate}[$($a$)$]
\item the object $e=\ig e'$ is a geometrically minimal closed idempotent in $\sD_G(G)$;
 \sbr
\item the functor $\ig$ restricts to an equivalence of categories
\[
e'\sD_{G'}(G') \rar{\sim} e\sD_G(G);
\]
 \sbr
\item $\forall\,M\in e'\sD_{G'}(G')$, the canonical map $\ig M\rar{}\Ig M$ is an isomorphism;
 \sbr
\item if $M\in e'\sD_{G'}(G')$ is such that the underlying complex of $M$ is perverse, then the underlying complex of $(\ig M)[d]$ is also perverse, where $d=\dim(G/G')$.
\end{enumerate}
\end{prop}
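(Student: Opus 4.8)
The plan is to reduce at once to the case $k=\overline{k}$, after which everything rests on a single geometric input — the geometric Mackey formula — which the geometric Mackey condition is designed to collapse. The reduction is routine: formation of $\ig$, $\Ig$ and of the canonical morphism $\can\colon\ig\to\Ig$ commutes with base change to $\overline{k}$ by the smooth and proper base change theorems (cf.\ Remark \ref{r:functoriality-of-averaging}), and being a (minimal) closed idempotent, being perverse up to shift, and being an isomorphism are all invariant under such base change. So assume $k=\overline{k}$, and the geometric Mackey condition is the honest statement $\overline{e'}*\de_x*\overline{e'}=0$ for every $x\in G(k)\smallsetminus G'(k)$.

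The substantive ingredient is the geometric Mackey formula for $\Res_{G'}^G\circ\ig$, where $\Res_{G'}^G\colon\sD_G(G)\to\sD_{G'}(G')$ is the restriction functor to which $\ig$ is left adjoint and $\Ig$ is right adjoint (this adjunction triple $\ig\dashv\Res_{G'}^G\dashv\Ig$ is recorded in \cite{foundations}; it also follows from Proposition \ref{p:induction-duality} together with the relative purity isomorphism $j^!\cong j^*[-2d](-d)$ for the regular closed immersion $j\colon G'\into G$ of codimension $d$). In the form proved in \cite{foundations,characters}, this formula equips $\Res_{G'}^G\ig(M)$, for $M\in\sD_{G'}(G')$, with a functorial filtration indexed by the $(G',G')$-double cosets in $G$, whose graded pieces have the shape $\ig{}_{G'\cap\,{}^{x}G'}^{G'}$ applied to the restriction to $G'\cap\,{}^{x}G'$ of an appropriate conjugate of $M$; the piece attached to the trivial double coset is $M$ itself and realizes the unit $M\to\Res_{G'}^G\ig(M)$ of $\ig\dashv\Res_{G'}^G$. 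The point of the Mackey condition is that it forces every off-diagonal graded piece to be annihilated by $e'*(-)$ and, moreover, to be $\Hom$-orthogonal to $e'\sD_{G'}(G')$; hence for $M\in e'\sD_{G'}(G')$ the unit induces an isomorphism $e'*\Res_{G'}^G\ig(M)\xrightarrow{\ \sim\ }M$, and $\Hom_{\sD_{G'}(G')}\bigl(A,\Res_{G'}^G\ig(M)\bigr)\cong\Hom_{\sD_{G'}(G')}(A,M)$ for all $A\in e'\sD_{G'}(G')$. I expect the translation of $\overline{e'}*\de_x*\overline{e'}=0$ into the vanishing of the convolution of $\Res^{G'}_{G'\cap\,{}^{x}G'}(e')$ with the relevant conjugate to be the one genuinely technical step; everything else is formal. (In effect this proposition is a repackaging of the ``geometric Mackey theorem'' of \cite{foundations}.)

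Granting this, parts (a)--(c) follow formally. Using the convolution projection formula $\ig(A)*B\cong\ig\bigl(A*\Res_{G'}^G B\bigr)$ of \cite{foundations}, one gets $e*e=\ig(e')*\ig(e')\cong\ig\bigl(e'*\Res_{G'}^G\ig(e')\bigr)\cong\ig(e')=e$, so $e$ is a weak idempotent, and for any $A\in e'\sD_{G'}(G')$ the same formula together with the Mackey collapse gives $e*\ig A\cong\ig A$, so $\ig$ lands in $e\sD_G(G)$. It is fully faithful there by the adjunction computation $\Hom(\ig A,\ig B)\cong\Hom(A,\Res_{G'}^G\ig B)\cong\Hom(A,B)$ above, and essentially surjective because every $P\in e\sD_G(G)$ satisfies $P\cong e*P\cong\ig\bigl(e'*\Res_{G'}^G P\bigr)$ with $e'*\Res_{G'}^G P\in e'\sD_{G'}(G')$; this proves (b). The equivalence $\ig\colon e'\sD_{G'}(G')\xrightarrow{\sim}e\sD_G(G)$ is monoidal (it sends the unit $e'$ to $e$, and is compatible with $*$ via the projection formula), so since $e'$ is geometrically minimal, $e'\sD_{G'}(G')$ contains no closed idempotent other than $0$ and $e'$, hence the same holds for $e\sD_G(G)$; combined with the fact that $e*f$ is a closed idempotent for every closed idempotent $f\in\sD_G(G)$ (\cite{foundations}), this shows $e$ is a minimal closed idempotent, completing (a) (and, via Remark \ref{r:different-notions-of-minimality}, it is equivalently a minimal weak idempotent). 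For (c), run the dual argument with $\Res_{G'}^G\dashv\Ig$: the Mackey formula likewise collapses $\Res_{G'}^G\Ig(M)$ after convolution with $e'$, so $\Ig$ also restricts to an equivalence $e'\sD_{G'}(G')\xrightarrow{\sim}e\sD_G(G)$; the canonical natural transformation $\can\colon\ig\Rightarrow\Ig$ is then a transformation between equivalences, and by the projection formula it suffices to check $\can_{e'}\colon\ig(e')\to\Ig(e')$ is an isomorphism, which holds because both sides are identified with $e$ compatibly with the (now trivialized) adjunction units.

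Finally, (d) is the standard t-exactness sandwich. For $M\in e'\sD_{G'}(G')$ with perverse underlying complex, $j_!M=j_*M$ is perverse since $j$ is a closed, hence affine, immersion; the equivalence $(i^*)^{-1}$ of \S\ref{ss:averaging-functors} shifts the perverse t-structure by $d$, while $\pr_{2!}$ (whose fibres are perfect affine spaces of dimension $d$) is right t-exact up to the shift by $d$ and $\pr_{2*}$ is left t-exact up to the shift by $d$. Hence $(\ig M)[d]\in{}^{p}\!D^{\leq 0}(G)$ and $(\Ig M)[d]\in{}^{p}\!D^{\geq 0}(G)$; by (c) these two objects coincide, so $(\ig M)[d]$ lies in ${}^{p}\!D^{\leq 0}(G)\cap{}^{p}\!D^{\geq 0}(G)$ and is perverse, as claimed.
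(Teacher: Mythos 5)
The paper's own ``proof'' is a one-line citation to \cite[Thm.~1.52]{foundations}, so there is no in-paper argument against which to compare; your proposal is a plausible reconstruction of what that cited theorem does (reduce to $\overline{k}$, use the geometric Mackey filtration of $\Res_{G'}^G\ig$ together with the projection formula and the adjunction triple $\ig\dashv\Res_{G'}^G\dashv\Ig$ to get (a)--(c), and sandwich perversity in (d) using $\ig\cong\Ig$ plus Artin-type exactness for affine maps). Structurally this is right, but I would flag two points. First, in (d) the inequalities are swapped: with $(i^*)^{-1}$ of a perverse complex landing in ${}^{p}D^{[d,d]}$, and $\pr_{2!}$ (resp.\ $\pr_{2*}$) of a perverse complex landing in ${}^{p}D^{[0,d]}$ (resp.\ ${}^{p}D^{[-d,0]}$), one gets $(\ig M)[d]\in{}^{p}D^{[0,d]}\subset{}^{p}D^{\geq 0}$ and $(\Ig M)[d]\in{}^{p}D^{[-d,0]}\subset{}^{p}D^{\leq 0}$, the opposite of what you wrote; the intersection argument still gives perversity, so this is only a labeling slip. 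Second, and more substantively, your argument for (a) only exhibits $e$ as a \emph{weak} idempotent over $k$, and then invokes Remark \ref{r:different-notions-of-minimality} to pass to ``closed''---but that remark is stated only over $\overline{k}$, and the conclusion of (a) asserts that $e$ is a closed idempotent over the base field $k$ itself. To supply the missing arrow $\e_G\rar{}e$ over $k$, one should use the adjunction $\Res_{G'}^G\dashv\Ig$ to transport the closed-idempotent structure $\pi'\colon\e_{G'}\rar{}e'$ to an arrow $\e_G\rar{}\Ig e'$, then identify $\Ig e'\cong\ig e'=e$ via (c), and check that convolving with $e$ yields an isomorphism by transporting under the equivalence of (b). This is exactly the kind of step the blanket reduction ``everything is invariant under base change to $\overline{k}$'' glosses over, since the existence of an arrow (as opposed to a property of an existing arrow) does not automatically descend.
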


\begin{proof}
All the assertions above are contained in \cite[Thm.~1.52]{foundations}.
\end{proof}

\begin{lem}\label{l:geometric-Mackey}
Let $G$ be a perfect unipotent group over $k$ and $(H,\cL)$ an admissible pair for $G$. Define $G'\subset G$ to be the normalizer of $(H,\cL)$, and let $e'\in\sD_{G'}(G')$ be the Heisenberg minimal idempotent defined by $(H,\cL)$ $($cf.~Definition \ref{d:adm-pair-min-idemp}$)$. Then $e'$ satisfies the geometric Mackey condition with respect to $G$.
\end{lem}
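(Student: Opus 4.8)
The plan is to reduce at once to the case $k=\kbar$ — this is built into the definition of the geometric Mackey condition over a general base — and then to fix $x\in G(k)$ with $x\notin G'(k)$ and prove that the extension by zero $\overline{e'}\in\sD(G)$ of $e'=e'_{H,\cL}$ satisfies $\overline{e'}*\de_x*\overline{e'}=0$. Since $e'$ is obtained from $\bK_H\tens\cL\in\sD_{G'}(H)$ by extension by zero, the complex $\overline{e'}$ is supported on $H$ and there equals $\bK_H\tens\cL$; as $H$ is a smooth perfect group, $\bK_H$ is a shift-and-twist of the constant sheaf, so all shifts and twists below are invertible operations and can be ignored in a vanishing statement.

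First I would unwind the two convolutions. Using that $p_2^*\de_x$ is the constant sheaf on $G\times\{x\}$ one gets $\overline{e'}*\de_x\cong(\rho_x)_*\overline{e'}$, where $\rho_x$ denotes right translation by $x$; a second application of the definition of $*$ then identifies $\overline{e'}*\de_x*\overline{e'}$, up to shift and Tate twist, with $\widetilde{m}_!\bigl(\cL\boxtimes\cL\bigr)$, where $\widetilde{m}\colon H\times H\rar{}G$ is the morphism $(h_1,h_2)\mapsto h_1xh_2$. The next step is the change of variables $\widetilde{m}=\rho_x\circ\mu_0\circ\psi$, where $\psi\colon H\times H\iso H\times H^{x^{-1}}$ sends $(h_1,h_2)$ to $(h_1,xh_2x^{-1})$, the group $H^{x^{-1}}=xHx^{-1}$ is a connected perfect unipotent subgroup of $G$, and $\mu_0\colon H\times H^{x^{-1}}\rar{}G$ is the multiplication map. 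A direct stalk computation gives $\psi_!(\cL\boxtimes\cL)\cong\cL\boxtimes\cL^{x^{-1}}$, where $\cL^{x^{-1}}$ is exactly the multiplicative local system on $H^{x^{-1}}$ obtained from $\cL$ by transport of structure via $h\mapsto xhx^{-1}$ (this is the $\cL^g$ of Definition \ref{d:admissible-pair}(a) with $g=x^{-1}$). Since $\rho_x$ is an isomorphism, it remains to prove $(\mu_0)_!(\cL\boxtimes\cL^{x^{-1}})=0$.

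For this I would compute stalks via the base change isomorphism for $(\mu_0)_!$. Put $K=H\cap H^{x^{-1}}$, a closed (possibly disconnected) subgroup of $G$ with $\pi_0(K)$ finite. Every nonempty fibre of $\mu_0$ is a coset, hence isomorphic to $K$, and on the fibre over $g=a_0b_0$, parametrized by $k\mapsto(a_0k^{-1},kb_0)$, the pullback of $\cL\boxtimes\cL^{x^{-1}}$ is — by the multiplicativity of $\cL$ and of $\cL^{x^{-1}}$ together with $\cL_{h^{-1}}\cong\cL_h^\vee$ — isomorphic to a one-dimensional constant sheaf tensored with the restriction to $K$ of the multiplicative local system $\cM:=(\cL|_K)^\vee\tens\cL^{x^{-1}}|_K$. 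Now the third condition in Definition \ref{d:admissible-pair}(a), applied to the element $x^{-1}\notin G'(k)$ — legitimate because $G'=N_G(H,\cL)$ is a subgroup, so $x\notin G'$ forces $x^{-1}\notin G'$ — says precisely that $\cL|_{K^\circ}\not\cong\cL^{x^{-1}}|_{K^\circ}$, i.e.\ that $\cM$ restricts to a nontrivial multiplicative local system on the connected perfect unipotent group $K^\circ$. It then suffices to invoke the standard vanishing $H^*_c(U,\cN)=0$ for any nontrivial multiplicative local system $\cN$ on a connected perfect unipotent group $U$ (reduce along a composition series with $\bG_a$-quotients to the Artin--Schreier computation $H^*_c(\bG_a,\cL_\psi)=0$): writing $K$ as a finite disjoint union of $K^\circ$-cosets and using multiplicativity once more to identify the restriction of $\cM$ to each coset with a constant twist of $\cM|_{K^\circ}$, one gets $H^*_c(K,\cM)=0$, hence all stalks of $(\mu_0)_!(\cL\boxtimes\cL^{x^{-1}})$ vanish and the complex is $0$. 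Combined with the previous paragraph this gives $\overline{e'}*\de_x*\overline{e'}=0$.

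The step I expect to be the main obstacle is the bookkeeping in the second paragraph: carrying the $\bK_H$-shifts correctly through both convolutions, and — more importantly — making sure that the local system produced by the change of variables $\psi$ is literally the $\cL^{x^{-1}}$ appearing in the admissibility hypothesis, and not some other conjugate (such as $\cL^x$) or its dual. Once that identification is pinned down, the conclusion is forced by admissibility together with the Artin--Schreier vanishing, both of which are standard.
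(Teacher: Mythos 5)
Your proof is correct, and the two points you flag as potential obstacles both work out: the $\bK_H$-shifts and Tate twists factor out harmlessly of a vanishing statement, and the pushforward $\phi_*\cL$ along $\phi:h\mapsto xhx^{-1}$ (the second component of $\psi$) is by definition the paper's $\cL^{x^{-1}}$, so the third admissibility condition applies to $g=x^{-1}$ exactly as you say. The paper itself only cites \cite[\S9.5]{characters} for this lemma, and your reconstruction --- reduce $\overline{e'}*\de_x*\overline{e'}$ to $(\mu_0)_!(\cL\boxtimes\cL^{x^{-1}})$ on $H\times H^{x^{-1}}$, compute stalks along the $K$-torsor fibres to land on $H^*_c(K,\cM)$ up to a constant twist, and kill this via the third admissibility condition together with Artin--Schreier vanishing on $K^\circ$ and its translates --- is precisely the standard geometric Mackey argument behind that reference.
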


\begin{proof}
See \cite[\S9.5]{characters}.
\end{proof}

\subsection{Proof of Theorem \ref{t:main}(c)}\label{ss:proof-t:main-c} For the time being we assume the following result, proved in \S\ref{ss:proof-p:heis}, and explain how to deduce Theorem \ref{t:main}(c) from it.

\begin{prop}\label{p:heis}
Theorem \ref{t:main}(c) holds when $e_0\in\sD_{G_0}(G_0)$ is a Heisenberg minimal idempotent $($Def.~\ref{d:adm-pair-min-idemp}$)$ defined by an admissible pair $(H_0,\cL_0)$ for $G_0$.
\end{prop}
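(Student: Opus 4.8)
The plan is to reduce Proposition~\ref{p:heis} to explicit computations, using that a Heisenberg minimal idempotent $e_0$ and the character sheaves in its $\bL$-packet admit completely explicit descriptions in terms of the admissible pair $(H_0,\cL_0)$ and the finite group $\pi_0(G_0)$ in \cite{foundations}.

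\emph{Purity of $e_0$.} Let $j:H_0\hookrightarrow G_0$ denote the (closed) inclusion and $d_H=\dim H_0$, so that $e_0\cong j_!(\bK_{H_0}\tens\cL_0)$ with $\bK_{H_0}\cong\ql[2d_H](d_H)$; in particular $t_{e_0\tensqn}$ is $q^{-n d_H}$ times the extension by zero of $t_{\cL_0\tensqn}$. Since $e_0\tensqn$ is an idempotent for convolution for every $n$, the identity $t_{e_0\tensqn}*t_{e_0\tensqn}=t_{e_0\tensqn}$ forces $t_{\cL_0\tensqn}:H_0(\bF_{q^n})\rar{}\ql^\times$ to be a group homomorphism. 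As $H_0(\bF_{q^n})$ is a finite $p$-group, its image lies in $\mu_{p^\infty}$, so $\cL_0$ is punctually pure of weight $0$ and hence, being lisse, pure of weight $0$; by Remark~\ref{r:twists-preserve-weights}, so is $\bK_{H_0}\tens\cL_0$. Since $j$ is a closed immersion, $j_!=j_*$ preserves both $\sD_{\leq 0}$ and $\sD_{\geq 0}$ (Theorem~\ref{t:deligne}), and therefore $e_0$ is pure of weight~$0$.

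\emph{Descent of the character sheaves.} Let $M$ be a character sheaf in the $\bL$-packet of $e$ with $\Fr_q^*M\cong M$, i.e., a simple object of $\sM_e^{perv}$ (Proposition~\ref{p:properties-char-sheaves}(a)) fixed by $\Fr_q^*$. Since $\sM_e^{perv}$ has finitely many simple objects, $M$ is defined over a finite extension $\bF_{q^n}$; arguing as in \S\ref{ss:proof-t:main-a} and Lemma~\ref{l:uniqueness-idempotent} — rescale a chosen isomorphism $\Fr_q^*M\rar{\simeq}M$ so that its $n$-fold iterate is the canonical descent isomorphism, obtaining an honest descent datum along $\bF/\bF_q$, and use the idempotent trick $e_0*(-)$ to see the descent lands in the Hecke subcategory — we obtain a perverse $M_0'\in e_0\sD_{G_0}(G_0)$ with base change $M$. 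By Lemma~\ref{l:equivariant-perverse-weil}, $M_0'$ is determined up to tensoring with $\pr^*\sL$ for a rank one local system $\sL$ on $\Spec\bF_q$, a twist which preserves membership in $e_0\sD_{G_0}(G_0)$; choosing $\sL$ suitably we fix $M_0$ to be pure of weight~$0$. By Proposition~\ref{p:transport-weights}, and since transport preserves perversity, $(M_0\tensqn)^\al$ is perverse and pure of weight~$0$ for all $n$ and all $\al\in H^1(\bF_{q^n},G_0\tensqn)$.

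\emph{The arithmetic of the trace functions.} The remaining point is that $M_0$ can be chosen so that $t_{(M_0\tensqn)^\al}$ takes values in $\bZ[\mu_{p^{2r}},p^{-1}]$, uniformly in $n$ and $\al$. Here I would use the explicit structure of Heisenberg $\bL$-packets from \cite{foundations}: the character sheaves in the packet of $e$ are indexed by the irreducible representations $\rho$ of a finite central extension $\Gamma$ of $\pi_0(G_0)(\bF)$ whose central character is prescribed by $\cL_0$ and the isogeny $\vp_{\cL_0}:G_0^\circ/H_0\rar{}(G_0^\circ/H_0)^*$, and for $M=M_\rho$ one gets an explicit expression for $t_{(M_0\tensqn)^\al}(g)$ as a product of: (i) values of the multiplicative local system $\cL_0$, and (ii) ordinary character values of $\rho$ on the finite $p$-group $\Gamma$, both of which are $p^r$-th roots of unity; together with (iii) a single normalization factor, arising from the dualizing-complex twist and from a quadratic Gauss sum attached to $\vp_{\cL_0}$ on the $\bF_p$-vector space $\Lie(G_0^\circ/H_0)$, whose absolute value is a (possibly half-integral) power of $q$. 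The heart of the matter is that, once $M_0$ is normalized to be pure of weight~$0$, this factor lies in $\bZ[\mu_{p^{2r}},p^{-1}]$: a quadratic Gauss sum over $\bF_{p^r}$ naturally lives in a ring where a square root of a power of $p$ is available, which is precisely what forces the passage from $\mu_{p^r}$ to $\mu_{p^{2r}}$ (cf.~Remarks~\ref{r:absolute-value-square-root-of-p} and~\ref{r:optimality-bound-exponent}). Passing to a pure inner form $G_0^\al$ or to an extension $\bF_{q^n}$ replaces the $\pi_0$-and-Galois data entering (i)--(iii) but not the ring of values, so the conclusion is uniform. I expect this last step — writing out the trace formula in the disconnected Heisenberg case and tracking the Gauss-sum factor into $\bZ[\mu_{p^{2r}},p^{-1}]$ — to be the main obstacle.
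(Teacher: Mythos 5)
Your proof establishes the purity of $e_0$ and the descent of $M$ to a pure $M_0$ of weight $0$ correctly (the purity argument for $\cL_0$ is fine, though you could simply invoke multiplicativity of $\cL_0$ rather than the idempotence of $t_{e_0\tensqn}$, and the descent argument is the same rescaling trick as in \S\ref{ss:proof-t:main-a}). However, there is a genuine gap at exactly the point you flag as ``the main obstacle'': the claim that the trace functions $t_{(M_0\tensqn)^\al}$ take values in $\bZ[\mu_{p^{2r}},p^{-1}]$ is not proved, only sketched, and your sketch goes in a different (and not clearly workable) direction from the paper's. You propose to write $t_{(M_0\tensqn)^\al}$ as a product of character values of a finite central extension of $\pi_0(G_0)(\bF)$ together with a quadratic Gauss sum attached to $\vp_{\cL_0}$, and to locate the $\mu_{p^{2r}}$ in that Gauss sum. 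This description of the Heisenberg packet is already inaccurate in general: the character sheaves in the packet are \emph{not} indexed by irreducible representations of a single central extension of $\pi_0(G_0)(\bF)$; rather, by Remark \ref{r:action-semidirect-product}, their supports spread over several $U$-orbits in $G$ (where $U=G\ltimes H$), and the relevant finite group is the component group $\pi_0(\Ut^x)$ of the stabilizer of a point $x$ of the orbit, which varies with the orbit and is \emph{not} naturally a central extension of $\pi_0(G_0)(\bF)$.

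The paper's proof does not compute Gauss sums at all. It constructs the semidirect product $U_0=G_0\ltimes H_0$ acting on $G_0$ (with $H_0$ by left translation, $G_0$ by conjugation), lifts the $\cL_0$-torsor to a central extension $\widetilde{H}_0\to H_0$, extends the $G_0$-conjugation to $\widetilde{H}_0$ using connectedness, and forms $\widetilde{U}_0=G_0\ltimes\widetilde{H}_0$, a central extension of $U_0$ by a finite discrete $p$-group $A$ with a character $\chi$. The key identification $e_0\sD_{G_0}(G_0)\simeq\sD_{\widetilde{U}_0}^{\chi}(G_0)$ (Lemma \ref{l:alternative-Hecke}) converts the whole problem into one about $\widetilde{U}$-equivariant local systems of central character $\chi$ on $U$-orbits, to which Corollary \ref{c:homog} applies. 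Corollary \ref{c:homog} reduces further to Proposition \ref{p:homog}, which is pure representation theory of $\widetilde{\Gamma}=\Gal(\bF/\bF_q)\ltimes\Gamma$ with $\Gamma=\pi_0(\widetilde{U}^x)(\bF)$, and the containment in $\bZ[\mu_{p^r},p^{-1}]$ (with $p^r$ now the exponent of $\pi_0(\widetilde{U}^x)$) comes from Proposition \ref{p:drinfeld-representations}. The passage to $p^{2r}$ (with $r$ now from the exponent of $G$) is not a Gauss-sum phenomenon but arises from two sources: Lemma \ref{l:bound-on-exponent}, bounding the exponent of $\pi_0(\widetilde{U}^x)$ by $p^{2r}$ because $\widetilde{U}$ is a central extension of $U$ by a $p$-group; and the normalizing twist $\pr^*\sL_\lambda$ with $\lambda\overline{\lambda}=q^{-s_{\cM}}$, where $\lambda$ can be taken in $\bZ[\mu_{p^{2r}},p^{-1}]$ by Remark \ref{r:absolute-value-square-root-of-p}. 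To complete your argument you would need to carry out the explicit Heisenberg trace computation including the multiple-orbit case and pure inner forms, and verify the ring of values directly; without the $\widetilde{U}_0$/Corollary \ref{c:homog} machinery this requires substantial work that your proposal does not do.
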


Now let $G_0$ be a perfect unipotent group over $\bF_q$, and let $e_0\in\sD_{G_0}(G_0)$ be a geometrically minimal idempotent. Without loss of generality we may replace $G_0$ with one of its pure inner forms and assume (using Theorem \ref{t:main}(b), which was proved earlier) that $e_0$ is isomorphic to the minimal idempotent defined by some admissible pair $(H_0,\cL_0)$ for $G_0$. Write $G_0'$ for the normalizer of $(H_0,\cL_0)$ in $G_0$ and $e_0'\in\sD_{G_0'}(G_0')$ for the Heisenberg minimal idempotent defined by $(H_0,\cL_0)$.

% If $G_0'=G_0$, then there is nothing to do by Proposition
% \ref{p:heis}. So from now on we assume that $G_0'\neq G_0$. In
% particular, $G_0$ is noncommutative, so if $p^r$ is the exponent of % $G_0$, then $p^r>2$. Let us proceed with the proof of Theorem
% \ref{t:main}(c) for $e_0$.

\mbr

It is clear that $e_0'$ is pure of weight $0$. Since $e_0\cong\igz e_0'$, Lemma \ref{l:geometric-Mackey}, Proposition \ref{p:geometric-Mackey}(c) and Proposition \ref{p:induction-purity} imply that $e_0$ is also pure of weight $0$.

\mbr

Next let $G,e,G',e'$ denote the objects obtained from $G_0,e_0,G_0',e_0'$ via base change to $\bF$, and write $\bL(e)$ and $\bL(e')$ for the $\bL$-packets of character sheaves on $G$ and $G'$ defined by $e$ and $e'$, respectively. Parts (b) and (d) of Proposition \ref{p:geometric-Mackey} yield

\begin{lem}\label{lemma}
The functor $\ig[d]:\sD_{G'}(G')\rar{}\sD_G(G)$ induces a bijection
\[
\bL(e')\rar{\simeq}\bL(e),
\]
which is compatible with the action of $\Fr_q^*$, where $d=\dim(G/G')$.
\end{lem}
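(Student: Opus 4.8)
The plan is to deduce Lemma \ref{lemma} directly from the geometric-Mackey machinery already assembled, with no new input beyond parts (b) and (d) of Proposition \ref{p:geometric-Mackey} together with the observation that the equivalence there is Frobenius-equivariant. First I would record that, by Lemma \ref{l:geometric-Mackey}, the Heisenberg minimal idempotent $e'\in\sD_{G'}(G')$ defined by the admissible pair $(H,\cL)$ (base-changed to $\bF$) satisfies the geometric Mackey condition with respect to $G$, so Proposition \ref{p:geometric-Mackey} applies. Part (a) gives that $e=\ig e'$ is a geometrically minimal closed idempotent; part (b) gives an equivalence of monoidal categories $\ig\colon e'\sD_{G'}(G')\iso e\sD_G(G)$.

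Next I would pass to perverse objects. Recall that by Definition \ref{d:min-idempotents-L-packets}(2) the $\bL$-packet $\bL(e')$ is the set of simple objects of $\sM_{e'}^{perv}$, and likewise for $\bL(e)$. By Proposition \ref{p:properties-char-sheaves}(d)--(e), shifting by $n_{e'}$ (resp.\ $n_e$) identifies $\sM_{e'}^{perv}$ with the heart $\sM_{e'}\subset e'\sD_{G'}(G')$ and similarly for $e$, and the relevant categories are semisimple with finitely many simples. The functor $\ig[d]$, being $\ig$ followed by a cohomological shift, carries $e'\sD_{G'}(G')$ to $e\sD_G(G)$ equivalently; by Proposition \ref{p:geometric-Mackey}(d) it sends a perverse object of $e'\sD_{G'}(G')$ to a perverse object of $e\sD_G(G)$ (the shift by $d$ is exactly what compensates for the dimension change), and by part (c) the canonical morphism $\ig M\to\Ig M$ is an isomorphism, so $\ig[d]$ is also exact for the perverse $t$-structures on these Hecke subcategories. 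Hence $\ig[d]$ restricts to an exact equivalence $\sM_{e'}^{perv}\iso\sM_e^{perv}$ (one should check the shifts match up: $\ig[d]$ is an equivalence of triangulated categories respecting the perverse $t$-structure, so it induces an equivalence of hearts). An equivalence of semisimple abelian categories induces a bijection on isomorphism classes of simple objects, which is precisely the asserted bijection $\bL(e')\iso\bL(e)$.

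Finally I would address Frobenius-equivariance. Here $G_0,G_0',e_0,e_0'$ are all defined over $\bF_q$; the induction functor $\ig$ is obtained by base change from $\igz\colon\sD_{G_0'}(G_0')\to\sD_{G_0}(G_0)$ (Definition \ref{d:induction-functors}), so it commutes with the pullback functor $\Fr_q^*$ up to a canonical isomorphism — concretely, $\Fr_q^*\circ\ig\cong\ig\circ\Fr_q^*$ because $\av_{G/G'}$, $j_!$ and the shift $[d]$ all commute with $\Fr_q^*$ by smooth/proper base change (cf.\ Remark \ref{r:functoriality-of-averaging}). Therefore the bijection $\bL(e')\iso\bL(e)$ intertwines the two $\Fr_q^*$-actions: if $M'\in\bL(e')$ then $\ig[d](\Fr_q^*M')\cong\Fr_q^*(\ig[d]\,M')$, so $M'$ is $\Fr_q^*$-invariant if and only if its image is. This completes the proof.

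I do not expect a serious obstacle here; the one point requiring a little care is the interplay of cohomological shifts — matching $\ig[d]$ on the Hecke subcategories with the identification $\sM_{e'}^{perv}[n_{e'}]=\sM_{e'}$ and $\sM_e^{perv}[n_e]=\sM_e$ — and checking that $\ig[d]$ really is exact for the perverse $t$-structures so that it descends to an equivalence of hearts rather than merely of bounded derived categories. That is handled by Proposition \ref{p:geometric-Mackey}(d) together with the fact (Proposition \ref{p:properties-char-sheaves}(e)) that $e'\sD_{G'}(G')\simeq D^b(\sM_{e'}^{perv})$ and $e\sD_G(G)\simeq D^b(\sM_e^{perv})$, so an equivalence between these derived categories preserving perversity must come from an equivalence of the hearts.
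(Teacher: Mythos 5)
Your proof is correct and follows essentially the same route as the paper, which simply invokes Proposition \ref{p:geometric-Mackey}(b) (equivalence of Hecke subcategories) and (d) (preservation of perversity under $\ig[d]$); you have merely expanded the two citations into a full argument, including the semisimplicity observation needed to pass from ``equivalence preserving the heart'' to ``equivalence of hearts'' and the (routine) commutation of $\ig$ with $\Fr_q^*$.
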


Fix $M\in\bL(e)^{\Fr_q^*}$. By Lemma \ref{lemma}, there is $N\in\bL(e')^{\Fr_q^*}$ with $M\cong\bigl(\ig N\bigr)[d]$. By Proposition \ref{p:heis}, we can choose $N_0\in e_0'\sD_{G_0'}(G_0')$ such that $N$ is obtained from $N_0$ by base change to $\bF$; the underlying complex of $N_0$ is perverse and pure of weight $0$; and for all $n\in\bN$ and $\be\in H^1(\bF_{q^n},G_0')$, the function $t_{(N_0\tensqn)^\be}:G_0'^\be(\bF_{q^n})\rar{}\ql$ takes values in the subring $\bZ[\mu_{p^{2r}},p^{-1}]\subset\qab\subset\ql$. Here, as before, we write $G_0'^\be(\bF_{q^n})$ in place of $(G_0'\tensqn)^\be(\bF_{q^n})$ for brevity.

\mbr

By Remark \ref{r:absolute-value-square-root-of-p}, there exists $\la\in\bZ[\mu_{p^{2r}}]$ with $\la\cdot\overline{\la}=p$. Let $\sL_\la$ be the local system on $\Spec\bF_q$ corresponding to the vector space of dimension $1$ over $\ql$ on which the geometric Frobenius $F_q\in\Gal(\bF/\bF_q)$ acts via $\la$. Write $q=p^s$ and let $M_0=\bigl(\igz N_0\bigr)[d]\tens\pr^*(\sL_\la)^{-s\cdot d}$, where $\pr:G_0\rar{}\Spec\bF_q$ is the structure morphism and $\sL_\la$ is equipped with the trivial $G_0$-equivariant structure. Then $M$ is obtained from $M_0$ via base change to $\bF$, and $M_0\in e_0\sD_{G_0}(G_0)$ by Proposition \ref{p:geometric-Mackey}(b). Proposition \ref{p:geometric-Mackey}(d) shows that $M_0$ is perverse. By Proposition \ref{p:geometric-Mackey}(c) and Proposition \ref{p:induction-purity}, $\igz N_0$ is pure of weight $0$. Since $\pr^*(\sL_\la)^{-s\cdot d}$ is a pure local system of weight $-d$ by construction, $M_0$ is also pure of weight $0$.

\mbr

Finally, by Proposition \ref{p:induction-sheaves-functions},
\[
t_{(M_0\tensqn)^\al} = (-1)^d \cdot \la^{-n\cdot s\cdot d} \cdot \sum_{\substack{\be\in H^1(\bF_{q^n},G_0') \\ \text{such that }\be\mapsto\al}} \ind_{G_0'^\be(\bF_{q^n})}^{G_0^\al(\bF_{q^n})} t_{(N_0\tensqn)^\be}
\]
for all $n\in\bN$ and $\al\in H^1(\bF_{q^n},G_0)$, where $\la\in\bZ[\mu_{p^{2r}}]$ is as above. We have $\la^{-1}=\overline{\la}/p\in\bZ[\mu_{p^{2r}},p^{-1}]$, which by our choice of $N_0$ implies that $t_{(M_0\tensqn)^\al}$ takes values in $\bZ[\mu_{p^{2r}},p^{-1}]$, completing the proof.

\subsection{Equivariant local systems on homogeneous spaces}\label{ss:homogeneous-spaces} Let us
state some results that will be used in the proofs of Theorem \ref{t:main}(d) and Proposition \ref{p:heis}.

\begin{defin}\label{d:equiv-loc-sys}
Let $k$ be a perfect field of characteristic $p>0$, and let $U$ be a perfect group over $k$ acting on a perfect variety $X$ over $k$ (see Def.~\ref{defs:perfect}). We write $\Loc_U(X)$ for the category of $U$-equivariant $\ql$-local systems on $X$ and $\operatorname{IrrLoc}_U(X)$ for
the set of isomorphism classes of irreducible objects in $\Loc_U(X)$.
\end{defin}

\begin{prop}\label{p:homog}
Let $U_0$ be a perfect group over $\bF_q$ acting transitively on
a nonempty perfect variety $X_0$ over $\bF_q$. Write
$U=U_0\tens_{\bF_q}\bF$ and $X=X_0\tens_{\bF_q}\bF$, and let
$\Fr_q:X\rar{}X$ denote the Frobenius endomorphism.
 \sbr
\begin{enumerate}[$($a$)$]
\item Every irreducible $\cL\in\Loc_U(X)$ such that $\Fr_q^*\cL\cong\cL$ can be represented
as the base change of an object $\cL_0\in\Loc_{U_0}(X_0)$ such that $\cL_0$ is pure of weight $0$ and the trace-of-Frobenius function\footnote{Here, as always, we write $X_0^\al(\bF_{q^n})$ in place of $(X_0\tensqn)^\al(\bF_{q^n})$ to simplify the notation.} $t_{(\cL_0\tensqn)^\al}:X_0^\al(\bF_{q^n})\rar{}\ql$ takes values in $\qab\subset\ql$ for all $n\in\bN$ and all $\al\in H^1(\bF_{q^n},U_0)$.
 \sbr
\item[$($a$')$] If for some $($equivalently, every$)$ $x\in X(\bF)$, the group $\pi_0(U^x)$ has exponent $p^r$ $($where $U^x$ is the stabilizer of $x$ in $U${}$)$, then in $($a$)$ we can further ensure that $t_{(\cL_0\tensqn)^\al}:X_0^\al(\bF_{q^n})\rar{}\ql$ takes values\footnote{Here, unlike in the statement of Theorems \ref{t:main-connected} and \ref{t:main}, we do not need to replace $p^r$ with $p^{2r}$, and this is important for our proofs.} in $\bZ[\mu_{p^r},p^{-1}]\subset\qab\subset\ql$.
 \sbr
\item For every $\cL\in\operatorname{IrrLoc}_{U}(X)^{\Fr_q^*}$, fix an $\cL_0$ as in $($a$)$. The elements
\[
\Bigl\{ T_{\cL_0} = \bigl( t_{\cL_0^\al} \bigr)_{\al\in
H^1(\bF_q,U_0)} \,\Bigl\lvert\,
\cL\in\operatorname{IrrLoc}_U(X)^{\Fr_q^*} \Bigr\}
\]
form a basis of the space\footnote{Here, $(U_0\backslash X_0)(\bF_q)$ is the groupoid of $\bF_q$-points of the quotient stack $U_0\backslash X_0$, and we refer to \S\ref{ss:functions-on-groupoids} for further explanations of the notation.}
\[
\Fun\bigl((U_0\backslash X_0)(\bF_q)\bigr) = \bigoplus_{\al\in H^1(\bF_q,U_0)}
\Fun(X_0^\al(\bF_q),\qab)^{U_0^{\al}(\bF_q)}.
\]
\end{enumerate}
\end{prop}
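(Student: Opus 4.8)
The plan is to reduce the statement to the representation theory of a finite $p$-group equipped with a semilinear Frobenius action, after which parts (a), (a$'$) and (b) become bookkeeping with characters of a semidirect product $\widetilde{\Ga}=\Gal(\bF/\bF_q)\ltimes\Ga$ together with the Grothendieck--Lefschetz trace formula recalled in \S\ref{ss:equivariant-sheaves-point}.

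\emph{Step 1: reduction to a component group.} Fix $x\in X(\bF)$, let $H=U^x$ be its stabilizer --- a closed, hence unipotent, subgroup of $U$ --- and put $\Ga=\pi_0(H)(\bF)$, a finite $p$-group. Transitivity gives an identification of quotient stacks $[U\backslash X]\cong BH$, and since the connected unipotent group $H^\circ$ satisfies $\pi_1^{et}(BH)\cong\pi_0(H)$ --- equivalently, $BH^\circ$ carries no nontrivial $\ql$-local system --- the forgetful functor induces an equivalence $\Loc_U(X)\rar{\sim}\Rep_{\ql}(\Ga)$. Tracking the Frobenius exactly as in \S\ref{ss:equivariant-sheaves-point}, $\Loc_{U_0}^{Weil}(X_0)$ is identified with the category of finite-dimensional continuous $\ql$-representations of $\widetilde{\Ga}=\Gal(\bF/\bF_q)\ltimes\Ga$, and passage to the pure inner form $X_0^\al$ corresponds to re-expressing $\widetilde{\Ga}$ as a semidirect product via a different lift of the geometric Frobenius (with $\al$ indexed by a twisted conjugacy class $[\ga\cdot F_q]$). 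In particular the gerby machinery of \S\ref{ss:equivariant-sheaves-point} and Proposition \ref{p:equivariant-sheaves-point} (and its Weil variant) apply verbatim with $\Spec\bF_q$ replaced by the gerbe $[U_0\backslash X_0]$.

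\emph{Step 2: parts (a) and (a$'$).} An irreducible $\cL\in\Loc_U(X)^{\Fr_q^*}$ corresponds to an $F_q$-stable irreducible $\ql$-representation $\rho$ of $\Ga$. Since $\widetilde{\Ga}/\Ga\cong\widehat{\bZ}$ is procyclic, $\rho$ extends to an irreducible representation of $\widetilde{\Ga}$ (alternatively one runs the descent-datum argument of \S\ref{ss:proof-t:main-a}: descend to a finite subfield $\bF_{q^n}$, then re-glue using Schur's lemma and an $n$-th root); this produces $\cL_0\in\Loc_{U_0}(X_0)$ with base change $\cL$. Two such extensions differ by a character of $\widehat{\bZ}$, i.e.\ by a rescaling of the Frobenius eigenvalues, and --- realizing $\rho$ (and hence the Frobenius intertwiner) over $\bQ(\mu_{p^r})$ when $\pi_0(U^x)$ has exponent $p^r$ --- the possible eigenvalues of $F_q$ are a fixed scalar times roots of unity; we choose the scaling making $\cL_0$ pure of weight $0$. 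The character values of $\rho$ lie in $\bZ[\mu_{p^r}]$, and Remark \ref{r:absolute-value-square-root-of-p} lets us absorb the weight-$0$ normalization factor into $\bZ[\mu_{p^r},p^{-1}]$; this gives the value ring in (a$'$) for $t_{(\cL_0\tensqn)^\al}$ over all $n$ and all $\al$, and dropping the exponent hypothesis (asking only for values in $\qab$) gives (a). Note that, in contrast with Theorems \ref{t:main-connected} and \ref{t:main}, there is no ``square root of a local system'' here, which is why $p^r$ rather than $p^{2r}$ suffices.

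\emph{Step 3: part (b), and the main obstacle.} For (b) one first establishes the dimension count $\#\IrrLoc_U(X)^{\Fr_q^*}=\dim_{\qab}\Fun\bigl((U_0\backslash X_0)(\bF_q)\bigr)$: by Step 1 the left side is the number of $F_q$-stable conjugacy classes of $\Ga$, while by the displayed decomposition the right side is $\sum_{\al}\bigl(\#\,U_0^\al(\bF_q)\text{-orbits on }X_0^\al(\bF_q)\bigr)$, and the two agree by Lang's theorem \cite{lang} together with the standard combinatorics relating $F_q$-stable classes, twisted conjugacy classes, and $\bF_q$-points of a quotient stack (cf.\ \S\ref{ss:equivariant-sheaves-point}). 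It then suffices to prove that the $T_{\cL_0}$ are linearly independent, which I would obtain by running the $R\Hom$-plus-trace-formula computation of \S\ref{ss:proof-orthonormality} with $\sD_{G_0}(G_0)$ replaced by $\Loc_{U_0}(X_0)$: there $R\Hom$ is computed inside $\Loc_U(X)=\Rep(\Ga)$, where Schur's lemma gives $R\Hom(\cL,\cL')=\ql$ in degree $0$ if $\cL\cong\cL'$ and $0$ otherwise, so the resulting pairing makes $\{T_{\cL_0}\}$ orthonormal --- in particular linearly independent, hence (with the count) a basis. The main obstacle is Step 1: proving the equivalence $\Loc_U(X)\simeq\Rep(\pi_0(U^x))$ --- i.e.\ the vanishing of nontrivial equivariant $\ql$-local systems under a connected unipotent group --- and checking its compatibility with the Frobenius and with transport to pure inner forms. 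A secondary technical point is the arithmetic in Step 2 pinning down the ring $\bZ[\mu_{p^r},p^{-1}]$ uniformly in $n$ and $\al$.
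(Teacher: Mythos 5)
Your reduction in Step 1 is essentially the paper's: it also passes to a stabilizer and identifies $\Loc_U(X)$ with $\Rep(\Ga,\ql)$ for $\Ga=\pi_0(\text{stabilizer})(\bF)$, and $\Loc^{Weil}_{U_0}(X_0)$ with $\Rep(\widetilde\Ga,\ql)$. The only difference is that the paper first replaces $(U_0,X_0)$ by a pure inner form for which $X_0(\bF_q)\neq\varnothing$, then works with the stabilizer of an honest $\bF_q$-point; starting from a geometric point $x\in X(\bF)$ as you do leaves the Frobenius structure on $U^x$ implicit, and the inner-form reduction is the cleaner way to make it explicit. That part is fine.

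For part (b) you take a genuinely different route: a dimension count (number of $F_q$-stable irreducibles $=$ number of twisted conjugacy classes) combined with an orthonormality computation via $R\Hom$, Lemma \ref{l:dual-complex-conjugate}, and Proposition \ref{p:equivariant-sheaves-point}. The paper instead proves Proposition \ref{p:twisted-characters-form-a-basis}, a purely algebraic statement that the twisted characters $\widetilde\chi_\rho$ form a basis of the space of $\phi$-twist-invariant linear functionals on a semisimple algebra. Your orthonormality route should work and has the advantage of reusing machinery already in the paper, but it needs more input (purity, the dual/complex-conjugation lemma of Gabber); the paper's argument is more elementary and self-contained at this point, which is presumably why the author separated it out. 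This is a legitimate alternative, not a gap.

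The real gap is in your Step 2, specifically (a$'$). You assert that because $\rho$ is realizable over $E=\bQ(\mu_{p^r})$ one may choose the Frobenius intertwiner $\phi_\rho$ over $E$, and that Remark \ref{r:absolute-value-square-root-of-p} lets you ``absorb the weight-$0$ normalization factor into $\bZ[\mu_{p^r},p^{-1}]$.'' This does not follow. If you pick any $E$-rational $\phi_\rho$, then $\phi_\rho^N=c\cdot\id$ for some $c\in E^\times$, and the rescaling $\la$ with $\la^N=c^{-1}$ which achieves purity of weight $0$ (and finite image) need not lie in $E$, let alone in $A=\bZ[\mu_{p^r},p^{-1}]$; Remark \ref{r:absolute-value-square-root-of-p} only produces an element of $A$ of prescribed \emph{absolute value}, not the specific $N$-th root you need, and there is no a priori reason the twisted trace values $\tr(\la\phi_\rho\rho(\ga))$ land in $A$. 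The paper handles this with a substantial argument (Propositions \ref{p:drinfeld-representations} and \ref{p:drinfeld-2}): one constructs an $A$-integral model $M$ with an $A$-valued $\Ga$-invariant Hermitian form, shows it is unique up to isomorphism and its $\cC$-automorphisms are roots of unity, and then chooses $\phi_\rho$ to be an $A$-linear \emph{isometry} of $M$ of finite order; the Hermitian structure simultaneously forces purity of weight $0$ and twisted character values in $A$. That is exactly the content you are waving past, and it is the place where $\bZ[\mu_{p^r},p^{-1}]$ (rather than $\bZ[\mu_{p^{2r}},p^{-1}]$) is earned.
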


This result is proved in \S\ref{ss:proof-p:homog}.

\begin{rem}
All the assertions of Proposition \ref{p:homog} depend only on the quotient stack $U_0\backslash X_0$, which is a gerbe because $U_0$ acts transitively on $X_0$. Thus Proposition \ref{p:homog} could be reformulated as a statement about algebraic gerbes over $\bF_q$. However, it is the form given above that will be most useful for us.
\end{rem}

\begin{rem}
Just as in Remark \ref{r:uniqueness-basis}, the local system $\cL_0$ in Proposition \ref{p:homog}(a) is determined uniquely up to tensor product with a $\ql$-local system on $X_0$ obtained by pullback from a rank $1$ local system on $\Spec\bF_q$ such that the corresponding character $\Gal(\bF/\bF_q)\rar{}\ql^\times$ takes values in the subgroup consisting of elements of $\bQ(\mu_{p^r})$ of absolute value $1$ (and the $U_0$-equivariant structure is trivial).
\end{rem}

\begin{defin}\label{d:quasi-equivariant}
In the setting of Definition \ref{d:equiv-loc-sys}, consider a central extension
\[
1 \rar{} A \rar{} \Ut \rar{} U \rar{} 1
\]
of perfect groups over $k$, where $A$ is finite and discrete.
We have the induced action of $\Ut$ on $X$, so we can form the category\footnote{The natural functor $\Loc_U(X)\rar{}\Loc_\Ut(X)$
is in general \emph{not} an equivalence.} $\Loc_{\Ut}(X)$. Since $A\subset\Ut$
acts trivially on $X$, for every $M\in\Loc_{\Ut}(X)$ we
obtain an action of $A$ on $M$ by automorphisms. If $\chi:A\rar{}\ql^\times$ is a homomorphism,
we write $\Loc_{\Ut}^\chi(X)$ for the full subcategory of $\Loc_{\Ut}(X)$ consisting of objects
on which $A$ acts via the scalar $\chi$. The set of isomorphism classes of simple objects in $\Loc_{\Ut}^\chi(X)$ will be denoted by $\operatorname{IrrLoc}_{\Ut}^\chi(X)$.
\end{defin}

Next we would like to have an analogue of Proposition \ref{p:homog} for the
categories $\Loc_\Ut^\chi(X)$. In order to state it we need some auxiliary constructions.

\begin{defin}\label{d:aux}
Consider a central extension
\[
1 \rar{} A \rar{} \Ut_0 \rar{f} U_0 \rar{} 1
\]
of perfect groups over $\bF_q$, where $A$ is finite and discrete.
Let \[\de:H^0(\bF_q,U_0)=U_0(\bF_q)\rar{}H^1(\bF_q,A)\cong A\] denote the
connecting homomorphism in the corresponding long exact sequence of Galois cohomology.
If $\chi:A\rar{}\ql^\times$ is a homomorphism and $X_0$ is a perfect variety over
$\bF_q$ equipped with an action of $U_0$,
we write $\Fun(X_0(\bF_q),\qab)^{U_0(\bF_q),\chi}$ for the space of functions
$X_0(\bF_q)\rar{}\qab$ on which $U_0(\bF_q)$ acts\footnote{Note
that the image of $\chi$ consists of roots of unity and hence lies in $\qab$.} via $\chi\circ\de:U_0(\bF_q)\rar{}\ql^\times$.
\end{defin}

\begin{rems}\label{rems:aux}
\begin{enumerate}[(1)]
\item In the situation of Definition \ref{d:aux},
the long exact sequence of Galois cohomology yields an exact sequence
\begin{equation}\label{e:aux}
1 \rar{} A \rar{} \Ut_0(\bF_q) \rar{} U_0(\bF_q) \rar{} A \rar{} H^1(\bF_q,\Ut_0) \rar{} H^1(\bF_q,U_0),
\end{equation}
where we identified both $A(\bF_q)$ and $H^1(\bF_q,A)$ with $A$. In particular,
we see that the image of $\Ut_0(\bF_q)$ under $f$ is a normal subgroup of $U_0(\bF_q)$ and
the quotient $U_0(\bF_q)/f(\Ut_0(\bF_q))$ is identified with a subgroup of $A$.
 \sbr
\item Suppose that, in addition, $A$ is contained in the neutral connected
component $\Ut^\circ_0$ of $\Ut_0$. Then the map $\pi_0(\Ut_0)\rar{}\pi_0(U_0)$ induced
by $f$ is an isomorphism, whence the map $H^1(\bF_q,\Ut_0) \rar{} H^1(\bF_q,U_0)$
in \eqref{e:aux} is a bijection.
 \sbr
\item If $A\subset\Ut_0^\circ$, then the last remark and the exactness of \eqref{e:aux} imply
that the induced map $U_0(\bF_q)/f(\Ut_0(\bF_q))\rar{}A$
is a group isomorphism.
 \sbr
\item Suppose that $\Ut_1$ is another perfect group over $\bF_q$ and $P$ is a
$(\Ut_1,\Ut_0)$-bitorsor. Since $A$ is central in $\Ut_0$, there exists a unique
embedding $A\into\Ut_1$ such that the left and right actions of $A$ on $P$ induced
by the embeddings $A\into\Ut_0$ and $A\into\Ut_1$,
coincide. Moreover, the quotient $P/A$ becomes a $(\Ut_1/A,U_0)$-bitorsor, and
hence $\Ut_1/A$ is identified with a pure inner form of $U_0$.

\mbr

Using this observation, we may and will canonically identify $A$ with a discrete central subgroup of any pure inner form of $\Ut_0$ in all that follows.
\end{enumerate}
\end{rems}

\begin{defin}\label{d:aux2}
In the situation of Definition \ref{d:aux}, assume that $A\subset\Ut_0^\circ$. Let
$\chi:A\rar{}\ql^\times$ be a homomorphism, and let $X_0$ be a perfect variety over
$\bF_q$ equipped with an action of $U_0$. We put
\[
\Fun(X_0)^{U_0,\chi} = \bigoplus_{\al\in H^1(\bF_q,U_0)} \Fun(X^\al_0(\bF_q),\qab)^{U^\al_0(\bF_q),\chi}.
\]
Here we identify $H^1(\bF_q,\Ut_0)=H^1(\bF_q,U_0)$ as in Remark \ref{rems:aux}(2).
In addition, for each $\al\in H^1(\bF_q,\Ut_0)$, we identify $A$ with a subgroup
of the pure inner form $\Ut_0^\al$, and we identify $U_0^\al$ with the corresponding
quotient $\Ut_0^\al/A$, as in Remark \ref{rems:aux}(4). The spaces
$\Fun(X^\al_0(\bF_q),\qab)^{U^\al_0(\bF_q),\chi}$ were constructed in Definition \ref{d:aux}.
\end{defin}

\begin{cor}\label{c:homog}
Let $U_0$ be a perfect group over $\bF_q$ acting transitively on
a nonempty perfect variety $X_0$ over $\bF_q$. Write
$U=U_0\tens_{\bF_q}\bF$ and $X=X_0\tens_{\bF_q}\bF$, and let
$\Fr_q:X\rar{}X$ denote the Frobenius endomorphism. In addition, consider
a central extension $1\rar{}A\rar{}\Ut_0\rar{}U_0\rar{}1$, where $A$ is a finite
discrete subgroup of $\Ut_0^\circ$; write $\Ut=\Ut_0\tens_{\bF_q}\bF$; and let $\chi:A\rar{}\ql^\times$ be a homomorphism.
 \sbr
\begin{enumerate}[$($a$)$]
\item Every irreducible $\cL\in\Loc_\Ut^\chi(X)$ such that $\Fr_q^*\cL\cong\cL$ can be represented as the base change of an object $\cL_0\in\Loc_{\Ut_0}^\chi(X_0)$ such that $\cL_0$ is pure of weight $0$ and the trace-of-Frobenius function\footnote{Here, as always, we write $X_0^\al(\bF_{q^n})$ in place of $(X_0\tensqn)^\al(\bF_{q^n})$ to simplify the notation.} $t_{(\cL_0\tensqn)^\al}:X_0^\al(\bF_{q^n})\rar{}\ql$ takes values in $\qab\subset\ql$ for all $n\in\bN$ and all $\al\in H^1(\bF_{q^n},U_0)$.
 \sbr
\item[$($a$')$] If for some $($equivalently, every$)$ $x\in X(\bF)$, the group $\pi_0(\Ut^x)$ has exponent $p^r$ $($where $\Ut^x$ is the stabilizer of $x$ in $\Ut${}$)$, then in $($a$)$ we can further ensure that $t_{(\cL_0\tensqn)^\al}:X_0^\al(\bF_{q^n})\rar{}\ql$ takes values in $\bZ[\mu_{p^r},p^{-1}]\subset\qab\subset\ql$.
 \sbr
\item For every $\cL\in\operatorname{IrrLoc}_{\Ut}^\chi(X)^{\Fr_q^*}$, fix an $\cL_0$ as in $($a$)$. The elements
\begin{equation}\label{e:basis}
\Bigl\{ T_{\cL_0} = \bigl( t_{\cL_0^\al} \bigr)_{\al\in
H^1(\bF_q,U_0)} \,\Bigl\lvert\,
\cL\in\operatorname{IrrLoc}_\Ut^\chi(X)^{\Fr_q^*} \Bigr\}
\end{equation}
form a basis of the space $\Fun(X_0)^{U_0,\chi}$.
\end{enumerate}
\end{cor}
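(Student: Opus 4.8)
The plan is to deduce Corollary~\ref{c:homog} from Proposition~\ref{p:homog} by splitting everything off according to the central character of $A$. The first observation is that $\Ut_0$ acts transitively on $X_0$ as well, since $\Ut_0\twoheadrightarrow U_0$ intertwines the two actions and $U_0$ acts transitively; hence Proposition~\ref{p:homog} applies verbatim with $\Ut_0,\Ut,\Ut^x$ in place of $U_0,U,U^x$. Note in particular that the hypothesis of Proposition~\ref{p:homog}(a') --- that $\pi_0$ of the stabilizer have exponent $p^r$ --- is exactly the hypothesis of Corollary~\ref{c:homog}(a'), so all that is left to do is to organize the output of Proposition~\ref{p:homog} according to the action of $A$.

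Since $A$ is finite, discrete, central in $\Ut_0$, and acts trivially on $X$, Schur's lemma shows that $A$ acts through a single character $\chi\colon A\to\ql^\times$ on each object of $\operatorname{IrrLoc}_{\Ut}(X)$, so $\operatorname{IrrLoc}_{\Ut}(X)=\bigsqcup_{\chi}\operatorname{IrrLoc}_{\Ut}^\chi(X)$, and this decomposition is preserved by $\Fr_q^*$ because $\Fr_q$ is defined over $\bF_q$ and hence commutes with the $A$-action. We may assume that $\chi$ is $\Fr_q$-invariant, since otherwise $\operatorname{IrrLoc}_{\Ut}^\chi(X)^{\Fr_q^*}$ is empty and, inspecting Definition~\ref{d:aux}, the space $\Fun(X_0)^{U_0,\chi}$ is zero as well. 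For $\cL\in\operatorname{IrrLoc}_{\Ut}^\chi(X)^{\Fr_q^*}$, the object $\cL_0$ furnished by parts~(a), (a') of Proposition~\ref{p:homog} automatically lies in $\Loc_{\Ut_0}^\chi(X_0)$, is pure of weight $0$, and has trace functions valued in $\qab$ --- respectively in $\bZ[\mu_{p^r},p^{-1}]$ under the hypothesis of~(a'). This gives parts~(a) and~(a') of the Corollary.

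For part~(b), I would first match the ambient function spaces. Proposition~\ref{p:homog}(b) applied to $\Ut_0$ gives a basis of $\Fun\bigl((\Ut_0\backslash X_0)(\bF_q)\bigr)=\bigoplus_{\al\in H^1(\bF_q,\Ut_0)}\Fun\bigl(X_0^\al(\bF_q),\qab\bigr)^{\Ut_0^\al(\bF_q)}$. As $A$ acts trivially on each $X_0^\al$, the action of $\Ut_0^\al(\bF_q)$ there factors through its image $f(\Ut_0^\al(\bF_q))\subseteq U_0^\al(\bF_q)$, and by the exactness of~\eqref{e:aux} and Remark~\ref{rems:aux}(2)--(3) (using $A\subset\Ut_0^\circ$) the quotient $U_0^\al(\bF_q)/f(\Ut_0^\al(\bF_q))$ is identified with $A$ via the connecting map $\de$. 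Decomposing the $f(\Ut_0^\al(\bF_q))$-invariant functions into isotypic pieces for this residual $A$-action and summing over $\al$ gives $\Fun\bigl((\Ut_0\backslash X_0)(\bF_q)\bigr)=\bigoplus_{\chi}\Fun(X_0)^{U_0,\chi}$, the $\chi$-summand being exactly the space of Definitions~\ref{d:aux} and~\ref{d:aux2}. It then remains to check that the basis from Proposition~\ref{p:homog}(b) respects this decomposition --- that $T_{\cL_0}$ lies in the $\chi$-summand when $\cL$ has central character $\chi$ --- which amounts to the identity $t_{\cL_0^\al}(u\cdot x)=\chi(\de(u))\cdot t_{\cL_0^\al}(x)$ for all $u\in U_0^\al(\bF_q)$ and $x\in X_0^\al(\bF_q)$. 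Given this, the $\Fr_q^*$-invariant objects with central character $\chi$ form a basis of $\Fun(X_0)^{U_0,\chi}$, which is part~(b).

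The main obstacle is that last identity for $t_{\cL_0^\al}$: one must unwind the $\Ut_0^\al$-equivariant structure of $\cL_0^\al$ through the sheaves-to-functions dictionary in order to see precisely how the trace function transforms under the full group of rational points $U_0^\al(\bF_q)$ --- and not merely under $f(\Ut_0^\al(\bF_q))$, under which invariance is automatic --- which forces one to keep careful track of the connecting homomorphism $\de$ and of the way Frobenius interacts with a chosen lift of $u$ to $\Ut_0^\al(\bF)$. Once that is settled, parts~(a), (a') and~(b) are formal consequences of Proposition~\ref{p:homog} and the remarks in~\ref{rems:aux}.
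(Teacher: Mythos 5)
Your plan matches the paper's own proof of Corollary~\ref{c:homog} essentially step for step: apply Proposition~\ref{p:homog} with $\Ut_0$ in place of $U_0$ (noting $\Ut_0$ acts transitively since $U_0$ does), decompose $\operatorname{IrrLoc}_{\Ut}(X)$ and $\Fun\bigl((\Ut_0\backslash X_0)(\bF_q)\bigr)$ by the central character $\chi\in A^*$ --- the former by Karoubi--completeness/Schur, the latter by Remark~\ref{rems:aux}(3), which identifies $U_0^\al(\bF_q)/f(\Ut_0^\al(\bF_q))$ with $A$ via $\de$ --- and finish by the observation that the basis of Proposition~\ref{p:homog}(b) respects the decomposition, together with Lemma~\ref{l:basis-direct-sum} (which you use implicitly and should cite). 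Parts~(a) and~(a$'$) are handled identically.

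The one point you flag as "the main obstacle" --- that $T_{\cL_0}$ lands in the $\chi$-summand, i.e.\ $t_{\cL_0^\al}(u\cdot x)=\chi(\de(u))\,t_{\cL_0^\al}(x)$ for $u\in U_0^\al(\bF_q)$ --- is exactly the point the paper also leaves as an assertion ("the elements~\eqref{e:basis} belong to the subspace $\Fun(X_0)^{U_0,\chi}$"). Your instinct to worry about it is sound, but it is a routine compatibility of the functions--sheaves dictionary, of the same type as Remark~\ref{r:function-L0}: lift $u$ to $\tilde u\in\Ut_0^\al(\bF)$; the $\Ut^\al$-equivariant structure of $\cL$ identifies the geometric stalks at $x$ and $\tilde u\cdot x=u\cdot x$; the failure of this identification to commute with Frobenius is precisely the automorphism of the stalk given by the $1$-cocycle $\de(u)\in A$, which acts on $\cL$ by the scalar $\chi(\de(u))$ since $\cL\in\Loc_\Ut^\chi(X)$. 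Unwinding the sign conventions for $\de$ and for the geometric Frobenius gives the stated identity, and then Lemma~\ref{l:basis-direct-sum} closes part~(b). (Also, since $A$ is a \emph{constant} finite group scheme, $\Fr_q$ acts trivially on $A^*$, so your aside about assuming $\chi$ is $\Fr_q$-invariant is vacuous, though harmless.)
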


This result, which can be reduced to Proposition \ref{p:homog}, is proved in \S\ref{ss:proof-c:homog}.

\subsection{Proof of Theorem \ref{t:main}(d)}\label{ss:proof-t:main-d} The nontrivial part of the proof is the following

\begin{prop}\label{p:implies-main-d}
Let $G_0$ be any perfect unipotent group over $\bF_q$, let
$G=G_0\tens_{\bF_q}\bF$, and let $\Fr_q:G\rar{}G$ be the Frobenius
endomorphism. For every character sheaf $M$ on $G$ such that $\Fr_q^*M\cong M$, choose $M_0\in\sD_{G_0}(G_0)$ satisfying the requirements of Theorem \ref{t:main}(c). Then $\Fun\bigl([G_0](\bF_q)\bigr)$ is spanned by $\bigl\{ T_{M_0} \st M\in\cs(G)^{\Fr_q^*}\bigr\}$.
\end{prop}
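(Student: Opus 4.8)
The plan is to reduce the spanning statement to a counting/dimension argument combined with the orthonormality already established in \S\ref{ss:proof-orthonormality}. First I would observe that the elements $T_{M_0}$, $M\in\cs(G)^{\Fr_q^*}$, are orthonormal for the inner product $\eval{\cdot}{\cdot}$ of Theorem \ref{t:main}(e); this was proved unconditionally in \S\ref{ss:proof-orthonormality} (that argument only used Theorem \ref{t:main}(c), not (d)). In particular they are linearly independent, so it suffices to show that their number equals $\dim_{\qab}\Fun\bigl([G_0](\bF_q)\bigr)$. By Remark \ref{r:F-q-points-quotient-stack-by-conjugation} and Proposition \ref{p:equivalence-def-L-packet-characters}, the latter dimension equals the total number of irreducible characters of all the groups $G_0^\al(\bF_q)$, $\al\in H^1(\bF_q,G_0)$, and these are partitioned into the $\bL$-packets of irreducible characters of $G_0$ indexed by the $\Fr_q^*$-invariant geometrically minimal idempotents $e_0\in\sD_{G_0}(G_0)$ (equivalently, by the $\Fr_q^*$-invariant minimal idempotents $e\in\sD_G(G)$, using Theorem \ref{t:main}(a)). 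On the character-sheaf side, $\cs(G)^{\Fr_q^*}$ is likewise partitioned according to which $\Fr_q^*$-invariant minimal idempotent $e$ has $M$ in its $\bL$-packet. So it is enough to prove, for each fixed $\Fr_q^*$-invariant minimal idempotent $e$ with associated $e_0$ (Theorem \ref{t:main}(a)), that the span of $\{T_{M_0}\st M\in\bL(e)^{\Fr_q^*}\}$ equals the span of the $\bL$-packet of irreducible characters defined by $e_0$ inside $\Fun\bigl([G_0](\bF_q)\bigr)$ — this is exactly the assertion of Theorem \ref{t:main}(d) read packet-by-packet.

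Next I would carry out the reduction to the Heisenberg case, mirroring \S\ref{ss:proof-t:main-c}. Replacing $G_0$ by a pure inner form (Theorem \ref{t:main}(b)), assume $e_0$ is the minimal idempotent defined by an admissible pair $(H_0,\cL_0)$; let $G_0'=N_{G_0}(H_0,\cL_0)$ and let $e_0'$ be the Heisenberg minimal idempotent. By Lemma \ref{l:geometric-Mackey} and Proposition \ref{p:geometric-Mackey}, induction $\ig[d]$ gives a $\Fr_q^*$-equivariant bijection $\bL(e')\rar{\simeq}\bL(e)$ (Lemma \ref{lemma}), and by Proposition \ref{p:induction-sheaves-functions}, up to the fixed invertible scalar $\la^{\pm n s d}$ of \S\ref{ss:proof-t:main-c}, the functions $T_{M_0}$ for $M\in\bL(e)^{\Fr_q^*}$ are the images of the $T_{N_0}$ for $N\in\bL(e')^{\Fr_q^*}$ under the map $\Fun\bigl([G_0'](\bF_q)\bigr)\rar{}\Fun\bigl([G_0](\bF_q)\bigr)$ induced on each pure inner form by $\ind_{G_0'^\be(\bF_q)}^{G_0^\al(\bF_q)}$. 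On the character side, the $\bL$-packet of irreducible characters of $G_0$ defined by $e_0$ is, by Proposition \ref{p:induction-sheaves-functions} applied to the central idempotent $t_{e_0^\al}=\ind(t_{e_0'^\be})$ and by geometric Mackey theory, precisely the image under the same induction maps of the $\bL$-packet of irreducible characters of $G_0'$ defined by $e_0'$ (this compatibility of $\bL$-packets under induction from the normalizer is the content of \S\ref{ss:L-packets-characters} together with Proposition \ref{p:geometric-Mackey}; I would cite \cite{characters} for the statement that induction from $G_0'$ identifies the two packets). Hence the packet-by-packet assertion for $e_0$ follows from the same assertion for the Heisenberg idempotent $e_0'$.

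Finally I would prove the Heisenberg case, which is where the real content lies and which I expect to be the main obstacle: here one invokes Corollary \ref{c:homog} (equivalently, Proposition \ref{p:homog}). By the structure of Heisenberg minimal idempotents recalled in \S\ref{ss:admissible-pairs}, the category $\sM_{e'}^{perv}$ of character sheaves in $\bL(e')$ is equivalent to a category $\Loc_{\Ut}^\chi(X)$ of twisted equivariant local systems on a homogeneous space $X$ under $G'$ (with $\Ut$ a central extension of the relevant quotient group by a finite group $A$ and $\chi$ a character of $A$), and under this equivalence $\Fr_q^*$-invariant character sheaves correspond to $\Fr_q^*$-invariant irreducible objects; moreover, via the sheaves-to-functions dictionary and Proposition \ref{p:induction-sheaves-functions}, the functions $T_{N_0}$ become exactly the functions $T_{\cL_0}$ of \eqref{e:basis}, and the space spanned by the $\bL$-packet of irreducible characters becomes $\Fun(X_0)^{U_0,\chi}$ of Definition \ref{d:aux2}. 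Then Corollary \ref{c:homog}(b) says precisely that the $T_{\cL_0}$ form a basis of $\Fun(X_0)^{U_0,\chi}$, giving the Heisenberg case. The main obstacle is setting up this dictionary cleanly — identifying the equivariant-derived-category description of $e_0'\sD_{G_0'}(G_0')$ with twisted equivariant sheaves on the homogeneous space, matching up the groupoids of $\bF_q$-points of pure inner forms on both sides, and checking the purity and $\bZ[\mu_{p^{2r}},p^{-1}]$-integrality claims of Theorem \ref{t:main}(c) compatibly with the rational-value statement of Corollary \ref{c:homog}(a$'$); this is carried out in \S\ref{ss:proof-p:heis}, and here I would simply reduce Proposition \ref{p:implies-main-d} to Proposition \ref{p:heis} and Corollary \ref{c:homog}.
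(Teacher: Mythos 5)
Your strategy — prove the span equality packet by packet (i.e., Theorem \ref{t:main}(d) directly) and deduce the proposition — is legitimate in principle, but it departs from the paper's argument in a way that imports significant extra difficulties, and as written it has a real gap in the Heisenberg step. The paper's proof of Proposition \ref{p:implies-main-d} in \S\ref{ss:proof-t:main-d} never reduces to the Heisenberg case: it (i) applies Proposition \ref{p:homog}(b) to each $\Fr_q$-stable \emph{geometric conjugacy class} (where the $G_0$-action \emph{is} transitive) to show that the $T_{N_0}$ for arbitrary $N_0\in\sD_{G_0}(G_0)$ span $\Fun\bigl([G_0](\bF_q)\bigr)$, (ii) uses $\sum_i t_{(e_0^i)^\al}=\delta_1$ to decompose $T_{N_0}=\sum_i T_{e_0^i*N_0}$, and (iii) for $N_0\in e_0\sD_{G_0}(G_0)$, uses semisimplicity of $\sM_e^{perv}$ plus Schur's lemma and the key observation $T_{N''_0}=0$ for the non-$\Fr_q^*$-invariant summand to write $T_{N_0}$ as a combination of the $T_{M_0}$. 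No induction from the normalizer, no comparison with characters of $G_0'$, and no identification of the packet span with a space of twisted invariant functions is ever needed.

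Your Heisenberg step as written does not go through directly: in \S\ref{ss:proof-p:heis} the variety $X_0$ is a finite \emph{union} of $U_0$-orbits, not a single homogeneous space, so Corollary \ref{c:homog}(b) (which hypothesizes transitivity) must first be applied orbit-by-orbit. More seriously, the identification ``the space spanned by the $\bL$-packet of irreducible characters becomes $\Fun(X_0)^{U_0,\chi}$'' is not established anywhere in the paper: Proposition \ref{p:heis} only constructs the objects $M_0$ and proves their purity/integrality, and nothing in \S\ref{ss:proof-p:heis} relates the image of $t_{e_0^\al}*(-)$ on class functions of $G_0^\al(\bF_q)$ to $\Fun(X_0)^{U_0,\chi}$. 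Similarly, your reduction from $e_0$ to the Heisenberg idempotent $e_0'$ on the \emph{character} side requires a Mackey-irreducibility statement (induction from $G_0'^\be(\bF_q)$ takes the $e_0'$-packet bijectively to the $e_0$-packet, including over all pure inner forms); you cite \cite{characters} for this, but it is not proved or quoted in the present paper, which is designed precisely to avoid it. So the approach could in principle be made to work, but only after supplying these two nontrivial ingredients; the paper's route through conjugacy classes and Schur's lemma is both shorter and self-contained.
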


Indeed, the fact that the elements $\bigl\{ T_{M_0} \st M\in\cs(G)^{\Fr_q^*}\bigr\}$ are linearly independent follows from the orthonormality relations proved in \S\ref{ss:proof-orthonormality}. Hence Proposition \ref{p:implies-main-d} implies that these elements form a basis of $\Fun\bigl([G_0](\bF_q)\bigr)$. Also, if $M_0\in e_0\sD_{G_0}(G_0)$ for some geometrically minimal idempotent $e_0\in\sD_{G_0}(G_0)$, then $T_{M_0}$ belongs to the span of the set of
elements in the $\bL$-packet of irreducible characters defined by
$e_0$. Hence Theorem \ref{t:main}(d) follows from the previous observations and the obvious

\begin{lem}\label{l:basis-direct-sum}
Let $V=\bigoplus_{i\in I}V_i$ be a direct sum of vector spaces over a field.
For each $i\in I$, suppose that we are given a subset $S_i\subset V_i$. If $\bigcup_{i\in I}S_i$
is a basis of $V$, then $S_i$ is a basis of $V_i$ for every $i\in I$.
\end{lem}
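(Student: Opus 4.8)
The plan is to fix an index $i_0\in I$ and check directly that $S_{i_0}$ is both linearly independent and spanning in $V_{i_0}$, using nothing beyond the defining property of the direct sum $V=\bigoplus_{i\in I}V_i$.

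Linear independence will be immediate: $S_{i_0}\subseteq\bigcup_{i\in I}S_i$, and the latter set is by hypothesis a basis of $V$, hence in particular a linearly independent subset of $V$; any subset of a linearly independent set is linearly independent, so $S_{i_0}$ (regarded inside $V_{i_0}\subseteq V$) is linearly independent.

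For spanning I would argue as follows. Let $v\in V_{i_0}$ be arbitrary. Viewing $v$ as an element of $V$ and using that $\bigcup_{i\in I}S_i$ is a basis, write $v$ as a finite $k$-linear combination of elements of $\bigcup_{i\in I}S_i$. Grouping the terms according to the index $i$ for which the corresponding basis vector lies in $S_i\subseteq V_i$, this expresses $v$ as $\sum_{i\in I}w_i$ with $w_i\in\operatorname{span}_k(S_i)\subseteq V_i$ and all but finitely many $w_i$ equal to $0$. On the other hand, $v\in V_{i_0}$ also has the decomposition whose $i_0$-component is $v$ and all of whose other components vanish. By uniqueness of decompositions with respect to a direct sum, these two decompositions agree; in particular $w_{i_0}=v$, so $v\in\operatorname{span}_k(S_{i_0})$. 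Since $v$ was arbitrary, $S_{i_0}$ spans $V_{i_0}$. Together with the previous paragraph this shows $S_{i_0}$ is a basis of $V_{i_0}$, and since $i_0\in I$ was arbitrary the lemma follows.

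There is no genuine obstacle here; the only point that deserves a moment's attention is to invoke the \emph{uniqueness} of the direct-sum decomposition (rather than merely the fact that the subspaces $V_i$ jointly span $V$) in the spanning step. One may also note in passing that the sets $S_i$ are automatically pairwise disjoint, since $S_i\cap S_j\subseteq V_i\cap V_j=0$ for $i\neq j$ while $0$ is never an element of a basis; but this observation is not actually needed for the argument.
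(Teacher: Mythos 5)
Your proof is correct, and the paper itself gives no proof at all — it simply calls the lemma ``obvious'' and cites it in the deduction of Theorem \ref{t:main}(d). Your argument is precisely the standard verification one would write out: linear independence of $S_{i_0}$ by restriction of a linearly independent set, and spanning via the uniqueness of the direct-sum decomposition, which you correctly flag as the one point requiring a moment's thought.
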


It remains to prove Proposition \ref{p:implies-main-d}. We begin with

\begin{lem}
As $N_0$ ranges over \emph{all} objects of $\sD_{G_0}(G_0)$, the elements $T_{N_0}=\bigl(t_{N_0^\al}\bigr)_{\al\in H^1(\bF_q,G_0)}$ span $\Fun\bigl([G_0](\bF_q),\ql\bigr)$.
\end{lem}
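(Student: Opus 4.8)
The plan is to exploit the decomposition of $[G_0](\bF_q)$ according to geometric conjugacy classes. By Remark \ref{r:F-q-points-quotient-stack-by-conjugation}, an element of $\Fun\bigl([G_0](\bF_q),\ql\bigr)$ is the same thing as a tuple of conjugation-invariant functions on the groups $G_0^\al(\bF_q)$, $\al\in H^1(\bF_q,G_0)$, and $T_{N_0}=(t_{N_0^\al})_\al$ is simply the trace-of-Frobenius function of $N_0$, regarded as a complex on the quotient stack $[G_0]=G_0/\Ad\,G_0$. Each point of $[G_0](\bF_q)$ has a well-defined geometric conjugacy class in $G$, which is necessarily $\Fr_q$-stable; grouping points accordingly I would first write $\Fun\bigl([G_0](\bF_q),\ql\bigr)=\bigoplus_O V_O$, the (finite) sum over the $\Fr_q$-stable conjugation orbits $O\subset G$ that occur, where $V_O$ consists of the functions supported on points of geometric class $O$. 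Since $\sD_{G_0}(G_0)$ is closed under direct sums, it suffices to show that each nonzero $V_O$ lies in the span of the $T_{N_0}$.

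To handle a fixed such $O$ I would choose $\be\in H^1(\bF_q,G_0)$ and a point $g\in G_0^\be(\bF_q)$ whose geometric conjugacy class is $O$ (possible exactly because $V_O\neq 0$), set $G_1=G_0^\be$, let $Z=Z_{G_1}(g)$, and let $O_g\subset G_1$ be the conjugation orbit of $g$: since $G_1$ is unipotent this is a closed perfect subvariety, isomorphic to $G_1/Z$, defined over $\bF_q$, homogeneous under $G_1$, and its base change to $\bF$ is identified with $O\subset G$. Then I would apply Proposition \ref{p:homog}(c) with $U_0=G_1$ and $X_0=O_g$: fixing models $\cL_0\in\Loc_{G_1}(O_g)$ of the $\Fr_q^*$-invariant irreducible equivariant local systems on $O$, the functions $T_{\cL_0}$ span $\Fun\bigl((G_1\backslash O_g)(\bF_q),\ql\bigr)$ (the $\qab$-coefficient statement of Proposition \ref{p:homog} extends to $\ql$-coefficients by extension of scalars). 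Finally, extending each $\cL_0$ by zero along the $G_1$-equivariant closed immersion $O_g\into G_1$ and transporting the result along the equivalence $\sD_{G_1}(G_1)\rar{\sim}\sD_{G_0}(G_0)$ inverse to $M_0\mapsto M_0^\be$ (Proposition \ref{p:conjugation-action}, Corollary \ref{c:transport-monoidal-equivalence}) produces objects $N_0\in\sD_{G_0}(G_0)$; I would then check, using Lemma \ref{l:functoriality-transport}, the compatibility of transport with the sheaves-to-functions correspondence, and the behaviour of trace functions under extension by zero, that $T_{N_0}$ is the extension by zero of $T_{\cL_0}$ along the injection $\Fun\bigl((G_1\backslash O_g)(\bF_q),\ql\bigr)\into\Fun\bigl([G_0](\bF_q),\ql\bigr)$ coming from the closed immersion of stacks $G_1\backslash O_g\into G_1\backslash G_1=[G_1]\cong[G_0]$.

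The last point is that the image of this injection is exactly $V_O$: a point of $[G_0](\bF_q)$ factors through the closed substack $G_1\backslash O_g$ if and only if its underlying geometric point lies in $|O_g|$, i.e. if and only if its geometric conjugacy class is $O$. I expect the main obstacle to be purely organizational — keeping straight the identifications of pure inner forms of $G_0$ with those of $G_1$, and verifying cleanly the chain of compatibilities (transport versus $j_!$, transport versus $t$, $t$ versus $j_!$) that identifies $T_{N_0}$ with the extension by zero of $T_{\cL_0}$. The substantive input, namely that one can separate the various twisted rational conjugacy classes lying inside a single geometric orbit, is supplied entirely by Proposition \ref{p:homog}.
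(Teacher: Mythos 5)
Your argument is correct and rests on the same skeleton as the paper's proof: decompose $\Fun\bigl([G_0](\bF_q),\ql\bigr)$ according to geometric ($\Fr_q$-stable) conjugacy classes, handle each class via Proposition \ref{p:homog}(c), then extend by zero. The one place you deviate is in how you apply Proposition \ref{p:homog}: you first pass to a pure inner form $G_0^\be$ admitting an $\bF_q$-point $g$ of class $O$, apply the proposition with $U_0=G_0^\be$ acting on the rational orbit $O_g$, and then transport back across $\sD_{G_0^\be}(G_0^\be)\rar{\sim}\sD_{G_0}(G_0)$. The paper skips this detour: the $\Fr_q$-stable class $X\subset G$ descends to a closed $X_0\subset G_0$, and Proposition \ref{p:homog} is invoked directly with $U_0=G_0$ acting on $X_0$ --- this is permitted because that proposition only requires $X_0$ to be nonempty as a variety, not to have $\bF_q$-points. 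The reduction you carry out by hand (replacing $U_0$ and $X_0$ by pure inner forms so that $X_0(\bF_q)\neq\varnothing$) is in fact the opening step of the paper's \emph{proof} of Proposition \ref{p:homog} in \S\ref{ss:proof-p:homog}, so repeating it at the point of application is redundant; the direct route also spares you the bookkeeping of identifying $H^1(\bF_q,G_0^\be)$ with $H^1(\bF_q,G_0)$ and checking the chain of transport compatibilities you flag at the end, all of which collapses to a single use of Lemma \ref{l:functoriality-transport} for the closed immersion $X_0\into G_0$.
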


\begin{proof}
It suffices to check that an element of $\Fun\bigl([G_0](\bF_q),\ql\bigr)$, which is supported on a single geometric conjugacy class in $G_0$, can be written as a linear combination of elements of the form $T_{N_0}$ with $N_0\in\sD_{G_0}(G_0)$. To this end, let $X\subset G$ be any $\Fr_q$-stable conjugacy class. Then $X$ comes from a subscheme $X_0\subset G_0$ and Proposition \ref{p:homog} implies that $\Fun\bigl((G_0\backslash X_0)(\bF_q),\ql\bigr)$ is spanned by elements of the form $T_{N_0}$ as $N_0$ ranges over $G_0$-equivariant local systems on $X_0$. Extending all such $N_0$ by zero to $G_0$, we obtain the desired claim.
\end{proof}

Next choose a complete collection of representatives $e^1_0,\dotsc,e^m_0$ of the isomorphism classes of geometrically minimal idempotents in $\sD_{G_0}(G_0)$. In view of Proposition \ref{p:equivalence-def-L-packet-characters}, this collection is finite and the sum $\sum_{i=1}^m t_{(e^i_0)^\al}$ is equal to the delta-function at $1\in G_0^\al(\bF_q)$ for every $\al\in H^1(\bF_q,G_0)$. Hence for every $N_0\in\sD_{G_0}(G_0)$, we have
\[
T_{N_0} = \sum_{i=1}^m T_{e_0^i*N_0} \qquad\text{in}\quad \Fun\bigl([G_0](\bF_q),\ql\bigr).
\]
So Proposition \ref{p:implies-main-d} follows from the previous lemma and

\begin{lem}
Let $e_0\in\sD_{G_0}(G_0)$ be a geometrically minimal idempotent, and let $N_0\in e_0\sD_{G_0}(G_0)$. Then $T_{N_0}$ belongs to the span of the elements $T_{M_0}$, where $M$ ranges over the $\Fr_q^*$-invariant character sheaves on $G$ in the $\bL$-packet defined by $e=e_0\tens_{\bF_q}\bF$ and $M_0$ is chosen as in Theorem \ref{t:main}(c).
\end{lem}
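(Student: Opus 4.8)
The plan is to deduce the statement directly from the semisimplicity of the Hecke subcategory $e\sD_G(G)$ together with an elementary trace-of-Frobenius computation, without invoking geometric Mackey theory or admissible pairs.

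First I would replace $N_0$ by its image $(N,\vp)$ in $\sD_{G_0}^{Weil}(G_0)$, where $N\in\sD_G(G)$ and $\vp\colon\Fr_q^*N\xrightarrow{\sim}N$; since the sheaves-to-functions correspondence factors through $\sD^{Weil}$ (\S\ref{ss:weil-equivariant}) and is compatible with transport, $T_{N_0}$ depends only on $(N,\vp)$. As $e_0*N_0\cong N_0$ base-changes to $e*N\cong N$, we have $N\in e\sD_G(G)$, and by Proposition~\ref{p:properties-char-sheaves}(a),(e) the canonical functor $D^b(\sM_e^{perv})\to e\sD_G(G)$ is an equivalence with $\sM_e^{perv}$ semisimple and with simple objects the character sheaves in $\bL(e)$; in particular $\End_{\sD_G(G)}(M)=\ql$ and $\Hom_{\sD_G(G)}(M,M'[j])=0$ whenever $M\not\cong M'$ or $j\neq0$. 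Fixing a decomposition $N\cong\bigoplus_{j}\bigl(\bigoplus_{M\in\bL(e)}M^{\oplus n_{M,j}}\bigr)[-j]$ and using that $\Fr_q^*$ is $\ql$-linear, monoidal for $*$, and preserves perversity (it is an autoequivalence of $\sD_G(G)$ preserving $e\sD_G(G)$ and hence $\sM_e^{perv}$), we see that $\Fr_q^*$ permutes $\bL(e)$, that $n_{M,j}$ is constant along each $\Fr_q^*$-orbit $O\subset\bL(e)$, and — by the $\Hom$-vanishing above — that $\vp$ is block-diagonal, yielding $(N,\vp)\cong\bigoplus_{O,j}(N_{O,j},\vp_{O,j})$ in $\sD_{G_0}^{Weil}(G_0)$ with $N_{O,j}=\bigl(\bigoplus_{M\in O}M^{\oplus n_{O,j}}\bigr)[-j]$. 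Since $t_{(-)^\al}$ and the transport functors are additive, $T_{N_0}=\sum_{O,j}T_{N_{O,j}}$.

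Next I would evaluate each summand. If $O=\{M\}$ with $\Fr_q^*M\cong M$, let $M_0\in e_0\sD_{G_0}(G_0)$ be the representative furnished by Theorem~\ref{t:main}(c), with image $(M,\vp_{M_0})$ in $\sD_{G_0}^{Weil}(G_0)$, so $t_{M_0^\al}=t_{(M^\al,\vp_{M_0}^\al)}$ for every $\al$. Because $\Hom_{\sD_G(G)}(\Fr_q^*M,M)=\ql\cdot\vp_{M_0}$ and $\End_{\sD_G(G)}(M)=\ql$, we may write $\vp_{O,j}=A_{M,j}\circ(\vp_{M_0}[-j])^{\oplus n_{O,j}}$ for a unique $A_{M,j}\in GL_{n_{O,j}}(\ql)$; evaluating at an $\bF_q$-point and using that a scalar endomorphism of $M$ acts on each stalk by that scalar gives $t_{N_{O,j}}=(-1)^{j}\tr(A_{M,j})\,t_{M_0}$. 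Applying the (monoidal, $\ql$-linear) transport functor, which carries $M$ to $M^\al$ and hence induces the identity on $\End(M)=\ql$ and so carries $A_{M,j}$ to the same matrix, the identity holds on every $G_0^\al(\bF_q)$; thus $T_{N_{O,j}}=(-1)^{j}\tr(A_{M,j})\,T_{M_0}$. If instead $\abs{O}>1$, I claim $T_{N_{O,j}}=0$. On $G_0(\bF_q)$ the geometric Frobenius fixes every point and, on the stalk $\bigoplus_{M\in O}(\,\cdot\,)^{\oplus n_{O,j}}$, acts by a block matrix permuting the blocks indexed by $O$ with no fixed block (using $\End M=\ql$), so its trace vanishes. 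For the pure inner forms I would use the auxiliary object $Q_O=\bigl(\bigoplus_{M\in O}M,\,\text{cyclic shift}\bigr)\in\sD_{G_0}^{Weil}(G_0)$: $\ql$-linearity of $\Fr_q^*$ forces $\End_{\sD_{G_0}^{Weil}(G_0)}(Q_O)=\ql$, so $Q_O$ is indecomposable, and since transport is an equivalence its image in $\sD_{G_0^\al}^{Weil}(G_0^\al)$ is indecomposable with endomorphism ring $\ql$ and underlying perverse sheaf $\bigoplus_{M\in O}M^\al$ (a sum of $\abs{O}$ distinct simples of the $\bL$-packet on $G_0^\al\tens_{\bF_q}\bF$); block-diagonality of the transported Weil structure along $\Fr_q^*$-orbits then forces $\{M^\al:M\in O\}$ to be a single orbit, necessarily of size $\abs{O}>1$, so the stalk argument applies verbatim on $G_0^\al(\bF_q)$.

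Summing over $O$ and $j$ yields $T_{N_0}=\sum_{M\in\bL(e)^{\Fr_q^*}}\bigl(\sum_{j}(-1)^{j}\tr(A_{M,j})\bigr)\,T_{M_0}$, which lies in the $\ql$-span of $\{T_{M_0}\st M\in\bL(e)^{\Fr_q^*}\}$. I expect the main obstacle to be the vanishing for $\abs{O}>1$ on the pure inner forms: a priori, transporting an object supported on a nontrivial $\Fr_q^*$-orbit of $\bL(e)$ might land partly on a character sheaf of $G_0^\al$ that is $\Fr_q^*$-fixed. The argument sketched above avoids invoking a descent theorem for individual character sheaves by exploiting that the properties ``$\End=\ql$'' and ``indecomposable'' are preserved by the transport equivalence; the remaining care lies in setting up the compatibility of transport with the passage to $\sD^{Weil}$ and with perverse truncation, which is covered by \S\ref{s:pure-inner-forms} and \S\ref{ss:weil-equivariant}.
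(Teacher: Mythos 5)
Your proposal is correct, and its overall skeleton matches the paper's: pass to $\sD_{G_0}^{Weil}(G_0)$, use semisimplicity of $\sM_e^{perv}$ (Proposition~\ref{p:properties-char-sheaves}) to write $N$ as a sum of shifted character sheaves, observe that the Weil isomorphism $\vp$ is block-diagonal along the $\Fr_q^*$-orbits by the $\Hom$-vanishing, and express the contribution of the fixed sheaves via Schur's lemma as a scalar multiple of $T_{M_0}$. Where you genuinely add something is the treatment of the non-fixed part. The paper simply asserts that ``it is clear that $T_{N_0''}=0$'', implicitly relying on the fact that for each $\al$, the Frobenius for $G_0^\al$ and the Frobenius for $G_0$ differ on $\sD_G(G)$ by pullback along an inner automorphism, which is naturally isomorphic to the identity on equivariant complexes, so the block permutation still has no fixed block. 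You instead introduce the auxiliary object $Q_O=\bigl(\bigoplus_{M\in O}M,\text{cyclic shift}\bigr)$, compute $\End^{Weil}(Q_O)=\ql$, and transport this indecomposability to each $G_0^\al$ to force a single nontrivial orbit there; this avoids having to spell out the isomorphism $(\Fr_q^\al)^*\cong\Fr_q^*$ explicitly, at the cost of an extra construction. Both routes are valid, and yours also has the small advantage of working entirely within the Weil category, sidestepping the question (which the paper glosses over) of whether the decomposition $N=N'\oplus N''$ lifts from $\sD_{G_0}^{Weil}(G_0)$ to $\sD_{G_0}(G_0)$. The only places deserving a line of extra care are (i) the compatibility of the transport equivalence with base change to $\bF$, which you use to identify the underlying object of $Q_O^\al$ with a sum of $\lvert O\rvert$ distinct simples, and (ii) the compatibility of transport with $\tens\pr^*(\sE)$, used to carry the matrix $A_{M,j}$ across unchanged; both follow from \S\ref{s:pure-inner-forms} but should be cited.
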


\begin{proof}
Write $N_0^{Weil}=(N,\vp_N)$ for the object of $\sD_{G_0}^{Weil}(G_0)$ coming from $N_0$. Then $N\in e\sD_G(G)$, so by Proposition \ref{p:properties-char-sheaves}, $N$ is isomorphic to a finite direct sum of cohomological shifts of character sheaves in the $\bL$-packet defined by $e$. Without loss of generality, we may assume that $N$ is a direct sum of character sheaves without shifts. We can then write $N=N'\oplus N''$, where $N'\subset N$ is the sum of $\Fr_q^*$-invariant character sheaves appearing in $N$ and $N''$ is the sum of character sheaves appearing in $N$ that are not $\Fr_q^*$-invariant. We obtain the corresponding decomposition $N_0=N'_0\oplus N''_0$, and it is clear that $T_{N''_0}=0$.
Moreover, Schur's lemma implies that $N'^{Weil}_0$ is a direct sum of objects of the form $M^{Weil}_0\tens\pr^*(\sE_M)$, where $M$ ranges over the $\Fr_q^*$-invariant character sheaves in the $\bL$-packet defined by $e$, the $M_0$ are chosen as in Theorem \ref{t:main}(c), each $\sE_M$ is an object of $\sD^{Weil}_{G_0}(\Spec\bF_q)$ corresponding to a finite dimensional vector space $V_M$ together with an automorphism $\phi_M:V_M\rar{\simeq}V_M$ (where the $G_0$-equivariant structure is trivial), and $\pr:G_0\rar{}\Spec\bF_q$ denotes the structure morphism.

% \mbr
%
% Explicitly, for each $M$ as above, one can take
% $V_M=\Hom_{\sD_G(G)}(M,N)$ and let $\phi_M$ be the composition of
% the isomorphism
% $\Fr_q^*:V_M\rar{\simeq}\Hom_{\sD_G(G)}(\Fr_q^*M,\Fr_q^*N)$ with
% the
% isomorphism $\Hom_{\sD_G(G)}(\Fr_q^*M,\Fr_q^*N)\rar{\simeq}V_M$
% coming from $\vp_N:\Fr_q^*N\rar{\simeq}N$ and the isomorphism
% $\Fr_q^*M\rar{\simeq}M$ induced by the identification $M\cong
% M_0\tens_{\bF_q}\bF$.

\mbr

Hence
\[
T_{N_0} = T_{N'_0} = \sum_M \tr(\phi_M;V_M)\cdot T_{M_0},
\]
which proves the lemma, and with it finishes the proof of Theorem \ref{t:main}(d).
\end{proof}

\subsection{Proof of Theorem \ref{t:main-easy}}\label{ss:proof-t:main-easy} Let $e\in\sD_G(G)$ be a minimal idempotent such that $\Fr_q^*e\cong e$. Assertions (a) and (b) of Theorem \ref{t:main-easy} follow from parts (a) and (c) of Theorem \ref{t:main}. In particular, we let $e_0\in\sD_{G_0}(G_0)$ denote the (unique) closed idempotent whose base change to $\bF$ is isomorphic to $e$.

\mbr

Next we will prove parts (c) and (d) of Theorem \ref{t:main-easy}. We use a more general

\begin{lem}\label{l:aux-easy}
Suppose $G_0$ is any connected perfect unipotent group over $\bF_q$ and $e_0\in\sD_{G_0}(G_0)$ is a geometrically minimal idempotent. Then $t_{e_0}(1)=q^{-n_e}$, where $e\in\sD_G(G)$ is the minimal idempotent obtained from $e$ via base change to $\bF$ and $n_e$ is as defined in Proposition \ref{p:properties-char-sheaves}(b).
\end{lem}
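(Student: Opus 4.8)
The plan is to deduce the lemma from Theorem~\ref{t:main} (parts (c) and (e), which are proved before \S\ref{ss:proof-t:main-easy}) combined with elementary finite group theory; no new geometric input is needed. Throughout write $\Gamma=G_0(\bF_q)$, and note that since $G_0$ is connected we have $H^1(\bF_q,G_0)=\{1\}$ and $\abs{\Gamma}=q^{\dim G}$, so that Theorem~\ref{t:main}(e) specializes to the orthonormality statement of Theorem~\ref{t:main-connected}(b) for the pairing $\eval{f_1}{f_2}=\sum_{g\in\Gamma}f_1(g)\overline{f_2(g)}$.

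The first step is to observe that $t_{e_0}$ is a \emph{central idempotent} in the convolution algebra $\qab[\Gamma]$: it is conjugation-invariant because $e_0\in\sD_{G_0}(G_0)$ is conjugation-equivariant, and it satisfies $t_{e_0}*t_{e_0}=t_{e_0}$ by the sheaves-to-functions dictionary applied to the isomorphism $e_0*e_0\cong e_0$ (valid since $e_0$ is geometrically minimal). Hence $t_{e_0}$ is a (possibly empty) sum of primitive central idempotents $e_\rho=\tfrac{\dim\rho}{\abs{\Gamma}}\,\overline{\chi_\rho}$, and from the standard identities $e_\rho(g^{-1})=\overline{e_\rho(g)}$ and $\eval{e_\rho}{e_{\rho'}}=\delta_{\rho,\rho'}\,e_\rho(1)$ one obtains, unconditionally,
\[
t_{e_0}(1)=(t_{e_0}*t_{e_0})(1)=\sum_{g\in\Gamma}t_{e_0}(g)\,t_{e_0}(g^{-1})=\sum_{g\in\Gamma}\abs{t_{e_0}(g)}^{2}=\eval{t_{e_0}}{t_{e_0}}.
\]

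It then remains to evaluate $\eval{t_{e_0}}{t_{e_0}}$, and here I would invoke the main theorem for the character sheaf $M:=e[-n_e]$. By Proposition~\ref{p:properties-char-sheaves}(c) this $M$ is a character sheaf, and it is $\Fr_q^*$-invariant since $\Fr_q^*e\cong e$; so by Theorem~\ref{t:main}(c) together with Remark~\ref{rems:t:main}(c) it comes from a perverse, pure-weight-$0$ object which may be taken to be $M_0=e_0[-n_e](-n_e/2)\tens\pr^*\sL$ for a suitable rank-$1$ weight-$0$ local system $\sL$ on $\Spec\bF_q$ (after fixing a square root of $q$ in $\qab$ if $n_e$ is odd), where $\pr:G_0\rar{}\Spec\bF_q$ is the structure morphism. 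Since $\sL$ has rank $1$ and weight $0$, $\abs{t_\sL(*)}=1$, so $\abs{t_{M_0}(g)}^{2}=q^{\,n_e}\abs{t_{e_0}(g)}^{2}$ for every $g\in\Gamma$; and because $M\in\cs(G)^{\Fr_q^*}$, Theorem~\ref{t:main-connected}(b) gives $\eval{t_{M_0}}{t_{M_0}}=1$. Hence $\eval{t_{e_0}}{t_{e_0}}=q^{-n_e}$, and comparing with the displayed identity completes the argument.

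There is no genuinely hard step; the only point requiring care is to avoid circularity. In particular one must \emph{not} appeal to formula~\eqref{e:sum-of-squares-of-dimensions}, which (via the central-idempotent observation above) is equivalent to the present lemma, and one should record that all parts of Theorem~\ref{t:main}, hence of Theorem~\ref{t:main-connected}, are indeed available at this stage. Note finally that the equality $\eval{t_{e_0}}{t_{e_0}}=q^{-n_e}\neq 0$ re-proves $t_{e_0}\not\equiv 0$, which is consistent with the fact that for connected $G_0$ the idempotent $e_0$ does arise from an admissible pair defined over $\bF_q$ (Theorem~\ref{t:main}(b) and Proposition~\ref{p:when-min-idemp-comes-from-adm-pair}).
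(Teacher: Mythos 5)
Your proof is correct, but it takes a genuinely different route from the paper's.

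The paper proves Lemma~\ref{l:aux-easy} by a direct Verdier-duality computation, parallel to the chain of isomorphisms \eqref{e:key-1} used for Theorem~\ref{t:main}(b). Specifically, it invokes Theorem~\ref{t:main}(b) to know $e_0$ arises from an admissible pair over $\bF_q$, cites \cite[Prop.~1.46]{foundations} for the self-duality $\bD_{G_0}^-(e_0)\cong e_0[-2n_e](-n_e)$, then computes $\bD(1^*e_0)\cong(1^!e_0)[-2n_e](-n_e)$, identifies $1^!e_0$ with $R\Hom^{Weil}_{\sD(G_0)}(\e,e_0)$, and reads off from Proposition~\ref{p:hom-1-to-min-idemp} (plus the Frobenius-fixed idempotent arrow $\e\rar{}e_0$) that $1^!e_0\cong\ql$ with trivial Frobenius. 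This gives $1^*e_0\cong\ql[2n_e](n_e)$ directly, hence $t_{e_0}(1)=q^{-n_e}$. That argument needs only Theorem~\ref{t:main}(b), not parts (c) or (e).

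You instead derive the lemma as a consequence of the orthonormality relations. Your two ingredients are sound: (i) for a central idempotent $t_{e_0}$ on a finite group one has $t_{e_0}(1)=(t_{e_0}*t_{e_0})(1)=\sum_g t_{e_0}(g)\overline{t_{e_0}(g)}=\eval{t_{e_0}}{t_{e_0}}$, since each primitive central idempotent $e_\rho$ satisfies $e_\rho(g^{-1})=\overline{e_\rho(g)}$; and (ii) by Remark~\ref{rems:t:main}(c) the normalizing object for $M=e[-n_e]$ can be taken to be $M_0=e_0[-n_e](-n_e/2)\tens\pr^*\sL$ with $\sL$ rank one and pure of weight $0$, so that $\abs{t_{M_0}(g)}^2=q^{n_e}\abs{t_{e_0}(g)}^2$, and orthonormality of $t_{M_0}$ then gives $\eval{t_{e_0}}{t_{e_0}}=q^{-n_e}$. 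The dependency on Theorem~\ref{t:main}(c),(e) and Remark~\ref{rems:t:main}(c) is legitimate at the point in the paper where Lemma~\ref{l:aux-easy} is proved, and you are right that one must not instead argue via \eqref{e:sum-of-squares-of-dimensions}, since that formula is itself deduced from this lemma in \S\ref{ss:proof-r:sum-of-squares-of-dimensions}. The trade-off is that your argument is shorter at the cost of invoking much heavier machinery (the full orthonormality theorem), while the paper's proof is self-contained modulo Theorem~\ref{t:main}(b) and the duality formula of \cite{foundations} and is therefore structurally more robust if one wanted to reorganize the proofs.
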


\begin{proof}
We use a calculation, which is similar to the chain of isomorphisms \eqref{e:key-1}, but easier.
By Theorem \ref{t:main}(b), there exists an admissible pair for $G_0$ (defined over $\bF_q$) that
gives rise\footnote{Note that since $G_0$ is connected, it has no pure inner forms other than itself.}
to $e_0$. By\footnote{Even though the base field in \cite{foundations} is assumed to be algebraically closed, this fact is irrelevant in the proof of the cited proposition.} \cite[Prop.~1.46]{foundations}, we have $\bD_{G_0}^-(e_0)\cong e_0[-2n_e](-n_e)$.
Hence we obtain the following chain of isomorphisms\footnote{As usual, we denote by $1:\Spec\bF_q\rar{}G_0$
the identity element of $G_0$, and by $\iota:G_0\rar{}G_0$ the inversion map $g\mapsto g^{-1}$.} in $\sD(\Spec\bF_q)$:
\begin{eqnarray*}
\bD_{\Spec\bF_q}(1^*e_0) &\cong& \bD_{\Spec(\bF_q)}(1^*\iota^*e_0) \cong 1^!(\bD_{G_0}\iota^*e_0)
= 1^!(\bD_{G_0}^-e_0) \\
 &\cong& 1^!\bigl(e_0[-2n_e](-n_e)\bigr) \cong (1^!e_0)[-2n_e](-n_e).
\end{eqnarray*}

Now if we view $1^!e_0$ as an object of $\sD^{Weil}(\Spec\bF_q)$ (see \S\ref{ss:weil-definitions}), we have
\[
1^!e_0 \cong R\Hom^{Weil}_{\sD(\Spec\bF_q)}(\ql,1^!e_0) \cong R\Hom^{Weil}_{\sD(G_0)}(\e,e_0).
\]
Since $G$ is connected, the forgetful functor $\sD_G(G)\rar{}\sD(G)$ is fully faithful. By
Proposition \ref{p:hom-1-to-min-idemp}, the complex $R\Hom^{Weil}_{\sD(G_0)}(\e,e_0)$ has $1$-dimensional
cohomology in degree $0$ and is acyclic in all nonzero degrees. Hence $1^!e_0\cong\ql$ (with the
trivial Frobenius action) in $\sD^{Weil}(\Spec\bF_q)$, because by assumption we have an idempotent
arrow $\e\rar{}e_0$, which maps to a nonzero $\Fr_q$-invariant element of $\Hom(\e,e)$.

\mbr

We conclude that $\bD_{\Spec\bF_q}(1^*e_0)\cong\ql[-2n_e](-n_e)$ in $\sD^{Weil}(\Spec\bF_q)$, whence
$1^*e_0\cong\ql[2n_e](n_e)$ in $\sD^{Weil}(\Spec\bF_q)$. \emph{A fortiori}, this implies that $t_{e_0}(1)=q^{-n_e}$.
\end{proof}

We return to the setting of Theorem \ref{t:main-easy}; in particular, we assume that $G_0$ is easy. If $e\in\sD_G(G)$ is a minimal idempotent such that $\Fr_q^*(e)\cong e$, Proposition \ref{p:properties-char-sheaves}(c) implies that $e[-n_e]$ is a $\Fr_q^*$-invariant character sheaf in the $\bL$-packet defined by $e$. On the other hand, it follows from \cite[\S9.4]{characters} that the $\bL$-packet of irreducible characters of $G_0(\bF_q)$ defined by $e_0$ contains only one element. By Theorem \ref{t:main}(d), $t_{e_0}$ must be proportional to an irreducible character $\chi$ of $G_0(\bF_q)$ over $\ql$. Since $\frac{\chi(1)}{\abs{G_0(\bF_q)}}\cdot\chi$ is an idempotent under convolution of functions on $G_0(\bF_q)$, this forces $\chi(1)^2=t_{e_0}(1)\cdot\abs{G_0(\bF_q)}$. By Lemma \ref{l:aux-easy}, the last identity can be rewritten as $\chi(1)^2=q^{-n_e+\dim G}$, which implies that $\chi(1)=q^{d_e}$, and therefore $\chi=q^{\dim G-d_e}\cdot t_{e_0}$. The fact that $d_e$ is an integer follows from the identity $\chi(1)=q^{d_e}$ combined with \cite[Theorem~2.5]{characters}. Thus we proved Theorem \ref{t:main-easy}(c).

\begin{rem}
The argument of the previous paragraph implies that $e[-n_e]$ is the unique character sheaf in the $\bL$-packet defined by $e$.
\end{rem}

Finally, let $e$ range over all minimal idempotents in $\sD_G(G)$ satisfying $\Fr_q^*(e)\cong e$. By the last remark, $e[-n_e]$ then ranges over all $\Fr_q^*$-invariant character sheaves on $G$. Theorem \ref{t:main}(d) implies that the corresponding functions $t_{e_0}$ form a basis for the space $\fun{G_0(\bF_q)}$. We already saw that the functions $q^{\dim G-d_e}\cdot t_{e_0}$ are irreducible characters of $G_0(\bF_q)$, and now we conclude that there cannot be any other irreducible characters, proving Theorem \ref{t:main-easy}(d).

\subsection{Proof of Proposition \ref{p:heis}}\label{ss:proof-p:heis} The argument consists of several steps. We begin by explaining the general idea of the proof, and then supply all the details.

\subsubsection{Overview}\label{sss:overview}
By assumption, $H_0$ is normal in $G_0$, so we can consider the action of $G_0$ on $H_0$ by conjugation and form the semidirect product $U_0=G_0\ltimes H_0$. We let $U_0$ act on $G_0$ in such a way that $H_0\subset U_0$ acts by left translations and $G_0\subset U_0$ acts by conjugation. As usual we write $H$, $G$ and $U=G\ltimes H$ for the perfect unipotent groups over $\bF$ obtained from $H_0$, $G_0$ and $U_0$ via base change. Further, we write $e\in\sD_G(G)$ for the Heisenberg minimal idempotent obtained from $e_0$.

\begin{rems}\label{r:action-semidirect-product}
\begin{enumerate}[(1)]
\item It is proved in \cite[\S4.1]{tanmay} that there exist finitely many $U$-orbits in $G$ such that the support of every object of $e\sD_{G}(G)$ is contained in their union. We caution the reader that the notation used in \cite{tanmay} slightly differs from the one we are using. Namely, the meaning of $G,\cL,U$ is the same, while our $G^\circ$ and $H$ are denoted in \emph{op.~cit.} by $H$ and $N$, respectively.
 \sbr
\item All $U$-orbits in $G$ are closed because $U$ is unipotent and $G$ is affine.
\end{enumerate}
\end{rems}

Using the local system $\cL_0$ on $H_0$, we will find a central extension $\Ut_0$ of $U_0$ by a finite discrete $p$-group $A$ and a homomorphism $\chi:A\rar{}\ql^\times$ such that the Hecke subcategory $e_0\sD_{G_0}(G_0)\subset\sD_{G_0}(G_0)$ equals\footnote{This step in the proof is essentially the same as the argument in \cite[\S3.5]{tanmay}.} $\sD_{\Ut_0}^\chi(G_0)$ (cf.~Definition \ref{d:quasi-equivariant}). Even though the action of $U_0$ on $G_0$ is not transitive, Remarks \ref{r:action-semidirect-product} will allow us to apply Corollary \ref{c:homog} in this situation. Proposition \ref{p:heis} will then be deduced from it.

\mbr

Some preliminary results appear in \S\S\ref{sss:construction-Ht0}--\ref{sss:alternative-Hecke-preliminary}. We construct $\Ut_0$ in \S\ref{sss:construction-Ut0} and give alternative descriptions of $e_0\sD_{G_0}(G_0)$ and $e\sD_{G}(G)$ in \S\ref{sss:alternative-Hecke}. The proof of Proposition \ref{p:heis} itself is given in \S\ref{sss:proof}.

\subsubsection{Construction of $\widetilde{H}_0$}\label{sss:construction-Ht0}

\begin{lem}\label{l:Ht0}
There exists a central extension
\begin{equation}\label{e:Ht0}
1 \rar{} A \rar{} \Ht_0 \rar{} H_0 \rar{} 1
\end{equation}
of perfect unipotent groups over $\bF_q$, where $\Ht_0$ is connected and $A$ is finite and discrete, together with an injective homomorphism $\chi:A\into\ql^\times$, such that if we view $\Ht_0$ as an $A$-torsor over $H_0$, then the $\ql$-local system on $H_0$ obtained from this torsor via $\chi$ is isomorphic to $\cL_0$.
\end{lem}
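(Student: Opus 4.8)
The plan is to realize $\widetilde H_0$ as the Heisenberg-type group attached to the multiplicative local system $\cL_0$. Since $\cL_0$ is multiplicative on the connected perfect unipotent group $H_0$, it has finite order: there is an integer $m\geq 1$ with $\cL_0^{\tens m}\cong\ql$. Equivalently, $\cL_0$ corresponds to a surjection $\pi_1(H_0,1)\onto\mu_n$ for some $n$ (or, more precisely, a homomorphism from the \'etale fundamental group whose image is a finite cyclic $p$-group, since $H_0$ is unipotent — in characteristic $p$ a nontrivial multiplicative local system on a unipotent group is pulled back via Artin–Schreier–Witt type isogenies). First I would set $A$ to be the (cyclic) image of this character, fix the injection $\chi:A\into\ql^\times$ to be the tautological inclusion, and let $\widetilde H_0\rar{}H_0$ be the connected $A$-torsor classifying the kernel: concretely, $\widetilde H_0$ is the fiber product expressing $\cL_0$ as pushout of this torsor along $\chi$. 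The only thing to check is that $\widetilde H_0$ can be given the structure of a \emph{group} making \eqref{e:Ht0} a central extension, and that this extension is defined over $\bF_q$.

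The group structure comes from multiplicativity of $\cL_0$. Writing $\mu:H_0\times H_0\rar{}H_0$ for multiplication, the isomorphism $\mu^*\cL_0\cong\cL_0\boxtimes\cL_0$ says precisely that the pullback of the torsor $\widetilde H_0$ along $\mu$ is isomorphic to the "sum" (Baer-type sum for $A$-torsors, using the group law on $A$) of its two pullbacks along the projections. Unwinding this, I would construct a multiplication morphism $\widetilde\mu:\widetilde H_0\times_A\widetilde H_0\rar{}\widetilde H_0$ — more carefully, a morphism $\widetilde H_0\times\widetilde H_0\rar{}\widetilde H_0$ covering $\mu$ and compatible with the two $A$-actions via the group law of $A$ — so that $\widetilde H_0$ becomes a group with $A$ central. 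Associativity and the existence of inverses follow from the cocycle/coherence conditions on the multiplicativity isomorphism for $\cL_0$ (the relevant compatibility on $H_0\times H_0\times H_0$), which one normalizes as usual; since $H_0$ is a unipotent group and $A$ is a finite $p$-group, $\widetilde H_0$ is again a connected perfect unipotent group. The fact that all of this descends to $\bF_q$ is automatic: $\cL_0$ and its multiplicativity isomorphism are given over $\bF_q$, and the torsor-and-group-structure construction is functorial, so it is carried out in the category of perfect schemes over $\bF_q$ from the start. (Alternatively, one appeals to the Serre-dual formalism of \S\ref{ss:admissible-pairs}, cf.~\cite{characters,foundations}: a point of $H_0^*$ over $\bF_q$ is exactly such an extension.)

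The main obstacle, I expect, is the bookkeeping in producing the group law on $\widetilde H_0$ from the multiplicativity datum — i.e.\ turning the isomorphism $\mu^*\cL_0\cong\cL_0\boxtimes\cL_0$ (and its higher coherences) into an honest associative multiplication on the total space of the torsor, with $A$ landing centrally. This is the familiar passage from "a multiplicative local system" to "a central extension by its group of values," and while it is standard (it is implicit in \cite[Appendix A]{characters} and \cite[\S3]{foundations}, and essentially identical to the construction in \cite[\S3.5]{tanmay}), writing out the normalization of cocycles and verifying that no Tate-twist or square-root-of-$q$ ambiguity intervenes — none does, because $A$ is a finite discrete $p$-group and everything is at the level of torsors rather than shifted/twisted perverse sheaves — requires some care. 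Once the group structure is in hand, that $\widetilde H_0$ is connected follows because it is a finite connected cover of the connected group $H_0$ (a quotient of $\widetilde H_0$ by the finite $A$), and that the associated $\chi$-local system is $\cL_0$ is true by construction. I would cite \cite[\S3.5]{tanmay} for the essentially identical argument and only indicate the modifications needed to keep everything over $\bF_q$.
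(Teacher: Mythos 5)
The paper's proof here is a bare citation to \cite[Lemma~7.3]{characters}, so I cannot compare approaches directly; your reconstruction --- take $A$ to be the image of the monodromy character, let $\Ht_0$ be the corresponding $A$-torsor over $H_0$, and use the multiplicativity isomorphism $\mu^*\cL_0\cong\cL_0\boxtimes\cL_0$ (normalized to satisfy the cocycle condition on the triple product) to build the group law --- is the standard construction and is sound in outline.

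One step does need attention. You address connectedness of $\Ht_0$ by saying it ``is a finite connected cover of the connected group $H_0$ (a quotient of $\Ht_0$ by the finite $A$),'' which either restates the definition or, read as an argument, is not one (a finite cover of a connected scheme need not be connected). Connectedness of $\Ht_0$ as an $\bF_q$-scheme does hold by construction, since $\pi_1(H_0)\onto A$ is surjective. But the lemma and its later use (the construction of $\Ut_0$, which needs $\Ht_0$ connected so that the conjugation action of $G_0$ lifts by group automorphisms) require \emph{geometric} connectedness, and this is not automatic: a priori the geometric monodromy $\pi_1(H)\to A$ could have image a proper subgroup of $A$, in which case $\Ht_0\tens_{\bF_q}\bF$ would be disconnected. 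The missing observation is that multiplicativity of $\cL_0$ forces $1^*\cL_0$ to be the trivial rank-one sheaf on $\Spec\bF_q$ with trivial Frobenius action (the Frobenius eigenvalue $\phi$ satisfies $\phi^2=\phi$, hence $\phi=1$), so the splitting of $\pi_1(H_0,1)\onto\Gal(\bF/\bF_q)$ determined by the rational point $1\in H_0(\bF_q)$ has image inside $\Ker\bigl(\pi_1(H_0)\onto A\bigr)$. Consequently the geometric monodromy already surjects onto $A$, and the fiber of $\Ht_0$ over $1$ acquires a canonical rational point --- which both witnesses geometric connectedness and serves as the identity for the group law you then construct. Once this is spelled out, the remaining coherence bookkeeping is, as you say, standard.
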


\begin{proof}
See \cite[Lemma~7.3]{characters}.
\end{proof}

For the remainder of this section we fix a central extension \eqref{e:Ht0} and an injective homomorphism $\chi:A\into\ql^\times$ satisfying the conclusions of the lemma.

\begin{rem}\label{r:function-L0}
If $\de:H_0(\bF_q)\rar{}A$ is the connecting homomorphism in the long exact Galois cohomology sequence corresponding to \eqref{e:Ht0}, then $t_{\cL_0}=\chi\circ\de$.
\end{rem}

\subsubsection{Alternative description of $e_0\sD(G_0)\subset\sD(G_0)$}\label{sss:alternative-Hecke-preliminary}
The action of $H_0$ on $G_0$ by left translation induces an action of $\Ht_0$ on $G_0$. Since $A\subset\Ht_0$ acts trivially on $G_0$, the full subcategory $\sD_{\Ht_0}^\chi(G_0)\subset\sD_{\Ht_0}(G_0)$ can be constructed as in Definition \ref{d:quasi-equivariant}. Namely, it consists of objects of $\sD_{\Ht_0}(G_0)$ on which $A$ acts via $\chi$.

\mbr

The next result is standard (cf.~\cite[Proposition~3.11(a)]{tanmay}).
\begin{lem}\label{l:alternative-Hecke-preliminary}
The forgetful functor $\sD_{\Ht_0}(G_0)\rar{}\sD(G_0)$ is fully faithful, and it induces an equivalence of categories $\sD_{\Ht_0}^\chi(G_0)\rar{\sim}e_0\sD(G_0)$.
\end{lem}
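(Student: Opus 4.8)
<br>

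The plan is to prove the two assertions separately, using the structure of $e_0$ as $\bK_{H_0}\tens\cL_0$ extended by zero (Proposition~\ref{p:adm-pair-min-idemp} with $G=G'$, since $H_0$ is normal here).

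\textbf{Step 1: Full faithfulness of the forgetful functor $\sD_{\Ht_0}(G_0)\rar{}\sD(G_0)$.} First I would recall the standard criterion: if a connected unipotent group $V$ acts on a variety $Y$, the forgetful functor $\sD_V(Y)\rar{}\sD(Y)$ is fully faithful. This is because $V$, being a successive extension of copies of $\bG_a$ (in the perfect setting, $\gap$), is "cohomologically trivial" — more precisely, for the action morphism and projection $a,\pr:V\times Y\rar{}Y$ one has $a^*\cong\pr^*$ as functors up to the equivariance datum, and $R\pr_*\pr^*\cong\id$ on $\sD(Y)$ since $R\Ga(\gap,\ql)=\ql$. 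I would cite \cite[Proposition~3.11(a)]{tanmay} or the analogous statement in \cite{foundations}/\cite{characters} for this, since the lemma statement explicitly says the result is standard. Applying this with $V=\Ht_0$ (which is connected by Lemma~\ref{l:Ht0}) and $Y=G_0$ gives full faithfulness.

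\textbf{Step 2: The equivalence $\sD_{\Ht_0}^\chi(G_0)\rar{\sim}e_0\sD(G_0)$.} The key point is to identify both sides. On one hand, decomposing by the central character of $A$: since $A$ is finite discrete and central in $\Ht_0$, and acts trivially on $G_0$, the category $\sD_{\Ht_0}(G_0)$ decomposes as a direct sum $\bigoplus_\psi \sD_{\Ht_0}^\psi(G_0)$ over characters $\psi:A\rar{}\ql^\times$; equivalently, $\sD_{\Ht_0}(G_0)\cong\sD_{H_0}(\Ht_0\backslash\text{-twisted objects})$, and the $\chi$-isotypic part corresponds to sheaves that transform by $\cL_0$ under the $H_0$-translation action. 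On the other hand, I would recall that for the closed idempotent $e_0=j_!(\bK_{H_0}\tens\cL_0)$ (with $j:H_0\into G_0$, noting in the Heisenberg case $G_0'=G_0$ so the induction is trivial and the formula for $e_0$ simplifies), convolution $e_0*(-)$ on $\sD(G_0)$ is computed via pullback-tensor-pushforward along the left $H_0$-action: $e_0*M=m_!(p_1^*(\bK_{H_0}\tens\cL_0)\tens p_2^*M)$ restricted over $H_0\times G_0$, and the condition $e_0*M\cong M$ forces $M$ to be "$(\cL_0,H_0)$-equivariant" for left translation — which is exactly the data of an object of $\sD_{\Ht_0}^\chi(G_0)$ via the torsor interpretation of Lemma~\ref{l:Ht0}. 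So the two descriptions of the Hecke subcategory coincide. I expect this identification is carried out essentially as in \cite[\S3.5]{tanmay} and \cite[Proposition~3.11]{tanmay}, which the overview in \S\ref{sss:overview} already points to.

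\textbf{Main obstacle.} The genuinely non-formal point is pinning down the precise dictionary between "$e_0$-Hecke-eigenobject" and "$\chi$-twisted $\Ht_0$-equivariant object," i.e.\ checking that the convolution condition $e_0*M\cong M$ is equivalent to descent along $\Ht_0\to H_0$ with the $A$-action given by $\chi$. This requires unwinding the definition of convolution with compact supports together with the fact that $\bK_{H_0}\tens\cL_0$ convolves like (a shift/twist of) the "regular representation" of the twisted group. I would structure this as: (i) show $e_0*M$ only depends on $M$ through its restriction data along the $H_0$-action (using that $e_0$ is supported on $H_0$); (ii) identify $e_0*(-)$ with the composite $(\text{forget }\Ht_0)\circ(\text{$\chi$-equivariantization})$ up to the cohomological normalization built into $\bK_{H_0}$; (iii) conclude the counit/unit of this adjunction-type pair are isomorphisms precisely on $\sD_{\Ht_0}^\chi(G_0)$, i.e.\ on the essential image. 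Since the paper flags this as "standard" and gives the reference \cite[Proposition~3.11(a)]{tanmay}, in the write-up I would give the conceptual argument in a few lines and defer the bookkeeping to that citation rather than reproducing it in full.
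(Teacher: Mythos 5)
Your proposal is correct and takes essentially the same approach as the paper: the paper offers no proof at all beyond the remark that the result is standard and the citation to \cite[Proposition~3.11(a)]{tanmay}, which is exactly the reference you identify. Your two-step outline — full faithfulness from connectedness and cohomological triviality of $\Ht_0$, and identification of the Hecke condition $e_0*M\cong M$ with $\chi$-twisted $\Ht_0$-equivariance via the $A$-torsor picture of Lemma~\ref{l:Ht0} — is the correct content of the cited result, and you have correctly located the single non-formal point (unwinding the convolution against $\bK_{H_0}\tens\cL_0$ into a descent condition along $\Ht_0\to H_0$).
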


\subsubsection{Construction of $\widetilde{U}_0$}\label{sss:construction-Ut0}
Let $c:G_0\times H_0\rar{}H_0$ be the conjugation action map: $c(g,h)=ghg^{-1}$. The central extension \eqref{e:Ht0} is invariant under $G_0$-conjugation (because the local system $\cL_0$ on $H_0$ is $G_0$-invariant by assumption). In other words, let us form the fiber product of the quotient map $\Ht_0\rar{}H_0$ with the morphism $c:G_0\times H_0\rar{}H_0$ and view it as a family of central extensions of $H_0$ parameterized by $G_0$. Then this family is constant. This means that there is a unique morphism $\widetilde{c}:G_0\times\Ht_0\rar{}\Ht_0$ such that $\widetilde{c}(g,a)=a$ for all $g\in G_0$ and all $a\in A$, and the following diagram commutes:
\[
\xymatrix{
 G_0 \times \Ht_0 \ar[d] \ar[rr]^{\ \ \ \widetilde{c}} & & \Ht_0 \ar[d] \\
 G_0 \times H_0 \ar[rr]^{\ \ \ c} & & H_0
   }
\]
The connectedness of $\Ht_0$ further implies that $\widetilde{c}$ defines an action of $G_0$ on $\Ht_0$ by group automorphisms. Using this action, we form the semidirect product $\Ut_0=G_0\ltimes\Ht_0$. Thus we obtain a central extension of perfect unipotent groups over $\bF_q$:
\begin{equation}\label{e:Ut0}
1 \rar{} A \rar{} \Ut_0 \rar{} U_0 \rar{} 1,
\end{equation}
where $U_0=G_0\ltimes H_0$, as defined in \S\ref{sss:overview}.

\subsubsection{Alternative descriptions of $e_0\sD_{G_0}(G_0)$ and $e\sD_{G}(G)$}\label{sss:alternative-Hecke} The action of $U_0$ on $G_0$ described in \S\ref{sss:overview} induces an action of $\Ut_0$ on $G_0$. Similarly, we have actions of $U=U_0\tens_{\bF_q}\bF$ and $\Ut=\Ut_0\tens_{\bF_q}\bF$ on $G=G_0\tens_{\bF_q}\bF$.

\begin{lem}\label{l:bound-on-exponent}
Fix $x\in G(\bF)$ and let $\Ut^x$ be the stabilizer of $x$ in $\Ut$. Then the exponent of $\pi_0(\Ut^x)$ is at most equal to $p^{2r}$, where $p^r$ is the exponent of $G$.
\end{lem}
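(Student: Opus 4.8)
The plan is to analyze the stabilizer $\Ut^x$ via the exact sequence coming from the central extension \eqref{e:Ut0}. Write $U^x$ for the stabilizer of $x$ in $U$ and recall that $U = G \ltimes H$ acts on $G$ with $H$ acting by left translation and $G$ by conjugation. First I would observe that $\Ut^x$ sits in an exact sequence $1 \to A \to \Ut^x \to U^x$ where the image of $\Ut^x$ in $U^x$ is the subgroup of elements over which the restricted central extension splits; in any case $\Ut^x$ is a central extension of a subgroup of $U^x$ by $A$, and $A$ is a $p$-group of exponent dividing $p^r$ (it is a subgroup of the finite group $\Ht_0(\bF)$, whose exponent is bounded by the exponent of $H$, which divides that of $G$). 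So on $\pi_0$ we get an exact sequence relating $\pi_0(\Ut^x)$, $\pi_0$ of a subgroup of $U^x$, and the image of $A$. The key point is that an extension of a group of exponent $p^r$ by a central group of exponent $p^r$ has exponent at most $p^{2r}$: if $\bar{g}^{p^r} = 1$ in the quotient then $g^{p^r} \in A$, hence $g^{p^{2r}} = (g^{p^r})^{p^r} = 1$. This immediately gives the bound $p^{2r}$ once I know that the relevant quotient of $\Ut^x$ (the image in $U^x$, or rather its component group) has exponent dividing $p^r$.

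So the real content is to bound the exponent of $\pi_0(U^x)$ by $p^r$. Here I would use the structure of the $U$-action on $G$. An element of $U$ is a pair $(g,h) \in G \times H$, acting on $x \in G$ by $(g,h)\cdot x = g x g^{-1} h^{-1}$ (with appropriate conventions), so $(g,h) \in U^x$ iff $h = g x g^{-1} x^{-1}$. Thus $U^x$ is the graph of the morphism $G \to H$, $g \mapsto [g,x] := gxg^{-1}x^{-1}$, restricted to $\{g : [g,x] \in H\}$; since $H$ is normal in $G$ and $G/H$ need not be of any special form, one checks $U^x \cong Z_G(x)$ where $Z_G(x)$ is the centralizer — no, more precisely $U^x$ projects isomorphically to a subgroup of $G$ via $(g,h)\mapsto g$, namely $\{g \in G : gxg^{-1}x^{-1} \in H\}$, wait this is all of $G$ since... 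I need to be careful: $x \in G$ and $H \trianglelefteq G$, so $gxg^{-1}x^{-1}$ lands in $H$ only when $x$ normalizes appropriately. In any case the upshot I want is $U^x \cong$ (a closed subgroup of $G$) as perfect groups, so $\pi_0(U^x)$ embeds into a subquotient of... this is not quite right either since $\pi_0$ of a subgroup need not embed in $\pi_0$ of the group. The cleaner statement: $U^x$ is isomorphic to a closed subgroup $S \subseteq G$, and I claim $\pi_0(S)$ has exponent dividing $p^r$ because $S(\bF) \subseteq G(\bF)$ has exponent dividing $p^r$ (the exponent of $G$), hence so does any quotient of $S(\bF)$, in particular $\pi_0(S)(\bF) = \pi_0(S(\bF))$.

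The main obstacle I anticipate is pinning down precisely the isomorphism type of $U^x$ and verifying that $\pi_0(\Ut^x)$ is genuinely an extension of (a subgroup of) $\pi_0(U^x)$ by (a subquotient of) $A$ — the subtlety being that taking $\pi_0$ of the exact sequence $1 \to A \to \Ut^x \to (\text{image in } U^x) \to 1$ is not automatically exact, but since $A$ is finite and discrete it is its own $\pi_0$ and the sequence $\pi_0(\Ut^x) \to \pi_0(U^x)$ has kernel a quotient of $A$, which suffices for the exponent bound. I would also need to note that $\Ht_0$ connected guarantees $\Ut_0^\circ = G_0^\circ \ltimes \Ht_0$ maps onto $U_0^\circ$ with the extension restricting to one by $A$, so the component groups of $\Ut$ and $U$ differ by $A$ — and combining the "exponent $p^r$" bound on $\pi_0(U^x)$ with the "exponent $p^r$" bound on $A$ via the extension-of-groups lemma above yields exactly $p^{2r}$.
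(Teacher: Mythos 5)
Your overall architecture (bound the exponents of the constituent pieces by $p^r$ and combine via the elementary extension lemma to get $p^{2r}$) is the same idea the paper uses, but your proposal has a concrete gap at the one step that is actually nontrivial: bounding the exponent of $A$.

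You assert that $A$ is a subgroup of ``the finite group $\Ht_0(\bF)$, whose exponent is bounded by the exponent of $H$.'' This is wrong on two counts. First, $\Ht_0(\bF)$ is not finite: $\Ht_0$ is a connected perfect unipotent group of positive dimension, so $\Ht_0(\bF)$ is infinite. Second, and more importantly, the claim that the exponent of $\Ht_0$ is bounded by the exponent of $H_0$ is precisely the nontrivial content that needs proof, and you simply assert it. \emph{A priori}, $\Ht_0$ is a central extension of $H_0$ by $A$, so the only naive bound on its exponent is $(\text{exponent of }H_0)\cdot(\text{exponent of }A)$, which is circular for your purpose. The missing ingredient is the connectedness of $\Ht_0$: if $p^s$ is the exponent of $H_0$, the $p^s$-power map sends $\Ht_0$ into $A$ (because it kills $H_0$), and a morphism from the connected perfect variety $\Ht_0$ to the finite discrete scheme $A$ must be constant, hence identically $1$. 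This is what gives exponent$(\Ht_0)=p^s\leq p^r$, and hence the bound you want on $A$.

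Once that is in hand, the rest of your argument works, but it is considerably more roundabout than necessary. The paper simply observes that $\Ut=G\ltimes\Ht$ has exponent at most $p^r\cdot p^s\leq p^{2r}$ (any $u\in\Ut(\bF)$ has $u^{p^r}\in\Ht(\bF)$, and $\Ht$ has exponent $p^s$), and then any subquotient of $\Ut(\bF)$ — in particular $\pi_0(\Ut^x)$, a quotient of the subgroup $\Ut^x(\bF)$ — automatically inherits the bound. Your careful identification of $U^x$ with a closed subgroup of $G$, and the analysis of the sequence $1\to A\to\Ut^x\to U^x$ on $\pi_0$, is all correct (and the extension-of-exponents lemma you state is fine and does not even need centrality), but none of it is needed once one bounds the exponent of the ambient group $\Ut$ itself.
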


\begin{proof}
Let $p^s$ denote the exponent of $H_0$. Then the map $h\mapsto h^{p^s}$ takes $\Ht_0$ into $A$, so since $\Ht_0$ is connected, that map is constant. Thus $p^s$ is also the exponent of $\Ht_0$. So the exponent of $\Ut$ is at most $p^r\cdot p^s\leq p^{2r}$, which implies the lemma.
\end{proof}

\begin{rem}\label{r:optimality-bound-exponent}
It is not known to us whether the bound in Lemma \ref{l:bound-on-exponent} is optimal. If one could improve this bound, one could also replace the ring $\bZ[\mu_{p^{2r}},p^{-1}]$ with a smaller one in the statements of Theorems \ref{t:main-connected} and \ref{t:main}.
\end{rem}

\begin{lem}\label{l:alternative-Hecke}
\begin{enumerate}[$($a$)$]
\item The natural functor
\begin{equation}\label{e:forg}
\sD_{\Ut_0}^\chi(G_0)\rar{}\sD_{G_0}(G_0)
\end{equation}
is fully faithful, and its essential image is equal to $e_0\sD_{G_0}(G_0)$.
 \sbr
\item The natural functor $\sD_{\Ut}^\chi(G)\rar{}\sD_{G}(G)$ is fully faithful, and its essential image is equal to $e\sD_{G}(G)$.
\end{enumerate}
\end{lem}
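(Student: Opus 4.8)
The plan is to deduce the lemma from Lemma~\ref{l:alternative-Hecke-preliminary} by adjoining to both sides the conjugation-equivariance of $G_0$ on $G_0$. I would prove (a) in detail and obtain (b) by repeating the argument over $\bF$ with $G,H,\cL,\Ht,\Ut,e$ (the base changes of $G_0,H_0,\cL_0,\Ht_0,\Ut_0,e_0$) in place of $G_0,H_0,\cL_0,\Ht_0,\Ut_0,e_0$: nothing below uses finiteness of the ground field, and the analogue of Lemma~\ref{l:alternative-Hecke-preliminary} holds over $\bF$ for the admissible pair $(H,\cL)$. Recall that $\Ut_0=G_0\ltimes\Ht_0$ acts on $G_0$ so that $\Ht_0$ acts by left translations through $\Ht_0\onto H_0$ and $G_0$ acts by conjugation, and that $A$ is central in $\Ut_0$ and acts trivially on $G_0$.

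First I would record the elementary compatibilities among the relevant forgetful functors. Restriction of equivariance to the subgroups $G_0\subset\Ut_0$ and $\Ht_0\subset\Ut_0$ gives conservative functors $\sD_{\Ut_0}(G_0)\rar{}\sD_{G_0}(G_0)$ and $\sD_{\Ut_0}(G_0)\rar{}\sD_{\Ht_0}(G_0)$, and composing either with the functor to $\sD(G_0)$ that forgets all equivariance yields one and the same functor $\sD_{\Ut_0}(G_0)\rar{}\sD(G_0)$; moreover the functor to $\sD_{\Ht_0}(G_0)$ carries $\sD_{\Ut_0}^\chi(G_0)$ into $\sD_{\Ht_0}^\chi(G_0)$, since the $A$-action on an object of $\sD_{\Ut_0}(G_0)$ is the restriction of its $\Ut_0$-structure.

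The crux, which I expect to be the main obstacle, is a pair of identifications of equivariant categories. (i) Because $\Ut_0=G_0\ltimes\Ht_0$ is a semidirect product with $\Ht_0$ normal, the forgetful functor $\sD_{\Ut_0}(G_0)\rar{}\sD_{\Ht_0}(G_0)$ realizes $\sD_{\Ut_0}(G_0)$ as the category of $G_0$-equivariant objects of $\sD_{\Ht_0}(G_0)$, where $G_0$ acts on $\sD_{\Ht_0}(G_0)$ through the $\Ut_0$-action (the extension \eqref{e:Ht0} being $G_0$-invariant, conjugation by $g\in G_0$ normalizes $\Ht_0$ and acts on the space $G_0$ by $g$-conjugation). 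From the quotient-stack viewpoint of \cite{Las-Ols06} this is the tautology $\Ut_0\backslash G_0 = G_0\backslash(\Ht_0\backslash G_0)$; with the \emph{ad hoc} definitions of \cite{characters,foundations} it must be checked from the defining simplicial diagram by splitting $\Ut_0^{\times n}$ into its $G_0$- and $\Ht_0$-factors, the cohomological triviality of unipotent groups for $\ql$-coefficients keeping the bookkeeping harmless. Since $A$ is central, this $G_0$-action fixes $A$ pointwise and preserves the $A$-isotypic decomposition of $\sD_{\Ht_0}(G_0)$, so $\sD_{\Ut_0}^\chi(G_0)$ is the category of $G_0$-equivariant objects of $\sD_{\Ht_0}^\chi(G_0)$. (ii) Letting $G_0$ act on $\sD(G_0)$ by conjugation $g\mapsto g^*$, this action preserves $e_0\sD(G_0)$ (as $g^*(e_0*M)\cong e_0*g^*M$, $e_0$ being $G_0$-equivariant), and the category of $G_0$-equivariant objects of $e_0\sD(G_0)$ is canonically $e_0\sD_{G_0}(G_0)$; the only point needing a word is that $M\in\sD_{G_0}(G_0)$ lies in $e_0\sD_{G_0}(G_0)$ as soon as its underlying complex lies in $e_0\sD(G_0)$, which holds because the idempotent arrow $\e\rar{}e_0$ in $\sD_{G_0}(G_0)$ induces $M\rar{}e_0*M$, an isomorphism as soon as it is so on underlying complexes.

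The conclusion is then formal. By Lemma~\ref{l:alternative-Hecke-preliminary} the forgetful functor $F\colon\sD_{\Ht_0}^\chi(G_0)\rar{\sim}e_0\sD(G_0)$ is an equivalence, and by the compatibilities of the second paragraph it intertwines the $G_0$-actions of (i) and (ii) (both being, after passing to $\sD(G_0)$, the conjugation action of $G_0$ on $G_0$). Hence $F$ induces an equivalence on the categories of $G_0$-equivariant objects, which under (i)--(ii) is precisely the functor \eqref{e:forg}; thus \eqref{e:forg} is fully faithful with essential image $e_0\sD_{G_0}(G_0)$, proving (a), and (b) follows as explained at the outset.
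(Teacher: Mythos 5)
Your proof is correct and follows essentially the same route as the paper's (which, following \cite[Prop.~3.11(b)]{tanmay}, appeals to Lemma~\ref{l:alternative-Hecke-preliminary} plus the observation that membership in $e_0\sD_{G_0}(G_0)$ is detected after forgetting $G_0$-equivariance); you simply make explicit the equivariantization structure that the paper leaves implicit. In particular, what the paper states baldly as ``full faithfulness follows from $\Ht_0$ connected (and unipotent)'' is derived in your version from the same connectedness fact, but routed through the identification of $\sD_{\Ut_0}(G_0)$ with $G_0$-equivariant objects of $\sD_{\Ht_0}(G_0)$ and the fact that equivariantizing a fully faithful functor preserves full faithfulness. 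One point you correctly flag but should be aware is doing real work: identifying $\sD_{\Ut_0}(G_0)$ with the category of $G_0$-equivariant objects of $\sD_{\Ht_0}(G_0)$, and $\sD_{G_0}(G_0)$ with $G_0$-equivariant objects of $\sD(G_0)$, is unproblematic here precisely because $G_0$ is unipotent and $\ell\neq p$ (so $\pi_0(G_0)$ is a $p$-group and $B\pi_0(G_0)$ has no higher $\ql$-cohomology), which makes the naive equivariantization coincide with the equivariant derived category in the paper's ad hoc framework; without this the step would be delicate.
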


\begin{proof}[Proof $($cf.~Proposition~3.11(b)~in~\cite{tanmay}$)$.]
We only consider assertion (a) (the proof of (b) is completely analogous). The fact that \eqref{e:forg} is fully faithful follows from the fact that $\Ht_0$ is connected (and unipotent). To show that the essential image of \eqref{e:forg} equals $e_0\sD_{G_0}(G_0)$, use Lemma \ref{l:alternative-Hecke-preliminary} and the observation that an object of $\sD_{G_0}(G_0)$ belongs to $e_0\sD_{G_0}(G_0)$ if and only if the corresponding object of $\sD(G_0)$ (obtained by forgetting the $G_0$-equivariant structure) belongs to $e_0\sD(G_0)$.
\end{proof}

\subsubsection{Identifications of Galois cohomology and pure inner forms}\label{sss:identifications}
The inclusion maps $G_0\into U_0$ and $G_0\into\Ut_0$ yield bijections $H^1(\bF_q,G_0)\rar{\simeq}H^1(\bF_q,U_0)$ and $H^1(\bF_q,G_0)\rar{\simeq}H^1(\bF_q,\Ut_0)$. For the remainder of the argument, we tacitly identify $H^1(\bF_q,U_0)$ and $H^1(\bF_q,\Ut_0)$ with $H^1(\bF_q,G_0)$ using these bijections.

\begin{rem}\label{r:ambiguity}
If $\be\in H^1(\bF_q,G_0)$, we now have three different interpretations of $G_0^\be$. Namely, we can consider $G_0$ as an object of $G_0\var$, or as an object of $U_0\var$, or as an object of $\Ut_0\var$, and in each case we can consider the corresponding pure inner form of $G_0$ defined by $\be$. However, all three of these are canonically identified with each other, so this ambiguity is irrelevant for the arguments that follow.
\end{rem}

\subsubsection{Proof of Proposition \ref{p:heis}}\label{sss:proof} By Remarks \ref{r:action-semidirect-product}, there is a closed $\bF_q$-subvariety $X_0\subset G_0$ such that $X=X_0\tens_{\bF_q}\bF$ is a union of finitely many $U$-orbits in $G$, and for every $N\in e\sD_{G}(G)$, the support of $N$ is contained in $X$.

\mbr

By Lemma \ref{l:alternative-Hecke}, we obtain equivalences of categories
\[
\sD_{\Ut_0}^\chi(X_0)\rar{\sim}e_0\sD_{G_0}(G_0) \qquad\text{and}\qquad \sD_{\Ut}^\chi(X)\rar{\sim}e\sD_{G}(G).
\]
If $\cM\in\Loc_{\Ut}^\chi(X)$, the support of $\cM$ is a smooth closed subvariety of $G$. Let $s_{\cM}$ denote its dimension. It follows that the map $\cM\mapsto\cM[s_{\cM}]$ induces a bijection between $\IrrLoc_\Ut^\chi(X)$ and the $\bL$-packet $\bL(e)$ of character sheaves on $G$ defined by the minimal idempotent $e$. This bijection is compatible with the action of $\Fr_q^*$.

\mbr

Now choose $M\in\bL(e)^{\Fr_q^*}$. Then $M\cong\cM[s_{\cM}]$, where $\cM\in\IrrLoc_{\Ut}^\chi(X)^{\Fr_q^*}$ is supported on a single $\Fr_q$-stable $U$-orbit in $X$. Hence by Lemma \ref{l:bound-on-exponent} and Corollary \ref{c:homog}(a$'$), $\cM$ is isomorphic to the base change of some object $\cM_0\in\Loc_{\Ut_0}^\chi(X_0)$ such that $\cM_0$ is pure of weight $0$ and the function $t_{(\cM_0\tensqn)^\al}:X_0^\al(\bF_{q^n})\rar{}\ql$ takes values in $\bZ[\mu_{p^{2r}},p^{-1}]$ for each $n\in\bN$ and each $\al\in H^1(\bF_q,G_0)$, where $p^r$ is the exponent of $G_0$.

\mbr

By Remark \ref{r:absolute-value-square-root-of-p}, there exists $\la\in\bZ[\mu_{p^{2r}},p^{-1}]$ with $\la\cdot\overline{\la}=q^{-s_{\cM}}$.

\mbr

Finally, let $\sL_\la$ be the $\ql$-local system on $\Spec\bF_q$ corresponding to the vector space of dimension $1$ on which the geometric Frobenius acts via $\la$, equip $\sL_\la$ with the trivial $G_0$-equivariant structure, write $\pr:G_0\rar{}\Spec\bF_q$ for the structure morphism, and put $M_0=\cM_0[s_{\cM}]\tens\pr^*\sL_\la\in e_0\sD_{G_0}(G_0)$. It is clear from the construction that $M_0$ satisfies all the requirements of Theorem \ref{t:main}(c).

\mbr

Finally, the fact that $e_0$ itself is pure of weight $0$ follows from its definition as extension-by-zero of $\cL_0[2\dim H](\dim H)$.

%%%%%%%%%%%%%%%%%%%%%%%%%%%%%%%%%%%%%%%%%%%%%%%%%%%%%%%%
%%%%%%%%%%%%%%%%%%%%%%%%%%%%%%%%%%%%%%%%%%%%%%%%%%%%%%%%
%%%%%%%%%%%%%%%%%%%%%%%%%%%%%%%%%%%%%%%%%%%%%%%%%%%%%%%%
%%%
%%%         APPENDIX
%%%
%%%%%%%%%%%%%%%%%%%%%%%%%%%%%%%%%%%%%%%%%%%%%%%%%%%%%%%%
%%%%%%%%%%%%%%%%%%%%%%%%%%%%%%%%%%%%%%%%%%%%%%%%%%%%%%%%
%%%%%%%%%%%%%%%%%%%%%%%%%%%%%%%%%%%%%%%%%%%%%%%%%%%%%%%%

\appendix

\section*{Appendix}

\setcounter{section}{1}
\setcounter{thm}{0}

The goal of this appendix is to supply the proofs of the more technical results and the auxiliary claims appearing in the article.

\subsection{Proof of the assertion of Remark \ref{r:sum-of-squares-of-dimensions}}\label{ss:proof-r:sum-of-squares-of-dimensions} Let $\bL(e_0)$ denote the $\bL$-packet of irreducible characters of $G_0(\bF_q)$ defined by $e_0$. For each $\chi\in\bL(e_0)$, the function $\frac{\chi(1)}{\abs{G_0(\bF_q)}}\cdot\chi$ on $G_0(\bF_q)$ is an idempotent under convolution, and
\[
\sum_{\chi\in\bL(e_0)} \frac{\chi(1)}{\abs{G_0(\bF_q)}}\cdot\chi = t_{e_0}.
\]
Evaluating both sides at $1\in G_0(\bF_q)$ yields $\sum_{\chi\in\bL(e_0)} \chi(1)^2=q^{\dim G-n_e}$ in view of Lemma \ref{l:aux-easy} and the fact that $\abs{G_0(\bF_q)}=q^{\dim G}$ because $G_0$ is connected. Recalling that $\dim G-n_e=2d_e$ by definition, we obtain \eqref{e:sum-of-squares-of-dimensions}.

\subsection{Positive functions on finite groups}\label{ss:positive-functions} Let us introduce a simple auxiliary notion, which will be helpful in \S\ref{ss:proof-p:equivalence-def-L-packet-characters} below.

\begin{lem}
Let $\Ga$ be a finite group and $f\in\Fun(\Ga,\ql)^\Ga$ $($here, as usual, $\Ga$ acts on itself by conjugation$)$. The following are equivalent:
 \sbr
\begin{enumerate}[$($i$)$]
\item $f$ acts as a nonnegative rational scalar in each irreducible representation of $\Ga$;
 \sbr
\item $f$ is a linear combination of the irreducible characters of $\Ga$ with nonnegative rational coefficients;
 \sbr
\item $f$ is a linear combination of the minimal idempotents $($under convolution$)$ in $\Fun(\Ga,\ql)^\Ga$ with nonnegative rational coefficients.
\end{enumerate}
\end{lem}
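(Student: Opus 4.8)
The plan is to reduce everything to the standard description of the commutative algebra $\Fun(\Ga,\ql)^\Ga$ under convolution. Since $\ql$ has characteristic $0$ and is algebraically closed, $\ql[\Ga]$ is split semisimple, so $\Fun(\Ga,\ql)^\Ga$ — which we identify with the center of $\ql[\Ga]$ — is isomorphic as a $\ql$-algebra to $\prod_{\rho\in\operatorname{Irr}(\Ga)}\ql$, where $\operatorname{Irr}(\Ga)$ is the set of isomorphism classes of irreducible $\ql$-representations of $\Ga$. Concretely, for $f\in\Fun(\Ga,\ql)^\Ga$ and $\rho\in\operatorname{Irr}(\Ga)$ the operator $\sum_{g\in\Ga}f(g)\,\rho(g)$ on the underlying space of $\rho$ is, by Schur's lemma, multiplication by a scalar $\lambda_\rho(f)\in\ql$, and $f\longmapsto(\lambda_\rho(f))_\rho$ is the desired algebra isomorphism; explicitly $\lambda_\rho(f)=\chi_\rho(1)^{-1}\sum_{g}f(g)\chi_\rho(g)$. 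The primitive idempotents of $\prod_\rho\ql$ are the standard basis vectors, so the minimal idempotents (under convolution) of $\Fun(\Ga,\ql)^\Ga$ are exactly the elements $e_\rho$ with $\lambda_{\rho'}(e_\rho)=\delta_{\rho,\rho'}$; a one-line computation using the orthogonality relation $\sum_g\chi_\rho(g^{-1})\chi_{\rho'}(g)=|\Ga|\,\delta_{\rho,\rho'}$ shows that $e_\rho$ is the function $g\mapsto\frac{\chi_\rho(1)}{|\Ga|}\chi_\rho(g^{-1})$, i.e. $e_\rho=\frac{\chi_\rho(1)}{|\Ga|}\,\check\chi_\rho$, where $\check\chi_\rho$ is the character of the contragredient $\rho^{\vee}$ and is again an irreducible character.

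With this in hand, $(\mathrm{i})\Leftrightarrow(\mathrm{iii})$ is immediate: since the $e_\rho$ form a $\ql$-basis of $\Fun(\Ga,\ql)^\Ga$, we may write $f=\sum_\rho c_\rho e_\rho$ uniquely, and applying $\lambda_{\rho'}$ gives $c_{\rho'}=\lambda_{\rho'}(f)$, so the coefficients $c_\rho$ are precisely the scalars by which $f$ acts in the irreducible representations; hence $f$ acts as a nonnegative rational scalar in every irreducible representation if and only if every $c_\rho$ is a nonnegative rational. For $(\mathrm{ii})\Leftrightarrow(\mathrm{iii})$ I would use the identity $e_\rho=\frac{\chi_\rho(1)}{|\Ga|}\chi_{\rho^{\vee}}$ together with its inverse $\chi_\rho=\frac{|\Ga|}{\chi_\rho(1)}e_{\rho^{\vee}}$ (using $\chi_{\rho^{\vee}}(1)=\chi_\rho(1)$): each $e_\rho$ is a positive rational multiple of an irreducible character and each irreducible character is a positive rational multiple of some $e_{\rho'}$, and $\rho\mapsto\rho^{\vee}$ permutes $\operatorname{Irr}(\Ga)$. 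Therefore $f$ is a nonnegative-rational linear combination of the $e_\rho$ if and only if it is a nonnegative-rational linear combination of the irreducible characters.

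The only points needing care — and the closest thing to an obstacle — are bookkeeping ones: first, that $\ql$ is ``large enough'' for the representation theory of $\Ga$ to be split, so that Schur's lemma produces genuine scalars and $\operatorname{Irr}(\Ga)$ is unaffected by replacing $\ql$ with $\bC$ via any embedding; and second, keeping the rationality conditions in $(\mathrm{i})$, $(\mathrm{ii})$, $(\mathrm{iii})$ aligned, which is handled at once by the identity $c_\rho=\lambda_\rho(f)$. I do not expect any genuine difficulty beyond recording these standard facts carefully.
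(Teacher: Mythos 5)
Your proposal is correct and takes essentially the same route as the paper's (one-sentence) proof: both rest on the algebra isomorphism $\Fun(\Ga,\ql)^\Ga\cong\prod_{\rho}\ql$ and on the fact that the minimal idempotents are positive rational multiples of irreducible characters (which the paper phrases as the bijection $\chi\mapsto\frac{\chi(1)}{|\Ga|}\chi$ and you phrase, slightly more precisely, via $e_\rho=\frac{\chi_\rho(1)}{|\Ga|}\chi_{\rho^\vee}$ together with the observation that $\rho\mapsto\rho^\vee$ permutes $\operatorname{Irr}(\Ga)$). You merely spell out the details the paper leaves implicit.
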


The lemma follows at once from the standard facts that $\Fun(\Ga,\ql)^\Ga$ is isomorphic to a product of copies of $\ql$ as an algebra, and that the map $\chi\mapsto\frac{\chi(1)}{\abs{\Ga}}\cdot\chi$ is a bijection between irreducible characters of $\Ga$ and minimal idempotents in $\Fun(\Ga,\ql)^\Ga$.

\mbr

We call a function $f:\Ga\rar{}\ql$ \emph{positive} if it is conjugation-invariant and satisfies the equivalent conditions of the lemma.

\begin{prop}\label{p:properties-positive-functions}
Let $\Ga$ be a finite group.
 \sbr
\begin{enumerate}[$($a$)$]
\item If $f_1,f_2:\Ga\rar{}\ql$ are positive functions, so is $f_1+f_2$.
 \sbr
\item If $f_1,f_2:\Ga\rar{}\ql$ are positive functions and $f_1$ acts nontrivially on a given irreducible representation $\rho$ of $\Ga$ over $\ql$, then so does $f_1+f_2$.
 \sbr
\item If $f:\Ga\rar{}\ql$ is a positive function, then $f=0$ if and only if $f(1)=0$.
 \sbr
\item If $\Ga'\subset\Ga$ is a subgroup and $f:\Ga'\rar{}\ql$ is a positive function, then $\iga(f):\Ga\rar{}\ql$ is also positive.
\end{enumerate}
\end{prop}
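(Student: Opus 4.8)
The plan is to reduce every assertion to the three equivalent characterizations recorded in the lemma preceding the proposition, so that no genuine computation is needed. Throughout I will freely switch between: (i) $f$ acts by a nonnegative rational scalar in each irreducible $\ql$-representation of $\Ga$; and (ii) $f=\sum_\rho c_\rho\,\chi_\rho$, where $\rho$ ranges over the irreducible $\ql$-representations of $\Ga$, $\chi_\rho=\tr\rho$, and each $c_\rho\in\bQ_{\geq 0}$.

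For part (a) I would simply invoke characterization (ii) and its $\bQ$-linearity: writing $f_i=\sum_\rho c_\rho^{(i)}\chi_\rho$ with $c_\rho^{(i)}\geq 0$ for $i=1,2$ gives $f_1+f_2=\sum_\rho\bigl(c_\rho^{(1)}+c_\rho^{(2)}\bigr)\chi_\rho$, still a nonnegative rational combination. For part (b) I would switch to characterization (i): each of $f_1,f_2$ is central, hence acts on a given irreducible representation $\rho$ by a scalar, which is a nonnegative rational number, say $\la_1$ and $\la_2$; here ``$f_1$ acts nontrivially on $\rho$'' means $\la_1\neq 0$, i.e. $\la_1>0$, so $f_1+f_2$ acts on $\rho$ by $\la_1+\la_2\geq\la_1>0$ and in particular nontrivially. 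For part (c) the ``only if'' direction is trivial; for the converse I would write $f=\sum_\rho c_\rho\chi_\rho$ with $c_\rho\geq 0$ and evaluate at $1$, obtaining $f(1)=\sum_\rho c_\rho\dim\rho$, a sum of nonnegative terms, so $f(1)=0$ forces $c_\rho=0$ for all $\rho$ and hence $f=0$.

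For part (d) I would again use characterization (ii): writing $f=\sum_\rho c_\rho\chi_\rho$ over the irreducible $\ql$-representations $\rho$ of $\Ga'$ with $c_\rho\geq 0$, and using that the induction of class functions $\iga$ is $\bQ$-linear and sends the character of a representation $\rho$ of $\Ga'$ to the character of the induced representation $\Ind_{\Ga'}^\Ga\rho$, I get $\iga(f)=\sum_\rho c_\rho\,\chi_{\Ind_{\Ga'}^\Ga\rho}$. Each $\chi_{\Ind_{\Ga'}^\Ga\rho}$ is the character of an honest $\Ga$-representation, hence a nonnegative-integer — a fortiori nonnegative-rational — combination of the irreducible characters of $\Ga$, and multiplying by $c_\rho\geq 0$ and summing preserves this, so $\iga(f)$ is positive.

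No step presents a real obstacle; the only two points meriting a word of care are the reading of ``acts nontrivially'' in (b) as ``acts by a strictly positive scalar,'' and the standard compatibility (Frobenius reciprocity / the projection formula) used in (d) that induction of class functions intertwines with induction of representations. I would state these explicitly but not belabor them.
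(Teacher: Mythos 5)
Your proof is correct and takes essentially the same approach as the paper, which simply records that (a) and (b) follow from characterization (i) and (c), (d) from characterization (ii), without spelling out the (straightforward) details that you supply. The only cosmetic divergence is that you prove (a) via characterization (ii) whereas the paper cites (i); both are equally immediate and the choice is immaterial.
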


Properties (a) and (b) follow from the characterization (i) of positive functions, while properties (c) and (d) follows from the characterization (ii).

\subsection{Proof of Proposition \ref{p:equivalence-def-L-packet-characters}}\label{ss:proof-p:equivalence-def-L-packet-characters}
Fix a perfect unipotent group $G_0$ over $\bF_q$. The fact that the $\bL$-packets of irreducible characters of $G_0$ are nonempty follows from Proposition \ref{p:implies-main-b}, while the fact that they are pairwise disjoint follows from

\begin{lem}\label{l:convolution-geom-min-weak-idemp}
Let $G_0$ be a perfect unipotent group over $\bF_q$, and let $e_0,f_0\in\sD_{G_0}(G_0)$ be nonisomorphic geometrically minimal idempotents. Then\footnote{The reason this fact requires proof is that we defined a geometrically minimal idempotent in $\sD_{G_0}(G_0)$ to be a weak idempotent, which becomes a minimal idempotent after base change to $\bF$.} $e_0*f_0=0$.
\end{lem}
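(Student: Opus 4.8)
The plan is to reduce the statement over $\bF_q$ to the corresponding statement over $\bF$, where it is already known. Let $e,f\in\sD_G(G)$ denote the base changes of $e_0,f_0$ to $\bF$; by hypothesis these are minimal idempotents, and since $e_0\not\cong f_0$ we must have $e\not\cong f$ (a fortiori the base changes cannot be isomorphic if the original objects are not, but in fact I will only need that $e$ and $f$ are \emph{distinct} minimal idempotents — if $e\cong f$ then by Lemma \ref{l:uniqueness-idempotent} applied with the common minimal idempotent we would get $e_0\cong f_0$, a contradiction). By the dichotomy in the definition of minimality for closed idempotents, together with Remark \ref{r:different-notions-of-minimality} identifying minimal weak and minimal closed idempotents over $\bF$, nonisomorphic minimal idempotents $e,f$ satisfy $e*f=0$: indeed $e*f$ is again a closed idempotent, and if it were nonzero it would be isomorphic to both $e$ and to $f$, forcing $e\cong f$. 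Hence $e*f=0$ in $\sD_G(G)$.

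Now I need to descend this vanishing. The object $e_0*f_0\in\sD_{G_0}(G_0)$ has base change to $\bF$ isomorphic to $e*f=0$, hence is itself zero: base change along the faithfully flat (pro-)étale cover $\Spec\bF\rar{}\Spec\bF_q$ is conservative on $\sD_{G_0}(G_0)$, because an object of $\sD_{G_0}(G_0)$ whose underlying complex on $G$ is acyclic must already be acyclic on $G_0$ (the pullback functor $\sD(G_0)\rar{}\sD(G)$ detects vanishing of cohomology sheaves, as $G\rar{}G_0$ is surjective). Therefore $e_0*f_0=0$, as claimed.

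The one subtlety I want to be careful about is making sure the convolution functor on $\sD_{G_0}(G_0)$ is compatible with base change to $\bF$, i.e.\ that the base change of $e_0*f_0$ really is $e*f$. This follows from proper base change applied to the definition of $*$ as $\mu_!\bigl((p_1^*(-))\tens(p_2^*(-))\bigr)$: pullback along $G\times G\rar{}G_0\times G_0$ commutes with $p_1^*,p_2^*$, with $\tens$, and (by the proper base change theorem, since $\mu$ is proper after passing to compactly-supported direct image in the appropriate sense — here $\mu$ is a morphism of affine perfect groups, and $\mu_!$ commutes with the flat base change $\Spec\bF\rar{}\Spec\bF_q$) with $\mu_!$. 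This compatibility is also implicit in the way base change of idempotents is used throughout \S\ref{ss:proof-t:main-a}, so one could simply cite that discussion.

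I do not expect any serious obstacle here: the whole argument is a routine descent, and the only genuinely nontrivial input — that distinct minimal idempotents convolve to zero — is part of the definition of minimality combined with Proposition \ref{p:properties-char-sheaves} (or just \cite[Thm.~1.41]{foundations} type statements) over the algebraically closed field. If anything requires a word of justification it is the conservativity of $\sD_{G_0}(G_0)\rar{}\sD_G(G)$, but this is standard for a surjective morphism of schemes and the reader has already implicitly accepted it in Lemma \ref{l:uniqueness-idempotent}. So the "main obstacle" is essentially bookkeeping: keeping straight which objects live over $\bF_q$ versus over $\bF$, and invoking the base-change compatibility of $*$ at the right moment.
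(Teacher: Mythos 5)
Your proof is correct and uses exactly the same two ingredients as the paper's: the dichotomy for minimal idempotents over $\bF$, and Lemma \ref{l:uniqueness-idempotent}. The paper states the argument as a direct contrapositive (``if $e_0*f_0\neq 0$ then $e*f\neq 0$, whence $e\cong f$, whence $e_0\cong f_0$''), while you run the same chain in the forward direction and spend more words on the base-change compatibility of $*$ and on conservativity of pullback; logically the two are identical.
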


\begin{proof}
Let $G=G_0\tens_{\bF_q}\bF$, and let $e,f\in\sD_G(G)$ be the minimal idempotents obtained from $e_0,f_0$. If $e_0*f_0\neq 0$, then $e*f\neq 0$, whence $e\cong f$ by minimality. Then $e_0\cong f_0$ by Lemma \ref{l:uniqueness-idempotent}.
\end{proof}

To show that the union of the $\bL$-packets of irreducible characters of $G_0$ equals the disjoint union of the sets of irreducible characters of the groups $G_0^\al(\bF_q)$, it suffices to show that if $\rho$ is an irreducible representation of $G_0(\bF_q)$ over $\ql$, then $\rho$ belongs to some $\bL$-packet (Definition \ref{d:L-packets-characters}(a)). By \cite[Thm.~7.1]{characters}, there exists an admissible pair $(H_0,\cL_0)$ for $G_0$ such that the restriction of $\rho$ to $H_0(\bF_q)$ has as a direct summand the $1$-dimensional representation given by $t_{\cL_0}:H_0(\bF_q)\rar{}\ql^\times$. Write $G_0'$ for the normalizer of $(H_0,\cL_0)$ in $G_0$ and let $e'_0\in\sD_{G_0'}(G_0)$ be the Heisenberg minimal idempotent defined by $(H_0,\cL_0)$. Put $e_0\cong\igz e_0'$, a geometrically minimal idempotent in $\sD_{G_0}(G_0)$. We claim that $\rho$ belongs to the $\bL$-packet defined by $e_0$.

\mbr

To this end, observe that by Proposition \ref{p:induction-sheaves-functions}, we have
\[
t_{e_0}=\sum_{\be\in\operatorname{Ker}(H^1(\bF_q,G_0')\rar{}H^1(\bF_q,G_0))} \ind_{G_0'^\be(\bF_q)}^{G_0(\bF_q)}(t_{e_0'^\be}).
\]
Each $t_{e_0'^\be}\in\Fun(G_0'^\be(\bF_q),\ql)^{G_0'^\be(\bF_q)}$ is an idempotent, whence it is a positive function on $G_0'^\be(\bF_q)$ (see \S\ref{ss:positive-functions}). Since $t_{e_0'}$ equals the function $q^{-\dim H_0}\cdot t_{\cL_0}$ extended by zero to $G_0'(\bF_q)$, it follows that $\ind_{G_0'(\bF_q)}^{G_0(\bF_q)}(t_{e_0'})$ acts nontrivially on $\rho$. By parts (a), (b), (d) of Proposition \ref{p:properties-positive-functions}, $t_{e_0}$ also acts nontrivially on $\rho$, as desired.

\mbr

Finally, let us prove the last assertion of Proposition \ref{p:equivalence-def-L-packet-characters}. If $G_0$ is connected, it has no pure inner forms apart from itself. So every geometrically minimal idempotent in $\sD_{G_0}(G_0)$ comes from an admissible pair for $G_0$ by Theorem \ref{t:main}(b). Now the fact that Definition \ref{d:L-packets-characters}(b) is equivalent to the definition of $\bL$-packets of irreducible characters of $G_0(\bF_q)$ given in \cite[Def.~2.7]{characters} follows from \cite[Prop.~9.1(b)]{characters} and \cite[Thm.~2.14]{characters}.

\subsection{An example for Remark \ref{rems:t:main}(b)}\label{ss:example-for-main-theorem} In this subsection we will construct a unipotent group $G_0$ over $\bF_q$ and a geometrically minimal idempotent $e_0\in\sD_{G_0}(G_0)$ such that $t_{e_0}\equiv 0$. Let $U_0\subset GL_3$ be the group of matrices of the form
\[
\left(
\begin{array}{ccc}
1 & a & b \\
0 & 1 & c \\
0 & 0 & 1
\end{array}
\right)
\]
where $a\in\bF_p$ and $b,c$ are arbitrary. Write $H_0\subset U_0$ for the subgroup defined by $a=0$. Then $H_0$ is the neutral connected component of $U_0$ and $U_0$ can be identified with a semidirect product of the finite discrete group $\bZ/p\bZ$ and $H_0$.

\mbr

Moreover, $H_0\cong\bG_a^2$, so the Serre dual of $H_0$ can also be identified with $\bG_a^2$. The induced action of $\bZ/p\bZ=U_0/H_0$ on $H_0^*(\bF_q)$ is nontrivial, so we can choose a multiplicative local system $\cL_0$ on $H_0$, defined over $\bF_q$, which is not $U_0$-invariant.

\mbr

It is clear that $(H_0,\cL_0)$ is an admissible pair for $U_0$, and by construction, its normalizer in $U_0$ is equal to $H_0$. Put $f_0=\ind_{H_0}^{U_0}(\cL_0[4](2))$, which is the geometrically minimal idempotent in $\sD_{U_0}(U_0)$ defined by $(H_0,\cL_0)$. Choose any nontrivial $\al\in H^1(\bF_q,U_0)$ and let $G_0=U_0^\al$ and $e_0=f_0^\al$. Then $t_{e_0}\equiv 0$ by Proposition \ref{p:induction-sheaves-functions}.

\subsection{Proof of the assertion of Remark \ref{rems:t:main}(c)}\label{ss:proof-rems:t:main-c} By Remark \ref{r:absolute-value-square-root-of-p}, there exists $\la\in\bZ[\mu_{p^{2r}}]$ with $\la\cdot\overline{\la}=q$. Let $\sL_\la$ be the local system on $\Spec\bF_q$ corresponding to the vector space of dimension $1$ over $\ql$ on which the geometric Frobenius acts via $\la$ and put $M_0=e_0[-n_e]\tens\pr^*(\sL_\la)^{n_e}$, where $\pr:G_0\rar{}\Spec\bF_q$ is the structure morphism. Then $e$ comes from $M_0$ by base change to $\bF$, and it is clear that we can write $M_0=e_0[-n_e](-n_e/2)\tens\pr^*\sL$ for a suitable pure rank $1$ local system $\sL$ of weight $0$ on $\Spec\bF_q$. In particular, $M_0$ is perverse and pure of weight $0$. Moreover, for each $n\in\bN$ and each $\al\in H^1(\bF_{q^n},G_0\tensqn)$, the function $t_{(e_0\tensqn)^\al}$ on $G_0^\al(\bF_{q^n})$ is a central idempotent under convolution, so it takes values in $\bZ[\mu_{p^r},p^{-1}]$. Hence $t_{(M_0\tensqn)^\al}=(-\la)^{n_e}\cdot t_{(e_0\tensqn)^\al}$ takes values in $\bZ[\mu_{p^{2r}},p^{-1}]$.

\subsection{Proof of Proposition \ref{p:when-min-idemp-comes-from-adm-pair}}\label{ss:proof-p:when-min-idemp-comes-from-adm-pair}
Let us prove that (i) implies (iii). Suppose that $(H_0,\cL_0)$ is an admissible pair for $G_0$ that gives rise to $e_0$. It is clear that one can find some irreducible representation $\rho$ of $G_0(\bF_q)$ over $\ql$ whose restriction to $H_0(\bF_q)$ has as a direct summand the $1$-dimensional representation given by $t_{\cL_0}:H_0(\bF_q)\rar{}\ql^\times$. By the argument in \S\ref{ss:proof-p:equivalence-def-L-packet-characters}, $t_{e_0}$ acts on $\rho$ via the identity, and \emph{a fortiori}, $t_{e_0}\neq 0$.

\mbr

Next, (iii) implies (ii) trivially, so it remains to check that (ii) implies (i).

\mbr

If $t_{e_0}\not\equiv 0$, there is an irreducible representation $\rho$ of $G_0(\bF_q)$ over $\ql$ on which $t_{e_0}$ acts nontrivially. By the argument in \S\ref{ss:proof-p:equivalence-def-L-packet-characters}, there exists a minimal weak idempotent $f_0\in\sD_{G_0}(G_0)$ coming from some admissible pair for $G_0$ such that $t_{f_0}$ acts on $\rho$ via the identity. So $t_{e_0}*t_{f_0}\not\equiv 0$ (here, $*$ denotes the usual convolution of functions on the finite group $G_0(\bF_q)$), and \emph{a fortiori}, $e_0*f_0\neq 0$. Hence $e_0\cong f_0$ by Lemma \ref{l:convolution-geom-min-weak-idemp}, completing the proof of Proposition \ref{p:when-min-idemp-comes-from-adm-pair}.

\subsection{Proof of Lemma \ref{l:averaging-purity}}\label{ss:proof-l:averaging-purity}
We begin with an auxiliary

\begin{lem}\label{l:auxiliary-weights-pullback}
Let $f:X_0\rar{}Y_0$ be a morphism of perfect varieties over $\bF_q$, and suppose that the induced map\footnote{Recall that $\bF$ denotes a fixed algebraic closure of a field with $p=\operatorname{char}\bF_q$ elements, and $\bF_q$ is viewed as a subfield of $\bF$.} $X_0(\bF)\rar{}Y_0(\bF)$ is surjective. If $M\in\sD(Y_0)$ is such that $f^*M\in\sD_{\leq w}(X_0)$ for some $w\in\bZ$, then $M\in\sD_{\leq w}(Y_0)$.
\end{lem}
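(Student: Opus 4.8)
The plan is to reduce to the case of a single constructible sheaf, to establish the required bound on Frobenius eigenvalues pointwise by a lifting argument, and then to recover mixedness by a d\'evissage on $Y_0$. For the reduction: since $f^*$ is $t$-exact for the standard $t$-structure on $\ell$-adic complexes, $\cH^i(f^*M)\cong f^*(\cH^iM)$ for all $i$, and since $f$ is surjective on $\bF$-points the functor $f^*$ is conservative on constructible sheaves (a sheaf whose pullback vanishes has vanishing stalks at all $\bF$-points of $Y_0$, hence vanishes). Thus $M$ is bounded, with $\cH^iM=0$ whenever $\cH^i(f^*M)=0$, and since $M\in\sD_{\leq w}(Y_0)$ means exactly that each $\cH^iM$ is mixed of weights $\leq w+i$, it is enough to prove: for a constructible $\ql$-sheaf $\cF$ on $Y_0$, if $f^*\cF\in\sD_{\leq v}(X_0)$ then $\cF\in\sD_{\leq v}(Y_0)$.

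For the pointwise bound, fix $n\geq 1$ and $y\in Y_0(\bF_{q^n})$, viewed also as $\bar y\in Y_0(\bF)$. By surjectivity choose $\bar x\in X_0(\bF)$ with $f(\bar x)=\bar y$ and a finite field $\bF_{q^m}$ with $n\mid m$ over which $\bar x$ is defined; this gives $x\in X_0(\bF_{q^m})$ with $f(x)=y$ in $Y_0(\bF_{q^m})$, so $x^*f^*\cF\cong y^*\cF$ as $\Gal(\bF/\bF_{q^m})$-modules (namely the restriction of the $\Gal(\bF/\bF_{q^n})$-module $y^*\cF$). Since $f^*\cF$ has weights $\leq v$, every eigenvalue of the geometric Frobenius $F_{q^m}$ on this module is algebraic with all complex conjugates of absolute value $\leq (q^m)^{v/2}$. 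If $\alpha$ is an eigenvalue of $F_{q^n}$ on $y^*\cF$, then $\alpha^{m/n}$ is such an eigenvalue of $F_{q^m}=F_{q^n}^{m/n}$; hence $\alpha$ is algebraic, and for each conjugate $\alpha'$ of $\alpha$ the number $(\alpha')^{m/n}$ is a conjugate of $\alpha^{m/n}$, so $|\alpha'|^{m/n}\leq (q^m)^{v/2}=(q^n)^{(m/n)v/2}$, i.e.\ $|\alpha'|\leq (q^n)^{v/2}$. Thus the Frobenius eigenvalues of $\cF$ at every point satisfy the inequalities defining weight $\leq v$.

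It remains to upgrade this to the assertion that $\cF$ is mixed of weights $\leq v$, and this is the delicate step. I would argue by Noetherian induction on $Y_0$ (which we may assume reduced): choose a dense open $j:U_0\hookrightarrow Y_0$ on which $\cF$ is lisse, let $i:Z_0\hookrightarrow Y_0$ be the closed complement, and use the short exact sequence $0\to j_!j^*\cF\to\cF\to i_*i^*\cF\to 0$. The inductive hypothesis applied to $f^{-1}(Z_0)\to Z_0$ (still surjective on $\bF$-points, with $(f^*\cF)|_{f^{-1}(Z_0)}$ of weights $\leq v$) yields $i^*\cF\in\sD_{\leq v}(Z_0)$, hence $i_*i^*\cF=i_!i^*\cF\in\sD_{\leq v}(Y_0)$ by Theorem \ref{t:deligne}; granting that the lisse sheaf $\cF|_{U_0}$ lies in $\sD_{\leq v}(U_0)$, the same theorem gives $j_!j^*\cF\in\sD_{\leq v}(Y_0)$, and as $\sD_{\leq v}(Y_0)$ is closed under extensions we conclude $\cF\in\sD_{\leq v}(Y_0)$. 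The one remaining input is that a lisse sheaf on a smooth variety whose Frobenius eigenvalues at all closed points are algebraic with conjugates bounded by the appropriate power of the residue cardinality is mixed of weights $\leq v$ — the pointwise criterion for mixedness of lisse sheaves, for which I would cite \cite{deligne-weil-2} (and \cite{bbd}) rather than reprove it. I expect precisely this last point — passing from a uniform bound on Frobenius eigenvalues to an actual weight filtration — to be the only genuine obstacle; the rest of the argument is formal.
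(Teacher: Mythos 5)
Your first two steps — reducing to a single constructible sheaf via $t$-exactness of $f^*$, then establishing the pointwise Frobenius eigenvalue bound by lifting $y\in Y_0(\bF_{q^n})$ to some $x\in X_0(\bF_{q^m})$ over a common extension and comparing $F_{q^m}$ to the $(m/n)$-th power of $F_{q^n}$ — are precisely the paper's own proof, and the paper \emph{stops there}: its four-sentence argument shows $M$ has the right pointwise behaviour and declares the lemma proved. What the paper does not discuss, and what you correctly single out as the delicate point, is why the pointwise eigenvalue bound produces an actual weight filtration on the cohomology sheaves of $M$, which is what Definition \ref{d:purity}(b)--(d) demands for membership in $\sD_{\leq w}(Y_0)$. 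Your proposal is therefore more scrupulous than the source on this point, and your d\'evissage (the open/closed SES, Theorem \ref{t:deligne}, closure of $\sD_{\leq w}$ under extensions, Noetherian induction) is a valid reduction to the case of a lisse sheaf on a dense open.

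Where you go wrong is in the attribution of the terminal fact. The statement that a lisse $\ql$-sheaf whose Frobenius eigenvalues at all closed points satisfy the pointwise weight-$\leq v$ bound is automatically mixed (i.e.\ admits a weight filtration) is \emph{not} a theorem of \cite{deligne-weil-2} or \cite{bbd}. For curves it follows from Weil II together with L.~Lafforgue's Langlands correspondence for $GL_n$ over function fields; in higher dimension it is Deligne's conjecture (stated as a conjecture in Weil II, \S1.2, that every constructible $\ql$-sheaf on a finite-type $\bF_q$-scheme is mixed), which was resolved only afterwards by Deligne and Drinfeld building on Lafforgue. So the citation is off; you are invoking a post-Weil-II result. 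The paper itself quietly relies on the same input (or, alternatively, on the fact that the equivariant complexes that actually arise in the paper are visibly mixed by construction, making the general mixedness question moot in context); your version makes the dependence explicit but points to the wrong source.
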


\begin{proof}
The functor $f^*:\sD(Y_0)\rar{}\sD(X_0)$ is exact with respect to the usual (non-perverse) t-structure. Hence it suffices to show that if $M$ is a $\ql$-sheaf on $Y_0$ such that $f^*M$ is punctually pure of weight $w$, then so is $M$. Let $n\geq 1$ and $y\in Y_0(\bF_{q^n})$. By assumption, there exist $m\geq 1$ and $x\in X_0(\bF_{q^m})$ such that $n$ divides $m$ and $f(x)=y$. The geometric Frobenius $F_{q^m}$ acting on $x^*(f^*M)$ can be identified with the $(m/n)$-th power of the geometric Frobenius $F_{q^n}$ acting on $y^*M$. Hence the lemma follows from the fact that if $a\in\ql$ is such that $a^{m/n}$ is a Weil number of absolute value $(q^m)^{w/2}$, then $a$ is a Weil number of absolute value $(q^n)^{w/2}$.
\end{proof}

\begin{cor}\label{c:auxiliary-weights-pullback}
Let $G_0$ be a perfect $($unipotent\footnote{The unipotence assumption is irrelevant here; we only impose it because we did not discuss the correct definition of the equivariant derived category $\sD_{G_0}(Y_0)$ in the non-unipotent case.}$)$ group over $\bF_q$ acting on a perfect variety $Y_0$ over $\bF_q$, and let $\al:G_0\times Y_0\rar{}Y_0$ denote the action map. Suppose $X_0$ is another perfect variety over $\bF_q$ and $f:X_0\rar{}Y_0$ is a morphism such that the induced map
\[
G_0\times X_0 \xrar{\ \ \id\times f\ \ } G_0\times Y_0 \xrar{\ \ \al\ \ } Y_0
\]
is surjective at the level of $\bF$-points. If $M\in\sD_{G_0}(Y_0)$ is such that $f^*M\in\sD_{\leq w}(X_0)$ for some $w\in\bZ$, then $M\in\sD_{\leq w}(Y_0)$.
\end{cor}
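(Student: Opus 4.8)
The plan is to use the $G_0$-equivariance of $M$ to replace the map $f$ by the map $h=\al\circ(\id_{G_0}\times f)\colon G_0\times X_0\to Y_0$, which is surjective on $\bF$-points by hypothesis, and then to quote Lemma \ref{l:auxiliary-weights-pullback} for $h$. Write $\pr\colon G_0\times Y_0\to Y_0$ and $q\colon G_0\times X_0\to X_0$ for the projections. The first step is to record the elementary identity of morphisms $\pr\circ(\id_{G_0}\times f)=f\circ q$, and to note that the $G_0$-equivariant structure on $M\in\sD_{G_0}(Y_0)$ provides, at the level of underlying complexes, an isomorphism $\al^*M\cong\pr^*M$ in $\sD(G_0\times Y_0)$ (this is precisely the level-one datum of an equivariant structure in the \emph{ad hoc} sense of \cite{characters,foundations}, so nothing deep is needed here).

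The second step is to pull the isomorphism $\al^*M\cong\pr^*M$ back along $\id_{G_0}\times f$ and use the identity above to obtain $h^*M\cong(\id_{G_0}\times f)^*\pr^*M=q^*f^*M$ in $\sD(G_0\times X_0)$.

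The third step is to check that $q^*$ carries $\sD_{\leq w}(X_0)$ into $\sD_{\leq w}(G_0\times X_0)$. This holds for the pullback along any $\bF_q$-morphism: $q^*$ is t-exact for the ordinary t-structure, so it commutes with the cohomology sheaves $\cH^i$, and it preserves punctual purity of any given weight because an $\bF_{q^n}$-point of $G_0\times X_0$ maps to an $\bF_{q^n}$-point of $X_0$ (one can also argue exactly as in the proof of Lemma \ref{l:auxiliary-weights-pullback}). Since $f^*M\in\sD_{\leq w}(X_0)$ by assumption, this gives $h^*M\cong q^*f^*M\in\sD_{\leq w}(G_0\times X_0)$.

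The final step is to apply Lemma \ref{l:auxiliary-weights-pullback} to the morphism $h\colon G_0\times X_0\to Y_0$: it is surjective on $\bF$-points and $h^*M$ has weights $\leq w$, hence $M\in\sD_{\leq w}(Y_0)$, as claimed. The only thing to be careful about is the bookkeeping with the action and projection maps and the legitimacy of using the equivariance isomorphism on underlying complexes, but both are routine; no genuine obstacle arises beyond Lemma \ref{l:auxiliary-weights-pullback} itself.
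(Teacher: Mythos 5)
Your proof is correct and follows essentially the same route as the paper: use the equivariance isomorphism $\al^*M\cong\pr^*M$, pull it back along $\id\times f$ to identify $h^*M$ with $\pi^*f^*M$, observe that pullback preserves $\sD_{\leq w}$, and apply Lemma~\ref{l:auxiliary-weights-pullback} to the composite map $h=\al\circ(\id\times f)$. The only difference is that you spell out the intermediate steps (the commuting-square identity and the purity-preservation under $q^*$) a bit more explicitly than the paper does.
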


\begin{proof}
Let $\pi:G_0\times X_0\rar{}X_0$ denote the second projection. Since $f^*M\in\sD_{\leq w}(X_0)$, we have $\pi^*f^*M\in\sD_{\leq w}(G_0\times X_0)$. The $G_0$-equivariant structure on $M$ yields an isomorphism $(\id\times f)^*\al^*M\rar{\simeq}\pi^*f^*M$. In view of our assumption, Lemma \ref{l:auxiliary-weights-pullback} implies that $M\in\sD_{\leq w}(Y_0)$.
\end{proof}

\begin{proof}[Proof of Lemma \ref{l:averaging-purity}]
Apply Corollary \ref{c:auxiliary-weights-pullback} to $Y_0=(G_0/G_0')\times X_0$ and $f=i$.
\end{proof}

\subsection{Equivalence of two definitions of $\ig$}\label{ss:equivalence-def-induction} In this subsection we justify the assertion of Remark \ref{r:equivalent-def-induction}. Consider the diagram
\begin{equation}\label{e:diagram-for-ind}
\xymatrix{
  G' \ar[d]_j \ar[rr]^i & & \Gt \ar[d]^J \ar[drr]^{\pi} & & \\
  G \ar[rr]^{I\ \ \ \ \ \ \ } & & (G/G')\times G \ar[rr]^{\ \ \ \ \ \pr_2} & & G
   }
\end{equation}
where the notation is explained as follows. The variety $\Gt$ and the morphisms $i$ and $\pi$ are defined as in Remark \ref{r:equivalent-def-induction}. The map $j$ is the natural inclusion of $G'$ into $G$. The map $I$ is given by $g\mapsto (\overline{1},g)$ and the map $J$ is induced by the map $G\times G'\rar{}(G/G')\times G$ given by $(g,g')\mapsto(\overline{g},gg'g^{-1})$, where $\overline{g}$ denotes the image of $g$ in $G/G'$. Finally, $\pr_2$ is the second projection.

\mbr

The functors
\[
i^*:\sD_G(\Gt)\rar{\sim}\sD_{G'}(G') \qquad \text{and} \qquad I^*:\sD_G\bigl((G/G')\times G\bigr)\rar{\sim}\sD_{G'}(G)
\]
are equivalences. Unraveling Definition \ref{d:induction-functors}, we see that the functor $\ig$ is defined as the composition $\pr_{2!}\circ (I^*)^{-1}\circ j_!$. On the other hand, in \cite{characters} the functor $\ig$ was defined as the composition $\pi_!\circ(i^*)^{-1}$. We need to check that these two compositions are isomorphic. To this end, observe that diagram \eqref{e:diagram-for-ind} is commutative, and, in addition, the square on the left is cartesian. This implies that $\pr_{2!}\circ J_!\cong\pi_!$ and, by the proper base change theorem, $I^*\circ J_!\cong j_!\circ i^*$. Therefore
\[
\pr_{2!}\circ (I^*)^{-1}\circ j_! \cong \pr_{2!}\circ J_!\circ (i^*)^{-1} \cong \pi_!\circ(i^*)^{-1},
\]
as required.

\subsection{Proof of Proposition \ref{p:conjugation-action}}\label{ss:proof-p:conjugation-action}
We first verify the uniqueness assertion. Suppose $\vp:G_1\rar{\simeq}\cG_1$ is an isomorphism of perfect groups over $\bF_q$ satisfying the requirement of the proposition. Let $X_0\in G_0\var$ be $G_0$ equipped with the action of $G_0$ given by left multiplication. The corresponding $X_1\in G_1\var$ can be identified with $P$ viewed as a left $G_1$-torsor. Let
\[
a:G_1\times P\rar{}P \qquad\text{and}\qquad \al:\cG_1\times P\rar{}P
\]
denote the action maps, and let
\[
p_2:G_1\times P\rar{}P \qquad\text{and}\qquad \pi_2:\cG_1\times P\rar{}P
\]
denote the second projections. By assumption, $\al\circ(\vp\times\id_P)=a$, and we also have $\pi_2\circ(\vp\times\id_P)=p_2$. Hence the following diagram commutes:
\[
\xymatrix{
  G_1\times P \ar[dr]_{(a,p_2)} \ar[rr]^{\vp\times\id_P} & & \cG_1\times P \ar[dl]^{(\al,\pi_2)} \\
   & P\times P &
   }
\]
However, in this diagram all arrows are isomorphisms because $P$ is a left $G_1$-torsor. This implies that $\vp\times\id_P=(\al,\pi_2)^{-1}\circ(a,p_2)$, and this identity determines the morphism $\vp$ uniquely.

\mbr

For the existence, one could construct $\vp$ by reversing the proof of uniqueness given above, but it is more convenient to proceed directly as follows. Consider the diagram
\[
G_1\times P \xrightarrow{\ \ \simeq\ \ } P\times P \xleftarrow{\ \ \simeq\ \ } P\times G_0
\]
where the isomorphism on the left is given by $(g_1,p)\mapsto(g_1\cdot p,p)$ and the isomorphism on the right is given by $(p,g_0)\mapsto(p\cdot g_0,p)$. Both of these isomorphisms are equivariant with respect to the left action of $G_1$ and the right action of $G_0$, where:
 \sbr
\begin{itemize}
\item $G_0$ acts on $P$ on the right in the given way\footnote{Recall that, by assumption, $P$ is a $(G_1,G_0)$-bitorsor.};
 \sbr
\item $G_1$ acts on $P$ on the left in the given way;
 \sbr
\item $G_0$ acts on itself via $\ga_0:g_0\mapsto\ga_0^{-1}g\ga$ and acts trivially on $G_1$;
 \sbr
\item $G_1$ acts on itself via $\ga_1:g_1\mapsto\ga_1 g_1\ga_1^{-1}$ and acts trivially on $G_0$.
\end{itemize}

\mbr

Taking the quotient by the right action of $G_0$, we obtain an isomorphism \[(G_1\times P)/G_0 \rar{\simeq} (P\times G_0)/G_0\] in the category $G_1\var$. But $(G_1\times P)/G_0$ is canonically identified with $G_1$, and by construction, $\cG_1=(P\times G_0)/G_0$. Thus we obtain an isomorphism $G_1\rar{\simeq}\cG_1$. It is straightforward to check that this is an isomorphism of perfect groups over $\bF_q$ and that the requirement of Proposition \ref{p:conjugation-action} is satisfied.

\subsection{Proof of Proposition \ref{p:induction-sheaves-functions}}\label{ss:proof-p:induction-sheaves-functions} We will consider two cases separately. Suppose first that there exists $\be\in H^1(\bF_q,G_0')$ that maps to $\al\in H^1(\bF_q,G_0)$. In this case, without loss of generality, we may replace $G_0'\subset G_0$ with $G_0'^\be\subset G_0^\al$ and $M$ with $M^\be$, and hence we may assume that $\al$ is trivial. (Here we implicitly used Lemma \ref{l:induction-transport}.) Then Proposition \ref{p:induction-sheaves-functions} follows from Remark \ref{r:equivalent-def-induction} and \cite[Prop.~6.13]{characters}.

\mbr

Next suppose that $\al$ is not in the image of the natural map $H^1(\bF_q,G_0')\rar{}H^1(\bF_q,G_0)$. In this case we must prove that $t_{N^\al}\equiv 0$ on $G_0^\al(\bF_q)$.

\mbr

It will be more convenient to work with the definition of the functor $\igz$ used in \cite{characters}, which was recalled in Remark \ref{r:equivalent-def-induction}. Thus let $\Gt_0$ denote the quotient $(G_0\times G_0')/G_0'$, where the right action of $G_0'$ is given by $(g,g')\cdot\ga'=(g\ga',\ga'^{-1}g'\ga')$. If $i:G_0'\into\Gt_0$ is induced by $g'\mapsto(1,g')$ and $\pi:\Gt_0\rar{}G_0$ is induced by $(g,g')\mapsto gg'g^{-1}$, then, by Remark \ref{r:equivalent-def-induction}, we have $N\cong\pi_!\bigl((i^*)^{-1}(M)\bigr)$. Hence, by Lemma \ref{l:functoriality-transport}, we have $N^\al\cong\pi^\al_!\bigl(((i^*)^{-1}(M))^\al\bigr)$. If we can show that $\Gt_0^\al(\bF_q)=\varnothing$, then the proof will be complete, in view of the Grothendieck-Lefschetz trace formula.

\begin{lem}
With the notation above, if $\al$ is not in the image of the natural map $H^1(\bF_q,G_0')\rar{}H^1(\bF_q,G_0)$, then $\Gt_0^\al(\bF_q)=\varnothing$.
\end{lem}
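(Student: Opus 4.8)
The plan is to push the statement down to the homogeneous space $G_0/G_0'$, using the canonical $G_0$-equivariant projection $\rho\colon\Gt_0\rar{}G_0/G_0'$, $[(g,g')]\mapsto gG_0'$. This is well defined because the right $G_0'$-action $(g,g')\cdot\ga'=(g\ga',\ga'^{-1}g'\ga')$ used to form $\Gt_0$ leaves the coset $gG_0'$ unchanged, and it is equivariant for the left-translation action of $G_0$ on $G_0/G_0'$ (which is the $G_0$-action on $\Gt_0$ from Remark \ref{r:equivalent-def-induction}, i.e. the one with respect to which $\Gt_0^\al$ is formed in \S\ref{ss:proof-p:induction-sheaves-functions}). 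By the functoriality of the twisting construction recalled in \S\ref{ss:functoriality-inner-forms}, $\rho$ induces a morphism of perfect $\bF_q$-varieties $\rho^\al\colon\Gt_0^\al\rar{}(G_0/G_0')^\al$; hence it is enough to prove that $(G_0/G_0')^\al(\bF_q)=\varnothing$ whenever $\al$ is not in the image of $H^1(\bF_q,G_0')\rar{}H^1(\bF_q,G_0)$.

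Next I would identify the twisted homogeneous space explicitly. Fix a right $G_0$-torsor $P$ over $\bF_q$ representing $\al$. Writing $G_0/G_0'$ as the quotient of $G_0$ by right multiplication by $G_0'$ (which commutes with left translation), one gets a canonical isomorphism of perfect $\bF_q$-varieties $(G_0/G_0')^\al\cong P/G_0'$, where $G_0'$ acts on $P$ on the right through $G_0'\subset G_0$; concretely this comes from the standard isomorphism $P\times^{G_0}G_0\cong P$, $[(p,g)]\mapsto pg$ (here $\times^{G_0}$ is the quotient of the product by the twisting action $(p,g)\cdot h=(ph,h^{-1}g)$), which identifies the residual right $G_0'$-action with right multiplication on $P$. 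So it remains to show that $(P/G_0')(\bF_q)\neq\varnothing$ forces $\al\in\operatorname{Im}\bigl(H^1(\bF_q,G_0')\rar{}H^1(\bF_q,G_0)\bigr)$.

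Finally I would run the usual reduction-of-structure-group argument. An $\bF_q$-point of $P/G_0'$ is the class of some $p\in P(\bF)$ whose right $G_0'(\bF)$-orbit is stable under the geometric Frobenius $F$; since $G_0'$ is defined over $\bF_q$ this gives $F(p)=p\cdot\ga'$ for a unique $\ga'\in G_0'(\bF)$, so the closed subscheme $Q=p\cdot G_0'\subset P$ (the fibre of $P\rar{}P/G_0'$ through $p$) is $F$-stable and descends to a right $G_0'$-torsor $Q_0$ over $\bF_q$; the inclusion $Q_0\into P$ then induces an isomorphism of $G_0$-torsors $Q_0\times^{G_0'}G_0\iso P$ over $\bF_q$, so $[Q_0]$ maps to $[P]=\al$. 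Chaining the three reductions yields $\Gt_0^\al(\bF_q)\subseteq(G_0/G_0')^\al(\bF_q)=\varnothing$. There is no genuine difficulty here — the lemma is essentially formal once the twist is unwound — and the only point requiring care is the bookkeeping in this last paragraph: tracking the Frobenius-semilinear action on $\bF$-points, checking that $Q$ is an $F$-stable closed subscheme so that Galois descent applies, and verifying that the induced $G_0$-torsor is exactly $P$.
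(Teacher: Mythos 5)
Your proof is correct, and it takes a mildly but genuinely different route from the one in the paper. The paper unravels $\Gt_0^\al$ directly as the double quotient $(P\times G_0\times G_0')/(G_0\times G_0')$, writes out the Frobenius fixed-point equations for a representative $(p,g,g')$, and solves them to produce $\Fr_q(p\cdot g)=(p\cdot g)\cdot\ga'$ with $\ga'\in G_0'(\bF)$. You instead factor through the $G_0$-equivariant projection $\rho\colon\Gt_0\rar{}G_0/G_0'$, so it suffices to show $(G_0/G_0')^\al(\bF_q)=\varnothing$; you then identify $(G_0/G_0')^\al\cong P/G_0'$ and run the standard reduction-of-structure-group argument, which again culminates in a point $p\in P(\bF)$ with $\Fr_q(p)=p\cdot\ga'$. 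The core computation is thus the same (the paper's substitution $\ga=g\ga'\Fr_q(g)^{-1}$ is exactly what your $pg\mapsto\overline{pg}\in P/G_0'$ absorbs), but your version isolates a cleaner and reusable intermediate statement — that $(G_0/G_0')^\al$ has an $\bF_q$-point if and only if $\al$ lifts to $H^1(\bF_q,G_0')$ — and makes the appeal to twist-functoriality and torsor descent explicit where the paper keeps everything at the level of explicit cocycle manipulation. Both are fine; yours is slightly more conceptual and would generalize more readily, at the cost of invoking the identification $(G_0/G_0')^\al\cong P/G_0'$, which the paper's hands-on approach avoids.
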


\begin{proof}
Let $P$ be a right $G_0$-torsor whose isomorphism class equals $\al$. Unraveling the definition of $\Gt_0^\al$, we see that it can be identified with the quotient
\[
(P\times G_0\times G_0') / (G_0\times G_0'),
\]
where the right action of $G_0\times G_0'$ on $P\times G_0\times G_0'$ is given by
\[
(p,g,g')\cdot(\ga,\ga') = (p\cdot\ga,\ga^{-1}g\ga',\ga'^{-1}g'\ga').
\]
Suppose, for the sake of contradiction, that $\Gt_0^\al(\bF_q)\neq\varnothing$. This means that there exist $p\in P(\bF)$, $g,\ga\in G_0(\bF)$ and $g',\ga'\in G_0'(\bF)$ such that
\[
(\Fr_q(p),\Fr_q(g),\Fr_q(g')) = (p\cdot\ga,\ga^{-1}g\ga',\ga'^{-1}g'\ga').
\]
The identity above implies that $\ga=g\ga'\Fr_q(g)^{-1}$, so that
\[
\Fr_q(p)=p\cdot\ga = p\cdot g\ga'\Fr_q(g)^{-1},
\]
and therefore $\Fr_q(p\cdot g)=(p\cdot g)\cdot\ga'$. However, the last equality means that $P$ comes from a right $G_0'$-torsor, which contradicts the assumption that $\al$ is not in the image of the natural map $H^1(\bF_q,G_0')\rar{}H^1(\bF_q,G_0)$.
\end{proof}

\subsection{Proof of Lemma \ref{l:induction-transport}}\label{ss:proof-l:induction-transport} Recall that $\Igz$ is right adjoint to the restriction functor $\Res^{G_0}_{G'_0}:\sD_{G_0}(G_0)\rar{}\sD_{G_0'}(G_0')$. This implies that the functors
\[
\sD_{G_0'}(G_0') \rar{} \sD_{G_0^\al}(G_0^\al), \qquad M\longmapsto (\Igz M)^\al \quad\text{and}\quad M\longmapsto \Ind_{G_0'^\be}^{G_0^\al} (M^\be),
\]
are isomorphic, because they have isomorphic left adjoints. Now Lemma \ref{l:induction-transport} follows from Proposition \ref{p:transport-duality} and the last assertion of Proposition \ref{p:induction-duality}.

\subsection{Proof of Proposition \ref{p:transport-duality}}\label{ss:proof-p:transport-duality}
Fix $X\in G_0\var$ and $\al\in H^1(\bF_q,G_0)$.
Let $(G_1,P)$ be a pure inner form of $G_0$ such that $P$, as a right $G_0$-torsor, represents the cohomology class $\al$. Put $X_1=(P\times X)/G_0$, where the right action of $G_0$ is given by $(p,x)\cdot\ga=(p\cdot\ga,\ga^{-1}\cdot x)$. The left action of $G_1$ on $P$ induces a left action of $G_1$ on $X_1$, and we can identify $(G_0^\al,X^\al)$ with $(G_1,X_1)$. If $\varpi:P\times X\rar{}X_1$ is the quotient map and $\pi_2:P\times X\rar{}X$ is the second projection, then the functors
\[
\varpi^*:\sD_{G_1}(X_1)\rar{\sim}\sD_{G_1\times G_0}(P\times X) \qquad\text{and}\qquad \pi_2^*:\sD_{G_0}(X)\rar{\sim}\sD_{G_1\times G_0}(P\times X)
\]
are equivalences, and the transport functor $\sD_{G_0}(X)\rar{\sim}\sD_{G_1}(X_1)$ is defined as $(\varpi^*)^{-1}\circ \pi_2^*$. Both $\varpi$ and $\pi_2$ are smooth morphisms of relative dimension $d=\dim P$, which implies that $\bD_{P\times X}\circ \varpi^*\cong \varpi^*\circ\bD_{X_1}[2d](d)$ and $\bD_{P\times X}\circ \pi_2^*\cong \pi_2^*\circ\bD_{X}[2d](d)$. This yields assertion (a) of the proposition.

\mbr

Assertion (b) follows from part (a), Lemma \ref{l:functoriality-transport}, and the fact that the inner form $\iota^\al:G_0^\al\rar{}G_0^\al$ of the inversion map $\iota:G_0\rar{}G_0$ (given by $g\mapsto g^{-1}$) is naturally identified with the inversion map for the group $G_0^\al$.

\subsection{Proof of Proposition \ref{p:transport-weights}}\label{ss:proof-p:transport-weights}
Fix a pure inner form $(G_1,P)$ of $G_0$ and put $X_1=(P\times X)/G_0$, where the right $G_0$-action on $P\times X$ is given by $(p,x)\cdot g=(p\cdot g,g^{-1}\cdot x)$. Write $\varpi:P\times X\rar{}X_1$ for the quotient morphism and $\pi_2:P\times X\rar{}X$ for the second projection.

\mbr

If $M\in\sD_{G_0}(X)$, then, by definition, the object $M_1\in\sD_{G_1}(X_1)$ obtained from $M$ via transport of equivariant complexes (Definition \ref{d:transport-equivariant-complexes}) is determined uniquely (up to isomorphism) by the requirement that
$\pi_2^* M \cong \varpi^* M_1$ in $\sD_{G_1\times G_0}(P\times X)$
(here, as usual, $G_1$ acts on $P\times X$ via its given action on $P$ and the trivial action on $X$).

\mbr

Now if $M\in\sD_{\leq w}(X)$, then $\pi_2^*M\in\sD_{\leq w}(P\times X)$. Since $\varpi$ is surjective at the level of $\bF$-points, Lemma \ref{l:auxiliary-weights-pullback} implies that $M_1\in\sD_{\leq w}(X_1)$.

\mbr

The fact that if $M\in\sD_{\geq w}(X)$, then $M_1\in\sD_{\geq w}(X_1)$ follows from what we just proved, using the definition of $\sD_{\geq w}$ (cf.~Definition \ref{d:purity}(e)) and Proposition \ref{p:transport-duality}(a).

\subsection{Proof of Proposition \ref{p:equivariant-sheaves-point}}\label{ss:proof-p:equivariant-sheaves-point} Choose representatives $\ga_1\cdot F_q, \ga_2\cdot F_q,\dotsc,\ga_r\cdot F_q$ of the conjugacy classes\footnote{We remark that if $\ga,\ga'\in\Ga$, then $\ga\cdot F_q\in\widetilde{\Ga}$ and $\ga'\cdot F_q\in\widetilde{\Ga}$ are $\Ga$-conjugate if and only if they are $\widetilde{\Ga}$-conjugate. So there is no reason to distinguish between the two kinds of conjugacy.} in $\widetilde{\Ga}=\Gal(\bF/\bF_q)\ltimes\Ga$ that project onto $F_q$ in $\Gal(\bF/\bF_q)$, and write $\al_1,\dotsc,\al_r\in H^1(\bF_q,G_0)$ for the corresponding cohomology classes. Then
\[
t_{M^{\al_i}}(*) = \sum_{j\in\bZ} (-1)^j\cdot \tr\bigl( \ga_i\cdot F_q; H^j(V^\bullet) \bigr),
\]
and hence we are reduced to proving that if $W$ is a finite dimensional continuous representation of $\widetilde{\Ga}$ over $\ql$, then
\begin{equation}\label{e:need-auxiliary}
\sum_{i=1}^r \frac{q^{\dim G_0}}{\abs{G_0^{\al_i}(\bF_q)}} \cdot \tr \bigl( \ga_i\cdot F_q; W \bigr) = \tr \bigl( F_q; W^\Ga \bigr).
\end{equation}

Now recall that if $H$ is a perfect connected unipotent group over $\bF_q$, then $\abs{H(\bF_q)}=q^{\dim H}$. By Lang's theorem \cite{lang}, if $H$ is an arbitrary (not necessarily connected) perfect group over $\bF_q$, the sequence
\[
1 \rar{} H^\circ(\bF_q) \rar{} H(\bF_q) \rar{} (H/H^\circ)(\bF_q) \rar{} 1
\]
is exact, which implies that if $H$ is unipotent, then $\abs{H(\bF_q)}=q^{\dim H}\cdot\abs{(H/H^\circ)(\bF_q)}$. We already saw that $\pi_0(G_0^{\al_i})$ can be identified with the group $\Ga$ equipped with the new geometric Frobenius action given by $x\mapsto \ga_i F_q(x)\ga_i^{-1}$. In particular, the group $\pi_0(G_0^{\al_i})$ can be identified with the centralizer $Z_{\Ga}(\ga_i\cdot F_q)$ of $\ga_i\cdot F_q$ in $\Ga$. Hence the desired identity \eqref{e:need-auxiliary} can be rewritten as
\begin{equation}\label{e:need-auxiliary-2}
\sum_{i=1}^r \frac{1}{\abs{Z_{\Ga}(\ga_i\cdot F_q)}} \cdot \tr \bigl( \ga_i\cdot F_q; W \bigr) = \tr \bigl( F_q; W^\Ga \bigr).
\end{equation}

Recalling that the elements $\ga_i\cdot F_q$ were chosen as representatives of $\Ga$-conjugacy classes of all elements of the form $\ga\cdot F_q$ in $\widetilde{\Ga}$, where $\ga\in\Ga$, we see that the left hand side of \eqref{e:need-auxiliary-2} equals $\abs{\Ga}^{-1}\cdot \tr\bigl( \sum_{\ga\in\Ga} \ga\cdot F_q; W\bigr)$. Now \eqref{e:need-auxiliary-2} follows from the fact that the element $\abs{\Ga}^{-1}\cdot\sum_{\ga\in\Ga}\ga$ in the group algebra of $\Ga$ acts as a projector onto the subspace $W^\Ga$ of $\Ga$-invariants (for any representation $W$ of $\Ga$).

\subsection{Proof of Proposition \ref{p:homog}}\label{ss:proof-p:homog}
Let us first reduce the proposition to the case where $X_0=\Spec\bF_q$.
Since $X_0$ is nonempty and $U_0$ acts transitively on $X_0$,
there exists $\al\in H^1(\bF_q,U_0)$ such that $X_0^\al(\bF_q)\neq\varnothing$.
Without loss of generality we may replace $U_0$ and $X_0$ by the pure inner
forms $U_0^\al$ and $X_0^\al$, and thus assume that $X_0(\bF_q)\neq\varnothing$.
Fix a point $x\in X_0(\bF_q)$ and let $U_0^x\subset U_0$
denote the stabilizer of $x$.

\mbr

If we view $x$ as a morphism $\Spec\bF_q\rar{}X_0$, then, since $U_0$ acts transitively on $X_0$, we obtain an equivalence of stacks $(U_0^x)\backslash\Spec\bF_q\rar{\sim}U_0\backslash X_0$. Since all the assertions of Proposition \ref{p:homog} only depend of the quotient stack $U_0\backslash X_0$, we see that if the proposition is true when $X_0=\Spec\bF_q$, then it is true in general\footnote{It is straightforward to rephrase this argument without using the language of stacks. However, it becomes more cumbersome, so we skip it.}.

\mbr

It remains to prove Proposition \ref{p:homog} in the case where $X_0=\Spec\bF_q$.
Here the result can be reformulated more concretely as a statement about
representations of finite groups. Namely, let $\Ga=\pi_0(U_0)(\bF)$,
a finite group equipped with a (continuous) action of $\Gal(\bF/\bF_q)$,
and form the semidirect product $\widetilde{\Ga}=\Gal(\bF/\bF_q)\ltimes\Ga$.

\mbr

The category $\Loc_U(\Spec\bF)$ is naturally identified with the category
$\Rep(\Ga,\ql)$
of finite dimensional representations of $\Ga$ over $\ql$, so that the
action of $\Fr_q^*$ on $\Loc_U(\Spec\bF)$ becomes identified with the automorphism
of $\Rep(\Ga,\ql)$ induced by the action of the geometric Frobenius
$F_q\in\Gal(\bF/\bF_q)$ on $\Ga$. On the other hand, the category $\Loc_{U_0}(\Spec\bF_q)$
is naturally identified with the category $\Rep(\widetilde{\Ga},\ql)$ of continuous
finite dimensional representations of $\widetilde{\Ga}$ over $\ql$.

\begin{prop}\label{p:extends-finite-image}
Every $F_q$-invariant irreducible representation of $\Ga$ over $\ql$ can be extended to a continuous representation of $\widetilde{\Ga}$ with finite image.
\end{prop}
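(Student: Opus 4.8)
The plan is to reduce the statement to a question about finite groups, using that the Galois action on the finite group $\Ga$ factors through a finite (procyclic, hence finite cyclic) quotient, and then to construct the extension by hand with the help of Schur's lemma and the algebraic closedness of $\ql$. First I would note that since $\Ga$ is finite and $\Gal(\bF/\bF_q)$ acts continuously on it, there is an integer $m\geq 1$ with $F_q^m$ acting trivially on $\Ga$. Let $C=\langle\sigma\rangle\subset\Aut(\Ga)$ be the image of $\Gal(\bF/\bF_q)$, a cyclic group of order dividing $m$ generated by the image $\sigma$ of $F_q$. Because this action factors through $C$, the surjection $\widehat{\bZ}=\Gal(\bF/\bF_q)\twoheadrightarrow C$ induces a surjective homomorphism $\widetilde{\Ga}\twoheadrightarrow\Ga':=C\ltimes\Ga$ onto a finite group. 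Consequently it suffices to extend a given $F_q$-invariant irreducible $\rho\colon\Ga\rar{}GL(V)$ to a representation of $\Ga'$: such an extension automatically has finite image, it is continuous, and pulling it back along $\widetilde{\Ga}\twoheadrightarrow\Ga'$ produces the desired representation of $\widetilde{\Ga}$.

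Next I would build the extension. Since $\rho\circ\sigma\cong\rho$, there is an $A\in GL(V)$ with $A\rho(g)A^{-1}=\rho(\sigma(g))$ for all $g\in\Ga$, and by Schur's lemma $A$ is unique up to a scalar. As $\sigma^m$ acts trivially on $\Ga$, the operator $A^m$ satisfies $A^m\rho(g)A^{-m}=\rho(g)$ for all $g$, so it centralizes the irreducible representation $\rho$ and is therefore a scalar $\mu\in\ql^\times$, again by Schur. Because $\ql$ is algebraically closed we may pick $\nu\in\ql^\times$ with $\nu^m=\mu$ and replace $A$ by $\nu^{-1}A$; after this normalization $A^m=\id$ while still $A\rho(g)A^{-1}=\rho(\sigma(g))$. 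One then checks that $\tilde\rho(\sigma^i\cdot g):=A^i\rho(g)$, for $0\leq i<m$ and $g\in\Ga$, is well defined and defines a group homomorphism $\Ga'=C\ltimes\Ga\rar{}GL(V)$ restricting to $\rho$ on $\Ga$: the relations to verify are $\tilde\rho(\sigma)^m=\id$, which holds by construction, and $\tilde\rho(\sigma)\tilde\rho(g)\tilde\rho(\sigma)^{-1}=\tilde\rho(\sigma(g))$, which is the defining property of $A$.

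There is essentially no serious obstacle here; the points that deserve care are the reduction in the first step (that the $\Gal(\bF/\bF_q)$-action on the finite group $\Ga$ factors through a finite quotient, so that $\Ga'$ is genuinely a finite quotient of $\widetilde{\Ga}$) and the normalization step, where one uses that $\ql^\times$ is divisible. For a general finite group acting on $\Ga$ this normalization would amount to the vanishing of an obstruction class in $H^2$ of the acting group with coefficients in $\ql^\times$, which still holds by divisibility of $\ql^\times$; but since $\Gal(\bF/\bF_q)$ is procyclic, the elementary cyclic argument above suffices and no group cohomology is needed.
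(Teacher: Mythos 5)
Your proof is correct and takes essentially the same approach as the paper: pick an intertwiner $\vp$ between $\rho$ and $\rho\circ F_q$, use Schur's lemma to see that $\vp^N$ is scalar (where $N$ is the order of $F_q$ as an automorphism of $\Ga$), rescale so that $\vp^N=\id$, and send $F_q\mapsto\vp$. Your extra framing via the explicit finite quotient $C\ltimes\Ga$ of $\widetilde{\Ga}$ makes the continuity/finite-image claims slightly more transparent but is not a different argument.
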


\begin{proof}
Let $\rho:\Ga\rar{}GL(V)$ be an $F_q$-invariant irreducible representation of $\Ga$ over $\ql$. Then there exists a linear automorphism $\vp:V\rar{\simeq}V$ with $\rho(F_q(\ga))=\vp\circ\rho(\ga)\circ\vp^{-1}$ for all $\ga\in\Ga$. If $N$ is the order of the automorphism of $\Ga$ given by $F_q$, then $\vp^N$ commutes with $\rho(\Ga)$, whence $\vp^N$ is scalar by Schur's lemma. Rescaling $\vp$, we may assume that $\vp^N=\id_V$. Then $\rho$ extends to a continuous representation $\widetilde{\rho}$ of $\widetilde{\Ga}$ on $V$ determined by $\widetilde{\rho}(F_q)=\vp$, and by construction, $\widetilde{\rho}$ has finite image.
\end{proof}

We now state two other results used in the proof of Proposition \ref{p:homog}. They will be proved in \S\S\ref{ss:proof-p:twisted-characters-form-a-basis}--\ref{ss:proof-p:drinfeld-representations}, following unpublished notes by Drinfeld.

\begin{prop}\label{p:twisted-characters-form-a-basis}
For every $F_q$-invariant irreducible representation $\rho$ of $\Ga$ over $\ql$,
choose some extension $\widetilde{\rho}$ of $\rho$ to a continuous
representation of $\widetilde{\Ga}$ with finite image,
and form the function $\widetilde{\chi}_\rho:\Ga\rar{}\qab$ defined
by $\widetilde{\chi}_\rho(\ga)=\tr\bigl(\widetilde{\rho}(\ga\cdot F_q)\bigr)$.
Then the functions $\widetilde{\chi}_\rho$ form a basis for the space
of functions $\Ga\rar{}\qab$ that are invariant under $F_q$-conjugation.
\end{prop}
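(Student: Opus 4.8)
The plan is to reduce the statement to pure character theory of the finite group $\Ga$ equipped with its automorphism $F_q$, and then to run a Clifford-theory argument for a finite cyclic extension. Since each chosen extension $\widetilde\rho$ has finite image, all of them factor through a common finite quotient $\widetilde\Ga'=\Ga\rtimes C$ of $\widetilde\Ga$, where $C$ is a finite cyclic quotient of $\Gal(\bF/\bF_q)$ whose chosen generator $\sigma$ induces the automorphism $F_q$ of $\Ga$. Under the projection $\widetilde\Ga\rar{}\widetilde\Ga'$ the coset $\Ga F_q\subset\widetilde\Ga$ maps isomorphically and $\Ga$-conjugation-equivariantly onto the coset $\Ga\sigma\subset\widetilde\Ga'$; thus the target space — functions $\Ga\rar{}\qab$ invariant under $F_q$-conjugation (constant on $F_q$-twisted conjugacy classes $\ga\sim x\ga F_q(x)^{-1}$) — is identified with the space of $\Ga$-conjugation-invariant functions on $\Ga\sigma$. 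I would first record the elementary remark that $\ga\sigma$ and $\sigma\ga=\sigma(\ga)\sigma$ are conjugate in $\widetilde\Ga'$ by the element $\ga\in\Ga$; hence any $F_q$-conjugation-invariant function $f$ on $\Ga$ automatically satisfies $f\circ F_q=f$, and therefore the function $\widehat f$ on $\widetilde\Ga'$ obtained by extending $f$ by zero off the coset $\Ga\sigma$ is invariant under conjugation by $\Ga$ and by $\sigma$, i.e. is a genuine class function on $\widetilde\Ga'$.

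Linear independence of the $\widetilde\chi_\rho$ I would get directly from Schur orthogonality for $\Ga$. Choosing the $\widetilde\rho$ unitary (possible since they have finite image) and writing $\widetilde\chi_\rho(\ga)=\tr(\rho(\ga)\phi_\rho)$ with $\phi_\rho=\widetilde\rho(F_q)$, the sum $|\Ga|^{-1}\sum_{\ga\in\Ga}\widetilde\chi_\rho(\ga)\,\overline{\widetilde\chi_{\rho'}(\ga)}$ expands into matrix-coefficient orthogonality relations for $\Ga$, which vanish unless $\rho\cong\rho'$ and in that case collapse (using that $\phi_\rho$ is unitary, so its Hilbert--Schmidt norm squared equals $\dim\rho$) to $1$. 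So the $\widetilde\chi_\rho$ are orthonormal for a positive-definite Hermitian form on the target space, and in particular linearly independent.

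For spanning I would show that the orthogonal complement of $\{\widetilde\chi_\rho\}$ vanishes. Let $f$ be $F_q$-conjugation-invariant and orthogonal to every $\widetilde\chi_\rho$, and consider the class function $\widehat f$ on $\widetilde\Ga'$. For an irreducible representation $V$ of $\widetilde\Ga'$: if $V|_\Ga$ is irreducible, then $V|_\Ga$ is an $F_q$-invariant irreducible $\rho$ and — using Proposition \ref{p:extends-finite-image} together with Schur's lemma — $V$ differs from the chosen extension $\widetilde\rho$ only by twisting with a character of $C$, so the restriction of $\chi_V$ to the coset $\Ga\sigma$ is a scalar multiple of $\widetilde\chi_\rho$ and hence $\langle\widehat f,\chi_V\rangle=0$; if $V|_\Ga$ is reducible, then by Clifford theory for the cyclic extension $\widetilde\Ga'/\Ga$ the representation $V$ is induced from a proper subgroup of the form $\Ga\langle\sigma^d\rangle$ with $d>1$, which is disjoint from the coset $\Ga\sigma$, so $\chi_V$ vanishes identically on $\Ga\sigma$ and again $\langle\widehat f,\chi_V\rangle=0$. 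Thus $\widehat f$ is orthogonal to all irreducible characters of $\widetilde\Ga'$, so $\widehat f=0$ and $f=0$. Together with the previous paragraph this shows the $\widetilde\chi_\rho$ form a basis — and, as a by-product, that the number of $F_q$-invariant irreducibles of $\Ga$ equals the number of $F_q$-twisted conjugacy classes.

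The reductions and the Schur-orthogonality computation are routine. The one substantive input is the Clifford-theoretic description of $\operatorname{Irr}(\widetilde\Ga')$ for a cyclic extension $\widetilde\Ga'/\Ga$: concretely, that every $F_q$-stable irreducible of $\Ga$ extends to $\widetilde\Ga'$ (which is exactly where Proposition \ref{p:extends-finite-image} is used) and that the character of any irreducible of $\widetilde\Ga'$ restricting reducibly to $\Ga$ vanishes on the coset $\Ga\sigma$. I expect the only genuinely fiddly points to be the careful set-up of the finite quotient $\widetilde\Ga'$ and the verification that $\widehat f$ is a class function; the rest is standard.
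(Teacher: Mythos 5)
Your proof is correct but takes a genuinely different route from the paper's. The paper (following Drinfeld) reformulates the problem purely in terms of the group algebra $A=\ql[\Ga]$: Lemma~\ref{l:twisted-invariant} identifies $F_q$-conjugation-invariant functions with linear functionals $\la$ on $A$ satisfying the twisted trace identity $\la(\phi(a)b)=\la(ba)$, where $\phi$ is the automorphism of $A$ induced by $F_q$, and then Proposition~\ref{p:tw-conjug-alg} is an abstract statement about an arbitrary finite-dimensional semisimple $\ql$-algebra with an automorphism, proved by reducing to a single matrix factor and doing a short direct computation. Your argument instead passes to a finite quotient $\widetilde\Ga'=\Ga\rtimes C$ with $C$ cyclic through which all the chosen $\widetilde\rho$ factor, identifies $F_q$-twisted class functions on $\Ga$ with class functions on $\widetilde\Ga'$ supported on the coset $\Ga\sigma$, and then runs ordinary character theory for $\widetilde\Ga'$ together with Clifford theory for the cyclic quotient $C$: orthonormality of the $\widetilde\chi_\rho$ falls out of Schur orthogonality for $\Ga$, and spanning follows because the irreducible characters of $\widetilde\Ga'$ are either (up to a $C$-twist) the chosen extensions or are induced from $\Ga\langle\sigma^d\rangle$ with $d>1$ and hence vanish on $\Ga\sigma$. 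Both are valid; the paper's is more self-contained and avoids Clifford theory, while yours is arguably more transparent to a representation theorist and gives, as a by-product, the equality of the number of $F_q$-stable irreducibles of $\Ga$ with the number of $F_q$-twisted conjugacy classes. One small remark: in your Case 1 the appeal to Proposition~\ref{p:extends-finite-image} is superfluous --- once $\widetilde\Ga'$ has been chosen so that all $\widetilde\rho$ factor through it, Schur's lemma alone shows that any irreducible of $\widetilde\Ga'$ restricting irreducibly to $\Ga$ is a $C$-twist of one of the $\widetilde\rho$.
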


To explain the terminology,
we recall that $F_q$-conjugation is the action of $\Ga$ on itself defined by
$\ga:x\mapsto F_q(\ga)x\ga^{-1}$. The set of orbits for this action is identified
with the cohomology $H^1(\Gal(\bF/\bF_q),\Ga)\cong H^1(\bF_q,U_0)$. Similarly, for each $n\in\bN$, the cohomology $H^1(\bF_{q^n},U_0)$ is identified with the set of $F_q^n$-conjugacy classes in $\Ga$.

\begin{prop}\label{p:drinfeld-representations}
Suppose that $\Ga$ has exponent $p^r$. Every $F_q$-invariant irreducible representation of $\Ga$ over $\ql$ can be extended to a continuous representation of $\widetilde{\Ga}$, which has finite image and whose character takes values in $\bZ[\mu_{p^r},p^{-1}]\subset\ql$.
\end{prop}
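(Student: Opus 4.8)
The plan is to reduce the assertion to a statement about representations of finite groups and then settle it by Galois descent. Write $\phi\in\Aut(\Ga)$ for the automorphism given by the geometric Frobenius. Since $\Gal(\bF/\bF_q)\cong\widehat\bZ$ acts on $\Ga$ through the finite cyclic quotient $C=\langle\phi\rangle$, every continuous finite-dimensional representation of $\widetilde\Ga$ with finite image factors through the finite group $G_0:=\widehat\bZ\ltimes_C\Ga$; thus it suffices to produce a representation of $G_0$ restricting to $\rho$ on $\Ga$ whose character is $\bZ[\mu_{p^r},p^{-1}]$-valued. As a starting point I would invoke Proposition \ref{p:extends-finite-image} to choose \emph{some} extension $\widetilde\rho$ of $\rho$ to $\widetilde\Ga$ with finite image $G$. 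Then $\bar\Ga:=\widetilde\rho(\Ga)$ is a normal subgroup of $G$ which is a quotient of $\Ga$, hence a $p$-group of exponent dividing $p^r$, and $G/\bar\Ga$ is cyclic; its restriction of $\widetilde\chi:=\operatorname{tr}\widetilde\rho$ to $\Ga$ is $\chi_\rho$, which already takes values in $\bZ[\mu_{p^r}]$ because $\Ga$ has exponent $p^r$. Finally, since $\widetilde\rho|_\Ga$ is irreducible and $G/\bar\Ga$ is abelian, any two extensions of $\rho$ to $G$ differ by a character of $G/\bar\Ga$.

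The heart of the matter is to correct $\widetilde\rho$, by twisting by a suitable character of $G/\bar\Ga$, so as to remove the roots of unity of order prime to $p$ from $\widetilde\chi$ and to bound the order of the $p$-power roots that appear by $p^r$. Let $L=\bQ(\mu_{|G|})$, a field containing all character values, and let $\mathcal G=\Gal\bigl(L/(L\cap\bQ(\mu_{p^r}))\bigr)$. Every $\tau\in\mathcal G$ fixes $\chi_\rho$, so $\widetilde\rho^{\,\tau}$ is again an extension of $\rho$, whence $\widetilde\rho^{\,\tau}\cong\widetilde\rho\otimes\eta_\tau$ for a unique character $\eta_\tau$ of $G/\bar\Ga$; one checks that $\tau\mapsto\eta_\tau$ is a $1$-cocycle valued in the $\mathcal G$-module $\widehat{G/\bar\Ga}$, which as a $\mathcal G$-module is a subgroup of $\mu_{|G|}\subset L^\times$ with the cyclotomic action. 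If this cocycle is a coboundary, say $\eta_\tau={}^{\tau}\zeta\cdot\zeta^{-1}$, then replacing $\widetilde\rho$ by $\widetilde\rho\otimes\zeta^{-1}$ makes $\widetilde\chi$ invariant under $\mathcal G$, hence valued in $L\cap\bQ(\mu_{p^r})$, and since $\widetilde\chi$ is a sum of roots of unity it is an algebraic integer, so it lies in $\bZ[\mu_{p^r}]\subset\bZ[\mu_{p^r},p^{-1}]$; as twisting by a character of the finite group $G/\bar\Ga$ preserves finite image, this completes the proof. So the whole question reduces to showing the cocycle is trivial.

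For the cocycle-vanishing step I would decompose $\widehat{G/\bar\Ga}$ into its $\ell$-primary parts $\mu_{\ell^{k}}$ and compute each $H^1$ of the relevant Galois group on $\mu_{\ell^k}$ by cyclic-group cohomology. For $\ell\ne p$ this is a standard Hilbert 90 type vanishing, and for $\ell=p$ it follows from the elementary identity $v_p\bigl((1+p^r)^{p^{d-r}}-1\bigr)=d$ (valid for $p$ odd), which forces the norm map and multiplication by $p^r$ on $\mu_{p^d}$ to have the same image, so that $H^1\bigl(\Gal(\bQ(\mu_{p^d})/\bQ(\mu_{p^r})),\mu_{p^d}\bigr)=0$. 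This handles the $p$-power roots of unity cleanly. The prime-to-$p$ part is the genuinely delicate point: when the $2$-part of $|G/\bar\Ga|$ is at least $4$, the abstract group $H^1\bigl(\Gal(\bQ(\mu_{2^k})/\bQ),\mu_{2^k}\bigr)$ need not vanish, so one cannot argue by pure cohomology; one must exploit that the cocycle $\eta$ is not arbitrary but is manufactured from the \emph{fixed} representation $\widetilde\rho|_\Ga$, whose character is $\bZ[\mu_{p^r}]$-valued.

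A cleaner route around the last obstacle, which I would ultimately adopt, is to separate off the prime-to-$p$ part of the Frobenius from the outset. Write $C=C_p\times C_{p'}$. The action of $C_{p'}$ on $\Ga$ is coprime to $|\Ga|$, so $\rho$ admits a canonical extension $\hat\rho$ to $C_{p'}\ltimes\Ga$; the determinant of $\hat\rho$ then has $p$-power order (so its restriction to $C_{p'}$ is trivial), and via the Glauberman--Isaacs correspondence the values of $\operatorname{tr}\hat\rho$ on a coset $c\cdot\Ga$ are, up to a sign, character values of the $p$-subgroup $C_\Ga(c)$ of $\Ga$, hence lie in $\bZ[\mu_{p^r}]$. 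One then extends $\hat\rho$ further over the remaining $p$-group quotient governed by $C_p$, where only $p$-power roots of unity occur, and applies the $p$-primary Galois descent of the previous paragraph (together with the exponent bound $p^r$ on $\Ga$) to keep those of order at most $p^r$. The part I expect to take the most care is precisely the interface between the coprime extension $\hat\rho$ and the $p$-part extension — checking that the canonical choices are compatible and that no $p'$-roots reappear when one passes from $C_{p'}\ltimes\Ga$ to all of $G_0$.
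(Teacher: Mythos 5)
Your approach is genuinely different from the paper's, which instead introduces a groupoid $\overline{\cC}$ of free $A$-lattices ($A=\bZ[\mu_{p^r},p^{-1}]$) with $\Ga$-action and a suitably normalized $\Ga$-invariant Hermitian form, proves (Proposition \ref{p:drinfeld-2}) that $\rho\mapsto\ql\tens_A M$ is a bijection onto irreducible representations with $\Aut_{\overline{\cC}}(M)\subset\mu_E$, and then descends: $F_q$-invariance of $\rho$ gives an $A$-linear isometry $\phi$ intertwining the two $\Ga$-actions, the rigidity of $\Aut_{\overline{\cC}}(M)$ forces $\phi$ to have finite order, and $A$-valuedness of the character of $\widetilde\rho$ on all cosets $\ga\cdot F_q^n$ is then automatic because $\rho(\ga)$ and $\phi$ are $A$-linear operators on a free $A$-module.

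There are genuine gaps in your argument as written. Your first route (Galois cocycle in $\widehat{G/\bar\Ga}$) fails at $\ell=2$, as you yourself observe, and your appeal to ``standard Hilbert 90 type vanishing'' for the prime-to-$p$ part is not in fact standard: Hilbert 90 concerns $L^\times$, not $\mu_n$, and $H^1(\Gal(\bQ(\mu_{2^k})/\bQ),\mu_{2^k})$ is nonzero for $k\geq 2$. Your second route is only a sketch with two unresolved points. First, the claim that Glauberman--Isaacs forces $\operatorname{tr}\hat\rho$ to lie in $\bZ[\mu_{p^r}]$ on \emph{entire} cosets $c\Ga$ is not a statement of the correspondence (which controls $\hat\rho$ only on special elements such as $c$ itself); a cleaner and correct replacement is to note that the canonical extension to $C_{p'}\ltimes\Ga$ (unique with $\det$ of $p$-power order) is fixed by every $\sigma\in\Gal(\ql/\bQ(\mu_{p^r}))$ that fixes $\chi_\rho$, whence its character lies in $\bQ(\mu_{p^r})\cap\overline{\bZ}=\bZ[\mu_{p^r}]$. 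Second, and more seriously, the extension over $C_p$ is not controlled: $|C_p|$ can greatly exceed $p^r$, the twisting torsor is $\widehat{C_p}\cong\mu_{|C_p|}$ rather than $\mu_{p^r}$, the relevant cohomology is $H^1(\cG,\widehat{C_p})$ for the full $\cG=\Gal(\bQ(\mu_N)/\bQ(\mu_{p^r}))$ rather than the $p$-primary piece you computed, and when $|C_p|\le p^r$ the action is trivial so $H^1=\Hom(\cG,\mu_{|C_p|})$ need not vanish; you therefore have not shown that the specific cocycle arising from $\hat\rho$ is a coboundary. This is exactly the point the paper's Hermitian-lattice construction is designed to circumvent: it produces an $F_q$-compatible $A$-structure in one stroke, making the target ring $\bZ[\mu_{p^r},p^{-1}]$ appear for free rather than after a Galois-descent argument coset by coset.
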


Let us show how Proposition \ref{p:homog} follows from the results above. Suppose $\rho$ is an $F_q$-invariant irreducible representation of $\Ga$ over $\ql$ and let $\widetilde{\rho}$ be an extension of $\rho$ to a representation of $\widetilde{\Ga}$ with finite image. If $\Ga$ has exponent $p^r$, assume moreover that the character of $\widetilde{\rho}$ takes values in $\bZ[\mu_{p^r},p^{-1}]\subset\ql$.

\mbr

Let $\cL_0\in\Loc_{U_0}(\Spec\bF_q)$ correspond to $\widetilde{\rho}$. Choose $\ga\in\Ga$ and $n\in\bN$, and let $\al\in H^1(\bF_{q^n},U_0)$ correspond to the $F_q^n$-conjugacy class of $\ga$ in $\Ga$. Then the value of the trace-of-Frobenius function $t_{(\cL_0\tensqn)^\al}$ on the $1$-element set $X_0^\al(\bF_{q^n})$ is equal to the trace of $\widetilde{\rho}(\ga\cdot F_q^n)$ (this is a special case
of the remarks in \S\ref{ss:equivariant-sheaves-point}). Since $\widetilde{\rho}$ has finite image, we see that $\cL_0$ is pure of weight $0$ and $\operatorname{trace}\bigl(\widetilde{\rho}(\ga\cdot F_q^n)\bigr)\in\qab$, whence Proposition \ref{p:homog}(a) results from Proposition \ref{p:extends-finite-image}. Similarly, Proposition \ref{p:homog}(a$'$) follows from Proposition \ref{p:drinfeld-representations}. Furthermore, if $n=1$, the value of $t_{\cL_0^\al}$ on the $1$-element set $X_0^\al(\bF_q)$
is equal to $\widetilde{\chi}_\rho(\ga)$, so Proposition \ref{p:homog}(b) follows from Proposition \ref{p:twisted-characters-form-a-basis}.

\subsection{Proof of Corollary \ref{c:homog}}\label{ss:proof-c:homog}
Write $A^*=\Hom(A,\ql^\times)$. If $\sC$ is any
$\ql$-linear Karoubi complete\footnote{This means that if $M\in\sC$ and $P\in\End_{\sC}(M)$
satisfies $P^2=P$, then $P$ has a kernel.} category and $M$ is an object of $\sC$ equipped
with an action of $A$ by automorphisms, we get a canonical decomposition
$M=\bigoplus_{\chi\in A^*}M^\chi$, where each $M^\chi$ is a direct summand of $M$ on
which $A$ acts via $\chi$.

\mbr

In particular, we obtain decompositions
\[
\Loc_\Ut(X) = \bigoplus_{\chi\in A^*} \Loc_\Ut^\chi(X) \qquad\text{and}\qquad
\Loc_{\Ut_0}(X_0) = \bigoplus_{\chi\in A^*} \Loc_{\Ut_0}^\chi(X_0)
\]
(direct sums of $\ql$-linear categories). Moreover, $\IrrLoc_\Ut(X)$ becomes
identified with the disjoint union $\bigcup_{\chi\in A^*}\IrrLoc_\Ut^\chi(X)$
(in a way compatible with the action of $\Fr_q^*$), so parts
(a)--(a$'$) of Corollary \ref{c:homog} follow from Proposition \ref{p:homog}(a)--(a$'$).

\mbr

Next, by Remark \ref{rems:aux}(3), the connecting homomorphism $\de:U_0(\bF_q)\rar{}A$
used in Definition \ref{d:aux} is surjective, and its kernel is equal to the image
of $\Ut_0(\bF_q)\rar{}U_0(\bF_q)$. Hence we obtain a decomposition
\[
\Fun(X_0(\bF_q),\qab)^{\Ut_0(\bF_q)} = \bigoplus_{\chi\in A^*}
\Fun(X_0(\bF_q),\qab)^{U_0(\bF_q),\chi}.
\]
The same argument applies to each pure inner form of $\Ut_0$, so we
deduce that
\[
\Fun\bigl((\Ut_0\backslash X_0)(\bF_q)\bigr) = \bigoplus_{\chi\in A^*} \Fun(X_0)^{U_0,\chi}
\]
Since for every $\chi\in A^*$ the elements \eqref{e:basis} belong to
the subspace $\Fun(X_0)^{U_0,\chi}\subset\Fun(X_0)^{\Ut_0}$, we see that
part (b) of Corollary \ref{c:homog} follows from Proposition \ref{p:homog}(b)
together with the remarks in the first part of the proof and Lemma \ref{l:basis-direct-sum}.

\subsection{Proof of Proposition \ref{p:twisted-characters-form-a-basis} (V.~Drinfeld)}\label{ss:proof-p:twisted-characters-form-a-basis}
Let $\Fun(\Ga)$ be the space of functions $\Ga\rar{}\ql$ and let
$A=\ql[\Ga]$ be the group algebra of $\Ga$ over $\ql$. The space $\Fun(\Ga)$ is naturally identified with $\Hom_{\ql}(A,\ql)$, i.e., the space of linear functionals on $A$ (namely, $\la\in\Hom_{\ql}(A,\ql)$ corresponds to
the function $f\in\Fun(\Ga)$ which is the restriction of $\lambda:A\rar{}\ql$ to $\Ga\subset A$). The algebra automorphism of $A$
induced by $F_q\in\Aut(\Ga)$ will be denoted by $\phi$.

\begin{lem}\label{l:twisted-invariant}
A function $f\in\Fun(\Ga)$ is invariant under $F_q$-conjugation if and only if
the corresponding linear functional $\la:A\rar{}\ql$ satisfies the
identity
\begin{equation}\label{e:twtr}
\la(\phi(a)b)=\la(ba) \mbox{ for all } a,b\in A.
\end{equation}
\end{lem}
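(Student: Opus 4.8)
The plan is to reduce the claimed equivalence to a statement about group elements, and then to recognize it, after a change of variable, as the very definition of invariance under $F_q$-conjugation. Since $\Ga$ spans $A=\ql[\Ga]$ over $\ql$ and both sides of \eqref{e:twtr} are $\ql$-bilinear in the pair $(a,b)$, the identity \eqref{e:twtr} holds for all $a,b\in A$ if and only if it holds for $a=\ga$ and $b=x$ with $\ga,x\in\Ga$. Recalling that $\phi$ is by definition the $\ql$-linear algebra automorphism of $A$ extending $F_q\in\Aut(\Ga)$, so that $\phi(\ga)=F_q(\ga)$ on group elements, we have $\phi(\ga)\,b=F_q(\ga)x\in\Ga$ and $ba=x\ga\in\Ga$; since $\la$ restricts to $f$ on $\Ga$, the condition \eqref{e:twtr} is therefore equivalent to
\[
f\bigl(F_q(\ga)\,x\bigr)=f(x\ga)\qquad\text{for all }\ga,x\in\Ga.
\]

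Next I would observe that for each fixed $\ga\in\Ga$ the map $x\mapsto x\ga^{-1}$ is a bijection of $\Ga$; substituting $x$ by $x\ga^{-1}$ in the displayed identity converts it into
\[
f\bigl(F_q(\ga)\,x\,\ga^{-1}\bigr)=f(x)\qquad\text{for all }\ga,x\in\Ga,
\]
which says precisely that $f$ is constant on the orbits of the $F_q$-conjugation action $\ga\colon x\mapsto F_q(\ga)x\ga^{-1}$. As the substitution is invertible, the two displayed identities are equivalent, and the lemma follows in both directions.

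There is no real obstacle here: the argument is a direct unwinding of definitions, and the only points deserving a word of care are the reduction from $A$ to the spanning set $\Ga$ (legitimate because \eqref{e:twtr} is linear in each argument) and the identification $\phi(\ga)=F_q(\ga)$ for $\ga\in\Ga$, which is built into the definition of $\phi$ as the algebra automorphism of $A$ induced by $F_q$.
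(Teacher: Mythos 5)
Your proof is correct and is essentially identical to the paper's: both reduce \eqref{e:twtr} to group elements by bilinearity, use $\phi(\ga)=F_q(\ga)$ to rewrite the condition as $f(F_q(\ga)x)=f(x\ga)$, and then apply the change of variables $x\mapsto x\ga^{-1}$ to recognize this as $F_q$-conjugation invariance.
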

\begin{proof}
$f$ is invariant under $F_q$-conjugation if and only if
$f(F_q(\ga)x\ga^{-1})=f(x)$ for all $x,\ga\in\Ga$, which is
equivalent to the condition that $f(\phi(\ga)y)=f(y\ga)$ for all
$y,\ga\in\Ga$ (make the change of variables $x\ga^{-1}=y$). Now use
the linearity of $\la$ and $\phi$.
\end{proof}

Proposition \ref{p:twisted-characters-form-a-basis} follows from Lemma \ref{l:twisted-invariant} and
the next fact applied to $A=\ql[\Ga]$.

\begin{prop}\label{p:tw-conjug-alg}
Let $A$ be a finite dimensional semisimple associative algebra over
$\ql$ equipped with an automorphism $\phi:A\rar{\simeq}A$. Let
$\widehat{A}$ denote the set of isomorphism classes of simple $A$-modules. For each
$\rho\in\bigl(\widehat{A}\bigr)^{\phi}$ choose a realization
$\rho:A\rar{}\End(V_\rho)$ and an invertible operator $\phi_\rho:V_\rho\rar{}V_\rho$ so that
\begin{equation}\label{e:intertw}
\phi_\rho\circ\rho(\phi(a))=\rho(a)\circ\phi_\rho \text{ for all }
a\in A.
\end{equation}
Then the functionals $\widetilde{\chi}_\rho:A\rar{}\ql$,
$\rho\in\bigl(\widehat{A}\bigr)^{\phi}$, defined by
$\widetilde{\chi}_\rho (a):=\tr(\phi_{\rho}\circ\rho(a))$ form a
basis of the space of all linear functionals $\la:A\rar{}\ql$
satisfying \eqref{e:twtr}.
\end{prop}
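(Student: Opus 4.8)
The plan is to prove Proposition \ref{p:tw-conjug-alg} by reducing, via the Wedderburn decomposition of $A$, to two extreme cases. Since $\ql$ is algebraically closed, write $A=\bigoplus_{\sigma\in\widehat{A}}A_\sigma$ as a product of its simple two-sided ideals, with $A_\sigma\cong\End(V_\sigma)$ a matrix algebra. The automorphism $\phi$ permutes the minimal central idempotents, hence acts on $\widehat{A}$, and carries $A_\sigma$ isomorphically onto $A_{\phi(\sigma)}$; in particular all ideals in one $\phi$-orbit have equal dimension. Grouping the ideals by $\phi$-orbits $O$ gives a $\phi$-stable decomposition $A=\bigoplus_O A_O$. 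Because each $A_O$ is a \emph{two-sided} ideal that is unital in its own right, a linear functional $\lambda$ satisfying \eqref{e:twtr} is determined by the restrictions $\lambda|_{A_O}$, each of which again satisfies \eqref{e:twtr} for $(A_O,\phi|_{A_O})$; likewise $(\widehat{A})^{\phi}=\bigsqcup_O(\widehat{A_O})^{\phi}$, and $\widetilde{\chi}_\rho$ is supported on the ideal containing $\rho$ (since $\rho$ factors through $A\to A_\rho$). So it suffices to treat each $A_O$ separately.

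First I would dispose of an orbit $O$ of length $m>1$. Here $(\widehat{A_O})^{\phi}=\varnothing$, so the claim is that the only $\lambda$ on $A_O$ satisfying \eqref{e:twtr} is $\lambda=0$. Label the ideals in $O$ cyclically $A_1,\dots,A_m$ with $\phi(A_j)=A_{j+1}$, and let $e_j$ be the identity of $A_j$, so $\phi(e_j)=e_{j+1}$. For $a\in A_j$, apply \eqref{e:twtr} to the pair $(e_{j-1},a)$: since $e_ja=a$ and $ae_{j-1}=0$, this gives $\lambda(a)=\lambda(e_ja)=\lambda(\phi(e_{j-1})a)=\lambda(ae_{j-1})=0$. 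As $A_O=\bigoplus_jA_j$, this forces $\lambda|_{A_O}=0$.

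The heart of the matter is an orbit of length $1$: a simple ideal $M:=\End(V)$ on which $\phi$ restricts to an algebra automorphism, with $\rho$ the corresponding simple module (realized tautologically on $V$). By the Skolem--Noether theorem $\phi(a)=gag^{-1}$ for some $g\in M^{\times}$, and $g^{-1}$ is a legitimate choice of $\phi_\rho$ (it satisfies \eqref{e:intertw}); any other choice differs by a nonzero scalar by Schur's lemma, so $\widetilde{\chi}_\rho$ spans the line $\ql\cdot(a\mapsto\tr(g^{-1}a))$ inside $M^{*}$, and it satisfies \eqref{e:twtr} by \eqref{e:intertw} and cyclicity of the trace. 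Conversely, if $\lambda$ on $M$ satisfies \eqref{e:twtr}, put $\mu(a):=\lambda(ga)$; then for all $x,y\in M$ one has $\mu(xy)=\lambda(gxy)$ and, writing $y=g^{-1}b$ and using $\phi(x)b=g(xg^{-1}b)$, condition \eqref{e:twtr} becomes $\mu(xy)=\mu(yx)$. Hence $\mu$ is a multiple of the ordinary trace on $M$, so $\lambda(a)=\kappa\,\tr(g^{-1}a)$ for some scalar $\kappa$; thus the space of twisted-trace functionals on $M$ is exactly the line spanned by $\widetilde{\chi}_\rho$.

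Assembling the three cases: on each $\phi$-orbit the functionals satisfying \eqref{e:twtr} form precisely the span of the $\widetilde{\chi}_\rho$ with $\rho$ in that orbit, so by the orbit decomposition the $\widetilde{\chi}_\rho$, $\rho\in(\widehat{A})^{\phi}$, span the whole space of $\lambda$ satisfying \eqref{e:twtr}. They are linearly independent because $\widetilde{\chi}_\rho$ vanishes on $A_\sigma$ for $\sigma\neq\rho$, while its restriction to $A_\rho$ is the functional $a\mapsto\kappa\,\tr(g^{-1}a)$, which is nonzero since $g^{-1}$ is invertible and the trace form on $\End(V)$ is nondegenerate. I do not expect a genuine obstacle here: the only non-formal ingredient is the length-$1$ case, handled entirely by Skolem--Noether and Schur, and the one point to keep an eye on is that restriction to the two-sided ideals $A_O$ is compatible both with \eqref{e:twtr} and with the formation of $\widetilde{\chi}_\rho$, which is exactly where the two-sidedness of the $A_O$ enters.
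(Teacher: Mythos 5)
Your proof is correct and follows essentially the same route as the paper: reduce to $\phi$-orbits of simple ideals, kill $\lambda$ on orbits of length $>1$ by hitting it with the orthogonal idempotents, and on a fixed matrix ideal twist $\lambda$ by $\phi_\rho^{-1}$ (your $g$, via Skolem--Noether, which the paper leaves implicit since it works directly with the given $\phi_\rho$) to reduce to the standard fact that a cyclic functional on $\End(V)$ is a multiple of the trace. The only cosmetic differences are that you make the orbit decomposition explicit where the paper phrases it as a product-compatibility reduction, and you spell out that $\widetilde{\chi}_\rho$ itself satisfies \eqref{e:twtr}, which the paper takes for granted.
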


\begin{proof}
If the proposition holds for two pairs $(A_1,\phi_1)$ and $(A_2,\phi_2)$, then it also holds for $(A_1\times A_2,\phi_1\times\phi_2)$. Since any
semisimple associative $\ql$-algebra $A$ is a product of matrix
algebras $A_{\rho}$, where $\rho\in\widehat{A}$, it suffices to consider two cases:
\begin{enumerate}[$($i$)$]
\item the case where $\bigl(\widehat{A}\bigr)^{\phi}=\varnothing$, and
 \sbr
\item the case where $A$ has only one simple module.
\end{enumerate}
In the first case we have to prove that any $\la\in\Hom_{\ql}(A,\ql)$
satisfying \eqref{e:twtr} equals $0$. Choose $\rho\in\widehat{A}$ and let $e_{\rho}\in A$ be such
that the image of $e_{\rho}$ in $A_{\rho'}$ equals $1$ if
$\rho'=\rho$ and $0$ if $\rho'\ne\rho$. Apply \eqref{e:twtr} for
$a=e_{\rho}$ and $b\in e_{\rho}A$. Then $\phi(a)b=\phi(e_{\rho})b=e_{\phi(\rho)}b=e_{\phi(\rho)}e_{\rho}b=0$ because $\phi(\rho)\neq\rho$ by assumption. So $\la(b)=0$ for all $b\in e_{\rho}A$. Since $\rho\in\widehat{A}$ is arbitrary, we conclude that $\la=0$.

\mbr

It remains to consider the case where $\widehat{A}$ has only one element, call it $\rho$. Then $\rho:A\rar{}\End_{\ql}(V_{\rho})$ is an isomorphism. Let us use it to identify $A$ with $\End_{\ql}(V_{\rho})$. Then
\eqref{e:intertw} can be rewritten as $\phi(a)=\phi_\rho^{-1}a\phi_\rho$, and we have to prove that any linear functional $\la:\End(V_{\rho})\rar{}\ql$ such that
\begin{equation}\label{e:trtw}
\la (\phi_\rho^{-1}a\phi_\rho b)=\la(ba) \text{ for all } a,b\in\End(V_\rho)
\end{equation}
is proportional to the functional $\widetilde{\chi}_\rho:\End(V_{\rho})\rar{}\ql$ defined by $\widetilde{\chi}_\rho(a):=\tr(\phi_{\rho} a)$. Put $\mu(a):=\la(\phi_{\rho}^{-1}a)$. Then \eqref{e:trtw} is equivalent to the condition that $\mu(aa')=\mu(a'a)$ for all $a,a'\in\End(V_{\rho})$. So $\mu$ is proportional to $\tr$ and $\la$ is proportional to $\widetilde{\chi}_\rho$.
\end{proof}

\subsection{Proof of Proposition \ref{p:drinfeld-representations} (V.~Drinfeld)}\label{ss:proof-p:drinfeld-representations} Throughout this subsection we write $A=\bZ[\mu_{p^r},p^{-1}]$ and $E=\bQ(\mu_{p^r})$ for brevity. Since $p^r$ is the exponent of $\Ga$, a general fact from character theory of finite groups implies that the group algebra $E[\Ga]$ is isomorphic to a product of matrix algebras over $E$. Thus extension of scalars from $E$ to $\ql$ is a bijection between the sets of isomorphism classes of irreducible representations of $\Ga$ over $E$ and over $\ql$.

\mbr

Consider the groupoid $\cC$ whose objects are free finite rank $A$-modules $M$ equipped with an action of $\Ga$ and a $\Ga$-invariant Hermitian form $\langle\cdot,\cdot\rangle:M\times M\rar{}A$, and whose morphisms are $\Ga$-equivariant isometries.

\mbr

Let $\overline{\cC}\subset\cC$ be the full subcategory consisting of objects $M$ such that
\begin{itemize}
\item the $\Ga$-module $E\tens_A M$ is irreducible, and
 \sbr
\item for every subgroup $H\subset\Ga$ and every character $\chi:H\rar{}E^\times$ such that $E\tens_A M\cong\Ind_H^\Ga\chi$ as $\Ga$-representations, the $A$-submodule \[M^{H,\chi}:=\bigl\{ m\in M \st hm=\chi(h)m\ \forall\,h\in H\bigr\}\] has a generator $m^{H,\chi}$ such that $\langle m^{H,\chi},m^{H,\chi}\rangle=1$.
\end{itemize}

\mbr

We will deduce Proposition \ref{p:drinfeld-representations} from the following result.

\begin{prop}\label{p:drinfeld-2}
\begin{enumerate}[$($a$)$]
\item Every irreducible representation of $\Ga$ over $\ql$ is isomorphic to $\ql\tens_A M$ for some $M\in\overline{\cC}$, and $M$ is determined uniquely up to isomorphism.
 \sbr
\item The automorphism group of any $M\in\overline{\cC}$ consists of multiplications by roots of unity in $E$.
\end{enumerate}
\end{prop}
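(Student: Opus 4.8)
The statement to prove is Proposition~\ref{p:drinfeld-2}, which asserts the existence and essential uniqueness of a ``nice'' integral model $M\in\overline{\cC}$ for every irreducible $\ql$-representation of $\Ga$, together with a description of the automorphism group of such a model. I would approach this by bootstrapping from the well-known rationality theory: since $p^r$ is the exponent of $\Ga$, a theorem of Brauer (really the cyclotomic special case, cf.\ Serre's \emph{Linear Representations of Finite Groups}) guarantees that $E=\bQ(\mu_{p^r})$ is a splitting field for $\Ga$, so $E[\Ga]$ is a product of matrix algebras over $E$ and every irreducible $\ql$-representation descends to a (unique up to isomorphism) irreducible $E$-representation $\rho_E$. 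The task is then to produce inside $\rho_E$ an $A$-lattice $M$ stable under $\Ga$ and carrying a $\Ga$-invariant Hermitian $A$-valued form satisfying the normalization condition on the one-dimensional isotypic pieces $M^{H,\chi}$.

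\textbf{Key steps, in order.} First I would fix an $E$-model $V$ of the irreducible representation and choose \emph{some} $\Ga$-invariant Hermitian form $\langle\cdot,\cdot\rangle_0$ on $V$ with values in $E$ (average an arbitrary positive-definite form over $\Ga$; positivity is automatic after fixing the embedding $E\into\bC$, but all I really need is nondegeneracy, which averaging preserves for a unitary-type form — here one uses that $\Ga$ is finite and $|\Ga|$ is invertible in $A$ since $\Ga$ is a $p$-group and $p^{-1}\in A$). Second, I would pick a subgroup $H\subset\Ga$ and a character $\chi:H\rar{}E^\times$ realizing $V\cong\Ind_H^\Ga\chi$ — such a pair exists because $\Ga$ is a $p$-group, hence monomial (supersolvable), so every irreducible is induced from a one-dimensional character of a subgroup; moreover $\chi$ takes values in $\mu_{p^r}\subset A^\times$. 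Third, I would take $m^{H,\chi}$ to be a generator of the rank-one $A$-module $V^{H,\chi}$, rescale it so that $\langle m^{H,\chi},m^{H,\chi}\rangle_0$ becomes a unit times $1$ — note $\langle m^{H,\chi},m^{H,\chi}\rangle_0$ is a \emph{totally positive} element of the maximal totally real subfield of $E$, and by rescaling $m^{H,\chi}$ by an element of $E^\times$ I can arrange this value to be any fixed totally positive element in the same square class; the content here is that the square class can be taken trivial, i.e.\ that $A$ (or its real subring) has trivial relevant class group / unit obstruction — this uses that $A=\bZ[\mu_{p^r},p^{-1}]$ and the arithmetic of cyclotomic fields. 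Fourth, I would let $M=A[\Ga]\cdot m^{H,\chi}$ be the $\Ga$-span of this vector; one checks $M$ is $\Ga$-stable, free of finite rank over $A$ (using that $A$ is a Dedekind domain with the relevant class number trivial, or directly that $M$ is generated by the finite set $\{g\cdot m^{H,\chi}\}$), and that $\langle\cdot,\cdot\rangle_0$ restricts to an $A$-valued form on $M$ after possibly clearing a unit denominator. Fifth, for the uniqueness in (a) and for (b), I would argue via Schur's lemma: any isomorphism or automorphism of objects of $\overline{\cC}$ must, over $E$, be a scalar (irreducibility), and an isometry forces that scalar to have norm $1$; combined with the integrality of $M$ and the normalization $\langle m^{H,\chi},m^{H,\chi}\rangle=1$, the scalar must lie in $A^\times\cap E^1 = \mu(E)$, the roots of unity in $E$. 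For uniqueness of $M$ up to isomorphism I would show any two models are isomorphic over $E$ by a scalar, then rescale using the normalization to land in $\overline{\cC}$, making the isomorphism an isometry.

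\textbf{The main obstacle.} The delicate step is the third one: showing that the ``self-inner-product'' $\langle m^{H,\chi},m^{H,\chi}\rangle$ of a generator of the one-dimensional eigenspace can be normalized to $1$ simultaneously with keeping everything integral over $A=\bZ[\mu_{p^r},p^{-1}]$ — equivalently, that the relevant Hermitian form on the rank-one piece is trivial in the appropriate Witt/square-class group over $A$, and that this does not conflict with the normalization for \emph{different} pairs $(H,\chi)$ realizing the same induced representation (one must check consistency, presumably because any two such pairs are $\Ga$-conjugate up to the action already built in). I expect this to hinge on specific facts about units and ideal classes in $\bZ[\mu_{p^r},p^{-1}]$ — in particular that inverting $p$ kills the relevant obstructions since $p$ is the only ramified prime in $\bQ(\mu_{p^r})/\bQ$ — and this is where I would spend the bulk of the argument; the deduction of Proposition~\ref{p:drinfeld-representations} itself from Proposition~\ref{p:drinfeld-2} should then be short, obtained by taking $\widetilde\rho$ to act on $\ql\tens_A M$ via the finite-image representation $\widetilde\Ga\rar{}\Aut_{\overline\cC}(M)\ltimes(\text{finite group})$ built from the $F_q$-action permuting the objects of $\overline\cC$, with character values landing in $A$ because the $\widetilde\Ga$-action preserves the $A$-lattice $M$.
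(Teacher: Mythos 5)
Your proposal reads the landscape correctly — use the fact that $E=\bQ(\mu_{p^r})$ splits $\Ga$ since $p^r$ is the exponent, realize $\rho$ monomially as $\Ind_H^\Ga\chi$, normalize a Hermitian form, and close (b) with Schur's lemma plus the observation that a $\la\in A$ with $\la\overline{\la}=1$ is a root of unity. But the execution diverges from the paper's in ways that leave real gaps.

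\textbf{Freeness.} You set $M=A[\Ga]\cdot m^{H,\chi}$ inside an $E$-model $V$. This is a finitely generated torsion-free $A$-module, hence projective, but $A=\bZ[\mu_{p^r},p^{-1}]$ is not a PID in general (the class group of $\bQ(\mu_{p^r})$ need not be trivial, and inverting $p$ only kills the class of the unique prime above $p$), so freeness is not automatic — and objects of $\cC$ are required to be free. The paper avoids this by \emph{constructing} $M_0$ directly as the function space of $f:\Ga\rar{}A$ with $f(\ga h)=\chi(h)^{-1}f(\ga)$, under left translation, with the tautological form $\langle f_1,f_2\rangle=\sum_{\ga\in\Ga/H}f_1(\ga)\overline{f_2(\ga)}$. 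This is manifestly free (a basis is given by coset-indicator functions), so the class-group issue never arises.

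\textbf{Normalization.} You anticipate a Witt/class-group obstruction to making $\langle m^{H',\chi'},m^{H',\chi'}\rangle=1$. The paper's step here is elementary and does not invoke any such theory: for the explicit $M_0$ above, a generator $m''$ of $M_0^{H',\chi'}$ satisfies $\langle m'',m''\rangle=p^s$ for some $s\in\bZ$, and then one just rescales by $\la^{-s}$ with $\la\in A^\times$, $\la\overline{\la}=p$ (Remark \ref{r:absolute-value-square-root-of-p}). That single unit is the entire arithmetic input, so your ``bulk of the argument'' is in fact a few lines.

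\textbf{Uniqueness.} Your sketch — ``any two models are isomorphic over $E$ by a scalar, then rescale'' — has a gap. Schur's lemma identifies $E[\Ga]$-linear endomorphisms of $V$ with scalars; it does \emph{not} say that two $A$-lattices inside $V$ differ by a scalar (they could differ by a non-principal fractional ideal). The paper's argument is constructive: a normalized generator $m\in M^{H,\chi}$ with $\langle m,m\rangle=1$ produces explicit mutually inverse morphisms $f\mapsto\sum_{\ga\in\Ga/H}f(\ga)\ga(m)$ and $x\mapsto\bigl(\ga\mapsto\langle x,\ga(m)\rangle\bigr)$ between $M_0$ and $M$, and the $A$-valuedness of the second map is exactly what the $A$-valued Hermitian form on $M$ delivers. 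This is what actually pins down the lattice, not Schur's lemma alone.

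Part (b) in your proposal is essentially the paper's argument and is fine.
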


To see that Proposition \ref{p:drinfeld-2} implies Proposition \ref{p:drinfeld-representations}, let $\rho$ be an irreducible representation of $\Ga$ over $\ql$ whose isomorphism class is invariant under $F_q$. Choose $M\in\overline{\cC}$ so that $\rho\cong\ql\tens_A M$. By the uniqueness assertion of Proposition \ref{p:drinfeld-2}, the isomorphism class of $M$ is also invariant under $F_q$. So there exists an $A$-linear isometry $\phi:M\rar{\simeq}M$ such that $\phi\ga\phi^{-1}(m)=F_q(\ga)(m)$ for all $m\in M$ and all $\ga\in\Ga$. Then some power of $\phi$ is an automorphism of $M$ in $\cC$, so by Proposition \ref{p:drinfeld-2}(b), some (possibly larger) power of $\phi$ is equal to the identity. Hence the action of $\Ga$ on $M$ extends to a continuous action of $\widetilde{\Ga}=\Gal(\bF/\bF_q)\ltimes\Ga$ with finite image, obtained by sending $F_q\in\Gal(\bF/\bF_q)$ to $\phi$. If $\widetilde{\rho}$ is the resulting representation of $\widetilde{\Ga}$ on $\ql\tens_A M$, then $\widetilde{\rho}$ satisfies all the requirements of Proposition \ref{p:drinfeld-representations}.

\begin{proof}[Proof of Proposition \ref{p:drinfeld-2}]
Assertion (b) follows from Schur's lemma and the fact that any element $\la\in A$ such that $\la\cdot\overline{\la}=1$ is a root of unity\footnote{To see this, note that since $E$ has only one place over $p$, the condition $\la\cdot\overline{\la}=1$ implies that $\la$ is integral over $\bZ$.}.

\mbr

Let us prove (a). Let $\rho$ be an irreducible representation of $\Ga$ over $\ql$ and choose $H\subset\Ga$ and $\chi:H\rar{}\ql^\times$ so that $\rho\cong\Ind_H^\Ga\chi$. Define $M_0\in\cC$ as follows. As an $A$-module, $M_0$ consists of functions $f:\Ga\rar{}A$ such that $f(\ga h)=\chi(h)^{-1}f(\ga)$ for all $\ga\in\Ga$ and all $h\in H$. The action of $\Ga$ is given by left translations. The Hermitian form is given by $\langle f_1,f_2\rangle=\sum_{\ga\in\Ga/H} f_1(\ga)\cdot\overline{f_2(\ga)}$.

\mbr

Note that $\ql\tens_A M_0$ is isomorphic to $\rho$ as a representation of $\Ga$ by construction. Moreover, if $M\in\overline{\cC}$ is such that $\ql\tens_A M\cong\rho$, then $M$ is $\cC$-isomorphic to $M_0$: a choice of $m\in M^{H,\chi}$ with $\langle m,m\rangle=1$ defines mutually inverse morphisms
\[
f \longmapsto \sum_{\ga\in\Ga/H} f(\ga)\cdot \ga(m), \qquad f\in M_0;
\]
\[
x \longmapsto f_x, \quad f_x(\ga):=\langle x,\ga(m) \rangle, \qquad x\in M.
\]

\mbr

It remains to check that $M_0\in\overline{\cC}$. That is, we have to show that if $\rho\cong\Ind_{H'}^\Ga\chi'$ for some $H'\subset\Ga$ and some $\chi':H'\rar{}\ql^\times$, then the $A$-module $M_0^{H',\chi'}$ has a generator $m'$ such that $\langle m',m'\rangle=1$. If $p^r\leq 2$, then $\Ga$ is abelian, so that $H'=H=\Ga$ and the existence of $m'$ is obvious. If $p^r>2$, one can argue as follows. It is easy to see that $M_0^{H',\chi'}$ has a generator $m''$ such that $\langle m'',m''\rangle=p^s$ for some $s\in\bZ$. By Remark \ref{r:absolute-value-square-root-of-p}, we can find $\la\in A^\times$ such that $\la\cdot\overline{\la}=p$. Then we can put $m'=\la^{-s}\cdot m''$.
\end{proof}

The proof of the next consequence of Proposition \ref{p:drinfeld-2} is of independent interest.

\begin{cor}[V.~Drinfeld]\label{c:drinfeld}
Let $\Ga$ be a finite $p$-group, let $\rho$ be an irreducible representation of $\Ga$ over $\ql$, and let $\Aut_\rho(\Ga)$ be the stabilizer of the isomorphism class of $\rho$ in $\Aut(\Ga)$. Thus $\rho$ defines a projective representation of $\Aut_\rho(\Ga)$, and therefore a central extension of $\Aut_\rho(\Ga)$ by $\ql^\times$. We write $\al_\rho\in H^2\bigl(\Aut_\rho(\Ga),\ql^\times\bigr)$ for its cohomology class. Then $\al_\rho$ belongs to the image of the natural map
\[
H^2\bigl(\Aut_\rho(\Ga),\mu_{p^r}\bigr) \rar{} H^2\bigl(\Aut_\rho(\Ga),\ql^\times\bigr),
\]
where $p^r$ is the exponent of $\Ga$.
\end{cor}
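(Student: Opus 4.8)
The plan is to extract from Proposition~\ref{p:drinfeld-2} an explicit $2$-cocycle representing $\al_\rho$ whose values are roots of unity lying in $\bQ(\mu_{p^r})$. By Proposition~\ref{p:drinfeld-2}(a) fix $M\in\overline{\cC}$ with $\ql\tens_A M\cong\rho$, and set $V=\ql\tens_A M$, so that $\rho$ is realized on $V$. For $g\in\Aut_\rho(\Ga)$ let $M^g$ denote $M$ with the $\Ga$-action precomposed with $g$ (and with unchanged $A$-module structure and unchanged Hermitian form). A routine check shows $M^g\in\overline{\cC}$: twisting by $g$ permutes the pairs $(H,\chi)$ occurring in the definition of $\overline{\cC}$, carries $M^{H,\chi}$ to $M^{g(H),\chi\circ g^{-1}}$, and leaves the Hermitian form untouched, so the ``norm-$1$ generator'' condition is preserved. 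Since $g$ fixes the isomorphism class of $\rho$, we have $\ql\tens_A M^g\cong\rho^g\cong\rho\cong\ql\tens_A M$, so the uniqueness part of Proposition~\ref{p:drinfeld-2}(a) (applicable in $\overline{\cC}$, which is full in $\cC$) yields an isomorphism $\phi_g\colon M\rar{\sim}M^g$ in $\cC$. Concretely, $\phi_g$ is an $A$-linear automorphism of the free $A$-module $M$ that preserves the Hermitian form and satisfies $\phi_g\circ\rho(\ga)\circ\phi_g^{-1}=\rho(g(\ga))$ on $M$ for all $\ga\in\Ga$. Passing to $V$, the operators $\phi_g$ form a lift of the projective representation of $\Aut_\rho(\Ga)$ attached to $\rho$, so $c(g,h):=\phi_g\phi_h\phi_{gh}^{-1}\in\ql^\times$ is a $2$-cocycle with $[c]=\al_\rho$. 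Finally, $\phi_g\phi_h$ and $\phi_{gh}$ are $A$-linear isometries of $M$ inducing the same twist of the $\Ga$-action, so $c(g,h)$ is a $\Ga$-equivariant $A$-linear isometry of $M$, i.e.\ an automorphism of $M$ in $\cC$; by Proposition~\ref{p:drinfeld-2}(b) it is multiplication by a root of unity of $E=\bQ(\mu_{p^r})$. Thus $c$ takes values in the group $\mu(E)$ of roots of unity of $E$.

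\textbf{Concluding for the two cases.} If $p=2$, then $\mu(E)=\mu_{p^r}$ and we are finished, since $\al_\rho$ is then represented by a $\mu_{p^r}$-valued cocycle. If $p$ is odd, then $\mu(E)=\mu_{2p^r}$, and because $\gcd(2,p^r)=1$ there is a canonical decomposition $\mu_{2p^r}=\mu_{p^r}\times\mu_2$, hence $H^2(\Aut_\rho(\Ga),\mu_{2p^r})=H^2(\Aut_\rho(\Ga),\mu_{p^r})\oplus H^2(\Aut_\rho(\Ga),\mu_2)$; it suffices to show that the $\mu_2$-component of $[c]$ vanishes. Here I use determinants. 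Since $\phi_g$ is a unitary automorphism of the free $A$-module $M$, we have $\det\phi_g\in A^\times$ and $\det\phi_g\cdot\overline{\det\phi_g}=1$, so $\det\phi_g$ is a root of unity of $E$ (by the argument in the proof of Proposition~\ref{p:drinfeld-2}(b)), i.e.\ $\det\phi_g\in\mu_{2p^r}$. Taking determinants of $\phi_g\phi_h=c(g,h)\phi_{gh}$ gives $c(g,h)^{d}=\det\phi_g\cdot\det\phi_h\cdot(\det\phi_{gh})^{-1}$ with $d=\dim_{\ql}V$, so $c^{d}$ is the coboundary of the $1$-cochain $g\mapsto\det\phi_g$ and hence $d\cdot[c]=0$ in $H^2(\Aut_\rho(\Ga),\mu_{2p^r})$. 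As $\Ga$ is a $p$-group, $d$ is a power of $p$, in particular odd; since $H^2(\Aut_\rho(\Ga),\mu_2)$ is $2$-torsion, multiplication by $d$ is the identity on it, so the $\mu_2$-component of $[c]$ equals the $\mu_2$-component of $d\cdot[c]$, which is $0$. Therefore $[c]$ is the image of a class in $H^2(\Aut_\rho(\Ga),\mu_{p^r})$, and pushing forward along the inclusion $\mu_{p^r}\subset\ql^\times$ shows that $\al_\rho$ lies in the image of $H^2(\Aut_\rho(\Ga),\mu_{p^r})\rar{}H^2(\Aut_\rho(\Ga),\ql^\times)$, as desired.

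\textbf{Main obstacle.} Granting Proposition~\ref{p:drinfeld-2}, essentially all the work is bookkeeping: checking $M^g\in\overline{\cC}$, confirming that the uniqueness in Proposition~\ref{p:drinfeld-2}(a) is with respect to isomorphism in $\overline{\cC}$ (immediate by fullness), and keeping straight that the $\phi_g$ are honest $A$-linear maps (only $\Ga$-twisted), so that their determinants make sense in $A^\times$. The one point that requires a genuine, if elementary, idea is the reduction from $\mu_{2p^r}$ to $\mu_{p^r}$ when $p$ is odd; the determinant/parity argument above handles it, exploiting that irreducible representations of a $p$-group have dimension a power of $p$ and that the $2$-primary obstruction lives in a $2$-torsion group.
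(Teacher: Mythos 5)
Your proof is correct and follows essentially the same route as the paper: use Proposition \ref{p:drinfeld-2} to realize $\al_\rho$ by a cocycle valued in the roots of unity $\mu_E$ of $E=\bQ(\mu_{p^r})$, then kill the $\mu_2$-factor when $p$ is odd by observing that $\dim\rho$ (a power of $p$, hence odd) annihilates the class via a determinant argument. The paper phrases the last step more abstractly — it observes that $\det:\Pi\to E^\times$ restricted to the center is $\la\mapsto\la^{\dim\rho}$, so $\dim\rho\cdot\al_\rho=0$ — whereas you carry out the corresponding cochain computation explicitly and note that the determinant cochain $g\mapsto\det\phi_g$ is $\mu_E$-valued, so that $\dim\rho$ already annihilates the class inside $H^2(\Aut_\rho(\Ga),\mu_E)$; this is a slight sharpening but not a different idea.
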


\begin{proof}
If $\mu_E\subset E^\times$ denotes the group of all roots of unity in $E=\bQ(\mu_{p^r})$, then Proposition \ref{p:drinfeld-2} implies that $\al_\rho$ belongs to the image of the natural map
\[
H^2\bigl(\Aut_\rho(\Ga),\mu_E\bigr) \rar{} H^2\bigl(\Aut_\rho(\Ga),\ql^\times\bigr).
\]
The group $\mu_E$ is isomorphic to $\mu_{p^r}\oplus B$, where $B$ has order $2$ if $p\neq 2$ and $B=0$ if $p=2$. So to finish the proof of the corollary, it suffices to show that $\al_\rho$ is annihilated by a power of $p$. Now $\al_\rho$ comes from a central extension
\[
1 \rar{} E^\times \rar{} \Pi \rar{} \Aut_\rho(\Ga) \rar{} 1.
\]
The homomorphism $\det:\Pi\rar{}E^\times$ induces on $E^\times$ the map $\la\mapsto\la^{\dim(\rho)}$. So $\al_\rho$ is annihilated by $\dim(\rho)$, which is a power of $p$.
\end{proof}

%%%%%%%%%%%%%%%%%%%%%%%%%%%%%%%%%%%%%%%%%%%%%%%%%%%%%%%%
%%%%%%%%%%%%%%%%%%%%%%%%%%%%%%%%%%%%%%%%%%%%%%%%%%%%%%%%
%%%%%%%%%%%%%%%%%%%%%%%%%%%%%%%%%%%%%%%%%%%%%%%%%%%%%%%%
%%%
%%%                 REFERENCES
%%%
%%%%%%%%%%%%%%%%%%%%%%%%%%%%%%%%%%%%%%%%%%%%%%%%%%%%%%%%
%%%%%%%%%%%%%%%%%%%%%%%%%%%%%%%%%%%%%%%%%%%%%%%%%%%%%%%%
%%%%%%%%%%%%%%%%%%%%%%%%%%%%%%%%%%%%%%%%%%%%%%%%%%%%%%%%


\begin{thebibliography}{55}

\bibitem[Be03]{behrend-book} K.A.~Behrend, ``Derived $\ell$-adic categories for algebraic stacks'', Mem. Amer. Math. Soc. \textbf{163} (2003), no.~774.

\bibitem[BBD82]{bbd} A.A.~Beilinson, J.~Bernstein and P.~Deligne,
\emph{Faisceaux Pervers}, in: ``Analyse et topologie sur les espaces
singuliers (I)'', Ast\'erisque \textbf{100}, 1982.

\bibitem[Bo10]{characters} M.~Boyarchenko, {\em Characters of unipotent groups over finite fields}, Selecta Math. \textbf{16} (2010), no.~4, 857--933.

\bibitem[BD11]{foundations} M.~Boyarchenko and V.~Drinfeld,
{\em Character sheaves on unipotent groups in positive
characteristic: foundations}, Preprint, {\tt arXiv:0810.0794}, \textbf{version 2}.

\bibitem[De80]{deligne-weil-2} P.~Deligne, {\em La conjecture de
Weil II}, Publ. Math. IHES \textbf{52} (1980), 137--252.

\bibitem[DF11]{deligne-flicker} P.~Deligne and Y.Z.~Flicker, {\em Counting local systems with principal unipotent local monodromy}, Preprint, March 28, 2011, available from \\ \url{http://www.math.ias.edu/people/faculty/deligne/preprints}

\bibitem[De10]{tanmay} T.~Deshpande,  {\em Heisenberg idempotents on unipotent groups}, Math. Res. Lett. \textbf{17} (2010), no.~3, 415--434.

\bibitem[Fu00]{fujiwara-gabber-independence} K.~Fujiwara, {\em Independence of $\ell$ for intersection cohomology (after Gabber)}, in: ``Algebraic geometry 2000, Azumino (Hotaka)'', 145--151, Adv. Stud. Pure Math. \textbf{36}, Math. Soc. Japan, Tokyo, 2002.

\bibitem[Gr65]{greenberg} M.J.~Greenberg, {\em Perfect closures of rings and schemes}, Proc. Amer. Math. Soc. \textbf{16} (1965), 313--317.

\bibitem[KW01]{kiehl-weissauer} R.~Kiehl and R.~Weissauer, ``Weil Conjectures, Perverse Sheaves and $\ell$-adic Fourier Transform'', Ergeb. Math. Grenzgeb. (3) \textbf{42}, Springer-Verlag, Berlin, 2001.

\bibitem[La56]{lang} S.~Lang, {\em Algebraic groups over finite fields},
Amer. J. Math. \textbf{78} (1956), 555--563.

\bibitem[La83]{langlands} R.P.~Langlands, {\em Les d\'ebuts d'une formule des traces stable}, Publ. Math. Univ. Paris VII \textbf{13}, Paris, 1983.

\bibitem[LO06]{Las-Ols06} Y.~Laszlo and M.~Olsson, {\em The six
operations for sheaves on Artin stacks II: Adic coefficients},
Publ. Math. IHES.  \textbf{107}  (2008), 169--210.

\bibitem[Lu06]{lusztig} G.~Lusztig, {\em Character sheaves and
generalizations}, in: ``The unity of mathematics'' (editors:
P.~Etingof, V.~Retakh, I.~M.~Singer), 443--455, Progress in Math.
\textbf{244}, Birkh\"auser Boston, Boston, MA, 2006.

\bibitem[Su10]{sun} S.~Sun, {\em $L$-series of Artin stacks over finite fields}, Preprint, {\tt arXiv:1008.3689}, \textbf{version 2}.

\end{thebibliography}
\end{document}